\newtheorem{Lemma}{Lemma}[section]
\newtheorem{Theorem}[Lemma]{Theorem}
\newtheorem{Proposition}[Lemma]{Proposition}
\newtheorem{Corollary}[Lemma]{Corollary}
\newtheorem{question}[Lemma]{Question}
\newtheorem{claim}[Lemma]{Claim}
\theoremstyle{definition}
\newtheorem{Definition}[Lemma]{Definition}
\newtheorem{Example}[Lemma]{Example}
\newtheorem{Remark}[Lemma]{Remark}
\newtheorem{Convention}[Lemma]{Convention}
\numberwithin{equation}{section}
\newcommand{\F}{\mathcal F}
\newcommand{\V}{{\mathcal V}}
\newcommand{\W}{{\mathcal W}}
\newcommand{\sF}{{\mathcal F}}
\newcommand{\sH}{{\mathcal H}}
\newcommand{\La}{{\mathcal L}}
\newcommand{\sC}{{\mathcal C}}
\newcommand{\AG}{{\mathbf{AG}}}
\newcommand{\CT}{{\mathbf{CTop}}}
\newcommand{\Top}{{\mathbf{Top}}}
\newcommand{\ent}{{\mathrm{ent}}}
\newcommand{\CAG}{{\mathbf{CAG}}}
\newcommand{\MS}{{\mathbf{MesSp}}}
\newcommand{\Set}{{\mathbf{Set}}}
\newcommand{\Fr}{{\mathbf{Frm}}}
\newcommand{\h}{\mathbf h}
\renewcommand{\H}{\mathbf H}
\newcommand{\hf}{h_{\mathrm{fin\text{-}top}}}
\newcommand{\Hf}{H_{\mathrm{fin\text{-}top}}}
\newcommand{\fc}{{\mathrm{fin\text{-}\cov}}}
\newcommand{\cov}{\mathfrak{cov}}
\newcommand{\N}{\mathbb N}
\newcommand{\Z}{\mathbb Z}
\newcommand{\R}{\mathbb R}
\newcommand{\T}{\mathbb T}
\newcommand{\SL}{{\mathfrak{L}}}
\newcommand{\PSL}{\mathfrak{PL}}
\newcommand{\Se}{{\mathfrak{S}}}
\newcommand{\Sesu}{{\mathfrak{S}^\dag}}
\newcommand{\Mo}{{\mathfrak{M}}}
\newcommand{\X}{{\mathfrak{X}}}
\def\ent{\mathrm{ent}}
\newcommand{\sM}{\mathcal{H}}
\newcommand{\U}{{\mathcal U}}
\newcommand{\f}{\phi}
\newcommand{\ms}[1]{\mathcal{#1}}
\newcommand{\sub}{\mathfrak{sub}}
\def\cov{\mathfrak{cov}}
\def\pet{\mathfrak{pet}}
\def\atr{\mathfrak{im}}
\def\str{\mathfrak{cim}}
\newlength{\bibitemsep}\setlength{\bibitemsep}{.0\baselineskip plus .0\baselineskip minus .0\baselineskip}
\newlength{\bibparskip}\setlength{\bibparskip}{0pt}
\let\oldthebibliography\thebibliography
\renewcommand\thebibliography[1]{%
  \oldthebibliography{#1}%
  \setlength{\parskip}{\bibitemsep}%
  \setlength{\itemsep}{\bibparskip}%
}
\title{Entropy on normed semigroups\\(Towards a unifying approach to entropy)}
\author{Dikran Dikranjan \and Anna Giordano Bruno}
\date{}
\begin{document}

\maketitle

\abstract{
We present a unifying approach to the study of entropies in Mathematics, such as measure entropy, various forms of topological entropy, several notions of algebraic entropy, and two forms of set-theoretic entropy. We take into account only discrete dynamical systems, that is, pairs $(X,\phi)$, where $X$ is the underlying space (e.g., a probability space, a compact topological space, a group, a set) and $\phi:X\to X$ is a transformation of $X$ (e.g., a measure preserving transformation, a continuous selfmap, a group homomorphism, a selfmap).
We see entropies as functions $h:\mathfrak X\to \R_+$, associating to each flow $(X,\phi)$ of a category $\mathfrak \X$ either a non-negative real number or $\infty$.

First, we introduce the notion of semigroup entropy $h_\Se:\Se\to\R_+$, which is a numerical invariant attached to endomorphisms of the category $\Se$ of normed semigroups. Then, for a functor $F:\mathfrak X\to\Se$ from any specific category $\mathfrak X$ to $\Se$, we define the functorial entropy $\h_F:\mathfrak X\to\R_+$ as the composition $h_\Se\circ F$, that is, $\h_F(\phi) = h_\Se(F\phi)$ for any endomorphism $\phi: X \to X$ in $\mathfrak X$. Clearly, $\h_F$ inherits many of  the properties of $h_\Se$, depending also on the functor $F$. Motivated by this aspect, we study in detail the properties of $h_\Se$. 

Such a general scheme, using elementary category theory, permits one to obtain many relevant known entropies as functorial entropies $\h_F$, for appropriately chosen categories $\mathfrak X$ and functors $F:\mathfrak X\to\Se$. All of the above mentioned entropies are functorial. Furthermore, we exploit our scheme to elaborate a common approach to establish the properties shared by those entropies that we find as functorial entropies, pointing out their common nature. We give also a detailed description of the limits of our approach, namely entropies which cannot be covered. 

Finally, we discuss and deeply analyze the relations between pairs of entropies through the looking glass of our unifying approach. To this end we first formalize the notion of Bridge Theorem between two entropies $h_1:\mathfrak X_1\to \R_+$ and $h_2:\mathfrak X_2\to \R_+$ with respect to a functor $\varepsilon:\mathfrak X_1\to\mathfrak X_2$, taking inspiration from the known relation between the topological and the algebraic entropy via the Pontryagin duality functor. Then, for pairs of functorial entropies we use the above scheme to introduce the notion and the related scheme of Strong Bridge Theorem.
It allows us to shelter various relations between pairs of entropies under the same umbrella (e.g., the above mentioned connection of the topological and the algebraic entropy, as well as their relation with the set-theoretic entropy).}

\bigskip
\noindent \emph{Keywords}: entropy, semigroup, semilattice, normed semigroup, semigroup entropy, bridge theorem, algebraic entropy, topological entropy, measure entropy, frame entropy, set-theoretic entropy, duality.

\smallskip
\noindent\emph{2010 Mathematics Subject Classification}: 16B50, 20M15, 20K30, 20F65, 22D05, 22D35, 22D40, 28D20, 37A35, 37B40, 54C70, 54H11, 55U30.
%\thanks{The authors are supported by the project PRID Topological, Categorical and Dynamical Methods in Algebra (DMIF - University of Udine).
%This work was partially supported by the ``National Group for Algebraic and Geometric Structures, and their Applications'' (GNSAGA - INdAM).}

\bigskip
\noindent The authors are supported by the project PRID Topological, Categorical and Dynamical Methods in Algebra (DMIF - University of Udine).
This work was partially supported by the ``National Group for Algebraic and Geometric Structures, and their Applications'' (GNSAGA - INdAM).

\bigskip
\bigskip
\bigskip
\noindent Dikran Dikranjan\\
Dipartimento di Scienze Matematiche, Informatiche e Fisiche\\ Universit\`a degli Studi di Udine\\
Via delle Scienze 206, 33100 Udine (Italy)\\
E-mail: dikran.dikranjan@uniud.it

\bigskip
\noindent Anna Giordano Bruno\\
Dipartimento di Scienze Matematiche, Informatiche e Fisiche\\ Universit\`a degli Studi di Udine\\
Via delle Scienze 206, 33100 Udine (Italy)\\
E-mail: anna.giordanobruno@uniud.it

\newpage

\tableofcontents

%\noindent\textbf{Keywords}: {entropy, semigroup, semilattice, normed semigroup, semigroup entropy, bridge theorem, algebraic entropy, topological entropy, measure entropy, frame entropy, set-theoretic entropy, duality.}
%\\ \textbf{MSC2010}: {16B50, %Category-theoretic methods and results
%20M15, %Mappings of semigroups
%20K30, % Abelian groups - Automorphisms, homomorphisms, endomorphisms, etc.
%20F65, %Geometric group theory
%22D05, %General properties and structure of locally compact groups
%22D35, %Duality theorems
%22D40, %Ergodic theory on groups
%28D20, %Entropy and other invariants (in Measure-theoretic ergodic theory)
%37A35, %Entropy and other invariants, isomorphism, classification (in Dynamical systems and ergodic theory)
%37B40, %Topological entropy
%54C70, %Entropy (in Basic constructions)
%54H11, %Topological groups
%55U30. % Duality (in Applied homological algebra and category theory)
%}

\newpage

\section{Introduction}

\subsection{Historical background}

The first notion of entropy in mathematics was the measure entropy $h_{mes}$ for measure preserving transformations of probability spaces, studied by Kolmogorov \cite{K} and Sinai \cite{Sinai} in ergodic theory. Analogously, Adler, Konheim and McAndrew \cite{AKM} introduced the topological entropy $h_{top}$ for continuous selfmaps of compact spaces by using open covers. Later, Hofer \cite{Hof} proposed a quite natural extension $\hf$ of the topological entropy $h_{top}$ to continuous selfmaps of arbitrary topological spaces, by simply replacing the open covers by finite open covers. 

Another notion of topological entropy $h_B$ for uniformly continuous selfmaps of metric spaces was given by Bowen \cite{B} and Dinaburg \cite{Din};  it coincides with $h_{top}$ on compact metric spaces. Later on, Hood \cite{hood} extended Bowen-Dinaburg's entropy to uniformly continuous selfmaps of uniform spaces. This notion of entropy is sometimes called {uniform entropy}, and it coincides with the topological entropy in the compact case, when the given compact topological space is endowed with the unique uniformity compatible with the topology (see \cite{DK,DSV} for more detail). In particular, this topological entropy can be studied for continuous endomorphisms of topological groups.

Indeed, the topological entropy was thoroughly studied for continuous endomorphisms of compact groups, starting from the work of Yuzvinski \cite{Y}, where the so-called Addition Theorem\index{Addition Theorem} was proved, and also the so-called Yuzvinski's formula\index{Yuzvinski's formula} relating the topological entropy with the Mahler measure. Moreover, a Uniqueness Theorem\index{Uniqueness Theorem} is available in this case due to Stoyanov \cite{S}. Recently, the topological entropy was studied for totally disconnected locally compact groups in \cite{DG-tdlc,GBV}, and for totally bounded abelian groups in \cite{Dik+Manolo}.

Entropy was taken to Algebraic Dynamics by Adler, Konheim and McAndrew \cite{AKM} and by Weiss \cite{W}; they studied the algebraic entropy $\ent$ for endomorphisms of torsion abelian groups, which was further investigated in \cite{DGSZ}, where in particular an Addition Theorem and a Uniqueness Theorem were provided. Then Peters \cite{P} defined its extension $h_{alg}$ to automorphisms of abelian groups; finally in \cite{DG0,DG-islam,DG-bridge,DG} the algebraic entropy $h_{alg}$ for group endomorphisms was introduced in general, developing all its fundamental properties, with the Addition Theorem playing a crucial role among them. In particular, the relation of the algebraic entropy with Lehmer's problem from number theory was pointed out, by means of the so-called algebraic Yuzvinski's formula (see \cite{GV,GV2}).

Peters \cite{Pet1} gave a further generalization of his notion of entropy $h_{alg}$ for continuous automorphisms of locally compact abelian groups, which was recently extended by Virili \cite{V} to continuous endomorphisms; the commutativity can be removed as noted in \cite{DG-islam}. The recent paper \cite{GBST} is dedicated to the study of the algebraic entropy for a class of locally compact not necessarily abelian groups  where the Addition Theorem is available.

 As a dual notion of the algebraic entropy $\ent$, the adjoint algebraic entropy $\ent^\star$ for group endomorphisms was investigated in \cite{DGS} (see also \cite{GK,SZ1}), and its topological version in \cite{G}. A notion of algebraic entropy for module endomorphisms was introduced in \cite{SZ}, namely, for $i$ an invariant of a module category the algebraic $i$-entropy $\ent_i$ for module endomorphisms. This entropy was deeply investigated in case $i$ is a length function, when the Addition Theorem holds (see \cite{SVV,SV1}).  The adjoint version of the algebraic $i$-entropy was studied in \cite{Vi}. In particular, the algebraic dimension entropy $\ent_{\dim}$ for discrete vector spaces was thoroughly analyzed in \cite{GBS}, and carried to locally linearly compact vector spaces in \cite{CGBalg}. A topological dimension entropy $\ent_{\dim}^\star$ was studied for locally linearly compact vector spaces in \cite{CGBtop}.

Finally, one can find in \cite{AZD} and \cite{DG-islam} two mutually ``dual'' notions of entropy for selfmaps of sets, namely the covariant set-theoretic entropy $\mathfrak h$ and the contravariant set-theoretic entropy $\mathfrak h^*$.

\smallskip
We briefly mention here that nowadays notions of entropy are studied also in much more general contexts, namely for (semi)group actions, for example, the measure entropy and the topological entropy for amenable group actions (e.g., see \cite{Kie,Ki,LSW,Oll,OW,ST}; analogues of the Addition Theorem and the Yuzvinski's formula can be found in \cite{CT,De,Li,LSW,S}). The algebraic entropy for amenable group actions was recently considered in \cite{LL,V2,V3}.
For amenable semigroup actions, the measure entropy and the topological entropy were introduced in \cite{CCK} and the algebraic entropy in \cite{DFGB}.
For a survey on entropy in the very general case of sofic groups see \cite{Weisss} (see also \cite{Bo0,Bo,KH}).
In another direction, entropy was extended to actions of finitely generated semigroups using regular systems in \cite{HS} (see also \cite{Bis,GLW} and \cite{BDGBS1,BDGBS}).

\subsection{The general scheme}\label{list}

We list here all the entropies that we take into account:
\begin{enumerate}[-]
\item the covariant set-theoretic entropy $\mathfrak h$ and the contravariant set-theoretic entropy $\mathfrak h^*$ (see \S\ref{set-sec});
\item the topological entropy $h_{top}$ for continuous selfmaps of compact spaces (see \S\ref{htop-sec}) or for continuous endomorphisms of locally compact groups (see \S\ref{NewSec2});
\item the topological entropy $\hf$ for continuous selfmaps of topological spaces  (see \S\ref{fr-sec} and \S\ref{lintop-sec});
\item the frame entropy $h_{fr}$ for endomorphisms of frames (see \S\ref{fr-sec});
\item the measure entropy $h_{mes}$ for measure preserving transformations of probability spaces (see \S\ref{mes-sec});
\item the algebraic entropy $\ent$ for endomorphisms of torsion abelian groups (see \S\ref{sec:h_alg});
\item the algebraic entropy $h_{alg}$ for group endomorphisms (see \S\ref{sec:h_alg}) and for continuous endomorphisms of locally compact groups (see \S\ref{NewSec2});
\item the algebraic $i$-entropy $\ent_i$ for endomorphisms of modules (see \S\ref{i-sec});
\item the adjoint algebraic entropy $\ent^\star$ for group endomorphisms (see \S\ref{adj-sec});
\item the algebraic dimension entropy $\ent_{\dim}$ for discrete vector spaces (see \S\ref{V-sec});
\item the topological dimension entropy $\ent^\star_{\dim}$ for linearly compact vector spaces (see \S\ref{V-sec}).
\end{enumerate}

Each of the above listed entropies has its specific definition, usually given by a limit computed on some ``trajectories'' and then by taking the supremum of these quantities (we will see their definitions explicitly in \S\ref{known-sec}). Their basic properties are very similar, but the proofs in the literature take into account the particular features of the specific case each time. Since it appears that all these definitions and basic properties share a lot of common features, the aim of our approach is to ``unify'' them under a general scheme, pointing out their common nature. To this end we need some easy tools from category theory. 

\medskip
Let $\mathfrak X$ be a category. A  \emph{flow}\index{flow} of $\mathfrak X$ is a pair $(X,\phi)$, where $X$ is an object and $\phi: X\to X$ is an endomorphism in $\mathfrak X$.
A morphism between two flows $(X,\phi)$ and $(Y,\psi)$ of $\mathfrak X$ is a morphism $\alpha: X \to Y$ in $\mathfrak X$ such that  the diagram
$$\xymatrix{X\ar[r]^{\phi} \ar[d]_{\alpha}&X\ar[d]^{\alpha}\\Y\ar[r]_{\psi}& Y.}$$
commutes. This defines the category \newsym{category of flows of $\mathfrak X$}{$\mathbf{Flow}_{\mathfrak X}$} of flows of $\mathfrak X$. 

If $\alpha$ is an epimorphism we say that $(Y,\psi)$ is a \emph{factor}\index{factor flow} of $(X,\phi)$, if $\alpha$ is a monomorphism then $(X,\phi)$ is a \emph{subflow}\index{subflow} of $(Y,\psi)$, and if $\alpha$ is an isomorphism we say that the flows $(X,\phi)$ and $(Y,\psi)$ are \emph{isomorphic}\index{isomorphic flows} and the morphisms $\phi$ and $\psi$ are \emph{conjugate}\index{conjugate morphisms}. 

\begin{Example}
Given a commutative ring $R$ and letting $\mathfrak X$ be the category \newsym{category of modules over the ring $R$}{$\mathbf{Mod}_R$} of $R$-modules, the category $\mathbf{Flow}_{\mathfrak X}$ is equivalent to the category $\mathbf{Mod}_{R[X]}$ of modules over the ring of polynomials $R[X]$ with coefficients in $R$.
\end{Example} 

Let \newsym{set of non-negative reals}{$\R_{\geq 0}$}$= \{r\in \R: r\geq 0\}$ and \newsym{set of positive reals}{$\R_+$}$= \R_{\geq 0}\cup \{\infty\}$. To classify flows of a category $\mathfrak X$ up to isomorphisms one can use  $\R_+$-valued invariants, that is, functions 
\begin{equation}\label{dag}
h: \mathbf{Flow}_{\mathfrak X}\to \R_+,
\end{equation}
which take the same values on isomorphic flows. We generally refer to such invariants as \emph{entropies}\index{entropy} or \emph{entropy functions}\index{entropy function} of $\mathfrak X$. For simplicity and with some abuse of notation, we adopt the following 

\begin{Convention}
If $\mathfrak X$ is a category and $h$ an entropy of $\mathfrak X$ we write (with some abuse of notation) $$h: {\mathfrak X}\to \R_+$$ in place of $h: \mathbf{Flow}_{\mathfrak X}\to \R_+$ as in \eqref{dag}.
\end{Convention}

\medskip
In order to pursue our unifying aim, in \S\ref{hs-sec} we introduce a general notion of \emph{semigroup entropy} $$h_{\Se*}: \Se^* \to \R_+$$
on the category $\Se^*$ of normed semigroups and semigroup homomorphisms. 

Most of the time we are involved with the non-full subcategory $\Se$ of $\Se^*$ consisting of normed semigroups and contractive semigroup homomorphisms; in this case we denote the semigroup entropy by $$h_{\Se}: \Se \to \R_+.$$

The next step towards the main aim is done in \S\ref{f-sec}, where for a category $\mathfrak X$ and a functor $F:\mathfrak X\to\Se$ (respectively, $F:\mathfrak X\to\Se^*$), the \emph{functorial entropy} $\h_F$ of $\mathfrak X$ is defined to be $$\h_F=h_{\Se} \circ F:\mathfrak X\to \R_+$$
(respectively, $\h_F=h_{\Se^*} \circ F:\mathfrak X\to \R_+$),
as described by the following diagram.
\begin{equation}\label{schemeq}
\xymatrix@R=6pt@C=37pt
{\mathfrak{X}\ar[dd]_{F}\ar[rrd]^{h=\h_F}	&	&	\\
& &	\R_+		\\
\Se\ar[rru]_{h_\Se}						&	&
}
\end{equation}

\medskip
Finally, in \S\ref{known-sec} and in \S\ref{NewSec2} all specific entropies listed above are obtained in our scheme as functorial entropies. 

\smallskip
For every specific functor $F$ that we consider, an order is available on the normed semigroup in the target of $F$, and the norm of the semigroup is monotone with respect to this order; this circumstance is exploited in the section dealing with the Bridge Theorem, in order to introduce the Strong Bridge Theorem, where the functorial entropies are involved. 

\smallskip
All the entropies listed at the beginning of this section can be obtained as functorial entropies $\h_F$ with respect to functors $F$ with target $\Se$, except the two considered in \S\ref{NewSec2} and the contravariant set-theoretic entropy $\mathfrak h^*$, for which the larger category $\Se^*$ is needed.
Furthermore, beyond the category $\Se$, we often make use of several subcategories of $\Se$ (see the diagram in \eqref{manycat}). 

In fact, when the target of the specific functor $F$ is $\Se$, the norm is subadditive in all specific cases considered in \S\ref{known-sec}, then one ends up in the full subcategory $\Se^\dag$ of $\Se$ consisting of all normed semigroups with subadditive norm.
Actually, most of the time the target of $F$ is the subcategory $\mathfrak L^\dag$ of $\Se^\dag$ of normed semilattices with subadditive norm, that is the bottom in the diagram of categories \eqref{manycat}.
In addition, with respect to the order naturally available in semilattices, the semigroup homomorphisms are monotone. 

To describe the remaining few cases, let $\Mo_p^\dag$ denote the subcategory of $\Se^\dag$ consisting of all normed preordered monoids with subadditive norm, and let $\PSL^\dag$ be its full subcategory of all normed presemilattices with subadditive norm. %(see Definition~\ref{presemilattice}).
The target of the functor is $\PSL^\dag$ for the topological entropies, and $\Mo_p^\dag$ for the algebraic entropy considered on the category of all discrete groups. 

Clearly, we make use of this additional wealth of useful properties of the semigroup and of its norm in order to exploit better the semigroup entropy, and consequently the functorial entropy. 
Anyway, we thickly underline that this additional structure, which is present in most of the specific cases, is not needed for the general setting. 

\bigskip
The category $\Se$ and all its above mentioned subcategories are studied in \S\ref{Se-sec}. Here, we list all the basic properties of the semigroup entropy, clearly inspired by those of the known entropies, that we prove in \S\ref{hs-sec}:
\begin{enumerate}[-]
\item Monotonicity for factors;
\item Invariance under conjugation;
\item Invariance under inversion;
\item Logarithmic Law;
\item Monotonicity for subflows;
\item Continuity for direct limits;
\item Vanishing on quasi-periodic flows;
\item Weak Addition Theorem.
\end{enumerate}
It is in \S\ref{f-sec} that we show that the basic properties of $\h_F$ can be deduced from those of $h_\Se$, by taking into account the properties of the specific functor $F$.

\medskip
To obtain all specific entropies in our scheme in \S\ref{known-sec} and \S\ref{NewSec2}, we deal with the categories:
\begin{enumerate}[-]
\item \newsym{category of probability measure spaces and measure preserving transformations}{$\mathbf{Mes}$} of probability measure spaces and measure preserving transformations (for $h_{mes}$);
\item \newsym{category of topological spaces and continuous maps}{$\mathbf{Top}$} of topological spaces and continuous maps (for $\hf$);
\item \newsym{category of $T_0$ topological spaces and continuous maps}{$\mathbf{Top}_0$} of $T_0$ topological spaces and continuous maps (for $\hf$);
\item \newsym{category of $T_1$ topological spaces and continuous maps}{$\mathbf{Top}_1$} of $T_1$ topological spaces and continuous maps;
\item \newsym{category of compact topological spaces and continuous maps}{$\mathbf{CTop}$} of compact topological spaces and continuous maps (for $h_{top}$);
\item \newsym{category of compact Hausdorff spaces and continuous maps}{$\mathbf{CTop}_2$} of compact Hausdorff spaces and continuous maps (for $h_{top}$);
\item \newsym{category of linearly topologized precompact groups and continuous homomorphisms}{$\mathbf{LPG}$} of linearly topologized precompact groups and continuous homomorphisms (for $\hf$);
\item \newsym{category of totally disconnected compact groups and continuous homomorphisms}{$\mathbf{TdCG}$} of totally disconnected compact groups and continuous homomorphisms (for $h_{top}$);
\item \newsym{category of frames and frame homomorphisms}{$\Fr$} of frames and frame homomorphisms (for $h_{fr}$);
\item \newsym{category of groups and group homomorphisms}{$\mathbf{Grp}$} of groups and group homomorphisms (for $h_{alg}$ and $\ent^\star$);
\item \newsym{category of locally compact groups and continuous homomorphisms}{$\mathbf{LCG}$} of locally compact groups and continuous homomorphisms (for $h_{alg}$ and $h_{top}$);
\item \newsym{category of locally compact abelian groups and continuous homomorphisms}{$\mathbf{LCA}$} of locally compact abelian groups and continuous homomorphisms (for $h_{alg}$ and $h_{top}$);
\item \newsym{category of compact abelian groups and continuous homomorphisms}{$\mathbf{CAG}$} of compact abelian groups and continuous homomorphisms;
\item \newsym{category of totally disconnected compact abelian groups and continuous homomorphisms}{$\mathbf{TdCAG}$} of totally disconnected compact abelian groups and continuous homomorphisms;
\item \newsym{category of abelian groups and group homomorphisms}{$\mathbf{AG}$} of abelian groups and group homomorphisms (for $\ent$ and $h_{alg}$);
\item \newsym{category of torsion abelian groups and group homomorphisms}{$\mathbf{TAG}$} of torsion abelian groups and group homomorphisms (for $\ent$);
\item \newsym{category of right modules over a ring $R$ and $R$-module homomorphisms}{$\mathbf{Mod}_R$} of right modules over a ring $R$ and $R$-module homomorphisms (for $\ent_i$);
\item \newsym{category of discrete vector spaces over a field $\mathbb K$ and linear transformations}{$\mathbf{Mod}_\mathbb K$} of discrete vector spaces over a field $\mathbb K$ and linear transformations (for $\ent_{\dim}$);
\item \newsym{category of linearly compact vector spaces over a discrete field $\mathbb K$ and continuous linear transformations}{$\mathbf{LCVect}_\mathbb K$} of linearly compact vector spaces over a discrete field $\mathbb K$ and continuous linear transformations (for $\ent^\star_{\dim}$);
\item \newsym{category of sets and maps}{$\mathbf{Set}$} of sets and maps (for $\mathfrak h$);
\item \newsym{category of sets and finite-to-one maps}{$\mathbf{Set}_{\mathrm{fin}}$} of sets and finite-to-one maps (for $\mathfrak h^*$).
\end{enumerate}

We dedicate to each specific entropy a subsection of \S\ref{known-sec} and of \S\ref{NewSec2}, each time giving explicitly the relevant functor that permits one to obtain the given entropy as a functorial entropy, and describing the basic properties of the specific entropy and how to deduce them from those of the semigroup entropy.
Some of these functors and the known entropies obtained as functorial entropies are summarized by the following diagram.
\begin{equation*}
\xymatrix@-0.9pc{
&&&\mathbf{Mes}\ar@{-->}[ddddr]|-{\mathfrak{mes}}\ar[ddddddr]|-{h_{mes}}& &\mathbf{AG}\ar[ddddddl]|-{\mathrm{ent}}\ar@{-->}[ddddl]|-{\mathfrak{sub}}&&\\
& &\mathbf{CTop}\ar@{-->}[dddrr]|-{\mathfrak{cov}}\ar[dddddrr]|-{h_{top}}&  & & & \mathbf{Grp}\ar@{-->}[dddll]|-{\mathfrak{pet}}\ar[dddddll]|-{h_{alg}} & &\\
& \Fr\ar@{-->}[ddrrr]|-{\fc_{fr}}\ar[ddddrrr]|-{h_{fr}} && & &	&  & \mathbf{Grp}\ar@{-->}[ddlll]|-{\mathfrak{sub}^\star}\ar[ddddlll]|-{\ent^\star} \\
\mathbf{Set}\ar@{-->}[drrrr]|-{\atr}\ar[dddrrrr]|-{\mathfrak h} && && & &	&  &\mathbf{Mod}_R\ar@{-->}[dllll]|-{\mathfrak{sub}_i}\ar[dddllll]|-{\ent_i} \\
& && & \mathfrak S \ar[dd]|-{h_\mathfrak S} && & \\
 \\
& && &{\R_+}	 &	& &
}		
\end{equation*}

\medskip
Moreover, in \S\ref{f-sec} we discuss the notion of Bernoulli shifts in an arbitrary (abstract or concrete) category $\mathfrak X$ with products or coproducts.  Moreover, for such categories $\mathfrak X$ we give the more general definitions of backward and forward generalized shift.

In particular, the backward generalized shift is a morphism $\sigma_\lambda : K^Y \to K^X$ in $\mathfrak X$, corresponding to a map $\lambda: X \to Y$ in the category $\mathbf{Set}$ of sets and maps and a fixed object $K$ of $\mathfrak X$. This defines a contravariant functor $\mathcal B_K: \mathbf{Set} \to \mathfrak X$, sending $X\in \mathbf{Set}$ to the product $K^X$ and $\lambda: X \to Y$ to $\sigma_\lambda$, which sends coproducts (in $\mathbf{Set}$) to products (in $\mathfrak X$). 
Up to natural equivalence, these are all the functors $\mathbf{Set} \to \mathfrak X$ with this property. 

Analogously, for a category $\mathfrak X$ with coproducts, the forward generalized shift is a morphism $\tau_\lambda : K^{(X)} \to K^{(Y)}$ in $\mathfrak X$, corresponding to a map $\lambda: X \to Y$ in $\mathbf{Set}$ and a fixed object $K$ of $\mathfrak X$. It defines a covariant functor $\mathcal F_K: \mathbf{Set} \to \mathfrak X$, sending $X\in \mathbf{Set}$ to the coproduct $K^{(X)}$ and $\lambda: X \to Y$ to $\tau_\lambda$, which sends coproducts (in $\mathbf{Set}$) to coproducts (in $\mathfrak X$). 
Up to natural equivalence, these are all the functors $\mathbf{Set} \to \mathfrak X$ with this property.

\subsection{Bridge Theorem}

The connections between pairs of entropies are analyzed in \S\ref{BT} from a categorical point of view. This is inspired by the following remarkable connection between the algebraic and the topological entropy, which will be called {\em Bridge Theorem} throughout this paper. 

For a locally compact abelian group $G$, denote by \newsym{Pontryagin dual group of a locally compact abelian group $G$}{$\widehat G$} its Pontryagin dual, that is, the group of all continuous homomorphisms $G\to\T$, where $\mathbb T=\mathbb R/\mathbb Z$ is the circle group, endowed with the compact-open topology; moreover, for an endomorphism $\phi:G\to G$, denote by $\widehat\phi:\widehat G\to \widehat G$ its dual, that is, $\phi(\chi)=\chi\circ\phi$ for every $\chi\in\widehat G$.

\begin{Theorem}\label{BT1}
If $G$ is an abelian group and $\phi:G\to G$ an endomorphism, then $h_{alg}(\phi)=h_{top}(\widehat\phi).$
Equivalently, if $K$ is a compact abelian group and $\psi:K\to K$ a continuous endomorphism, then $h_{top}(\psi) = h_{alg}(\widehat \psi).$
\end{Theorem}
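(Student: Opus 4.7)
The plan is to reduce the statement to an index-counting identity implemented by the Pontryagin annihilator map, taking advantage of the fact that Pontryagin reflexivity $\widehat{\widehat G} \cong G$ and $\widehat{\widehat\phi} = \phi$ renders the two formulations equivalent. I would concentrate on proving $h_{alg}(\phi) = h_{top}(\widehat\phi)$ for $\phi$ an endomorphism of a discrete abelian group $G$, with $\widehat G$ the corresponding compact abelian group.

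The computational core is a single intertwining identity. For a finite subgroup $F \le G$, its annihilator $F^\perp = \{\chi \in \widehat G : \chi|_F \equiv 1\}$ is an open subgroup of $\widehat G$ of index $|F|$, and the assignment $F \mapsto F^\perp$ is an order-reversing bijection between finite subgroups of $G$ and open subgroups of finite index in $\widehat G$. The functoriality of duality yields $\widehat\phi^{-1}(F^\perp) = \phi(F)^\perp$, which iterates to
\[
F^\perp \cap \widehat\phi^{-1}(F^\perp) \cap \cdots \cap \widehat\phi^{-(n-1)}(F^\perp) \;=\; T_n(\phi,F)^\perp,
\]
where $T_n(\phi,F) = F + \phi(F) + \cdots + \phi^{n-1}(F)$. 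Taking indices gives $[\widehat G : T_n(\phi,F)^\perp] = |T_n(\phi,F)|$, and therefore
\[
\lim_n \frac{\log |T_n(\phi,F)|}{n} \;=\; \lim_n \frac{\log [\widehat G : T_n(\phi,F)^\perp]}{n}.
\]
Passing to suprema over finite subgroups of $G$ on the left and over open finite-index subgroups of $\widehat G$ on the right already yields the Bridge Theorem on the subcategories of torsion abelian groups and totally disconnected compact abelian groups, which sit in an exact Pontryagin duality with each other.

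The main obstacle is that for general $G$ neither of the two entropies can be restricted to these small families: the only finite subgroup of $\Z$ is trivial, while $h_{alg}$ of multiplication by $n$ on $\Z$ must equal $\log n$, and dually $\widehat\Z \cong \T$ has no proper open subgroups of finite index while $h_{top}$ of multiplication by $n$ on $\T$ is $\log n$. One must therefore upgrade to finite subsets on the algebraic side (in the spirit of Peters' extension) and to arbitrary open covers on the topological side, and control how both transform under duality. Within the framework of this paper I would appeal to the Strong Bridge Theorem machinery: it is enough to verify that the functors $\AG \to \Se$ and $\CAG \to \Se$ realizing $h_{alg}$ and $h_{top}$ as functorial entropies are intertwined, up to a norm-preserving isomorphism in $\Se$, by the Pontryagin duality functor $\AG \to \CAG$. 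The annihilator correspondence realizes this isomorphism on objects, the intertwining identity above realizes it on morphisms, and the approximation arguments (continuity of the semigroup entropy on direct limits, reduction of a general compact abelian group to an inverse limit of Lie quotients, reduction of a general discrete abelian group to a direct limit of finitely generated ones) get absorbed into uniform consequences of the properties of $h_\Se$ rather than remaining case-by-case verifications.
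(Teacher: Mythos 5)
Your first half --- the annihilator correspondence $F \mapsto F^\perp$ between finite subgroups of $G$ and open finite-index subgroups of $\widehat G$, the intertwining identity $\widehat\phi^{-1}(F^\perp)=\phi(F)^\perp$, and the resulting index count $[\widehat G : T_n(\phi,F)^\perp]=|T_n(\phi,F)|$ --- is correct and is exactly how the paper proves the Bridge Theorem on the subcategories $\mathbf{TAG}$ and $\mathbf{TdCAG}$ (Theorems~\ref{WeissBT} and~\ref{WBT}, resting on Claim~\ref{Claim18March} to pass from open finite-index subgroups to arbitrary open covers). Up to that point you are reproducing the paper's argument.

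The second half, however, contains a genuine gap. To handle general $G$ you propose to verify a Strong Bridge Theorem intertwining the functors $\pet$ (finite subsets of $G$ with norm $\log|F|$) and $\cov$ (open covers of $\widehat G$ with norm $\log N(\U)$) by a norm-preserving isomorphism induced by Pontryagin duality. This is precisely what the paper records as an open question immediately after Theorem~\ref{BTgen}: it is \emph{not} known whether $(\h_\pet,\h_\cov)$ satisfies $SBT_{\;\widehat{}\;}$ or even $SBT^w_{\;\widehat{}\;}$. The annihilator map does not extend from finite subgroups to arbitrary finite subsets --- a finite subset of $\Z$ has no annihilator of finite index in $\T$, and there is no known natural transformation $\pet \to \cov\circ\;\widehat{}\;$ that is a weak isomorphism. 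Your final sentence, asserting that the approximation arguments ``get absorbed into uniform consequences of the properties of $h_\Se$,'' is where the actual content of the theorem lives: the proof of the general case in \cite{DG-bridge} is not soft, but goes through reduction to finitely generated (hence to $\Z^n$ and $\Q^n$) pieces, the Addition Theorems for $h_{alg}$ and $h_{top}$, and the matching of the algebraic and topological Yuzvinski formulas via the Mahler measure. None of these steps follows from the general machinery of \S\ref{f-sec}, and the present paper accordingly only cites \cite{DG-bridge} for the general statement rather than proving it.
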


This theorem was proved when $G$ is a torsion abelian group (i.e., $K$ is a totally disconnected compact abelian group) by Weiss \cite{W}; later Peters \cite{P} obtained a proof for $G$ countable and $\phi$ an automorphism (i.e., $K$ metrizable and $\psi$ a topological automorphism). The theorem in this general form was recently proved by the authors in \cite{DG-bridge}.

It is not known whether this result holds in general for locally compact abelian groups. Anyway, in \cite{DGB-BT} it was proved for locally compact abelian groups $G$ with totally disconnected Pontryagin dual (i.e., $G$ is compactly covered) as stated in Theorem~\ref{BT2}. Indeed, this hypothesis on the group $G$ permits one to compute more easily the algebraic entropy of the continuous endomorphism $\phi:G\to G$ and the topological entropy of its dual $\widehat\phi:\widehat G\to \widehat G$, avoiding the use of the Haar measure that appears in the definition (see \cite{DG-islam,DGB-BT}). Moreover, one can apply in this setting the so-called limit-free formulas, arising from an idea of Yuzvinski \cite{Y} exploited then in \cite{DG-lf,G,GBST,GBV}.

\begin{Theorem}\label{BT2}
Let $G$ be a locally compact abelian group such that $\widehat G$ is totally disconnected and $\phi:G\to G$ a continuous endomorphism. 
Then $h_{alg}(\phi)=h_{top}(\widehat\phi).$
\end{Theorem}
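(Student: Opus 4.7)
The plan is to show $h_{alg}(\phi)=h_{top}(\widehat\phi)$ by dualising each side to the same combinatorial index via Pontryagin duality, using the hypothesis that $\widehat G$ is totally disconnected (equivalently, $G$ is compactly covered) to avoid Haar measure and to reduce everything to a compact open subgroup dictionary. To set up the dictionary, I would invoke van Dantzig's theorem to obtain a local base at $0$ of $\widehat G$ consisting of compact open subgroups. For each compact open $U\leq \widehat G$, the annihilator $K:=U^\perp\leq G$ is compact and open in $G$: the isomorphisms $\widehat{G/K}\cong U$ and $\widehat K\cong \widehat G/U$ make $K$ open and compact, respectively. Thus $U\mapsto U^\perp$ gives an inclusion-reversing bijection between compact open subgroups of $\widehat G$ and compact open subgroups of $G$; since $G$ now has a compact open subgroup $K_0$, every compact subgroup of $G$ is contained in the compact open subgroup $K+K_0$, so both suprema defining $h_{alg}(\phi)$ and $h_{top}(\widehat\phi)$ may be restricted to this matched pair of cofinal families.

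Next, I would replace each partial entropy by a purely combinatorial index. On the topological side, the standard trajectory formula gives, for $U$ compact open in $\widehat G$,
$$H_{top}(\widehat\phi, U)=\lim_{n\to\infty}\tfrac{1}{n}\log\bigl[U : U\cap \widehat\phi^{-1}(U)\cap\cdots\cap \widehat\phi^{-(n-1)}(U)\bigr].$$
On the algebraic side, for $K=U^\perp$ compact open, the Yuzvinski-style limit-free reformulation available in the compactly covered case (see \cite{DG-islam, DG-lf, DGB-BT}) reduces the Haar-measure growth rate to the pure index
$$H_{alg}(\phi, K)=\lim_{n\to\infty}\tfrac{1}{n}\log\bigl[T_n(\phi,K) : K\bigr],$$
where $T_n(\phi,K):=K+\phi(K)+\cdots+\phi^{n-1}(K)$ is a compact open subgroup (a finite sum of compact subgroups containing the open $K$), so $T_n(\phi,K)/K$ is compact discrete, hence finite. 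The heart of the argument is then the annihilator identity $\phi^i(K)^\perp=\widehat\phi^{-i}(U)$, which propagates via $(A+B)^\perp=A^\perp\cap B^\perp$ to
$$T_n(\phi,K)^\perp = U\cap \widehat\phi^{-1}(U)\cap\cdots\cap \widehat\phi^{-(n-1)}(U)=:V_n.$$
Dualising $0\to K\to T_n(\phi,K)\to T_n(\phi,K)/K\to 0$, Pontryagin duality yields $\widehat{T_n(\phi,K)/K}\cong K^\perp/T_n(\phi,K)^\perp=U/V_n$, and since both sides are finite abelian, $[T_n(\phi,K):K]=[U:V_n]$. Taking $\log$, dividing by $n$, passing to the limit and then to the supremum over the cofinal families matched by annihilation, we conclude $h_{alg}(\phi)=h_{top}(\widehat\phi)$.

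The main obstacle is the limit-free reduction on the algebraic side: proving that the Haar-measure definition of $h_{alg}$ really does coincide with the combinatorial index formula above, and that the supremum over all compact subgroups of $G$ can be restricted to compact open subgroups without loss. Both rely crucially on the compactly covered hypothesis and constitute the technical heart of \cite{DGB-BT}. In the categorical language of this paper, once that reduction is in place Theorem~\ref{BT2} becomes an instance of a Strong Bridge Theorem: both entropies factor through a common normed-semilattice functor sending an LCA group to the semilattice of its compact open subgroups with the log-index norm, and Pontryagin duality provides a natural isomorphism intertwining the two factorisations.
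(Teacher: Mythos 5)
First, a point of orientation: the paper does not actually prove Theorem~\ref{BT2}. It is quoted in the Introduction as a known result with the proof deferred to \cite{DGB-BT}, and in \S\ref{NewSec2} it is only restated as a Bridge Theorem for the functor $\ \widehat{}\;:\mathbf{TdLCA}\to\mathbf{CcLCA}$, with the corresponding Strong Bridge Theorem left as an open question. So there is no internal proof to compare you against. That said, your sketch is a faithful reconstruction of the argument of \cite{DGB-BT}, and it is the natural generalization of the one proof of this type the paper does carry out, namely Theorem~\ref{WBT} for $\mathbf{TAG}$ and $\mathbf{TdCAG}$, where the same annihilator dictionary $N\mapsto N^\perp$ and the identity $\phi(N)^\perp=\widehat\phi^{-1}(N^\perp)$ do all the work. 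Your computation $T_n(\phi,K)^\perp=C_n(\widehat\phi,U)$ and the equality of indices via $\widehat{T_n(\phi,K)/K}\cong U/C_n(\widehat\phi,U)$ are correct, as is the reduction of both partial entropies to pure indices; note only that on the algebraic side this needs nothing more than translation invariance of the Haar measure once $T_n(\phi,K)$ is known to be a compact open subgroup --- the ``limit-free formulas'' the paper alludes to are a separate, stronger device, not required for this step.

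The one step you should tighten is the cofinality reduction. In the definition of \S\ref{NewSec2}, $h_{alg}(\phi)=\sup\{H_{alg}(\phi,U):U\in\mathcal N(G)\}$ where $\mathcal N(G)$ is the family of compact \emph{neighborhoods} of $1$, not of compact subgroups; your observation that every compact subgroup sits inside $K+K_0$ does not by itself show that every compact neighborhood of $0$ is contained in a compact open subgroup. This is exactly where the hypothesis enters: the closed subgroup generated by a compact neighborhood is open and compactly generated, and a compactly generated closed subgroup of a compactly covered LCA group can have no $\R^n$ or $\Z^m$ factor, hence is compact. With that in place (plus monotonicity of $H_{alg}(\phi,-)$ and $H_{top}(\widehat\phi,-)$), both suprema restrict to the matched families of compact open subgroups and the proof closes. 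Finally, a caveat on your last remark: the Strong Bridge Theorem you describe, built on the semilattice of compact open subgroups with the log-index norm, involves functors different from the $\mathfrak{lctop}$ and $\mathfrak{lcalg}$ of \S\ref{NewSec2}, for which the paper explicitly leaves $SBT_{\;\widehat{}\;}$ open; your version is the analogue of Theorem~\ref{WBT} on the subcategories $\mathbf{TdLCA}$ and $\mathbf{CcLCA}$, plausible but not established anywhere in the paper.
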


Furthermore, the following result for topological automorphisms of locally compact abelian groups was stated in \cite{Pet1}, but several gaps in the proof were pointed out in \cite{DSV}. Now it is a consequence of a much more general result from \cite{V_BT}.

\begin{Theorem}\label{BT3}
Let $G$ be a locally compact abelian group and $\phi:G\to G$ a topological automorphism. 
Then $h_{alg}(\phi)=h_{top}(\widehat\phi).$
\end{Theorem}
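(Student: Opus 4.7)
The plan is to exploit that $\phi$ is a topological automorphism, which gives a symmetric handle on $G$ and $\widehat G$, and to reduce to Theorems~\ref{BT1} and~\ref{BT2} via a $\phi$-invariant structural decomposition of $G$.

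First, I would set up a $\phi$-invariant filtration of $G$ using characteristic subgroups. The connected component $G_0$ of $0$ is characteristic, and inside $G_0$ the (unique) maximal compact subgroup $C$ is characteristic as well. By the Pontryagin--van Kampen structure theorem, $G_0 \cong \R^n \times C$ as topological groups. This produces the $\phi$-invariant chain $0 \subseteq C \subseteq G_0 \subseteq G$, whose successive quotients are a connected compact abelian group $C$, a real vector group $\R^n$, and the totally disconnected LCA group $G/G_0$.

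Second, I would apply an Addition Theorem for both $h_{alg}$ and $h_{top}$ along this chain and verify the Bridge identity on each piece. On $C$, the identity is Theorem~\ref{BT1} applied to the discrete dual $\widehat C$ (equivalently, the classical Weiss--Yuzvinski--Stoyanov result). On $\R^n$, it is the algebraic Yuzvinski formula from \cite{GV,GV2}, which identifies $h_{alg}(\phi|_{\R^n}) = \sum_{|\lambda|>1}\log|\lambda|$ (the sum taken over eigenvalues of the matrix of $\phi|_{\R^n}$) with the classical Yuzvinski formula for $h_{top}$ on the self-dual $\widehat{\R^n}\cong\R^n$. On the totally disconnected quotient $G/G_0$, one can iterate the argument using characteristic subgroups (the compactly-covered part $\mathcal K(G/G_0)$, to which Theorem~\ref{BT2} applies directly, and its quotient, to which Theorem~\ref{BT1} applies after further reduction to the discrete case).

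The hardest step I expect is the Addition Theorem for $h_{alg}$ along these short exact sequences: this is precisely the technical gap in Peters' original argument \cite{Pet1} pointed out in \cite{DSV}. The key workaround in the automorphism setting is that every restriction and quotient of $\phi$ is again a topological automorphism of an LCA group, so one can argue symmetrically on $G$ and $\widehat G$ using Haar measures and the Pontryagin duality between trajectories $K + \phi(K) + \cdots + \phi^{n-1}(K)$ in $G$ and $K^\perp \cap \widehat\phi(K^\perp) \cap \cdots \cap \widehat\phi^{n-1}(K^\perp)$ in $\widehat G$. This is the circle of ideas systematized in \cite{V_BT}, which ultimately furnishes the missing Addition Theorem and closes the proof.
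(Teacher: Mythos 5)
The first thing to note is that the paper does not prove Theorem~\ref{BT3} at all: it is stated as a consequence of a more general result in \cite{V_BT}, after recalling that Peters' original argument in \cite{Pet1} has gaps identified in \cite{DSV}. So there is no internal proof to match your proposal against, and your proposal, read as a self-contained argument, does not close the statement either. The decisive step you flag yourself --- an Addition Theorem for $h_{alg}$ (and for $h_{top}$) along the $\phi$-invariant chain $0\subseteq C\subseteq G_0\subseteq G$ --- is exactly what is missing from the literature this paper draws on: the Addition Theorem for $h_{alg}$ on locally compact groups is only known for restricted classes of groups (see \cite{GBST}), and invoking \cite{V_BT} to supply it makes the argument circular, since \cite{V_BT} is precisely the source from which the theorem itself is being quoted. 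A reduction that presupposes the Bridge Theorem's hardest ingredient is not a proof.

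There is also a concrete technical obstruction in your ``symmetric duality of trajectories'' step. On locally compact groups, $h_{alg}$ and $h_{top}$ are computed over compact neighborhoods of the identity (\S\ref{NewSec2}), not over subgroups; for a mere compact neighborhood $K$ the annihilator $K^\perp$ is not a compact neighborhood of $0$ in $\widehat G$, and there is no clean identity relating $\mu(T_n(\phi,K))$ to the Haar measure of a cotrajectory in $\widehat G$. The identity $T_n(\phi,K)^\perp=C_n(\widehat\phi,K^\perp)$ is available when $K$ is a compact \emph{open subgroup} --- note also that $(\phi(K))^\perp=\widehat\phi^{\,-1}(K^\perp)$, so the cotrajectory involves $\widehat\phi^{\,-1}$ rather than $\widehat\phi$ as you wrote, a distinction that matters because $h_{top}$ is not invariant under inversion in this setting ($h_{top}(\psi^{-1})=h_{top}(\psi)-\log\Delta(\psi)$ for topological automorphisms of totally disconnected locally compact groups, as recalled in \S\ref{NewSec2}) --- and compact open subgroups exist precisely when the group is totally disconnected. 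This is why Theorem~\ref{BT2} carries the hypothesis that $\widehat G$ is totally disconnected, and why the general case cannot be reached by the same annihilator computation. Your outline is a reasonable reconstruction of the classical reduction strategy (structure theorem, Yuzvinski formulas on $\R^n$, Weiss' theorem on the compact and discrete pieces), but as written it reduces the theorem to statements that are at least as hard as the theorem itself.
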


Inspired by Theorem~\ref{BT1}, we consider here a far reaching generalization (using the same term) that tends to relate two entropies $h_1:\mathfrak X_1 \to \R_+$ and $h_2:\mathfrak X_2 \to \R_+$, defined on two categories $\mathfrak X_1$ and $\mathfrak X_2$ connected by a functor 
\begin{equation}\label{Julu29}
\varepsilon: \mathfrak X_1 \to \mathfrak X_2.
\end{equation}
\begin{Definition}\label{BTdef}
Consider the functor \eqref{Julu29} and let $h_1:\mathfrak X_1 \to \R_+$ and $h_2:\mathfrak X_2 \to \R_+$ be entropies. The pair $(h_1, h_2)$ satisfies
the \emph{Bridge Theorem}\index{Bridge Theorem} with respect to $\varepsilon$ with constant $0<C\in\R_+$ (briefly, \newsym{Bridge Theorem with respect to the functor $\varepsilon$ with constant $C>0$}{$BT_{\varepsilon,C}$}) if, for every $\phi:X\to X$ in $\mathfrak X_1$,
\begin{equation*}
h_2(\varepsilon(\f)) = C h_1(\f).
\end{equation*}
If $C=1$ we write simply \newsym{Bridge Theorem with respect to the functor $\varepsilon$ with coefficient $1$}{$BT_\varepsilon$}.
\end{Definition}

One can summarize $BT_\varepsilon$ by (very roughly) saying that the following diagram commutes.

\begin{equation}\label{Buz}
\xymatrix@R=6pt@C=37pt
{\mathfrak{X}_1\ar[dd]_{\varepsilon}\ar[rrd]^{h_{1}}	&	&	\\
& &	\R_+		\\
\mathfrak{X}_2\ar[rru]_{h_{2}}						&	&
}
\end{equation}

In Definition~\ref{BTdef} we allow also $C=\infty$, with the natural convention 
$$\infty\cdot h_1(\f)  =\begin{cases}\infty & \text{if}\ h_1(\f) > 0,\\ 0 & \text{if}\ h_1(\f) = 0.\end{cases}$$
In these terms, if $C=\infty$, then $h_2(\varepsilon(\f))$ takes only two values, namely,
$$h_2(\varepsilon(\f)) =\begin{cases} \infty & \text{if}\  h_1(\f) > 0,\\ 0 & \text{if}\ h_1(\f) =0.\end{cases}$$ 

In this scheme, denoting by $$\,\ \widehat{}:\mathbf{LCA}\to\mathbf{LCA}$$ the Pontryagin duality functor, Theorem~\ref{BT1} can be read as: 
\begin{enumerate}[(a)]
\item for $\ \widehat{}:\mathbf{AG}\to\mathbf{CAG}$, the pair $(h_{alg},h_{top})$ satisfies $BT_{\;\widehat{}\;}$;
\item for $\ \widehat{}:\mathbf{CAG}\to\mathbf{AG}$, the pair $(h_{top},h_{alg})$ satisfies $BT_{\;\widehat{}\;}$.
\end{enumerate}

\smallskip
In \S\ref{BTsec}, using the functorial nature of the entropy seen in \eqref{schemeq}, we discuss various stronger levels of the Bridge Theorem, by passing through the category $\Se$ of normed semigroups and using $h_\Se$. We call these stronger versions Strong Bridge Theorems (see Definition~\ref{SBT} and \eqref{A}).
 
This more precise approach permits one to find a new and transparent proof of Weiss' Bridge Theorem (see Theorem~\ref{WBT}) as well as  of many other Bridge Theorems, that we state in these new terms and prove in \S\ref{BTat}. 

\medskip
Beyond their explicit beauty, the Bridge Theorems may offer a very clear practical advantage, by reducing the computation of an entropy to some more appropriate environment. The best example to this effect are the (Strong) Bridge Theorems making use of the set-theoretic entropies expounded in \S\ref{BTset}. Here the topological entropy of the backward generalized shifts in $\mathbf{CTop_2}$ and the algebraic entropy of the forward generalized shifts in $\mathbf{AG}$ are computed in terms of the set-theoretic entropies.

\subsection{The limits of the general scheme}

A natural side-effect of the wealth of nice properties of the functorial entropy $\h_F=h_\Se\circ F$, obtained from the semigroup entropy $h_\Se$ through functors $F:\mathfrak X\to \Se$, is the loss of some known entropies that we explain below.
 Indeed, such a functorial entropy admits the Invariance under inversion property $\h_F(\phi) = \h_F(\phi^{-1})$ (see Lemma~\ref{inversion}), while it is known that some specific entropies do not have this property.

\medskip
A first case is Bowen's topological entropy for uniformly continuous selfmaps of non-compact metric spaces, and its extension by Hood to uniform spaces. For a counterexample to $h_B(\phi^{-1})= h_B(\phi)$ for Bowen's entropy $h_B$, take the automorphism $\phi: \R \to \R$ defined by $\phi(x)= 2x$, which has $h_B(\phi)=\log 2$ and $h_B(\phi^{-1})=0$.

Fortunately, as far as the category $\mathbf{LCG}$ of locally compact groups and their continuous endomorphisms is concerned, one can include within our scheme Bowen's topological entropy, as well as its algebraic counterpart recalled above, using the larger category $\Se^*$ as done in \S\ref{NewSec2}.

On the other hand, in the general case of a non-compact metric space $X$, one would need a kind of poly-norm, i.e., a {\em family of norms} 
that corresponds to the family of all compact subspaces of $X$. Led by this idea, one could consider the category of semigroups provided with families of norms, and develop the subsequent theory in the line of that presented in this paper. 

This more general approach, tailored to include Bowen's topological entropy in full generality, unfortunately creates many technical difficulties (especially in defining the morphisms), a too high price to pay for adding a single entropy to the list.

\medskip
Another entropy which does not admit the Invariance under inversion property is the intrinsic algebraic entropy from \cite{DGSV} (see also \cite{SV2}), hence it cannot be obtained as a functorial entropy induced by a functor with target $\Se$. This entropy is a generalization of the algebraic entropy $\ent$ (as it coincides with $\ent$ on torsion abelian groups) but does not vanish on torsion-free abelian groups. 
A similar failure in satisfying the Invariance under inversion property can be observed in the pretty analogous case of the algebraic and the topological dimension entropy for locally linearly compact vector spaces from \cite{CGBalg,CGBtop} (see Remark~\ref{llc}).

\smallskip
Unlike the favorable outcome in the case of Bowen's topological entropy in the category $\mathbf{LCG}$, now one has a second more substantial obstacle. Namely, the definition the intrinsic entropy  is based on the concept of \emph{inert subgroup}, that was deeply investigated in several papers (see \cite{DDR,DR,DGSV1,DSZ,GSZ}). Similarly, the dimension entropies mentioned above use the fact that the open linearly compact linear subspaces are {\em inert} (in a similar, appropriate sense) with respect to all continuous linear maps (see \cite{DDR}).

In order to include the intrinsic and the dimension entropies in a scheme similar to that presented in this paper one needs to find an appropriate counterpart of the notion of inert subgroup (subspace). Unfortunately, the norm of a semigroup, used in the definition of the semigroup entropy to measure the growth of the partial trajectories of an endomorphism, 
seems inappropriate to ``encode" also this subtle property. Indeed, a notion of ``inert element" would require a different tool, namely, a kind of ``distance", that measures when two elements of the a semigroup are ``close". Therefore, a possible way to include the intrinsic and the dimension entropies mentioned above in a scheme similar to that presented in this paper would be to replace the target $\Se^*$ with an appropriate category of semilattices provided with a ``distance'' in the above sense (having some additional properties).

Again, as a consequence of this very specialized and additionally complicated approach tailored to cover only these few missing entropies, one would miss the relevant cases covered by our current approach (as the topological entropy and the algebraic entropy, among other entropies). This would  certainly push us apart from the main aim of the present paper to unify entropies.

\bigskip
In \cite{DGB_PC}, as a particular case of the notion of semigroup entropy $h_\Se$, we introduced the notion of semigroup entropy $h_{\Se}^0(x)$ of an element $x$ of a normed semigroup $S$. 

The idea of defining this notion was supported at least from three examples of entropy functions, that can be described in terms of $h_{\Se}^0$, namely, the entropy of finite length endomorphisms of Noetherian local rings \cite{MMS}, the dynamical degree of rational maps \cite{Silv}, and the growth rate of endomorphisms of finitely generated groups \cite{Bowen}.

\subsection{Final remarks} 

A preliminary version of these ideas is expounded in the survey \cite{DGB_PC}, with only few (concise) proofs and with a particular emphasis on the connection of the algebraic entropy to the growth of groups \cite[\S 4.3]{DGB_PC}.

In this paper,  we first extend our setting and we give all proofs and details. Moreover, we add some new entropies under the umbrella of $\h_F$ (in \S\ref{lintop-sec} and \S\ref{V-sec}), in particular  \cite[\S\S3.1--3.9]{DGB_PC} are covered (and largely extended) by \S\ref{known-sec} and \S\ref{NewSec2}. 

Furthermore, the major change and novelty in this paper is the completely renewed and extended \S\ref{BT} on the Bridge Theorem. 

\medskip
The second named author gave a talk on what is exposed in \S\ref{BT} of this paper about the Bridge Theorem in the special session ``Algebraic Entropy and Topological Entropy'' of the First Joint International Meeting RSME-SCM-SEMA-SIMAI-UMI held in Bilbao in 2014.

\smallskip
We started this project with our coauthor and friend Simone Virili, so we thank him also for many useful discussions and ideas; in particular, the idea to use semigroups in place of semilattices belongs to him. He is presenting his own version of this general approach in \cite{V_BT}. 

\smallskip
The essential and explanatory term Bridge Theorem was coined by our coauthor and friend Luigi Salce, who is one of the main contributor in the theory of the algebraic entropy for groups and modules since its rediscovery in the joint paper \cite{DGSZ} with Dikranjan, Goldsmith, and Zanardo.

\smallskip
It is fair to mention the substantial contribution of our friend and colleague Peter V\' amos towards a better and deeper understanding
of the algebraic entropy. At the Workshop on Commutative Rings and Their Modules held in Bressanone in 2007 he 
discussed with the first named author and Luigi Salce the subtle connection between entropy and length functions and multiplicity. 
These contacts continued further, involving also Simone Virili and led to \cite{SVV}, inspired also by \cite{DGSZ,SZ0,SZ}.

\section*{Acknowledgements}

It is a pleasure to thank George Janelidze for fruitful discussions with the first named author and in particular for his suggestion to introduce frame entropy. 

\smallskip
We thank also Lorenzo Busetti, who kindly allowed us to reproduce in our Introduction the diagram appearing in his Master's thesis \cite{LoBu}. 

\smallskip
We warmly thank Lydia Au\ss enhofer, Domenico Freni and Michal Misiuriewicz for pointing out some problems appearing in previous versions of the paper.

\medskip
Last but not least, we are indebted with the referees for their careful reading and very useful comments and suggestions.

\newpage

\section{The category $\Se$ of normed semigroups}\label{Se-sec}

\subsection{Definition and examples}\label{Se-sec1}

We denote by $\newsym{set of integers}{\Z}$ the set of integers, by $\newsym{set of natural numbers}{\N}$ the set of natural numbers and by $\newsym{set of positive integers}{\N_+}$ the set of positive integers.

We start introducing the notion of normed semigroup.

\begin{Definition}\label{Def1}  
A \emph{norm}\index{norm}\index{semigroup norm}  on a semigroup $(S,\cdot)$ is a map $v: S \to \R_{\geq 0}$. A
\emph{normed semigroup}\index{normed semigroup} is a semigroup provided with a norm.

If $S$ is a monoid, a \emph{monoid norm}\index{monoid norm} on $S$ is a semigroup norm $v$ such that $v(1)=0$; in such a case $S$ is called \emph{normed monoid}\index{normed monoid}.
\end{Definition}

Denote by \newsym{category of normed semigroups and semigroup homomorphisms}{$\Se^*$} the category of all normed semigroups and all semigroup homomorphisms. Nevertheless, the following stronger notion of morphisms in $\Se^*$ appears both natural and also quite useful as we shall see in what follows.

\begin{Definition}
A semigroup homomorphism $\f:(S,v)\to (S',v')$ between normed semigroups is \emph{contractive}\index{contractive semigroup homomorphism} if $$v'(\phi(x))\leq v(x)\ \text{for every}\ x\in S.$$
\end{Definition}

\medskip
We denote by \newsym{category of normed semigroups and contractive semigroup homomorphisms}{$\Se$} the category of normed semigroups, whose morphisms are all the contractive semigroup homomorphisms, i.e., $\Se$ is a non-full subcategory of $\Se^*$, having the same objects, but less morphisms. If $\phi:S\to S$ is an isomorphism in $\Se$, then $v(\phi(x))=v(x)$ for every $x\in S$.

Moreover, let \newsym{category of normed monoids and contractive monoid homomorphisms}{$\mathfrak M$} be the non-full subcategory of $\Se$ with objects all normed monoids and with morphisms all contractive monoid homomorphisms.
$$\xymatrix@-1pc{
\Se^*\ar@{-}[d] \\
\Se\ar@{-}[d]\\
\mathfrak M
}$$

Given a normed semigroup $(S,v)$, a \emph{normed subsemigroup}\index{normed subsemigroup} $T$ of $S$ is a subsemigroup of $S$ with norm the restriction of $v$ to $T$. Note that the inclusion $T\to S$ is a morphism in $\Se$. 

\begin{Convention}
In this paper,  most often we consider contractive semigroup endomorphism $\phi:S\to S$, so it is safe to think that a semigroup endomorphism $\phi$ is contractive, unless otherwise stated. 
\end{Convention}

We use the category $\Se^*$ when we define the semigroup entropy in \S\ref{hs-sec}, and then only in \S\ref{set-sec} 
(for the contravariant set-theoretic entropy), in \S\ref{BT} (to discuss the Strong Bridge Theorems) and in \S\ref{NewSec2} (for the algebraic and the topological entropy for locally compact groups).

\medskip
Now we introduce special forms of norm, in particular subadditive semigroup norms will be fundamental in the paper.

\begin{Definition}\label{subadditivedef}
Let $(S,v)$ be a normed semigroup. The norm $v$ is:
\begin{enumerate}[(a)]
\item \emph{subadditive}\index{subadditive norm} if $v(x \cdot y) \leq v(x) + v(y)$ for every $x,y\in S$, in such a case we call $(S,v)$ briefly \emph{subadditive semigroup}\index{subadditive semigroup} (or {\em subadditive monoid}\index{subadditive monoid}, in case it is a monoid);
\item \emph{bounded}\index{bounded norm} if there exists $C\in \N_+$ such that $v(x) \leq C$ for all $x\in S$;
\item \emph{arithmetic}\index{arithmetic norm} if for every $x\in S$ there exist a constant $C_x\in\N_+$
such that $v(x^n) \leq C_x\cdot  \log n$ for every $n\in\N$ with $n\geq 2$.
\end{enumerate}
\end{Definition}

Obviously, bounded norms are arithmetic.

\medskip
We denote by \newsym{category of subadditive semigroups and contractive semigroup homomorphisms}{$\Sesu$} the full subcategory of $\Se$ of subadditive semigroups and by \newsym{category of subaddive monoids and contractive monoid homomorphisms}{$\mathfrak M^\dag$} the full subcategory of $\mathfrak M$ of subadditive monoids.

$$\xymatrix@-1pc{
& \Se^*\ar@{-}[d] & \\
& \Se\ar@{-}[dl]\ar@{-}[dr] & \\
\Sesu & & \mathfrak M\\
& \mathfrak M^\dag \ar@{-}[ul]\ar@{-}[ur] &
}$$

\medskip
A sequence $\{a_n\}_{n\in\N}$ of non-negative real numbers is \emph{subadditive} if  $a_{n+m}\leq a_n  +a_m$ for every $n,m\in \N$. 
The following known fact is applied  in Theorem~\ref{limit} to prove the existence of the limit defining the semigroup entropy in $\Se^\dag$.

\begin{Lemma}[Fekete Lemma\index{Fekete Lemma} \cite{Fek}]\label{fekete}
For a subadditive sequence $\{a_n\}_{n\in\N}$ of non-negative real numbers, the sequence $\{\frac{a_n}{n}\}_{n\in\N}$ converges and $$\lim_{n\to\infty}\frac{a_n}{n}=\inf_{n\in\N}\frac{a_n}{n}.$$
\end{Lemma}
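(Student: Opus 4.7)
The plan is to follow the classical Fekete argument, splitting the claim into $\limsup_{n\to\infty} a_n/n \leq \inf_{n\geq 1} a_n/n$ and the trivial reverse inequality for the liminf. Set $L := \inf_{n\geq 1} a_n/n \in [0,\infty)$. Since $a_n/n \geq L$ for every $n \geq 1$, one immediately obtains
\begin{equation*}
\liminf_{n\to\infty}\frac{a_n}{n} \geq L,
\end{equation*}
so the whole game is to prove the matching upper bound for the $\limsup$ and thereby establish both the existence of the limit and its identification with $L$.

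For the upper bound, fix $\varepsilon > 0$ and, by the definition of infimum, choose $m \geq 1$ with $a_m/m < L + \varepsilon$. For every integer $n > m$ perform Euclidean division $n = qm + r$ with $q \geq 1$ and $0 \leq r < m$. Iterating subadditivity $q$ times gives $a_{qm} \leq q\, a_m$, and then one more application yields $a_n = a_{qm+r} \leq q\, a_m + a_r$ when $r \geq 1$, while $a_n \leq q\, a_m$ when $r = 0$. In either case,
\begin{equation*}
\frac{a_n}{n} \leq \frac{q\, a_m}{n} + \frac{M}{n}, \qquad \text{where } M := \max\{a_1,\dots,a_{m-1}\}
\end{equation*}
(and $M := 0$ if $m = 1$). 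Since $qm \leq n$, one has $q\, a_m/n \leq a_m/m$, and since $M$ is a constant independent of $n$, letting $n \to \infty$ yields
\begin{equation*}
\limsup_{n\to\infty} \frac{a_n}{n} \leq \frac{a_m}{m} < L + \varepsilon.
\end{equation*}

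As $\varepsilon > 0$ was arbitrary, $\limsup_{n\to\infty} a_n/n \leq L$. Combined with the lower bound on the liminf, this forces $\lim_{n\to\infty} a_n/n = L = \inf_{n\geq 1} a_n/n$, which is exactly the statement. The only mildly delicate point is the bookkeeping of the remainder term $a_r/n$: one needs to know that $a_1,\dots,a_{m-1}$ are finite (which is automatic since the $a_n$ are non-negative reals by assumption) so that the constant $M$ is well-defined and can be absorbed into a vanishing term $M/n$. No other obstacle arises, since the non-negativity of the $a_n$ guarantees $L \geq 0$ and thus prevents any $-\infty$ pathology in the liminf estimate.
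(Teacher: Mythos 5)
Your proof is correct: it is the standard Fekete argument (Euclidean division $n=qm+r$, iterated subadditivity to bound $a_{qm}$ by $qa_m$, absorption of the bounded remainder term into $M/n$), and every step checks out. Note, however, that the paper offers no proof of this lemma at all — it is stated as a known fact with a citation to Fekete's original 1923 paper — so there is nothing internal to compare against; your write-up simply supplies the classical proof that the authors take for granted. The only cosmetic point is that the infimum should be read over $n\in\N_+$ (as you implicitly do by writing $\inf_{n\geq 1}$), since the paper's convention has $0\in\N$ and $a_0/0$ is undefined.
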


Next we give several examples of norms on the semigroup $(\N,+)$.

\begin{Example}\label{Fekete} 
Consider the monoid $S = (\N, +)$.
\begin{itemize}
\item[(a)] Subadditive norms $v$ on $S$ correspond to subadditive sequences $\{a_n\}_{n\in\N}$ in  $\R_{\geq0}$ via $v \mapsto \{v(n)\}_{n\in\N}$. Then $\lim_{n\to \infty} \frac{a_n}{n}= \inf_{n\in\N} \frac{a_n}{n}$ exists by Lemma~\ref{fekete}.
\item[(b)] For $0 < p \leq 1$, define $v_p: S \to \R_{\geq0}$ by $v_p(x) = x^p$ for $x\in S$. Then the norm $v_p$ is a subadditive monoid norm, but $v_p$ is not arithmetic.
\item[(c)] Define $v_l: S \to \R_{\geq0}$ by $v_l(x) =\log( x+1)$ for $x\in S$. Then $v_l$ is a subadditive monoid norm, as $v_l(0) = 0$ and, for $x,y \in S$,
$$v_l(x + y) = \log(x + y +1) \leq \log (xy+x+y+1) = \log (x+1) + \log (y+1) = v_l(x) + v_l(y).$$  
Moreover $v_l$ is arithmetic, since for every $x\in S$, $nx+1\leq n^{x+1}$ for every $n\in\N$ with $n\geq 2$. 
\item[(d)] For $a\in \N$, $a>1$, consider
$$v_a(m) =\begin{cases} 0 & \text{if}\ m\in a\N,\\ m &\text{if}\ m\in \N\setminus a\N.\end{cases}$$ 
Then the norm $v_a$ on $S$ is neither arithmetic nor subadditive.

For $b\in \N_+$ consider the semigroup endomorphism $$\varrho_b: x\mapsto bx$$ of $(S,v_a)$. Then $\varrho_b$ is in $\Se$ (i.e., $\varrho_b$ is contractive) if and only if either $b = 1$ or $a|b$.
%Indeed, if $b=1$ then $\varrho_b=id_S$ is always contractive, and if $a\mid b$ then $v_a(bx)=0\leq v_a(x)$ for every $x\in S$. To prove the converse implication assume that $\varrho_b$ is contractive; then $v_a(b)\leq v_a(1)=1$, and this occurs only if either $v_a(b)=0$ (i.e., $a\mid b$) or $v_a(b)=1$ (i.e., $b=1$).
\end{itemize}
\end{Example}

We can define the norm $v_l$ in item (c) also for $S=(\R_{\geq0},+)$, i.e., with $\R_{\geq0}$ in place of $\N$. Then $v_l$, defined by $v_l(x) =\log( x+1)$ for every $x\in S$, is a subadditive and arithmetic monoid norm. Moreover, in this case the range is completely covered by $v_l(S)$, i.e., $v_l(S)=\R_{\geq0}$.

\subsection{Normed preordered semigroups and normed semilattices}\label{preorder-sec} 

None of  the more specific forms of normed semigroup considered in this subsection is formally needed for the definition of the semigroup entropy. Nevertheless, they provide significant and natural examples, as well as useful tools in the proofs, to justify our attention. 

\begin{Definition}
A triple $(S,\cdot,\leq)$ is a \emph{preordered semigroup}\index{preordered semigroup} if $(S,\cdot)$ is a semigroup and $\leq$ is a preorder such that, for every $x,y,z \in S$,
$$x\leq y\quad \Rightarrow\quad x \cdot z \leq y \cdot z\ \text{and}\ z \cdot x \leq z \cdot y.$$
The \emph{positive cone}\index{positive cone} of $S$ is $$\newsym{positive cone}{P_+(S)}=\{a\in S:x\leq x \cdot a \ \text{and}\ x\leq a\cdot x\ \text{for every}\ x\in S\}.$$
\end{Definition}

If $S$ is a preordered monoid, $P_+(S)=\{a\in S:a\geq 1\}$.

\begin{Definition}
If $S$ is a preordered semigroup, a subset $T$ of $S$ is {\em cofinal} in $S$ if, for every $s\in S$, there exists $t\in T$ such that $s\leq t$. 
\end{Definition}

As usual, we call a map $\phi:(S_1,\leq)\to (S_2,\leq)$ between preordered sets \emph{monotone}\index{monotone map} provided $\phi(x_1)\leq \phi(x_2)$ for $x_1, x_2\in S_1$ with $x_1\leq x_2$.

\smallskip
 In the next example we see that for every commutative monoid there exists a natural preorder that makes it a preordered monoid and such that every monoid homomorphism is monotone.

\begin{Example}\label{leqd}
If $(S,\cdot)$ is a commutative monoid, it admits an intrinsic preorder \newsym{intrinsic preorder of a commutative monoid}{$\leq^d$} defined for every $x,y\in S$ by $x\leq^d y$ if and only if there exists $z\in S$ such that $x\cdot z=y$ (i.e., $x$ ``divides" $y$).  Then $(S,\cdot,\leq^d)$ is a preordered semigroup with $P_+(S) = S$.  If $(T,\cdot)$ is another commutative monoid, then every semigroup homomorphism $\phi: (S, \cdot) \to (T, \cdot)$ is monotone with respect to the respective preorders $\leq^d$ of $S$ and $T$.

So, the assignment $S \mapsto (S, \leq^d)$ gives a concrete functor (i.e., a functor that does not change the supporting sets and maps) from the category of commutative monoids to the category of preordered commutative monoids with morphisms the monotone monoid homomorphisms.
\end{Example} 

A \emph{semilattice}\index{semilattice} is a partially ordered set $(S,\leq)$ such that for every $x,y\in S$ there exists the least upper bound $x\vee y$ of $x$ and $y$.  Moreover, we assume that a semilattice $(S,\leq)$ admits a least element $0\in S$. Equivalently, a semilattice is a commutative monoid $(S,\vee)$ such that $x\vee x=x$ for every $x\in S$ (witnessed by the preorder $\leq^d$ which is a partial order in this case).

\begin{Example} 
\begin{itemize}
\item[(a)] Each lattice $(L, \vee, \wedge)$ with $0$ and $1$ gives rise to two semilattices, namely $(L, \vee)$ and $(L, \wedge)$.
\item[(b)] A filter $\mathcal F$ on a given set $X$ is a semilattice with respect to the intersection, with zero element the set $X$. 
\end{itemize}
\end{Example}

The following notion offers a convenient weaker form of semilattice. For a preordered set $(X,\leq)$ and $x,y\in X$, we write $$x\sim y\quad \Leftrightarrow\quad x\leq y \ \text{and}\ y\leq x.$$

\begin{Definition}\label{presemilattice}
A \emph{presemilattice}\index{presemilattice} is a preordered commutative monoid $(S,\vee,\leq)$ such that $x\vee x\sim x$ for every $x\in S$. 
\end{Definition}

Now we equip the above semigroups with a \emph{monotone norm}\index{monotone norm}, that is, a monotone map $v:(S,\cdot,\leq)\to (\R_{\geq0},+,\leq)$. 

\begin{Definition}\label{normedpreordered}
\begin{itemize}
\item[(a)] A \emph{normed preordered semigroup}\index{normed preordered semigroup} (respectively, {\em semilattice\index{normed semilattice}, presemilattice\index{normed presemilattice}}) is a preordered semigroup (respectively, semilattice, presemilattice) $(S,\cdot,\leq)$ endowed with a monotone norm.
\item[(b)] A preorder $\leq$ on a normed semigroup $(S,v)$ is \emph{compatible}\index{compatible preorder} with a semigroup endomorphism $\phi:(S,v)\to (S,v)$ if both $v$ and $\phi$ are monotone with respect to $\leq$.
\end{itemize}
\end{Definition}

Clearly, a (normed) semilattice is a (normed) presemilattice, while a (normed) presemilattice is a (normed) preordered semigroup.  Hereinafter we use the following notation:
\begin{enumerate}[-]
\item \newsym{category of normed preoredered semigroups and monotone se\-mi\-group homomorphisms}{$\Se_p$} is the subcategory of $\Se$ of normed preordered semigroups and monotone semigroup homomorphisms;
\item $\newsym{category of normed preordered monoids and monotone monoid homomorphisms}{\mathfrak M_p}=\Se_p\cap \mathfrak M$ is the subcategory of $\mathfrak M$ of normed preordered monoids and monotone monoid homomorphisms;
\item \newsym{category of normed presemilattices and monotone monoid homomorphisms}{$\PSL$} is the full subcategory of $\mathfrak M_p$ with objects all normed presemilattices;
\item \newsym{category of semilattices and se\-mi\-lattice homomorphisms}{$\SL$} is the full subcategory of $\mathfrak M$ with objects all normed semilattices.
\end{enumerate}
Note that a morphism in $\SL$ is necessarily monotone.

\smallskip
Moreover, let:
\begin{enumerate}[-]
\item $\newsym{category of preoredered subadditive semigroups and monotone se\-mi\-group homomorphisms}{\Se_p^\dag}= \Se_p\cap \Se^\dag$;
\item $\newsym{category of preordered subadditive monoids and monotone monoid homomorphisms}{\mathfrak M_p^\dag}= \mathfrak M_p\cap \Se^\dag $;
\item $\newsym{category of subadditive presemilattices and monotone monoid homomorphisms}{\PSL^\dag}=\PSL \cap \Se^\dag$;
\item $\newsym{category of subadditive semilattices and semilattice homomorphisms}{\SL^\dag}= \SL \cap \Se^\dag$.
\end{enumerate}
\begin{equation}\label{manycat}
\xymatrix@C=7pt@R=14pt{
& &\Se^* \ar@{-}[d] & & \\
& &\Se \ar@{-}[d] \ar@{-}[drr]\ar@{-}[dll]& & \\
\mathfrak M \ar@{-}[dr] & & \Se_p \ar@{-}[dl] \ar@{-}[dr] & & \Sesu \ar@{-}[dl]\\
& \mathfrak M_p\ar@{-}[d]\ar@{-}[drr] & & \Se_p^\dag\ar@{-}[d] &\\
& \PSL \ar@{-}[d]\ar@{-}[drr]& & \mathfrak M_p^\dag \ar@{-}[d]&\\
& \SL\ar@{-}[drr] & & \PSL^\dag\ar@{-}[d]\\
& & & \SL^\dag
}\end{equation}

Obviously, the norm of a normed presemilattice $(S,\vee)$ is arithmetic.

\medskip
Finally, we propose another notion of monotonicity for a semigroup norm which does not require the semigroup to be explicitly endowed with a preorder.
 
\begin{Definition}\label{d-def}
Let $(S,v)$ be a normed semigroup. The norm $v$ is \emph{d-monotone}\index{d-monotone norm} if
$$\max\{v(x), v(y)\}\leq v(x \cdot y)\ \text{for every}\ x,y \in S.$$
\end{Definition}

When $(S,v)$ is a commutative normed monoid, $v$ is d-monotone precisely when $v$ is monotone with respect to $\leq^d$.

The inequality in Definition~\ref{d-def} may become a too stringent condition when $S$ is close to being a group; indeed, if $S$ is a group, then it implies that $v(S) = \{v(1)\}$, that is, $v$ is constantly zero.  Nevertheless, this will have no impact on our approach to entropy since the specific semigroups that appear in all cases considered hereinafter are indeed quite far from being groups. 

\smallskip
The following connection between monotonicity and $s$-monotonicity is clear. 
 
\begin{Lemma}\label{cone}
Let $S$ be a preordered semigroup. If $S=P_+(S)$ (in particular, if $S$ is a lattice), then every monotone norm of $S$ is also d-monotone.
\end{Lemma}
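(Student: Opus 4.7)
The plan is to unwind the definition of the positive cone: the hypothesis $S = P_+(S)$ says that every element of $S$ acts as an ``expanding'' element from both sides with respect to the order, and the d-monotone inequality then drops out of monotonicity of $v$ applied twice.

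Concretely, I would fix arbitrary $x, y \in S$ and extract two inequalities from $S = P_+(S)$. Using that $y$ lies in the positive cone and applying the definition to the ``test element'' $x$, I get $x \leq x \cdot y$. Swapping roles (using that $x$ lies in the positive cone, applied to the test element $y$), I get $y \leq x \cdot y$. Now, because $v$ is monotone with respect to $\leq$, both inequalities transfer: $v(x) \leq v(x\cdot y)$ and $v(y) \leq v(x\cdot y)$. Combining them yields
\[
\max\{v(x), v(y)\} \leq v(x \cdot y),
\]
which is precisely d-monotonicity in the sense of Definition~\ref{d-def}.

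For the parenthetical assertion, I would verify that in a (semi)lattice the equality $S = P_+(S)$ holds automatically, so that the first clause applies. Viewing $S$ as the semilattice $(S,\vee)$ equipped with the associated partial order $x \leq y \iff x \vee y = y$, one has $x \leq x \vee a = x \cdot a$ and $x \leq a \vee x = a \cdot x$ for every $a, x \in S$, so every $a \in S$ belongs to $P_+(S)$; hence $P_+(S) = S$ and the previous paragraph yields d-monotonicity.

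There is no real obstacle: the proof is essentially an unpacking of definitions. The only minor care needed is to apply the positive-cone condition with the correct choice of ``absorbing'' element in each of the two inequalities $x \leq x\cdot y$ and $y \leq x\cdot y$, so that both sides of the maximum are controlled by a single product $v(x\cdot y)$.
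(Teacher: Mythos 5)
Your proof is correct and is exactly the intended argument: the paper states this lemma without proof (introducing it as ``clear''), and your unpacking of $S = P_+(S)$ into the two inequalities $x \leq x\cdot y$ and $y \leq x\cdot y$, followed by monotonicity of $v$, is precisely the expected verification. The treatment of the semilattice case is likewise correct.
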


\section{The semigroup entropy}\label{hs-sec}

\subsection{Definition}

In this section we introduce the concept, fundamental in this paper, of semigroup entropy. 

\medskip
For $(S,v)$ a normed semigroup, $\phi:S\to S$ an endomorphism (not necessarily contractive) and $n\in\N_+$, consider the \emph{$n$-th $\phi$-trajectory of $x\in S$}\index{trajectory}
$$\newsym{$n$-th $\phi$-trajectory of $x\in S$}{T_n(\f,x)} = x \cdot\f(x)\cdot\ldots \cdot\f^{n-1}(x)$$ and let $c_n(\f,x) = v(T_n(\f,x)).$

We give the following definition of semigroup entropy in the general case of the category $\Se^*$, but we will consider it mainly in the category $\Se$, that is, for contractive semigroup endomorphisms.

\begin{Definition}\label{SEofEndos}    
Let $S$ be a normed semigroup and $\f:S\to S$ an endomorphism in $\Se^*$. The \emph{semigroup entropy of $\phi$ with respect to $x\in S$} is
$$\newsym{semigroup entropy of $\phi$ with respect to $x\in S$}{h_{\Se^*}(\f,x)}= \limsup_{n\to\infty}\frac{c_n(\f,x)}{n}.$$
The \emph{semigroup entropy}\index{semigroup entropy} of $\f$ is $$\newsym{semigroup entropy of $\phi$}{h_{\Se^*}(\f)}=\sup_{x\in S}h_{\Se^*}(\f,x).$$
\end{Definition}

\begin{Convention}
When we are in the category $\Se$, we denote the semigroup entropy by $h_\Se$. In particular, when we write \newsym{semigroup entropy of $\phi$ with respect to $x\in S$ in $\Se$}{$h_{\Se}(\f,x)$} or \newsym{semigroup entropy of $\phi$ in $\Se$}{$h_{\Se}(\f)$}
we intend that $\f$ is contractive, even if we do not say that explicitly. 
\end{Convention}

In the next example we consider the semigroup entropy of the identity map $id_S$ of a normed semigroup $S$; since $id_S$ is always contractive, we can write $h_\Se(id_S)$. In item (a) we see that $h_\Se(id_S)=0$ for normed semigroups with arithmetic norm, but also in other cases.  This will be generalized to other contracting endomorphisms (namely, quasi-periodic ones) in Theorem~\ref{quasi-per0} and Theorem~\ref{quasi-per}. In item (b) we see that $h_\Se(id_S)$ can be infinite; this case can occur  even for subadditive semigroups as we will see in Example~\ref{subidinf}.
 
\begin{Example}\label{idex}
Let $(S,v)$ be a normed semigroup.
\begin{enumerate}[(a)]
\item It is easy to see that if $v$ is arithmetic, then $h_\Se(id_S)=0$. 

Nevertheless, $h_\Se(id_S)=0$ may occur also when $v$ is not arithmetic. To this end 
consider $S=(\N,+)$ with the norm $v_p$ from Example~\ref{Fekete}(b),  with $0<p<1$. Then $h_\Se(id_S,x)=0$ for every $x\in S$, since $\lim_{n\to\infty}\frac{(nx)^p}{n}=0$, and so $h_\Se(id_S)=0$.
\item Consider $S=(\N,+)$ with the norm $v_a$, for some $a\in\N$, $a>1$, from Example~\ref{Fekete}(d), which is not arithmetic and not subadditive. Then $h_\Se(id_S,x)=x$ for every $x\in\N$, and so $h_\Se(id_S)=\infty$.
\end{enumerate}
\end{Example}

The next is another example of computation of the semigroup entropy.

\begin{Example}\label{vlex}
\begin{enumerate}[(a)]
\item Consider $(\N,+)$ with the norm $v_l(x)=\log(x+1)$ for every $x\in\N$ from Example~\ref{Fekete}(c), which is subadditive and arithmetic.
Let $a\in\N_+$ and $\varrho_a:\N\to \N$ defined by $\varrho_a(x)=ax$ for every $x\in \N$; note that $\varrho_a$ is not contractive if $a > 1$.
Moreover, $h_{\Se^*}(\varrho_a,x)=\log a$ for every $x\in\N_+$, so $h_{\Se^*}(\varrho_a)=\log a$.
\item Consider $(\R_{\geq0},+)$ with the norm $v_l(x)=\log(x+1)$ for every $x\in\R_{\geq0}$. Let $a\in\R_{\geq0}$, $a>0$ and $\varrho_a:\R_{\geq0}\to \R_{\geq0}$ defined by $\varrho_a(x)=ax$ for every $x\in\R_{\geq0}$. If $a\leq 1$, then $\varrho_a$ is contractive and $h_\Se(\varrho_a)=0$. If $a>1$, then $\varrho_a$ is not contractive and $h_{\Se^*}(\varrho_a)=\log a$ as above.
\end{enumerate}
\end{Example}

An open problem in the context of the topological and the algebraic entropy is whether  the infimum of the positive values of entropy is still positive. 
This is equivalent to Lehmer's problem from number theory on the values of the Mahler measure (see \cite{DG-islam,DG} for more details).
Following this idea, for a fixed $S\in \Se$ we consider the set $$E_{\Se^*}(S)=\{h_{\Se^*}(\phi): (S,\phi)\in \mathbf{Flow}_{\Se^*}\}\subseteq\R_+$$
of all possible values of the semigroup entropy on endomorphisms of $S$,
and we let $$\ell_{\Se^*}(S)=\inf (E_{\Se^*}(S)\setminus \{0\}).$$
Inspired by the counterpart of Lehmer's problem for the topological and the algebraic entropy, one can ask how well the set $E_{\Se^*}(S)\setminus \{0\}$ approximates $0$, i.e., 
whether $\ell_{\Se^*}(S)=0$.
In contrast with the highly difficult case of the topological or the algebraic entropy, from Example~\ref{vlex}(b) one gets $$\ell_{\Se^*}(\R_{\geq0},v_l)=0$$ by taking $a>1$ arbitrarily close to $1$.
This is even more striking since a single semigroup $S$ allows to get $\ell_{\Se^*}(S)=0$, whereas in the framework of the topological or the algebraic entropy it is not known whether $0$ can be attained even by taking a second $\inf$ on (i.e., varying) the supporting ``space" $S$.

\subsection{Entropy in $\Se$}\label{entropy:in:Se}

From now on, in this section we consider entropy in $\Se$ (and not in $\Se^*$), so when we write homomorphism/endomorphism we mean in $\Se$ (i.e., contractive).

\medskip
We list the main properties of the semigroup entropy, starting from its monotonicity under taking factor flows.

\begin{Lemma}[Monotonicity for factors\index{Monotonicity for factors}]\label{mono/fac} 
Let $S$, $T$ be normed semigroups and $\f: S \to S$, $\psi:T\to T$ endomorphisms. If $\alpha:S\to T$ is a surjective homomorphism such that $\alpha \circ \phi = \psi\circ \alpha$, i.e., the following diagram commutes,
\begin{equation}\label{22} 
\xymatrix{S\ar[r]^\phi\ar[d]_\alpha & S\ar[d]^\alpha \\
T \ar[r]_{\psi}&T}
\end{equation}
then $$h_{\Se}(\psi) \leq h_{\Se}(\phi).$$
\end{Lemma}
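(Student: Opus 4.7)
The plan is to reduce the inequality to a trajectory-by-trajectory comparison and exploit both the commutativity of the square \eqref{22} and the fact that $\alpha$ is contractive.

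First I would fix an arbitrary $y \in T$. Since $\alpha$ is surjective, I may choose some $x \in S$ with $\alpha(x)=y$. A trivial induction on $k$ using $\alpha\circ\phi=\psi\circ\alpha$ gives $\alpha\circ\phi^k=\psi^k\circ\alpha$ for every $k \in \N$, hence $\psi^k(y)=\alpha(\phi^k(x))$.

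Next, using that $\alpha$ is a semigroup homomorphism, I would compute the $n$-th $\psi$-trajectory of $y$ as
\[
T_n(\psi,y)=y\cdot\psi(y)\cdot\ldots\cdot\psi^{n-1}(y)=\alpha(x)\cdot\alpha(\phi(x))\cdot\ldots\cdot\alpha(\phi^{n-1}(x))=\alpha\bigl(T_n(\phi,x)\bigr).
\]
Applying the contractivity of $\alpha$ then yields
\[
c_n(\psi,y)=v_T\bigl(\alpha(T_n(\phi,x))\bigr)\leq v_S\bigl(T_n(\phi,x)\bigr)=c_n(\phi,x),
\]
for every $n\in\N_+$. Dividing by $n$ and passing to the limit superior, I get $h_\Se(\psi,y)\leq h_\Se(\phi,x)\leq h_\Se(\phi)$.

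Finally, taking the supremum over $y\in T$ on the left-hand side produces $h_\Se(\psi)\leq h_\Se(\phi)$, as required. There is no real obstacle here: the argument uses only the three ingredients built into the hypotheses (surjectivity for lifting trajectories, intertwining for transporting iterates, and contractivity for the norm estimate). The only small subtlety worth flagging is that the value of $x$ lifting $y$ may be non-unique, but this is irrelevant since any choice produces an upper bound by $h_\Se(\phi)$.
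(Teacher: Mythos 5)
Your proof is correct and follows exactly the paper's argument: lift $y$ to $x$ via surjectivity, use the intertwining relation and the homomorphism property to get $T_n(\psi,y)=\alpha(T_n(\phi,x))$, apply contractivity of $\alpha$ to obtain $c_n(\psi,y)\leq c_n(\phi,x)$, and pass to the $\limsup$ and supremum. The paper merely states the inequality $c_n(\psi,y)\leq c_n(\phi,x)$ without the intermediate computation you supply, so your write-up is the same proof with the omitted details made explicit.
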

\begin{proof} 
Fix $y\in T$ and find $x \in S$ with $y= \alpha(x)$. Then $c_n( \psi,y) \leq c_n(\phi, x)$ for every $n\in\N_+$.
%\begin{align*}
%c_n(x, \phi) &= c_n(\alpha(y), \f) = v(\alpha(y)+  \f(\alpha(y))+ \ldots +  \f^{n-1}(\alpha(y)))\\
%                     &= v(\alpha(y)+ \alpha(\psi(y))+ \ldots +  \alpha(\psi^{n-1}(y)))\\
%                     &= v(\alpha(y+ \psi(y)+ \ldots +  \psi^{n-1}(y)))\\
%                     &\leq v(y+ \psi(y)+ \ldots +  \psi^{n-1}(y))= c_n(\psi, y).
%\end{align*} 
Dividing by $n$ and taking the $\limsup$ gives $h_{\Se}(\psi,y) \leq h_{\Se}(\phi,x)$. So $h_{\Se}(\psi,y)\leq h_{\Se}(\phi)$. When $y$ runs over $T$, we conclude that $h_{\Se}(\psi) \leq h_{\Se}(\phi)$. 
\end{proof} 

Applying twice the above lemma, we obtain the following fundamental property of the semigroup entropy.
 
\begin{Corollary}[Invariance under conjugation\index{Invariance under conjugation}] \label{iuc}
Let $S$ be a normed semigroup and $\f: S \to S$ an endomorphism. If $\alpha:T\to S$ is an isomorphism, then $$h_{\Se}(\f)=h_{\Se}(\alpha\circ\f\circ\alpha^{-1}).$$
\end{Corollary}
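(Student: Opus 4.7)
The proof should be a direct symmetric application of Lemma~\ref{mono/fac} in both directions, so the plan is essentially to verify that the hypotheses of that lemma are satisfied by both $\alpha$ and $\alpha^{-1}$ and then conclude by antisymmetry of $\leq$ on $\R_+$.

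Concretely, set $\psi = \alpha \circ \phi \circ \alpha^{-1}$, which is an endomorphism of the appropriate object and conjugate to $\phi$ via $\alpha$. The commutation relation $\alpha \circ \phi = \psi \circ \alpha$ holds by construction, so the square
$$\xymatrix{S\ar[r]^\phi\ar[d]_\alpha & S\ar[d]^\alpha \\ T \ar[r]_{\psi}&T}$$
commutes. Since $\alpha$ is an isomorphism in $\Se$, it is in particular a surjective contractive semigroup homomorphism, so Lemma~\ref{mono/fac} applies directly and yields $h_\Se(\psi) \leq h_\Se(\phi)$.

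For the reverse inequality I would apply Lemma~\ref{mono/fac} a second time with the roles interchanged, using $\alpha^{-1}$ in place of $\alpha$: composing $\psi \circ \alpha = \alpha \circ \phi$ with $\alpha^{-1}$ on both sides gives $\alpha^{-1} \circ \psi = \phi \circ \alpha^{-1}$, i.e., the analogous commutative square with $\alpha^{-1}$, $\psi$, $\phi$ in place of $\alpha$, $\phi$, $\psi$. Since $\alpha$ is an isomorphism in $\Se$, the inverse $\alpha^{-1}$ is again a surjective contractive homomorphism (indeed, as observed right after Definition~\ref{Def1}, isomorphisms in $\Se$ satisfy $v(\alpha(x)) = v(x)$, so $\alpha^{-1}$ is automatically contractive and actually norm-preserving). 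Lemma~\ref{mono/fac} then gives $h_\Se(\phi) \leq h_\Se(\psi)$, and combining the two inequalities yields the desired equality.

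The only point requiring any attention is ensuring that $\alpha^{-1}$ is a legitimate morphism of $\Se$ (since $\Se$ is a non-full subcategory of $\Se^*$, being an isomorphism in $\Se$ is a genuine condition and not just set-theoretic bijectivity). That point, however, is built into the definition: the norm-preservation property of isomorphisms in $\Se$ is noted immediately after the definition of the category, so no extra work is needed. Consequently I do not expect any real obstacle — the proof is essentially two lines once Lemma~\ref{mono/fac} is in place.
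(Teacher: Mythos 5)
Your proof is correct and is exactly the paper's argument: the paper derives this corollary by "applying twice the above lemma" (Lemma~\ref{mono/fac}), once to $\alpha$ and once to $\alpha^{-1}$, just as you do. Your additional remark that $\alpha^{-1}$ is a legitimate (indeed norm-preserving) morphism of $\Se$ is the right point to check and is covered by the observation following the definition of $\Se$.
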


The next lemma shows that also the monotonicity under taking subsemigroups is available. 

\begin{Lemma}[Monotonicity for subflows\index{Monotonicity for subflows}] \label{mons}
Let $(S,v)$ be a normed semigroup and $\phi:S\to S$ an endomorphism. If $T$ is a $\phi$-invariant normed subsemigroup of $(S,v)$, then $$h_{\Se}(\phi)\geq h_{\Se}(\phi\restriction_{T}).$$
Equality holds provided that $S$ is preordered, $\phi$ and $v$ are monotone and $T$ is cofinal in $S$. 
\end{Lemma}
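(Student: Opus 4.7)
The plan is to prove the inequality by essentially unfolding the definition, and then use the extra hypotheses (preorder, monotonicity, cofinality) to reverse it.

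For the first claim, observe that for any $x\in T$ one has $\phi^{k}(x)\in T$ for all $k\in\N$ by $\phi$-invariance, so the trajectory $T_n(\phi,x)=x\cdot\phi(x)\cdots\phi^{n-1}(x)$ lies entirely in $T$ and coincides with $T_n(\phi\restriction_T,x)$. Since the norm on $T$ is the restriction of $v$, we get $c_n(\phi,x)=c_n(\phi\restriction_T,x)$ for every $n\in\N_+$, hence $h_\Se(\phi,x)=h_\Se(\phi\restriction_T,x)$. Therefore
\[
h_\Se(\phi\restriction_T)=\sup_{x\in T}h_\Se(\phi\restriction_T,x)=\sup_{x\in T}h_\Se(\phi,x)\leq \sup_{x\in S}h_\Se(\phi,x)=h_\Se(\phi).
\]

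For the equality under the additional hypotheses, I would establish the reverse inequality $h_\Se(\phi)\leq h_\Se(\phi\restriction_T)$ by showing that $h_\Se(\phi,x)\leq h_\Se(\phi\restriction_T)$ for every $x\in S$. Fix $x\in S$ and use cofinality of $T$ to pick $t\in T$ with $x\leq t$. Monotonicity of $\phi$ gives $\phi^{k}(x)\leq \phi^{k}(t)$ for every $k\in\N$, and then iterated application of the compatibility of $\leq$ with the semigroup operation (from the definition of preordered semigroup) yields
\[
T_n(\phi,x)=x\cdot\phi(x)\cdots\phi^{n-1}(x)\;\leq\; t\cdot\phi(t)\cdots\phi^{n-1}(t)=T_n(\phi,t)
\]
for every $n\in\N_+$. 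Monotonicity of $v$ then gives $c_n(\phi,x)\leq c_n(\phi,t)$, so after dividing by $n$ and taking $\limsup$ we obtain $h_\Se(\phi,x)\leq h_\Se(\phi,t)$. Since $t\in T$, the first part of the proof gives $h_\Se(\phi,t)=h_\Se(\phi\restriction_T,t)\leq h_\Se(\phi\restriction_T)$. Taking the supremum over $x\in S$ yields $h_\Se(\phi)\leq h_\Se(\phi\restriction_T)$, which combined with the first inequality proves equality.

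I expect no real obstacle here: both parts reduce to unwinding definitions. The only point that deserves care is the monotonicity of the trajectories, which requires a short induction using both sides of the compatibility axiom of a preordered semigroup (so that each factor $\phi^{k}(x)$ may be replaced by $\phi^{k}(t)$ one at a time); but this follows directly from Definition of preordered semigroup together with monotonicity of $\phi$.
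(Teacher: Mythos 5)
Your proposal is correct and follows essentially the same route as the paper: the first inequality by unwinding definitions, and the equality by choosing a cofinal $t\geq x$ in $T$, proving $T_n(\phi,x)\leq T_n(\phi,t)$ by induction using monotonicity of $\phi$ together with both compatibility conditions of the preorder, and then applying monotonicity of $v$. The only cosmetic difference is that you replace the factors one at a time while the paper runs the induction on $n$ with a two-step chain; the content is identical.
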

\begin{proof}
The first part is just a consequence of the definitions. Suppose that $S$ is preordered, $\phi$ and $v$ are monotone and $T$ is cofinal in $S$. Given $s\in S$, choose $t\in T$ such that $t\geq s$ and let us show that, for all $n\in\N_+$, $T_n(\phi,s)\leq T_n(\phi,t)$. Indeed, for $n=1$ this comes from the choice of $t$. If $n>1$ and we already proved our result for $n-1$, then, using the monotonicity of $\phi$,
$$T_n(\phi,s)=T_{n-1}(\phi,s)\cdot\phi^{n-1}(s)\leq T_{n-1}(\phi,t)\cdot\phi^{n-1}(s)\leq T_{n-1}(\phi,t)\cdot\phi^{n-1}(t)=T_n(\phi,t).$$
We can conclude applying the definition of entropy and the monotonicity of $v$. 
\end{proof}

The above monotonicity for subsemigroups applies to the subsemigroups $S_i$ in the next result.

\begin{Proposition}[Continuity for direct limits\index{Continuity for direct/inverse limits}] \label{cont}
Let $(S,v)$ be a normed semigroup and $\phi:S\to S$ an endomorphism. If $\{S_i: i\in I\}$ is a directed family of 
$\phi$-invariant normed subsemigroups of $(S,v)$ with $ S =\varinjlim_{i\in I} S_i$ and $\phi=\varinjlim_{i\in I}\phi\restriction_{S_i}$, then $$h_{\Se}(\phi)=\sup_{i\in I} h_{\Se}(\phi\restriction_{S_i}).$$
\end{Proposition}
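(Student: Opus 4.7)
The plan is to prove the two inequalities separately, with the nontrivial direction being essentially an exercise in unwinding the definition of the direct limit of the subsemigroups.

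First I would establish the easy inequality $h_\Se(\phi) \geq \sup_i h_\Se(\phi\restriction_{S_i})$. This is immediate from Lemma~\ref{mons} (Monotonicity for subflows): each $S_i$ is a $\phi$-invariant normed subsemigroup of $S$, so the first assertion of that lemma gives $h_\Se(\phi) \geq h_\Se(\phi\restriction_{S_i})$ for every $i \in I$, and then we take the supremum.

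For the reverse inequality $h_\Se(\phi) \leq \sup_i h_\Se(\phi\restriction_{S_i})$, I would unpack what $S = \varinjlim_{i\in I} S_i$ means in the present situation: since the $S_i$ are subsemigroups of $S$ forming a directed family (with the inclusions as transition maps) and the direct limit equals $S$, we have $S = \bigcup_{i \in I} S_i$. Pick an arbitrary $x \in S$ and choose $i \in I$ with $x \in S_i$. Because $S_i$ is $\phi$-invariant, the trajectories inside $S$ and inside $S_i$ coincide: $T_n(\phi,x) = T_n(\phi\restriction_{S_i},x)$ for all $n \in \N_+$. The norm on $S_i$ is the restriction of $v$, so $c_n(\phi,x) = c_n(\phi\restriction_{S_i},x)$, and taking $\limsup/n$ yields
\[
h_\Se(\phi,x) = h_\Se(\phi\restriction_{S_i},x) \leq h_\Se(\phi\restriction_{S_i}) \leq \sup_{j\in I} h_\Se(\phi\restriction_{S_j}).
\]
Taking the supremum over $x \in S$ gives the desired bound.

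There is really no obstacle here, since the proposition follows almost formally once one notices that a direct limit of subsemigroups (along inclusions) is their directed union; the only thing that could cause trouble is if an element of $S$ failed to lie in some $S_i$, but that is ruled out by the hypothesis $S = \varinjlim_{i} S_i$. The single small point to be careful about is that the compatibility $\phi = \varinjlim_i \phi\restriction_{S_i}$ ensures that each $S_i$ is genuinely $\phi$-invariant, so that restricting $\phi$ to $S_i$ makes sense and the trajectories computed in $S_i$ agree with those computed in $S$.
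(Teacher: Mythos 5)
Your proposal is correct and follows essentially the same route as the paper's proof: the easy inequality via Lemma~\ref{mons}, and the converse by locating each $x\in S$ in some $S_i$ (using $S=\varinjlim_{i\in I}S_i$) and comparing $h_\Se(\phi,x)$ with $h_\Se(\phi\restriction_{S_i},x)$. You merely spell out a bit more explicitly than the paper does that the trajectories and norms computed in $S_i$ agree with those computed in $S$.
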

\begin{proof}
By Lemma~\ref{mons} we have that $h_\Se(\phi)\geq h_\Se(\phi\restriction_{S_i})$ for every $i\in I$, so 
$$h_\Se(\phi)\geq\sup_{i\in I} h_{\Se}(\phi\restriction_{S_i}).$$ 
To verify the converse inequality, let $x\in S$. Since $S=\varinjlim_{i\in I} S_i$, there exists $i\in I$ such that $x\in S_i$. Then $$h_\Se(\phi,x)=h_\Se(\phi\restriction_{S_i},x)\leq h_\Se(\phi\restriction_{S_i}).$$ Hence, we can conclude that $h_\Se(\phi)\leq \sup_{i\in I}h_\Se(\phi\restriction_{S_i})$.
\end{proof}

The next lemma fully exploits our blanket hypothesis that $\phi$ is an automorphism in $\Se$ (see \S\ref{NewSec2}). 

\begin{Lemma}[Invariance under inversion\index{Invariance under inversion}]\label{inversion}
Let $S$ be a commutative normed semigroup and $\phi:S\to S$ an automorphism. Then $$h_{\Se}(\f^{-1})=h_{\Se}(\f).$$
\end{Lemma}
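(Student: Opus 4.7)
The plan is to exploit three ingredients: (i) the fact that, since $\phi$ is an isomorphism in $\Se$, both $\phi$ and $\phi^{-1}$ are contractive, so $v(\phi(x)) = v(x)$ for every $x \in S$ (as noted after the definition of $\Se$), hence $\phi$ is norm-preserving, and so is every power $\phi^k$ with $k \in \Z$; (ii) the commutativity of $S$, which lets us rearrange factors in a trajectory; and (iii) the equality $h_\Se(\phi) = \sup_{x\in S} h_\Se(\phi,x)$, so it suffices to compare the pointwise semigroup entropies.

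Concretely, I would fix $x \in S$ and $n \in \N_+$ and compute directly:
\[
\phi^{n-1}\bigl(T_n(\phi^{-1},x)\bigr) \;=\; \phi^{n-1}(x)\cdot \phi^{n-2}(x)\cdot\ldots\cdot \phi(x)\cdot x \;=\; T_n(\phi,x),
\]
where the second equality uses the commutativity of $S$ to reverse the order of the product. Applying the norm and using that $\phi^{n-1}$ is an isometry yields
\[
c_n(\phi^{-1},x) \;=\; v\bigl(T_n(\phi^{-1},x)\bigr) \;=\; v\bigl(\phi^{n-1}(T_n(\phi^{-1},x))\bigr) \;=\; v\bigl(T_n(\phi,x)\bigr) \;=\; c_n(\phi,x).
\]

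Dividing by $n$ and passing to $\limsup$ gives $h_\Se(\phi^{-1},x) = h_\Se(\phi,x)$ for every $x \in S$; taking the supremum over $x \in S$ delivers the desired equality $h_\Se(\phi^{-1}) = h_\Se(\phi)$.

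There is no real obstacle here: the only subtlety is remembering that membership of $\phi$ in the isomorphism class of $\Se$ forces norm-preservation (a stronger condition than mere contractivity), so that applying $\phi^{n-1}$ inside $v(\,\cdot\,)$ is harmless. Commutativity is used in a single, essential place — reversing the order of the $n$ factors — and it is not hard to see that the statement would fail without it, since in the noncommutative case one can only assert that $T_n(\phi,x)$ and $T_n(\phi^{-1},x)$ have the same factors but in opposite order, which in general changes the element and thus its norm.
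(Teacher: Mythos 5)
Your proof is correct and follows essentially the same route as the paper: apply $\phi^{n-1}$ to $T_n(\phi^{-1},x)$, use commutativity to reorder the factors into $T_n(\phi,x)$, and use that an isomorphism in $\Se$ is norm-preserving to conclude $c_n(\phi^{-1},x)=c_n(\phi,x)$. Your closing remarks on where norm-preservation and commutativity enter are accurate and match the paper's own discussion (see \S\ref{alternative} for the noncommutative failure).
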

\begin{proof} 
It suffices to see that $h_{\Se}(\f^{-1},x)=h_{\Se}(\f,x)$ for each $x\in S$.   In order to compute $h_{\Se}(\f^{-1},x)$ note that, for every $n\in\N_+$, 
\begin{align*} c_n(\f,x) &=  v(x\cdot \f(x)\cdot \ldots \cdot \f^{n-1}(x))\\
&=v(\f^{n-1}(x\cdot \f^{-1}(x)\cdot \ldots \cdot \f^{-n+1}(x))\\
&= v(x\cdot  \f^{-1}(x)\cdot\ldots \cdot \f^{-n+1}(x))\\
&=c_n(\f^{-1},x).\end{align*}
Now the definition of $h_\Se(\f^{-1},x)$ and $h_\Se(\f,x)$ applies. 
\end{proof}  

Notice that in the above lemma we have to impose the hypothesis that our semigroup is commutative. In \S\ref{alternative} we give an example of a flow of normed semigroups $(S,\phi)$ whose semigroup entropy does not coincide with the entropy of its inverse flow $(S,\phi^{-1})$. We refer to that section for a more complete description of the entropy of the inverse flow. Similar considerations hold for the second part of the following lemma.
   
\begin{Proposition}[Logarithmic Law\index{Logarithmic Law}] \label{ll}
Let $(S,v)$ be a commutative d-monotone normed semigroup and $\f:S\to S$ an endomorphism.  Then, for every $k\in \N_+$,
$$h_{\Se}(\f^{k})=k\cdot h_{\Se}(\f).$$ 
Furthermore, if $\phi:S\to S$ is an automorphism, then for all $k \in \Z\setminus\{0\}$, $$h_{\Se}(\phi^k) = |k|\cdot h_{\Se}(\phi).$$
\end{Proposition}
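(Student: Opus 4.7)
The plan is to prove $h_\Se(\phi^k) = k\cdot h_\Se(\phi)$ for $k \in \N_+$ by separately proving the two inequalities, and then to extend to $k \in \Z\setminus\{0\}$ via Lemma~\ref{inversion} (Invariance under inversion), which is applicable since $S$ is commutative.

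For the upper bound $h_\Se(\phi^k) \leq k\cdot h_\Se(\phi)$, I would fix $x \in S$ and exploit commutativity to rearrange the factors of $T_{kn}(\phi,x) = x\cdot\phi(x)\cdots\phi^{kn-1}(x)$ so as to collect the indices $\{0,k,2k,\ldots,(k-1)n\}$ first; this writes $T_{kn}(\phi,x) = T_n(\phi^k,x)\cdot R$ for some $R \in S$. Then d-monotonicity gives $c_n(\phi^k,x) \leq c_{kn}(\phi,x)$, and dividing by $n$ and passing to $\limsup$ yields $h_\Se(\phi^k,x) \leq k\cdot \limsup_{n\to\infty}\frac{c_{kn}(\phi,x)}{kn} \leq k\cdot h_\Se(\phi,x)$. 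Taking the supremum over $x$ closes this direction.

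For the lower bound $h_\Se(\phi^k) \geq k\cdot h_\Se(\phi)$, the key idea is to introduce the auxiliary element $y := T_k(\phi,x)$. A direct calculation using only associativity (the blocks $\phi^{jk}(y) = \phi^{jk}(x)\cdot \phi^{jk+1}(x)\cdots \phi^{jk+k-1}(x)$ concatenate to form the full product) yields the crucial identity
$$T_n(\phi^k,y) = T_{kn}(\phi,x), \qquad \text{so} \qquad c_n(\phi^k,y) = c_{kn}(\phi,x).$$
For arbitrary $m \in \N_+$, set $n_m = \lceil m/k\rceil$, so that $m \leq kn_m$ and $T_m(\phi,x)$ appears as an initial subproduct of $T_{kn_m}(\phi,x)$; d-monotonicity therefore gives $c_m(\phi,x) \leq c_{kn_m}(\phi,x) = c_{n_m}(\phi^k,y)$. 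Since $n_m/m \to 1/k$, taking $\limsup$ in $m$ produces $h_\Se(\phi,x) \leq \frac{1}{k}h_\Se(\phi^k,y) \leq \frac{1}{k}h_\Se(\phi^k)$; a supremum over $x$ then gives $k\cdot h_\Se(\phi) \leq h_\Se(\phi^k)$.

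For $k < 0$, I would write $\phi^k = (\phi^{-1})^{|k|}$, apply the already-established positive case to $\phi^{-1}$ (which is also an automorphism of the commutative normed semigroup $S$), and combine with Lemma~\ref{inversion} to obtain $h_\Se(\phi^k) = |k|\cdot h_\Se(\phi^{-1}) = |k|\cdot h_\Se(\phi)$. The main obstacle is the lower bound: one cannot directly compare $c_n(\phi^k,x)$ to the full sequence $c_m(\phi,x)$, since the subsequence $m = kn$ alone might miss the $\limsup$. Introducing $y = T_k(\phi,x)$ is the clever workaround, producing an \emph{exact} identity between trajectories that, together with d-monotonicity, lets every $c_m(\phi,x)$ be controlled by some $c_{n_m}(\phi^k,y)$.
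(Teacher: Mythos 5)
Your proof is correct and follows essentially the same route as the paper's: the same auxiliary element $y=T_k(\phi,x)$ giving the exact identity $c_n(\phi^k,y)=c_{kn}(\phi,x)$ for the lower bound, the same commutativity-plus-d-monotonicity comparison $c_n(\phi^k,x)\le c_{kn}(\phi,x)$ for the upper bound, and the same reduction of negative exponents to Lemma~\ref{inversion}. The only cosmetic differences are a typo in your index set, which should read $\{0,k,2k,\dots,(n-1)k\}$ rather than ending at $(k-1)n$, and that you settle the subsequence-versus-full-sequence $\limsup$ issue via the one-sided comparison $c_m\le c_{k\lceil m/k\rceil}$, where the paper sandwiches $c_j(\phi,x)$ between $c_{nk}(\phi,x)$ and $c_{(n+1)k}(\phi,x)$.
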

\begin{proof} 
Fix $k \in \N_+$.
Let $x\in S$ and let $y= x\cdot\f(x)\cdot\ldots\cdot\f^{k-1}(x)$. For every $n\in\N_+$ we have that $$c_n(\phi^k,y)=v (y\cdot \f^k(y)\cdot\ldots \cdot\f^{(n-1)k}(y)) =c_{nk}(\f,x).$$
Then
$$h_{\Se}(\f^k)\geq h_{\Se}(\f^k, y)=\limsup_{n\to\infty} \frac{c_{n}(\f^k,y)}{n}= k \cdot \limsup_{n\to \infty} \frac{c_{nk}(\f,x)}{nk}=k\cdot h_\Se(\f,x).$$
The last equality in the above formula holds for the following reason. For every $j\in\N_+$ there exists $n\in\N$ such that $nk\leq j<(n+1)k$. 
Since $$T_j(\phi,x)=T_{nk}(\phi,x)\cdot\phi^{nk}\cdot\ldots\cdot\phi^{j-1}(x)\ \text{and}\ T_{(n+1)k}(\phi,x)=T_j(\phi,x)\cdot\phi^j(x)\cdot\ldots\cdot\phi^{(n+1)k-1}(x),$$ it follows that 
$$c_{nk}(\phi,x)\leq c_j(\phi,x)\leq c_{(n+1)k}(\phi,x),$$ and so 
$$\frac{nk}{j}\cdot \frac{c_{nk}(\phi,x)}{nk}\leq \frac{c_j(\phi,x)}{j}\leq \frac{(n+1)k}{j}\cdot\frac{c_{(n+1)k}(\phi,x)}{(n+1)k}.$$
Since $\lim_{n\to\infty}nk/j=\lim_{n\to\infty}(n+1)k/j=1$, we have 
$$\limsup_{n\to\infty}\frac{c_{nk}(\phi,x)}{nk}=\limsup_{n\to\infty}\frac{c_{(n+1)k}(\phi,x)}{(n+1)k}=\limsup_{j\to\infty}\frac{c_j(\phi,x)}{j}=h_\Se(\phi,x).$$
We conclude that $h_\Se(\f^k)\geq k\cdot h_\Se(\f,x)$ for all $x\in S$, and consequently, $h_\Se(\f^k)\geq k\cdot h_\Se(\f)$. 
  
\smallskip
Suppose $v$ to be d-monotone. Then, for every $n\in\N_+$ and $x\in S$, we have that
$$c_{nk}(\phi,x)=v(T_{nk}(\phi,x))\geq v(x\cdot\f^k(x)\cdot\ldots\cdot(\f^k)^{n-1}(x))= v(T_n(\phi^k,x))=c_n(\phi^k,x).$$
Therefore, 
\begin{equation*}\begin{split}
h_{\Se}(\f,x)\geq \limsup_{n\to\infty}\frac{c_{nk}(\phi,x)}{nk}\geq \limsup_{n\to\infty} \frac{c_n(\phi^k,x)}{n\cdot k}= \frac{1}{k}\cdot\limsup_{n\to\infty}\frac{c_n(\phi^k,x)}{n}=\frac{h_{\Se}(\f^k,x)}{k}.
\end{split}\end{equation*}
Hence, $k\cdot h_{\Se}(\f)\geq h_{\Se}(\f^k,x)$ for every $x\in S$, and so $k\cdot h_{\Se}(\f)\geq h_{\Se}(\f^k)$.  We conclude that $h_{\Se}(\f^{k})= k\cdot h_{\Se}(\f)$.

\smallskip
If $\phi$ is an automorphism and $k\in\Z\setminus\{0\}$, apply the previous part of the proposition and Lemma~\ref{inversion}.
\end{proof}

We call Logarithmic Law the property $h_\Se(\phi^k)=k\cdot h_\Se(\phi)$ for every $k\in\N_+$. Of course, if the Invariance under inversion is also available, then one has $h_\Se(\phi^k)=|k| \cdot h_\Se(\phi)$ for every $k\in\Z$,  with $k\neq0$, when $\phi$ is an automorphism. In case $k=0$, that is, $\phi^k=id_S$, the equality holds only if $h_\Se(id_S)=0$ (e.g., when $(S,v)$ is arithmetic). We shall adopt the terminology Logarithmic Law also with respect to other entropy functions.

\medskip
A flow $(S,\f)$ of $\Se$ is \emph{quasi-periodic}\index{quasi-periodic flow} if there exists a pair of naturals $m<k$ such that $\f^k= \f^m$. 

\begin{Proposition}[Vanishing on quasi-periodic flows\index{Vanishing on quasi-periodic flows}]\label{quasi-per0}
If $(S,\f)$ is a quasi-periodic flow of $\Se$ such that $S$ is commutative and d-monotone, then either $h_\Se(\f)=0$ or $h_\Se(\f)=\infty$.
\end{Proposition}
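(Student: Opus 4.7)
The plan is to read the quasi-periodicity relation $\phi^k = \phi^m$ through the Logarithmic Law (Proposition~\ref{ll}), which is available because $(S,v)$ is assumed commutative and d-monotone. That proposition says $h_\Se(\phi^n) = n \cdot h_\Se(\phi)$ for every $n \in \N_+$, so applying it to both sides of the given identity should pin down $h_\Se(\phi)$ via a simple arithmetic constraint in $\R_+$.

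First I would dispose of the principal case $m \geq 1$. Here $m$ and $k$ are both positive integers with $m < k$, so applying Proposition~\ref{ll} to the two sides of $\phi^k = \phi^m$ yields the equality
\[
k \cdot h_\Se(\phi) \;=\; m \cdot h_\Se(\phi)
\]
in $\R_+$. If $h_\Se(\phi) = \infty$ there is nothing to prove. Otherwise $h_\Se(\phi) \in \R_{\geq 0}$ and we may cancel to obtain $(k-m)\cdot h_\Se(\phi) = 0$; since $k - m \geq 1$, this forces $h_\Se(\phi) = 0$.

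The remaining case $m = 0$, in which $\phi^k = \id_S$, is not covered directly by Proposition~\ref{ll} (which is stated only for exponents in $\N_+$), so I would reduce it to the previous case by the trivial squaring trick: from $\phi^k = \id_S$ one gets $\phi^{2k} = (\phi^k)^2 = \id_S = \phi^k$, which is a quasi-periodicity relation of the same shape but now with both exponents $k$ and $2k$ in $\N_+$ and $k < 2k$. Applying the argument above to this new relation yields the same dichotomy $h_\Se(\phi) \in \{0, \infty\}$.

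There is no real obstacle: once one has the Logarithmic Law, the argument is essentially a one-line cancellation in $\R_+$. The only point needing attention is the convention for $\infty$ adopted in Proposition~\ref{ll} (so that the equation $k\cdot h_\Se(\phi) = m \cdot h_\Se(\phi)$ is automatically satisfied when $h_\Se(\phi) = \infty$, forbidding any cancellation), and the small technical detail that the case $m = 0$ falls outside the scope of Proposition~\ref{ll} and must be handled by the auxiliary observation $\phi^{2k} = \phi^k$.
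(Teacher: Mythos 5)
Your proof is correct and follows essentially the same route as the paper's: apply the Logarithmic Law (Proposition~\ref{ll}) to both sides of $\phi^k=\phi^m$ and cancel in $\R_+$. Your explicit treatment of the case $m=0$ via $\phi^{2k}=\phi^k$ is a welcome extra precaution, since Proposition~\ref{ll} is stated only for exponents in $\N_+$ and the paper's own proof silently applies it with exponent $m$ even when $m$ could be $0$.
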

\begin{proof}
Assume that $\f^k= \f^m$ for a pair of naturals $m<k$.  Then, by Proposition~\ref{ll}, $$k\cdot h_{\Se}(\f)= h_{\Se}(\f^{k})= h_{\Se}(\f^m) = m\cdot h_{\Se}(\f).$$ 
Since $m<k$, the equality $k\cdot h_{\Se}(\f) = m \cdot h_{\Se}(\f)$ implies that either $h_\Se(\f)=0$ or  $h_\Se(\f)=\infty$. 
\end{proof}

Now we consider products in $\Se$. Let $\{(S_i,v_i):i\in I\}$ be a family of normed semigroups and let $S=\prod_{i \in I}S_i$ be their direct product in the category of semigroups. In case $I$ is finite, $S$ becomes a normed semigroup with the $\max$-norm $v_\Pi$, i.e., for $x=(x_i)_{i\in I}\in S$,
\begin{equation}\label{max-norm}
v_\Pi(x)=\sup\{v_i(x_i):i\in I\};
\end{equation}
so $(S,v_\Pi)$ is the product of the family $\{S_i:i\in I\}$ in the category $\Se$.

\begin{Theorem}[weak Addition Theorem - products\index{weak Addition Theorem}]\label{WAT}
Let $(S_i,v_i)$ be a normed semigroup and $\f_i:S_i\to S_i$ an endomorphism for $i=1,2$. Then the endomorphism $ \f=\f_1 \times \f_2$ of $ (S,v)=(S _1 \times S_2,v_\Pi)$ has $$h_\Se(\f)= \max\{ h_\Se(\f_1),h_\Se(\f_2)\}.$$
\end{Theorem}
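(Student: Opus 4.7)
The proof will follow the standard two-inequality strategy, each being essentially routine given the tools already developed.

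For the inequality $h_\Se(\phi) \geq \max\{h_\Se(\phi_1), h_\Se(\phi_2)\}$, the plan is to use the canonical projections $\pi_i : S \to S_i$ for $i=1,2$. By the definition of the max-norm \eqref{max-norm}, one has $v_i(\pi_i(x)) = v_i(x_i) \leq v_\Pi(x)$, so each $\pi_i$ is a contractive, surjective semigroup homomorphism, hence a morphism in $\Se$. Moreover, $\pi_i \circ \phi = \phi_i \circ \pi_i$ by construction of the product endomorphism. Applying Monotonicity for factors (Lemma~\ref{mono/fac}) to each projection gives $h_\Se(\phi_i) \leq h_\Se(\phi)$, and taking the maximum yields the desired inequality.

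For the reverse inequality, the key observation is that trajectories split along the product: for every $x=(x_1,x_2)\in S$ and every $n\in\N_+$,
$$T_n(\phi,x) = \bigl(T_n(\phi_1,x_1),\, T_n(\phi_2,x_2)\bigr),$$
since the semigroup operation is componentwise and $\phi^k(x)=(\phi_1^k(x_1),\phi_2^k(x_2))$. Applying the max-norm gives
$$c_n(\phi,x) \;=\; \max\{c_n(\phi_1,x_1),\, c_n(\phi_2,x_2)\}.$$
Dividing by $n$ and passing to the $\limsup$, I would invoke the elementary identity
$$\limsup_{n\to\infty}\max\{a_n,b_n\} \;=\; \max\Bigl\{\limsup_{n\to\infty} a_n,\, \limsup_{n\to\infty} b_n\Bigr\}$$
for sequences in $\R_+$; the direction $\geq$ is immediate from $\max\{a_n,b_n\}\geq a_n,b_n$, and the direction $\leq$ follows because for any $\varepsilon>0$ eventually $a_n < L_1+\varepsilon$ and $b_n < L_2+\varepsilon$, whence $\max\{a_n,b_n\} < \max\{L_1,L_2\}+\varepsilon$. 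This yields $h_\Se(\phi,x) = \max\{h_\Se(\phi_1,x_1), h_\Se(\phi_2,x_2)\}$.

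Finally, taking the supremum over $x = (x_1, x_2) \in S$ and using that the sup of a max over a product decouples,
$$h_\Se(\phi) \;=\; \sup_{(x_1,x_2)\in S}\max\{h_\Se(\phi_1,x_1), h_\Se(\phi_2,x_2)\} \;=\; \max\Bigl\{\sup_{x_1\in S_1} h_\Se(\phi_1,x_1),\, \sup_{x_2\in S_2} h_\Se(\phi_2,x_2)\Bigr\},$$
which equals $\max\{h_\Se(\phi_1), h_\Se(\phi_2)\}$, completing the proof. There is no real obstacle here — the proof is driven entirely by the specific shape of the max-norm \eqref{max-norm}, which was tailor-made to make trajectory-norms commute with maxima; the only point requiring a moment's care is the $\limsup$ identity above, which is why a subadditive-style Fekete argument is not needed.
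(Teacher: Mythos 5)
Your proposal is correct and follows essentially the same route as the paper's proof: the trajectory splits componentwise, the max-norm turns $c_n(\phi,x)$ into $\max\{c_n(\phi_1,x_1),c_n(\phi_2,x_2)\}$, and one concludes via the $\limsup$--$\max$ interchange and the decoupling of $\sup$ over a product (the paper's \eqref{easy:claim}); your explicit justification of the $\limsup$ identity is a welcome detail the paper leaves implicit. Note only that your first paragraph (the $\geq$ inequality via projections and Lemma~\ref{mono/fac}) is redundant, since the pointwise computation in the second part already yields equality.
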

\begin{proof} 
For $x\in S$, let $x_1\in S_1$ and $x_2\in S_2$ be such that $x=(x_1,x_2)$. Then 
\begin{align*}
h_\Se(\phi,x)&=\limsup_{n\to \infty}\frac{v(T_n(\phi,x))}{n}\\
&= \limsup_{n\to \infty}\frac{\max\{v_1(T_n(\phi_1,x_1)),v_2(T_n(\phi_2,x_2))\}}{n}\\
&=\max\{h_\Se(\f_1,x_1),h_\Se(\f_2,x_2)\}.
\end{align*}
Using the fact that for families of positive real numbers  $\{a_i\}_{i\in I}$ and $\{b_j\}_{j\in J}$
\begin{equation}\label{easy:claim}
\sup_{(i,j)\in I\times J}\max\{a_i,b_j\}=\max\{\sup_{i\in I}a_i,\sup_{j\in J}b_j\},
\end{equation}
we can conclude that $h_\Se(\phi)=\max\{ h_\Se(\f_1),h_\Se(\f_2)\}$.
\end{proof}

If $I$ is infinite, $S=\prod_{i\in I}S_i$ need not carry a semigroup norm $v$ such that every projection $p_i: (S,v) \to (S_i,v_i)$ is a morphism in $\Se$. This is why the product of the family $\{(S_i,v_i):i\in I\}$ in $\Se$ is actually the subset 
$$S_{\mathrm{bnd}}=\{(x_i)_{i\in I}\in S: \sup_{i\in I}v_i(x_i)\in\R\}$$
of $S$ with the norm $v_\Pi$ defined by \eqref{max-norm} for $x=(x_i)_{i\in I}\in S_{\mathrm{bnd}}.$

We will again consider the product of normed semigroups in \S\ref{Msec} below, in particular a comparison of Theorem~\ref{WAT} 
and Theorem~\ref{wAT}.

\subsection{Entropy in $\Sesu$}\label{semi-add}

In this section we consider the semigroup entropy for endomorphisms $\phi:S\to S$ of subadditive semigroups. 
Obviously, a subsemigroup of a subadditive semigroup $S$ is subadditive, too. 

\medskip
As we see in the next theorem, the superior limit in the definition of the semigroup entropy is actually a limit in this setting. Recall that here endomorphism means contractive semigroup endomorphism. 

\begin{Theorem}\label{limit}
Let $(S,v)$ be a subadditive semigroup and $\f:S\to S$ an endomorphism. Then for every $x \in S$ the limit 
\begin{equation}\label{hs-eq}
h_{\Se}(\f,x)= \lim_{n\to\infty}\frac{c_n(\f,x)}{n}
\end{equation}
 exists and satisfies $h_{\Se}(\f,x)\leq v(x)$.  
\end{Theorem}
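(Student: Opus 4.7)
The plan is to apply Fekete's Lemma (Lemma~\ref{fekete}) to the sequence $\{c_n(\f,x)\}_{n\in\N_+}$, after showing it is subadditive. The existence of the limit and the inequality $h_\Se(\f,x)\leq v(x)$ will both fall out of Fekete's Lemma at once, since that lemma identifies the limit with the infimum $\inf_n c_n(\f,x)/n$, which is $\leq c_1(\f,x)/1=v(x)$.

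The key computation is the factorization of trajectories. For any $n,m\in\N_+$, using that $\f$ is a semigroup homomorphism one gets
$$T_{n+m}(\f,x)=x\cdot\f(x)\cdot\ldots\cdot\f^{n+m-1}(x)=T_n(\f,x)\cdot\f^n\bigl(T_m(\f,x)\bigr),$$
since $\f^n(T_m(\f,x))=\f^n(x)\cdot\f^{n+1}(x)\cdot\ldots\cdot\f^{n+m-1}(x)$. Applying the subadditivity of $v$ to this product, and then the fact that $\f$ (and hence $\f^n$) is contractive, we obtain
$$c_{n+m}(\f,x)=v\bigl(T_{n+m}(\f,x)\bigr)\leq v\bigl(T_n(\f,x)\bigr)+v\bigl(\f^n(T_m(\f,x))\bigr)\leq c_n(\f,x)+c_m(\f,x).$$
Thus $\{c_n(\f,x)\}_{n\in\N_+}$ is a subadditive sequence of non-negative reals.

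By Fekete's Lemma the sequence $\{c_n(\f,x)/n\}_{n\in\N_+}$ converges to $\inf_{n\in\N_+}c_n(\f,x)/n$. Hence the $\limsup$ in Definition~\ref{SEofEndos} is in fact a limit, proving \eqref{hs-eq}, and the bound $h_\Se(\f,x)=\inf_n c_n(\f,x)/n\leq c_1(\f,x)=v(x)$ follows immediately. The only delicate point is that both hypotheses on the flow are needed in the subadditivity argument: subadditivity of $v$ to split $T_{n+m}$, and contractivity of $\f$ to replace $v(\f^n(T_m(\f,x)))$ by $c_m(\f,x)$. Neither alone would suffice.
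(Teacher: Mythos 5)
Your proof is correct and follows essentially the same route as the paper: the same factorization $T_{n+m}(\f,x)=T_n(\f,x)\cdot\f^n(T_m(\f,x))$, subadditivity of $v$ plus contractivity of $\f$ to get subadditivity of $\{c_n(\f,x)\}_n$, and then Fekete's Lemma. The only (immaterial) difference is in the final bound, where you use $\inf_n c_n/n\leq c_1=v(x)$ while the paper observes $c_n(\f,x)\leq n\cdot v(x)$; both are valid one-line justifications.
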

\begin{proof} 
The sequence $\{c_n(\f,x)\}_{n\in\N_+}$ is subadditive. Indeed, for every $n,m\in\N_+$,
\begin{align*}  
c_{n+m}(\f,x)&= v(x\cdot\f(x)\cdot\ldots\cdot\f^{n-1}(x)\cdot\f^{n}(x)\cdot\ldots\cdot\f^{n+m-1}(x))\\  
&=v((x\cdot\f(x)\cdot\ldots\cdot\f^{n-1}(x))\cdot\f^{n}(x\cdot\ldots\cdot\f^{m-1}(x)) \\ 
&\leq c_n(\phi,x)+v(\f^{n}(T_m(\phi,x)) \\  
&\leq c_n(\f,x)+v(T_m(\phi,x))=c_n(\f,x)+c_m(\f,x),
\end{align*} 
where the first inequality follows from the subadditivity of $v$.
By Fekete Lemma~\ref{fekete},  the limit $\lim_{n\to\infty} \frac{c_n(\f,x)}{n}$ exists and coincides with $\inf_{n\in\N_+} \frac{c_n(\f,x)}{n}$.
Finally, $h_{\Se}(\f,x)\leq v(x)$ follows from $c_n(\phi,x) \leq n \cdot v(x)$ for every $n\in\N_+$.
\end{proof}

\begin{Remark}
In the above notation, we have seen at the end of the proof of Theorem~\ref{limit} that, for every $n\in\N_+$, $$c_n(\phi,x) \leq n\cdot v(x).$$ Hence, the growth of the function $n \mapsto c_n(\f,x)$ is at most linear. 
\end{Remark}

By Theorem~\ref{limit} we have that $h_{\Se}(\f,x)$ is always finite if $(S,v)$ is a subadditive semigroup. 
On the other hand, $h_{\Se}(\f,x)=\infty$ may occur, if $(S,v)$ is not subadditive as the following example shows.

\begin{Example}
Consider $S = (\N,+)$ with the norm $v_2$ as in Example~\ref{Fekete}(d), which is not subadditive. The endomorphism $\varrho_2:(S,v_2)\to (S,v_2)$ defined by $x\to 2x$ for every $x\in S$ is  contractive (see Example~\ref{Fekete}(d)). For every $n\in\N_+$, 
$$T_n(\varrho_2,1)=2^n-1,$$
and so $$c_n(\varrho_2,1)=v_2(T_n(\varrho_2,1))=2^n-1.$$ Hence, applying the definition, we have that $$h_\Se(\varrho_2,1)=\infty.$$
\end{Example}

In Example~\ref{idex}(a) we have seen that $h_\Se(id_S)=0$ when the norm of the normed semigroup $S$ is arithmetic. We show that $h_\Se(id_S)$ can be infinite even if the norm is subadditive.

\begin{Example}\label{subidinf}
Consider $S=(\N,+)$ with the norm $v(x)=x$, which is subadditive; anyway, $h_\Se(id_S,x)=x$ for every $x\in\N$, and so $h_\Se(id_S)=\infty$.
\end{Example}

Now we extend item (a) of Example~\ref{idex} as well as Theorem~\ref{quasi-per0}. 
A flow $(S,\f)$ of $\Se$ is {\em locally quasi-periodic}\index{locally quasi-periodic flow}  if for every $x\in S$ there exists a pair of naturals $m<k$ such that $\f^k(x) = \f^m(x)$. 

\begin{Theorem}\label{quasi-per}
If $(S,\f)$ is a locally quasi-periodic flow of $\Sesu$ such that the norm of $S$ is arithmetic, then $h_\Se(\f)=0$. 
\end{Theorem}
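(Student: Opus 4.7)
Fix an arbitrary $x\in S$; it suffices to show $h_\Se(\phi,x)=0$, since then $h_\Se(\phi)=\sup_{x\in S}h_\Se(\phi,x)=0$. By local quasi-periodicity, choose naturals $m<k$ with $\phi^k(x)=\phi^m(x)$, and set $d=k-m>0$. Writing $y_i:=\phi^i(x)$, an easy induction on $j$ gives $y_{m+jd+r}=y_{m+r}$ for all $j\ge 0$ and $0\le r<d$, so the trajectory $(y_i)_{i\ge 0}$ is eventually periodic of period $d$ starting at index $m$.

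The plan is to decompose $T_n(\phi,x)=y_0\cdot y_1\cdots y_{n-1}$ into an initial segment, a repeated ``period block,'' and a bounded tail, and then estimate its norm using subadditivity and the arithmeticity assumption. For $n\ge m+d$, write $n-m=qd+r$ with $q\ge 1$ and $0\le r<d$, and set
\[
a:=y_0\cdot y_1\cdots y_{m-1},\qquad b:=y_m\cdot y_{m+1}\cdots y_{m+d-1},\qquad c_r:=y_m\cdot y_{m+1}\cdots y_{m+r-1},
\]
with the convention that $a=c_0$ equals the empty product (which I handle by considering separately the trivial case $m=0$, resp.\ $r=0$, where the corresponding factor simply disappears). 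Using the periodicity of the trajectory past index $m$, and being careful that the product is taken in the prescribed order (no commutativity is assumed), we obtain
\[
T_n(\phi,x)=a\cdot b^{q}\cdot c_r.
\]

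Iterating the subadditivity of $v$ (which is valid by hypothesis in $\Se^\dag$) yields
\[
v(T_n(\phi,x))\;\le\; v(a)+v(b^{q})+v(c_r).
\]
Here $v(a)$ is a constant (depending only on $x$) and $v(c_r)\le M:=\max_{0\le r<d}v(c_r)$ is bounded independently of $n$. Since $v$ is arithmetic, there exists $C_b\in\N_+$ such that $v(b^q)\le C_b\log q$ for all $q\ge 2$; and $q\le n/d$, so $\log q\le\log n$. Combining,
\[
\frac{v(T_n(\phi,x))}{n}\;\le\;\frac{v(a)+C_b\log n+M}{n}\quad\text{for all sufficiently large } n,
\]
and the right-hand side tends to $0$ as $n\to\infty$. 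By Theorem~\ref{limit} the superior limit defining $h_\Se(\phi,x)$ is a genuine limit, so $h_\Se(\phi,x)=0$, completing the proof.

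The only mildly delicate point is the non-commutativity: the factorization $T_n(\phi,x)=a\cdot b^q\cdot c_r$ must respect the order of the trajectory, which is why the period block $b$ is defined as $y_m\cdots y_{m+d-1}$ rather than as an arbitrary product of the (finitely many) distinct values of $y_i$. Once this is set up correctly, the subadditivity plus arithmetic bound forces linear-in-$n$ growth to collapse to logarithmic, and the ratio vanishes.
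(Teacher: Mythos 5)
Your proof is correct and follows essentially the same route as the paper's: decompose $T_n(\phi,x)$ into the initial segment $T_m(\phi,x)$, a power of the period block $w=\phi^m(T_d(\phi,x))$ (your $b$), and a bounded tail, then combine subadditivity with the arithmetic bound on $v(w^q)$ to get $v(T_n(\phi,x))=O(\log n)$. The only cosmetic difference is that you identify the tail with $c_r$ directly via periodicity of the trajectory, where the paper writes it as $\phi^{m+\bar\imath d}(T_l(\phi,x))$ and bounds its norm by contractivity; both are fine.
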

\begin{proof}
We have to prove that $h_\Se(\f,x)=0$ for an arbitrarily chosen $x\in S$. Pick a pair  of naturals $m<k$ such that $\f^k(x) = \f^m(x)$
and let $d= k-m>0$. Then, for all $i\in \N$, $$\f^{m+id}(x)=\f^m(x).$$
Let $$w=\f^m(T_d(\f,x)).$$
Since the norm is arithmetic, there exist $C_w \in\N_+$ such that $v(w^n)\leq C_w\log n$ for every $n\geq 2$.

Pick an arbitrary natural $n> m+2d$ and find $\bar\imath\in\N$ such that
$$m+\bar\imath d < n \leq m+(\bar\imath+1)d;$$
then $\bar\imath\geq 2$.
Put $l=n-(m+\bar\imath d)$, so that $0< l\leq d$.
Now let $$u =T_l(\f,x);$$
then $\f^{m+\bar\imath d}(u)=\f^{m+\bar\imath d}(x)\cdot\ldots\cdot \f^{n-1}(x)$. 
Therefore, since $n=m+\bar\imath d+l$,
$$T_n(\f,x) =T_m(\phi,x) \cdot w^{\bar\imath}\cdot  \f^{m+\bar\imath d}(u).$$
Thus, by the subaddititvity of the norm, $$c_n(\phi,x)=v(T_n(\f,x)) \leq v(T_m(\f,x)) + C_w\log\bar\imath + v(T_l(\f,x)).$$
Since $v(T_m(\f,x))$ and $v(T_l(\f,x))$ are bounded, we deduce that $$h_\Se(\phi,x)=\lim_{n\to \infty} \frac{c_n(\phi,x))}{n} =0,$$
 as required.
\end{proof}

\subsection{Entropy in $\mathfrak M$}\label{Msec}

We collect below some additional properties of the semigroup entropy in the category $\mathfrak M$ of normed monoids, where also coproducts are available. 

If $(S_i,v_i)$ is a normed monoid for every $i\in I$, the direct sum 
$$S_\oplus= \bigoplus_{i\in I} S_i =\left\{(x_i)\in S=\prod_{i\in I}S_i: |\{i\in I: x_i \ne 1\}|<\infty\right\}$$ becomes a normed monoid with the norm $$v_\oplus(x) = \sum_{i\in I} v_i(x_i)\ \text{for any}\ x = (x_i)_{i\in I} \in S_\oplus.$$ 
This definition makes sense since each $v_i$ is a monoid norm, so 
$v_i(1) = 0$. Hence, $(S_\oplus,v_\oplus)$ is a coproduct of the family $\{(S_i,v_i):i\in I\}$ in $\mathfrak  M$. 

Note that $$S_\oplus \subseteq S_{\mathrm{bnd}},$$ so one can consider on $S_\oplus$ both the norm $v_\oplus$ and the norm induced by $v_\Pi$; by the definitions of the two norms, we have that $$v_\Pi(x)\leq v_\oplus(x)$$ for every $x\in S_\oplus$.

\begin{Lemma}
The category  $\mathfrak M^\dag$ is stable under taking submonoids and direct sums.  
\end{Lemma}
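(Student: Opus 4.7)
The plan is to verify the two stability properties directly from the defining conditions of $\mathfrak M^\dag$, namely that the norm is a monoid norm (so $v(1)=0$) and that it is subadditive ($v(xy) \leq v(x)+v(y)$). No genuinely difficult step is expected; the only thing to be careful about is that the norm on the direct sum is well-defined and correctly inherits subadditivity term by term.

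For stability under submonoids, I would take $(S,v) \in \mathfrak M^\dag$ and a submonoid $T \leq S$ equipped with the restricted norm $v|_T$. Since $T$ is a submonoid, $1_T = 1_S$, so $v|_T(1_T) = v(1_S) = 0$, and for any $x,y \in T$ the inequality $v|_T(xy) = v(xy) \leq v(x)+v(y) = v|_T(x)+v|_T(y)$ follows at once from the subadditivity of $v$ on $S$. Hence $(T,v|_T) \in \mathfrak M^\dag$.

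For stability under direct sums, let $\{(S_i,v_i):i\in I\}$ be a family in $\mathfrak M^\dag$ and form $(S_\oplus, v_\oplus)$ as defined just before the lemma. The sum $v_\oplus(x) = \sum_{i\in I} v_i(x_i)$ is finite for each $x \in S_\oplus$ because only finitely many $x_i$ differ from $1$, and those indices are exactly the ones where $v_i(x_i)$ may be nonzero (the others contribute $v_i(1)=0$). At the identity element $1 = (1)_{i\in I}$ we thus get $v_\oplus(1) = 0$, so $v_\oplus$ is a monoid norm. Finally, for $x = (x_i), y = (y_i) \in S_\oplus$, using coordinate-wise subadditivity of each $v_i$,
\[
v_\oplus(xy) \;=\; \sum_{i\in I} v_i(x_i y_i) \;\leq\; \sum_{i\in I}\bigl(v_i(x_i)+v_i(y_i)\bigr) \;=\; v_\oplus(x)+v_\oplus(y),
\]
which proves subadditivity. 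Hence $(S_\oplus,v_\oplus) \in \mathfrak M^\dag$, completing the verification.
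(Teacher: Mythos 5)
Your proof is correct and follows exactly the route the paper takes (which merely states that the submonoid case follows from stability of subadditivity under restriction and that the direct-sum case "easily follows from the definitions"); you have simply written out the coordinate-wise verifications that the paper leaves implicit. No issues.
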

\begin{proof}
The first assertion follows from the fact we mentioned above that $\mathfrak S^\dag$ is stable under taking subsemigroups. 
If $(S_i,v_i)$ is a normed monoid for every $i\in I$ and $S_\oplus= \bigoplus_{i\in I} S_i $, then we have to check that $v_\oplus$ is subadditive whenever each $(S_i,v_i)$ is subadditive. This easily follows from the definitions. 
\end{proof}

Now we consider the case when $I$ is finite, that can easily be reduced to the case of binary products,  so we assume $I=\{1,2\}$ without loss of generality and we have two normed monoids $(S_1,v_1)$ and $(S_2,v_2)$. The product and the coproduct have the same underlying monoid $S=S_1\times S_2$, but the norms $v_\oplus$ and $v_\Pi$ on $S$ are different and give different values of the semigroup entropy $h_\Se$ (compare Theorem~\ref{WAT} and the following one).  

\begin{Theorem}[weak Addition Theorem - coproducts\index{weak Addition Theorem}] \label{wAT}
Let $(S_i,v_i)$ be a normed monoid and $\f_i:S_i\to S_i$ an endomorphism for $i=1,2$; moreover, let $(S,v)=(S _1 \oplus S_2,v_\oplus)$ and $\phi=\f_1 \oplus \f_2:S\to S$. Then 
$$h_\Se(\f)\leq h_\Se(\f_1) + h_\Se(\f_2).$$
If $v$ is subadditive, then $$h_\Se(\f) = h_\Se(\f_1) + h_\Se(\f_2).$$
\end{Theorem}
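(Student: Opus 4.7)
The key observation is that on the coproduct $S = S_1 \oplus S_2$ the multiplication is componentwise and the norm $v_\oplus$ is additive on components, so the trajectory of an element splits cleanly into the trajectories of its two coordinates. Explicitly, for $x=(x_1,x_2)\in S$ and every $n\in\N_+$,
$$T_n(\phi,x) = (T_n(\phi_1,x_1), T_n(\phi_2,x_2)),$$
because $\phi^k(x_1,x_2)=(\phi_1^k(x_1),\phi_2^k(x_2))$ and the product in $S$ is coordinatewise. Applying $v_\oplus$ gives the fundamental identity
$$c_n(\phi,x) \;=\; v_1(T_n(\phi_1,x_1))+v_2(T_n(\phi_2,x_2)) \;=\; c_n(\phi_1,x_1)+c_n(\phi_2,x_2),$$
which reduces the theorem to asymptotic bookkeeping on the two sequences $\{c_n(\phi_i,x_i)/n\}_{n\in\N_+}$.

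For the inequality $h_\Se(\phi)\leq h_\Se(\phi_1)+h_\Se(\phi_2)$, divide the above identity by $n$ and take $\limsup$. Using the standard subadditivity of $\limsup$ for non-negative sequences,
$$h_\Se(\phi,x) \;\leq\; \limsup_{n\to\infty}\frac{c_n(\phi_1,x_1)}{n}+\limsup_{n\to\infty}\frac{c_n(\phi_2,x_2)}{n} \;=\; h_\Se(\phi_1,x_1)+h_\Se(\phi_2,x_2) \;\leq\; h_\Se(\phi_1)+h_\Se(\phi_2).$$
Taking the supremum over $x\in S$ (noting that every element of $S=S_1\oplus S_2$ has the form $(x_1,x_2)$) yields the desired bound.

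For the reverse inequality under the assumption that $v=v_\oplus$ is subadditive, first invoke Theorem~\ref{limit}: each sequence $c_n(\phi_i,x_i)/n$ converges (it would be enough that the sum converges, but in fact the $v_i$ are also subadditive since $v_\oplus$ is, restricted to the natural copies of $S_i$ in $S$, so one gets convergence of each summand). Consequently the $\limsup$ above becomes an honest limit and the identity $c_n(\phi,x)=c_n(\phi_1,x_1)+c_n(\phi_2,x_2)$ transfers to equality
$$h_\Se(\phi,x) \;=\; h_\Se(\phi_1,x_1)+h_\Se(\phi_2,x_2).$$
Now take the supremum over $x=(x_1,x_2)\in S_1\oplus S_2$, using the elementary identity
$$\sup_{(x_1,x_2)}\bigl(a(x_1)+b(x_2)\bigr) \;=\; \sup_{x_1} a(x_1)+\sup_{x_2}b(x_2)$$
valid for arbitrary non-negative (possibly $\infty$-valued) functions $a,b$, to conclude $h_\Se(\phi)=h_\Se(\phi_1)+h_\Se(\phi_2)$.

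There is no substantive obstacle; the whole argument rests on the additivity of $v_\oplus$ and the componentwise form of $\phi$. The only delicate point is justifying that the $\limsup$ splits as a sum in the subadditive case, which forces the passage through Theorem~\ref{limit}; without subadditivity only the one-sided inequality survives, which explains the asymmetry in the statement. One should also verify that the full supremum on $S=S_1\oplus S_2$ (rather than only on decomposable elements) coincides with the sum of the suprema over $S_1$ and $S_2$, but in the coproduct every element is already of the form $(x_1,x_2)$, so this is automatic.
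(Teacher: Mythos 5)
Your proof is correct and follows essentially the same route as the paper's: split $c_n(\phi,x)$ as $c_n(\phi_1,x_1)+c_n(\phi_2,x_2)$ via the additivity of $v_\oplus$, use subadditivity of $\limsup$ for the inequality, and invoke Theorem~\ref{limit} to turn the $\limsup$ into a sum of genuine limits in the subadditive case before taking suprema. If anything, you are slightly more careful than the paper at the final step, since you state the additive supremum identity $\sup_{(x_1,x_2)}(a(x_1)+b(x_2))=\sup a+\sup b$ explicitly, whereas the paper cites its formula \eqref{easy:claim}, which is the $\max$ version used for products.
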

\begin{proof}
For $x\in S$, let $x_1\in S_1$ and $x_2\in S_2$ be such that $x=(x_1,x_2)$. Then 
\begin{align*}
h_\Se(\phi,x)&=\limsup_{n\to \infty}\frac{v(T_n(\phi,x))}{n}\\
&= \limsup_{n\to \infty}\frac{v_1(T_n(\phi_1,x_1))+v_2(T_n(\phi_2,x_2))}{n}\\
&\leq h_\Se(\f_1,x_1)+h_\Se(\f_2,x_2).
\end{align*}
Therefore, applying the definition and \eqref{easy:claim}, we have that $h_\Se(\phi)\leq  h_\Se(\f_1)+h_\Se(\f_2)$.

\smallskip
Assume that the norm $v$ is subadditive; clearly this occurs if and only if both norms $v_1, v_2$ are subadditive.
Hence, by Theorem~\ref{limit},
\begin{align*}
h_\Se(\phi,x)&=\limsup_{n\to \infty}\frac{v(T_n(\phi,x))}{n}\\
&= \limsup_{n\to \infty}\frac{v_1(T_n(\phi_1,x_1))+v_2(T_n(\phi_2,x_2))}{n}\\
&= h_\Se(\f_1,x_1)+h_\Se(\f_2,x_2).
\end{align*}
Applying the definition and \eqref{easy:claim}, we conclude that $h_\Se(\phi)= h_\Se(\phi_1)+h_\Se(\phi_2)$.
\end{proof}

Now we consider the main example in entropy theory, that is, the Bernoulli shifts, in the category of normed monoids.

\begin{Example}
For a normed monoid $(M,v) \in \mathfrak  M$, let $B(M)$ be the direct sum $M^{(\N)}$ equipped with the coproduct norm.
The \emph{right Bernoulli shift}\index{Bernoulli shift} is defined by $$\newsym{right Bernoulli shift}{\beta_M}:B(M)\to B(M),\quad  \beta_M(x_0,\ldots,x_n,\ldots)=(1,x_0,\ldots,x_n,\ldots),$$
while the \emph{left Bernoulli shift} is $$\newsym{left Bernoulli shift}{{}_M\beta}:B(M)\to B(M),\ {}_M\beta(x_0,x_1,\ldots,x_n,\ldots)=(x_1,x_2, \ldots,x_n,\ldots).$$
Then:
\begin{itemize}
\item[(a)] $h_\Se({}_M\beta) = 0$;
\item[(b)] $h_\Se (\beta_M)=\sup_{x\in M}v(x)$.
\end{itemize}

To verify (a), note that for every $\underline{x}=(x_n)_{n\in\N}\in B(M)$, there exists $m\in\N_+$ such that ${}_M\beta^m(x) = 1$. So 
$$T_n({}_M\beta,x)=T_m({}_M\beta,x)\ \text{for every}\ n\geq m,$$ hence dividing by $n$ and letting $n$ converge to infinity we obtain $h_\Se({}_M\beta,\underline{x})=0$.

We are left with (b). For $x\in M$ consider $\underline{x}=(x_n)_{n\in\N}\in B(M)$ such that $x_0=x$ and $x_n=1$ for every $n\in\N_+$.  
Then, for every $n\in\N_+$, $$v_\oplus(T_n(\beta_M,\underline{x}))=n\cdot v(x),$$ so $h_\Se(\beta_M,\underline{x})=v(x)$. Hence, $h_\Se(\beta_M)\geq \sup_{x\in M}v(x)$.  
Now let $\underline{x}=(x_n)_{n\in\N}\in B(M)$ and let $k\in\N$ be the greatest index such that $x_k\neq 1$; then, for every $n\geq k$,
\begin{equation*}\begin{split}
v_\oplus(T_n(\beta_M,\underline{x}))&= \sum_{i=0}^{k+n-1} v(T_n(\beta_M,\underline{x})_i)\\
&= \sum_{i=0}^{k-1} v(x_i\cdot\ldots\cdot x_0) + (n-k)\cdot v(x_k\cdot\ldots\cdot x_0)+\sum_{i=1}^{k} v(x_k\cdot\ldots\cdot x_i).
\end{split}\end{equation*}
Since the first and the last summand do not depend on $n$, after dividing by $n$ and letting $n$ tend to infinity, we obtain that  
$$h_\Se(\beta_M,\underline{x})=\lim_{n\to \infty} \frac{v_\oplus(T_n(\beta_M,\underline{x}))}{n}= v(x_k\cdot\ldots\cdot x_0)\leq \sup_{x\in M}v(x),$$ 
and this gives the equality in (b).
\end{Example}

\subsection{Alternatives for the definition of trajectories}\label{alternative}

Here we discuss a possible different notion of semigroup entropy. 
Let $(S,v)$ be a normed semigroup, $\phi:S\to S$ an endomorphism, $x\in S$ and $n\in\N_+$. One could define the ``left'' $n$-th $\phi$-trajectory of $x$ as $$T_n^{\#}(\phi,x)=\phi^{n-1}(x)\cdot\ldots\cdot\phi(x)\cdot x,$$ changing the order of the factors with respect to the above definition of $T_n(\phi,x)$.
With these new trajectories it is possible to define another entropy letting 
$$h_\Se^{\#}(\phi,x)=\limsup_{n\to\infty}\frac{v(T_n^{\#}(\phi,x))}{n},$$ and 
$$h_\Se^{\#}(\phi)=\sup\{h_\Se^{\#}(\phi,x):x\in S\}.$$ 
In the same way as above, one can see that the limit superior in the definition of $h_\Se^{\#}(\phi,x)$ is a limit when $v$ is subadditive.

\medskip
Obviously $h_\Se^{\#}$ and $h_\Se$ coincide on the identity map and in commutative normed semigroups. Anyway, one can produce examples of flows (e.g., see Example~\ref{Ex_ast}(b)) whose entropy $h_\Se$ differs from their ``left entropy" $h_\Se^\#$. On the other hand, in the next result we describe the relation between these two notions of entropy in the case of an automorphism  (compare with Lemma~\ref{inversion}).

\begin{Proposition}\label{sharp}
Let $(S,v)$ be a normed semigroup and let $\phi:S\to S$ be an automorphism. Then $$h_{\Se}(\f^{-1})=h^\#_{\Se}(\f).$$
\end{Proposition}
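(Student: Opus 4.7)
The plan is to exhibit a direct identity between the left trajectories of $\phi$ and the (ordinary) trajectories of $\phi^{-1}$, up to the action of a power of $\phi$, and then exploit the fact that the norm is invariant under an automorphism in $\Se$.

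First, I would unwind the definitions. For each $x\in S$ and $n\in\N_+$,
\begin{equation*}
T_n^{\#}(\phi,x)=\phi^{n-1}(x)\cdot \phi^{n-2}(x)\cdot\ldots\cdot \phi(x)\cdot x,\qquad
T_n(\phi^{-1},x)=x\cdot\phi^{-1}(x)\cdot\ldots\cdot \phi^{-(n-1)}(x).
\end{equation*}
Applying the homomorphism $\phi^{n-1}$ to $T_n(\phi^{-1},x)$ factor by factor, one gets exactly
\begin{equation*}
\phi^{n-1}\bigl(T_n(\phi^{-1},x)\bigr)=\phi^{n-1}(x)\cdot \phi^{n-2}(x)\cdot\ldots\cdot \phi(x)\cdot x=T_n^{\#}(\phi,x).
\end{equation*}

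Second, I would invoke the hypothesis that $\phi$ is an \emph{automorphism} in $\Se$. As recalled in \S\ref{Se-sec1}, an isomorphism in $\Se$ preserves the norm, so $v(\phi(y))=v(y)$ for every $y\in S$, and by iteration $v(\phi^{k}(y))=v(y)$ for every $k\in\N$. Applying this to $y=T_n(\phi^{-1},x)$ and $k=n-1$ yields
\begin{equation*}
v\bigl(T_n^{\#}(\phi,x)\bigr)=v\bigl(\phi^{n-1}(T_n(\phi^{-1},x))\bigr)=v\bigl(T_n(\phi^{-1},x)\bigr),
\end{equation*}
that is, the defining sequences for $h_\Se^{\#}(\phi,x)$ and for $h_\Se(\phi^{-1},x)$ coincide term by term.

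Third, dividing by $n$ and taking the $\limsup$ as $n\to\infty$ gives $h_\Se^{\#}(\phi,x)=h_\Se(\phi^{-1},x)$ for every $x\in S$; taking the supremum over $x\in S$ yields the desired equality $h_\Se^{\#}(\phi)=h_\Se(\phi^{-1})$. There is no real obstacle here: the only subtle point is recognizing that reversing the order of the factors in $T_n^{\#}(\phi,x)$ is, up to a global application of $\phi^{n-1}$ (which is norm-preserving since $\phi$ is an automorphism in $\Se$), exactly the trajectory of $\phi^{-1}$. Note that commutativity of $S$ is not needed, in contrast with Lemma~\ref{inversion}, precisely because here we compare $h_\Se^{\#}(\phi)$ with $h_\Se(\phi^{-1})$ rather than $h_\Se(\phi)$ with $h_\Se(\phi^{-1})$.
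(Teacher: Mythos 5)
Your proof is correct and follows essentially the same route as the paper: the paper writes $T_n(\phi^{-1},x)=\phi^{-n+1}\bigl(T_n^{\#}(\phi,x)\bigr)$ and uses that an automorphism in $\Se$ preserves the norm, which is exactly your identity read in the opposite direction. Your closing remark about why commutativity is not needed here, unlike in Lemma~\ref{inversion}, is also accurate.
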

\begin{proof}
It suffices to see that $h_{\Se}(\f^{-1},x)=h^\#_{\Se}(\f,x)$ for each $x\in S$.  Indeed, given $x\in S$ and $n\in\N_+$,
\begin{align*} v(T_n(\phi^{-1},x)) &=  v(x\cdot \f^{-1}(x)\cdot \ldots \cdot \f^{-n+1}(x))\\
&=v(\f^{-n+1}(\f^{n-1}(x)\cdot \ldots \cdot \f(x)\cdot x)=\\
&= v(\f^{n-1}(x)\cdot \ldots \cdot \f(x)\cdot x)=v(T^\#_n(\f,x)).\end{align*}
Apply the definitions of $h_\Se(\f^{-1},x)$ and $h^\#_\Se(\f,x)$ to conclude. 
\end{proof}

Next we give suitable hypotheses on a flow in $\Se$ to conclude that $h_\Se$ coincides with $h_\Se^\#$. Recall that an anti-isomorphism $i:S\to S$ is a bijective map such that $i(x\cdot y)=i(y)\cdot i(x)$ for every $x,y\in S$. The second statement in the following proposition should be compared with Lemma~\ref{inversion}.

\begin{Proposition}\label{antiiso}
Let $(S,v)$ be a normed semigroup, $\phi:S\to S$ an endomorphism and assume that there exists an anti-isomorphism $i:S\to S$ such that  $i\circ\phi=\phi\circ i$ and $v(i(x))=v(x)$ for every $x\in S$. Then $$h_\Se(\phi)=h_\Se^\#(\phi).$$
In particular, if $\phi$ is an automorphism, then $$h_\Se(\phi)=h_\Se(\phi^{-1}).$$
\end{Proposition}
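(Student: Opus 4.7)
The plan is to show that applying the anti-isomorphism $i$ to an ordinary trajectory produces exactly a ``left'' trajectory of $i(x)$, and then to exploit the $i$-invariance of the norm to transfer the computation between $h_\Se$ and $h_\Se^\#$.

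First, I would fix $x\in S$ and $n\in\N_+$ and compute $i(T_n(\phi,x))$. Since $i$ is an anti-isomorphism, it reverses the order of products, so
\begin{equation*}
i(T_n(\phi,x)) = i(x\cdot \phi(x)\cdot\ldots\cdot\phi^{n-1}(x)) = i(\phi^{n-1}(x))\cdot\ldots\cdot i(\phi(x))\cdot i(x).
\end{equation*}
Using the commutation relation $i\circ\phi=\phi\circ i$, an easy induction gives $i\circ\phi^k=\phi^k\circ i$ for all $k\in\N$, hence
\begin{equation*}
i(T_n(\phi,x)) = \phi^{n-1}(i(x))\cdot\ldots\cdot\phi(i(x))\cdot i(x) = T_n^\#(\phi,i(x)).
\end{equation*}
Applying $v$ and using $v\circ i=v$ yields $v(T_n(\phi,x)) = v(T_n^\#(\phi,i(x)))$ for every $n\in\N_+$.

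Next, dividing by $n$ and taking the limit superior gives $h_\Se(\phi,x)=h_\Se^\#(\phi,i(x))$. Since $i$ is a bijection $S\to S$, when $x$ ranges over $S$ so does $i(x)$, so taking the supremum on both sides produces
\begin{equation*}
h_\Se(\phi)=\sup_{x\in S} h_\Se(\phi,x)=\sup_{y\in S} h_\Se^\#(\phi,y)=h_\Se^\#(\phi),
\end{equation*}
which establishes the first claim.

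For the ``in particular'' part, assume additionally that $\phi$ is an automorphism. By Proposition~\ref{sharp}, $h_\Se(\phi^{-1})=h_\Se^\#(\phi)$, and combining with what was just proved gives $h_\Se(\phi^{-1})=h_\Se^\#(\phi)=h_\Se(\phi)$, as desired. I do not foresee any real obstacle: the only point requiring care is keeping track of the reversed order produced by the anti-isomorphism, which is precisely what converts $T_n$ into $T_n^\#$; the commutation and isometry hypotheses do the rest.
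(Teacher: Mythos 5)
Your proof is correct and follows essentially the same route as the paper's: apply $i$ to the trajectory, use the anti-isomorphism and the commutation $i\circ\phi=\phi\circ i$ to identify $i(T_n(\phi,x))$ with $T_n^\#(\phi,i(x))$, invoke the norm invariance, and finish the second claim via Proposition~\ref{sharp}. Your explicit remark that the bijectivity of $i$ justifies passing to the supremum is a slight refinement of detail, but the argument is the same.
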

\begin{proof}
For every $n\in\N_+$ and $x\in S$ we have that $v(T_n(\phi,x))=v(i(T_n(\phi,x))$ and 
$$i(T_n(\phi,x))=i(\phi^{n-1}(x)) \cdot\ldots\cdot i(\phi(x))i(x)=\phi^{n-1}(i(x))\cdot\ldots\cdot \phi(i(x))i(x)=T_n^\#(\phi,i(x));$$ 
so, $v(T_n(\phi,x))=v(T_n^\#(\phi,i(x)))$. Therefore $h_\Se(\phi,x)=h_\Se^\#(\phi,i(x))$, and hence $h_\Se(\phi)=h_\Se^\#(\phi)$.

That $h_\Se(\phi)=h_\Se(\phi^{-1})$ in case $\phi$ is an automorphism follows from the first assertion and from Proposition~\ref{sharp}.
\end{proof}  
 
Part (a) of the following example shows that it may occur that $h_\Se^{\#}$ and $h_\Se$ do not coincide ``locally'', while they coincide ``globally''.
Moreover, modifying appropriately the norm in part (a), Jan Spev\'ak found the example in part (b) for which $h_\Se^{\#}$ and $h_\Se$ do not coincide even ``globally''. This example was given in \cite{DGB_PC},  we repeat it here for the sake of completeness.

\begin{Example}\label{Ex_ast}
Let $X=\{x_n\}_{n\in\Z}$ be a faithfully enumerated countable set and let $S$ be the free semigroup generated by $X$. An element $w\in S$ is a word $w=x_{i_1}x_{i_2}\ldots x_{i_m}$ with $m\in\N_+$ and $i_j\in\Z$ for $j= 1,2, \ldots,  m$. In this case $m$ is called the {\em length} $\ell_X(w)$ of $w$, and a subword of $w$ is any $w'\in S$ of the form $w'=x_{i_k}x_{i_k+1}\ldots x_{i_l}$ with $1\le k\le l\le n$. 

Consider the automorphism $\phi:S\to S$ determined by $\phi(x_n)=x_{n+1}$ for every $n\in\Z$.

\begin{itemize}\label{ex-jan}
\item[(a)] Let $s(w)$ be the number of adjacent pairs $(i_k,i_{k+1})$ in $w$ such that $i_k<i_{k+1}$. The map $v:S\to\R_{\geq0}$ defined by $v(w)=s(w)+1$ is a subadditive semigroup norm. 
Then $\phi:(S,v)\to (S,v)$ is an automorphism of normed semigroups.

It is straightforward to prove that, for $w=x_{i_1}x_{i_2}\ldots x_{i_m}\in S$:
\begin{itemize}
\item[(i)] $h_\Se^\#(\phi,w)=h_\Se(\phi,w)$ if and only if $i_1>i_m+1$ or $i_1\leq i_m-1$;
\item[(ii)] $h_\Se^\#(\phi,w)=h_\Se(\phi,w)-1$ if and only if $i_m=i_1$ or $i_m=i_1-1$.
\end{itemize}
Moreover, 
\begin{itemize}
\item[(iii)]$h_\Se^\#(\phi)=h_\Se(\phi)=\infty$.
\end{itemize}
In particular, $h_\Se(\phi,x_0)=1$ while $h_\Se^\#(\phi,x_0)=0$.

\item[(b)] Define a subadditive semigroup norm $\nu: S\to \R_{\geq0}$ as follows. For 
$$w=x_{i_1}x_{i_2}\ldots x_{i_n}\in S$$ 
consider its subword $w'=x_{i_k}x_{i_{k+1}}\ldots x_{i_l}$ with maximal length satisfying $$i_{j+1}=i_j+1$$ for every $j\in\Z$ with $k\le j\le l-1$ and let $\nu(w)=\ell_X(w')$.
Then $\phi:(S,\nu)\to (S,\nu)$ is an automorphism of normed semigroups.

It is possible to prove that, for $w\in S$:
\begin{enumerate}
\item[(i)] if $\ell_X(w)=1$, then $\nu(T_n(\phi,w))=n$ and $\nu(T^\#_n(\phi,w))=1$ for every $n\in\N_+$;
\item[(ii)] if $\ell_X(w)=k$ with $k>1$, then $\nu(T_n(\phi,w))< 2k$ and $\nu(T^\#_n(\phi,w))< 2k $ for every $n\in\N_+$.
\end{enumerate}
From (i) and (ii), and from the definitions, we immediately obtain that
\begin{itemize}
\item[(iii)] $h_\mathfrak{S}(\phi)=1\neq 0=h^\#_\mathfrak{S}(\phi)$. 
\end{itemize}
\end{itemize}
\end{Example}

\section{The functorial entropy}\label{f-sec}

\subsection{Definition and basic properties}

In this section $\mathfrak X$ will always be a category and $F:\mathfrak X\to \Se$ a functor. 

\medskip
We define below the functorial entropy and establish its basic properties. We give the proofs only for the case when the functor $F:\mathfrak X\to \Se$ is covariant (for contravariant $F$ one can consider $F:\mathfrak X^{op}\to \Se$).

Recall that we write $\h_F:\mathfrak X\to \mathbb R_+$ in place of $\h_F:\mathrm{Flow}_\mathfrak X\to \mathbb R_+$ for the sake of simplicity.

\begin{Definition}
Define the \emph{functorial entropy}\index{functorial entropy} ${\h_{F}}:\mathfrak X\to \mathbb R_+$ on the category $\mathfrak X$ by letting, for any endomorphism $\phi: X \to X$ in $\mathfrak X$, $$\newsym{functorial entropy of $\phi$}{\h_F(\phi)}=h_{\Se}(F(\phi)).$$
For $x\in F(X)$, \emph{the functorial entropy of $\phi$ with respect to $x$} is $$\newsym{functorial entropy of $\phi$ with respect to $x$}{\H_F(\phi,x)}=h_\Se(F(\phi),x).$$
\end{Definition}

Clearly, in the above notation, $$\h_F(\phi)=\sup_{x\in F(X)}\H_F(\phi,x).$$

The same definition can be naturally extended to the more general case of a functor $F:\mathfrak X\to \Se^*$,  for which we keep the same notation $\h_F$.

\medskip
Now we prove the basic properties of the functorial entropy. The proofs of most of them require the target of the functor to be in $\Se$. 

\begin{Lemma}[Invariance under conjugation\index{Invariance under conjugation}]
If $(X,\phi)$ and $(Y,\psi)$ are isomorphic flows of $\mathfrak X$, then $$\h_F(\phi)=\h_F(\psi).$$
\end{Lemma}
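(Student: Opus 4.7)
The plan is to reduce this lemma directly to the already-established Invariance under conjugation for the semigroup entropy (Corollary~\ref{iuc}), using only the fact that $F$ is a functor and therefore preserves isomorphisms. The key observation is that the definition $\h_F(\phi)=h_\Se(F(\phi))$ lets us transport the assertion from $\mathfrak X$ to $\Se$ at no cost.

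First I would unfold the hypothesis: by definition, $(X,\phi)$ and $(Y,\psi)$ being isomorphic flows of $\mathfrak X$ means that there is an isomorphism $\alpha: X \to Y$ in $\mathfrak X$ making the square
$$\xymatrix{X\ar[r]^{\phi}\ar[d]_{\alpha} & X\ar[d]^{\alpha}\\ Y\ar[r]_{\psi}& Y}$$
commute, i.e.\ $\alpha\circ\phi=\psi\circ\alpha$. Applying the (covariant) functor $F:\mathfrak X\to\Se$ yields $F(\alpha)\circ F(\phi)=F(\psi)\circ F(\alpha)$ in $\Se$, and since functors preserve isomorphisms, $F(\alpha)$ is an isomorphism in $\Se$. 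Hence $F(\psi)=F(\alpha)\circ F(\phi)\circ F(\alpha)^{-1}$, so $F(\phi)$ and $F(\psi)$ are conjugate endomorphisms in $\Se$.

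Then I would invoke Corollary~\ref{iuc} to conclude $h_\Se(F(\phi))=h_\Se(F(\psi))$, which by definition of the functorial entropy gives $\h_F(\phi)=\h_F(\psi)$, as required. For the contravariant case, one replaces $F$ by $F:\mathfrak X^{op}\to\Se$ as indicated in the preamble; the commutative square becomes $F(\phi)\circ F(\alpha)=F(\alpha)\circ F(\psi)$, and the same conjugation argument applies with $F(\alpha)$ still an isomorphism in $\Se$.

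There is no real obstacle here: the entire content is the functoriality of $F$ plus Corollary~\ref{iuc}. The only point worth stating explicitly is that $F(\alpha)$ is a morphism in $\Se$ (hence contractive) together with its inverse, which is automatic from $\alpha$ being an isomorphism in $\mathfrak X$ and $F$ being a functor into $\Se$.
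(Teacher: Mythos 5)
Your proof is correct and follows exactly the paper's own argument: apply the functor $F$ to the conjugating isomorphism $\alpha$, note that $F(\alpha)$ is an isomorphism in $\Se$ so that $F(\psi)=F(\alpha)\circ F(\phi)\circ F(\alpha)^{-1}$, and invoke Corollary~\ref{iuc}. The remark about the contravariant case matches the paper's blanket convention of passing to $F:\mathfrak X^{op}\to\Se$.
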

\begin{proof}
Let $\alpha:X\to Y$ be an isomorphism in $\mathfrak X$ such that $\psi=\alpha\circ\phi\circ\alpha^{-1}$.
Since $F(\alpha):F(X)\to F(Y)$ is an isomorphism in $\Se$, we have that $F(\psi)=F(\alpha)\circ F(\phi)\circ F(\alpha)^{-1}$,
and it suffices to apply Corollary~\ref{iuc} to conclude that $$\h_F(\psi)=h_\Se(F(\alpha)\circ F(\phi)\circ F(\alpha)^{-1})=h_\Se(F(\phi))=\h_F(\phi).$$
This completes the proof.
\end{proof}

Next we see the an invertible flow of $\mathfrak X$ and its inverse flow have the same functorial entropy.

\begin{Lemma}[Invariance under inversion\index{Invariance under inversion}]\label{inv}
Let $\phi:X\to X$ be an automorphism in $\mathfrak X$ and $F(X)$ a commutative normed semigroup. Then $$\h_F(\f^{-1})=\h_F(\f).$$
\end{Lemma}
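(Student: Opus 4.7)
The plan is to reduce this directly to the already-proved Invariance under inversion for the semigroup entropy (Lemma~\ref{inversion}), by using the functoriality of $F$. The key observation is that functors preserve isomorphisms: since $\phi:X\to X$ is an automorphism in $\mathfrak X$, its image $F(\phi):F(X)\to F(X)$ is an automorphism of the normed semigroup $F(X)$ in $\Se$, and moreover $F(\phi^{-1}) = F(\phi)^{-1}$ (this is the only spot where we use the functoriality: from $\phi\circ\phi^{-1}=\mathrm{id}_X = \phi^{-1}\circ\phi$ we get $F(\phi)\circ F(\phi^{-1}) = \mathrm{id}_{F(X)} = F(\phi^{-1})\circ F(\phi)$).

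Once this is in place, the commutativity hypothesis on $F(X)$ is exactly what is needed to apply Lemma~\ref{inversion} to the automorphism $F(\phi)$ of the normed semigroup $F(X)$, giving $h_\Se(F(\phi)^{-1}) = h_\Se(F(\phi))$. Unfolding the definition of the functorial entropy,
\[
\h_F(\phi^{-1}) \;=\; h_\Se(F(\phi^{-1})) \;=\; h_\Se(F(\phi)^{-1}) \;=\; h_\Se(F(\phi)) \;=\; \h_F(\phi),
\]
which is the claim.

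There is essentially no obstacle in the covariant case. For the contravariant case (which is not treated here, as the paper states the proofs are given only for covariant $F$), one would pass to $F:\mathfrak X^{op}\to \Se$ and observe that under the isomorphism $\mathfrak X^{op}\to \mathfrak X^{op}$ corresponding to $\phi$, the trajectories get reversed in order, but Lemma~\ref{inversion} already handles this via its own proof (which does not care about the order of iterates, only about commutativity of $S$). The only hypothesis that is genuinely used is the commutativity of $F(X)$: without it, Lemma~\ref{inversion} fails in general (as discussed in \S\ref{alternative}), and one would at best obtain the weaker identity $\h_F(\phi^{-1}) = h_\Se^{\#}(F(\phi))$ coming from Proposition~\ref{sharp}.
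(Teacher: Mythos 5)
Your proof is correct and follows essentially the same route as the paper: the paper's proof likewise notes that $F(\phi)$ is an automorphism in $\Se$ and applies Lemma~\ref{inversion} to obtain $\h_F(\f^{-1})=h_\Se(F(\phi)^{-1})=h_\Se(F(\phi))=\h_F(\f)$. Your explicit justification that $F(\phi^{-1})=F(\phi)^{-1}$ and your remark on where commutativity is used are correct elaborations of steps the paper leaves implicit.
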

\begin{proof}
Since $F(\phi):F(X)\to F(X)$ is an automorphism in $\Se$, Lemma~\ref{inversion} gives immediately $$\h_F(\f^{-1})=h_\Se(F(\phi)^{-1})=h_\Se(F(\phi))=\h_F(\f),$$
and this concludes the proof.
\end{proof}

\begin{Lemma}[Logarithmic Law\index{Logarithmic Law}] 
Let $(X,\phi)$ be a flow of $\mathfrak X$. Then $\h_F(\f^{k})\leq k\cdot \h_F(\f)$ for all $k\in \N_+$. If $F(X)$ is commutative and has a d-monotone norm, then for all $k\in \N_+$, $$\h_F(\f^{k})= k\cdot \h_F(\f).$$
Moreover, if $\phi$ is an automorphism, then  for all $k\in \Z\setminus\{0\}$, $$\h_F(\f^{k})=|k|\cdot \h_F(\f).$$
\end{Lemma}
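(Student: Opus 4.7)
The plan is to exploit the functoriality identity $F(\phi^k)=F(\phi)^k$, which transforms all three assertions into corresponding statements about $h_\Se$ applied to the contractive endomorphism $F(\phi)$ of the normed semigroup $F(X)$:
$$\h_F(\phi^k)=h_\Se(F(\phi)^k), \qquad k\cdot\h_F(\phi)=k\cdot h_\Se(F(\phi)).$$
So the whole lemma is really a transcription, through $F$, of the semigroup-level Logarithmic Law together with the Invariance under inversion.

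For the equality $\h_F(\phi^k)=k\cdot\h_F(\phi)$ under the hypothesis that $F(X)$ is commutative and has a d-monotone norm, I would invoke Proposition~\ref{ll} directly: applied to the endomorphism $F(\phi)$ of the commutative d-monotone normed semigroup $F(X)$, it gives $h_\Se(F(\phi)^k)=k\cdot h_\Se(F(\phi))$, which is exactly what is needed.

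For the automorphism case with $k\in\Z\setminus\{0\}$, the case $k>0$ is already handled. For $k<0$, I write $\phi^k=(\phi^{-1})^{|k|}$ and apply Lemma~\ref{inv}, which under the commutativity of $F(X)$ yields $\h_F(\phi^{-1})=\h_F(\phi)$. Combining this with the Logarithmic Law at exponent $|k|$ for $\phi^{-1}$ (whose image $F(\phi)^{-1}$ is again an automorphism of the same commutative d-monotone semigroup) produces $\h_F(\phi^k)=|k|\cdot\h_F(\phi^{-1})=|k|\cdot\h_F(\phi)$.

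The subtlest point is the unconditional first inequality $\h_F(\phi^k)\leq k\cdot\h_F(\phi)$, since it carries no commutativity or d-monotonicity assumption. This must be reduced to an assertion $h_\Se(\psi^k)\leq k\cdot h_\Se(\psi)$ for an arbitrary contractive endomorphism $\psi:S\to S$ of a normed semigroup. The intended argument is a subsequence comparison in the spirit of the proof of Proposition~\ref{ll}: for $x\in S$ one compares the trajectory $T_n(\psi^k,x)$ with the longer $T_{nk}(\psi,x')$ for an appropriate $x'$, rewrites $c_{nk}/n=k\cdot c_{nk}/(nk)$, and uses the standard bound $\limsup_n c_{nk}(\psi,x')/(nk)\leq\limsup_j c_j(\psi,x')/j=h_\Se(\psi,x')$ for subsequential $\limsup$s, to harvest the factor $k$ after taking the supremum over $x$. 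This is the main obstacle, since the trajectory comparison has to be set up in a way that does not require the product to be rearranged or split by a subadditive bound; once it is in place, the remaining steps are bookkeeping.
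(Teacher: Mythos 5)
Your reduction is exactly the paper's proof: the entire argument there is the one-line observation that $\h_F(\f^{k})=h_\Se(F(\f^{k}))=h_\Se(F(\f)^{k})$ and $\h_F(\f)=h_\Se(F(\f))$, followed by a citation of Proposition~\ref{ll}. Your handling of the conditional equality, and of negative exponents via Lemma~\ref{inv} combined with the Logarithmic Law for $\f^{-1}$, matches this (the automorphism clause of Proposition~\ref{ll} is itself proved by ``apply the first part and Lemma~\ref{inversion}'', which is the same manoeuvre one level down).

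The genuine gap is in your treatment of the unconditional inequality $\h_F(\f^{k})\le k\cdot\h_F(\f)$. The trajectory identity you want to exploit, $T_n(\psi^k,y)=T_{nk}(\psi,x)$ with $y=T_k(\psi,x)$, only covers points $y$ of that special form; for such $y$ the subsequence bound $\limsup_n c_{nk}(\psi,x)/(nk)\le\limsup_j c_j(\psi,x)/j$ sits on the wrong side of what you need, and what actually comes out is $h_\Se(\psi^k)\ge h_\Se(\psi^k,y)=k\cdot\limsup_n c_{nk}(\psi,x)/(nk)$ --- i.e., material for the \emph{lower} bound $h_\Se(\psi^k)\ge k\,h_\Se(\psi)$, which is precisely how the first half of the proof of Proposition~\ref{ll} proceeds. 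To obtain the upper bound one must start from an \emph{arbitrary} $y\in S$ and compare $v(T_n(\psi^k,y))$ with $v(T_{nk}(\psi,y))$, i.e., bound the norm of a subproduct of a trajectory by the norm of the full trajectory; that is exactly the step where commutativity and d-monotonicity enter in the second half of the proof of Proposition~\ref{ll}, and it is not available hypothesis-free. So the sketched argument does not close. (For what it is worth, the paper's own proof does not supply a hypothesis-free argument for this first sentence either: it only cites Proposition~\ref{ll}, whose $\le$ direction is established under the d-monotonicity assumption, so the unconditional claim as stated should be treated with suspicion rather than reproduced.)
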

\begin{proof}
According to the definition, $$\h_F(\f^{k})=h_\Se(F(\phi^k))=h_\Se(F(\phi)^k)$$ and $\h_F(\phi)=h_\Se(F(\phi))$, so it suffices to apply Proposition~\ref{ll}.
\end{proof}

Now we see that, under suitable conditions, the functorial entropy of quasi-periodic flows vanishes.  In particular, the entropy of the identity morphism is zero.

\begin{Lemma}[Vanishing on quasi-periodic flows\index{Vanishing on quasi-periodic flows}]\label{teo:provvisorio}
If $F:\mathfrak X\to \Se^\dag$ has the property that all $F(X)$ have arithmetic norm, then for every quasi-periodic flow $(X,\phi)$ of $\mathfrak X$ one has $\h_F(\phi) = 0$. 
\end{Lemma}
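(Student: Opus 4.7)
The plan is to reduce the statement to Theorem~\ref{quasi-per} applied to the image flow $(F(X), F(\phi))$ in $\Se^\dag$. The key observation is that quasi-periodicity is preserved by any functor, since it is expressed as an equation between iterates.

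First, let $(X,\phi)$ be a quasi-periodic flow of $\mathfrak X$, so there exist naturals $m<k$ with $\phi^k = \phi^m$. Applying the functor $F$ and using its compatibility with composition, we get
\begin{equation*}
F(\phi)^k = F(\phi^k) = F(\phi^m) = F(\phi)^m,
\end{equation*}
so $(F(X), F(\phi))$ is a quasi-periodic flow of $\Se^\dag$. In particular, this flow is locally quasi-periodic, since the global equation $F(\phi)^k = F(\phi)^m$ forces $F(\phi)^k(y) = F(\phi)^m(y)$ for every $y \in F(X)$.

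Now the hypothesis gives that $F(X)$ is a subadditive semigroup with arithmetic norm, so Theorem~\ref{quasi-per} applies directly to the contractive endomorphism $F(\phi): F(X) \to F(X)$, yielding $h_\Se(F(\phi)) = 0$. By the definition of the functorial entropy, this means
\begin{equation*}
\h_F(\phi) = h_\Se(F(\phi)) = 0,
\end{equation*}
as required. There is essentially no obstacle here: once one notices that functors preserve the equation defining quasi-periodicity, the statement is an immediate consequence of Theorem~\ref{quasi-per}, which is the genuine substance behind the result.
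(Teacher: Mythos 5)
Your proof is correct and follows exactly the paper's own argument: $F$ preserves the identity $\phi^k=\phi^m$, so $(F(X),F(\phi))$ is a quasi-periodic (hence locally quasi-periodic) flow of $\Se^\dag$ with arithmetic norm, and Theorem~\ref{quasi-per} gives $h_\Se(F(\phi))=0$. Your explicit remark that quasi-periodicity implies local quasi-periodicity is a small step the paper leaves implicit, but otherwise the two proofs coincide.
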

\begin{proof}
If $(X,\phi)$ is a quasi-periodic flow of $\mathfrak X$, then $(F(X),F(\phi))$ is a quasi-periodic flow of $\Se^\dag$ and the norm of $F(X)$ is arithmetic by hypothesis. Then, by Theorem~\ref{quasi-per}, we conclude that $\h_F(\phi)=h_\Se(\F(\phi))=0$.
\end{proof}

 In the following lemma we see that the functorial entropy of a subflow of a flow $(X,\phi)$ of $\mathfrak X$ is always smaller than the functorial entropy of $(X,\phi)$.

\begin{Lemma}[Monotonicity for subflows\index{Monotonicity for subflows}]\label{Fmons} 
 Let $(X,\phi)$ be a flow of $\mathfrak X$ and $Y$ a $\f$-invariant subobject of $X$. If $F$ is covariant and $F(Y)$ is a subsemigroup of $F(X)$ (or if $F$ is contravariant and $F(Y)$ is a factor of $F(X))$, then $$\h_F(\f\restriction_Y)\leq \h_F(\f).$$
\end{Lemma}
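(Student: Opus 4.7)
The plan is to reduce both cases to the corresponding properties already proved for the semigroup entropy $h_\Se$ in Lemma~\ref{mono/fac} (Monotonicity for factors) and Lemma~\ref{mons} (Monotonicity for subflows), by applying the functor $F$ to the canonical morphism $\iota : Y \to X$ coming from the fact that $Y$ is a subobject of $X$. Since $Y$ is $\phi$-invariant, $\phi$ restricts to an endomorphism $\phi\restriction_Y : Y \to Y$ in $\mathfrak{X}$, and the pair $(Y,\phi\restriction_Y)$ is a subflow of $(X,\phi)$ via $\iota$; in particular, $\phi \circ \iota = \iota \circ \phi\restriction_Y$.

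First I would handle the covariant case. Applying the covariant functor $F$ to the identity $\phi \circ \iota = \iota \circ \phi\restriction_Y$ yields $F(\phi) \circ F(\iota) = F(\iota) \circ F(\phi\restriction_Y)$. The hypothesis that $F(Y)$ is a subsemigroup of $F(X)$ means that $F(\iota) : F(Y) \hookrightarrow F(X)$ is an inclusion of normed subsemigroups, so the above commuting square expresses exactly that $F(\phi)$ leaves $F(Y)$ invariant and that $F(\phi\restriction_Y)$ agrees with $F(\phi)\restriction_{F(Y)}$. Then Lemma~\ref{mons} applied to $F(\phi) : F(X) \to F(X)$ and its invariant subsemigroup $F(Y)$ gives
\[
\h_F(\phi\restriction_Y) = h_\Se\bigl(F(\phi)\restriction_{F(Y)}\bigr) \leq h_\Se(F(\phi)) = \h_F(\phi).
\]

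For the contravariant case, applying $F$ to the same identity $\phi \circ \iota = \iota \circ \phi\restriction_Y$ reverses the arrows, producing $F(\iota) \circ F(\phi) = F(\phi\restriction_Y) \circ F(\iota)$, where now $F(\iota) : F(X) \to F(Y)$. The assumption that $F(Y)$ is a factor of $F(X)$ exactly says that $F(\iota)$ is a surjective (epic) contractive semigroup homomorphism. Hence the square needed to invoke Lemma~\ref{mono/fac} (with $\alpha := F(\iota)$, the role of $\phi$ played by $F(\phi)$, and that of $\psi$ by $F(\phi\restriction_Y)$) is verified, giving
\[
\h_F(\phi\restriction_Y) = h_\Se(F(\phi\restriction_Y)) \leq h_\Se(F(\phi)) = \h_F(\phi).
\]

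The main obstacle, and really the only non-routine point, is checking that the categorical hypotheses of the statement cleanly translate into the algebraic hypotheses required by Lemmas~\ref{mons} and~\ref{mono/fac}: namely, that ``$F(Y)$ is a subsemigroup of $F(X)$'' in the covariant setting yields a genuine contractive inclusion making $F(Y)$ an $F(\phi)$-invariant normed subsemigroup (so that the restriction $F(\phi)\restriction_{F(Y)}$ makes sense and coincides with $F(\phi\restriction_Y)$), and that ``$F(Y)$ is a factor of $F(X)$'' in the contravariant setting provides the surjective semigroup homomorphism required by Lemma~\ref{mono/fac}. Once these identifications are made, everything else is a direct application of the already-proved properties of $h_\Se$.
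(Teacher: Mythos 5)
Your proof is correct and follows essentially the same route as the paper: the covariant case is reduced to Lemma~\ref{mons} exactly as in the paper's proof, via the identification of $F(\phi\restriction_Y)$ with $F(\phi)\restriction_{F(Y)}$. The paper only writes out the covariant case explicitly; your treatment of the contravariant case through Lemma~\ref{mono/fac}, with $\alpha=F(\iota):F(X)\to F(Y)$ surjective, is precisely the intended completion.
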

\begin{proof}
Assume that $F$ is covariant and that $F(Y)$ is a subsemigroup of $F(X)$. Since $$\h_F(\phi\restriction_Y)=h_\Se(F(\phi\restriction_Y))=h_\Se(F(\phi)\restriction_{F(Y)}),$$ Lemma~\ref{mons} can be applied to conclude that $h_\Se(F(\phi)\restriction_{F(Y)})\leq h_\Se(F(\phi))=\h_F(\phi)$.
\end{proof}

Now we see that under suitable conditions, if $(X,\phi)$ and $(Y,\psi)$ are flows of $\mathfrak X$ such that $(Y,\psi)$ is a factor of $(X,\phi)$, then $\h_F(\psi) \leq \h_F(\phi)$.

\begin{Lemma}[Monotonicity for factors\index{Monotonicity for factors}]\label{Fmonf}
Let $(X,\phi)$ be a flow of $\mathfrak X$ and $\alpha:X\to Y$ a quotient in $\mathfrak X$ such that $\alpha\circ \phi = \psi\circ \alpha$.
$$\xymatrix{
X\ar[r]^\phi\ar[d]_\alpha & X\ar[d]^\alpha \\
Y \ar[r]_{\psi}&Y
}$$
If $F$ is covariant and $F(\alpha):F(X)\to F(Y)$ is a surjective homomorphism in $\Se$ (or, if $F$ is contravariant and $F(\alpha):F(Y)\to F(X)$ is a subobject embedding in $\Se$), then $$\h_F(\psi) \leq \h_F(\phi).$$
\end{Lemma}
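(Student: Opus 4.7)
The plan is to reduce the statement to the corresponding property already established for the semigroup entropy, exactly as was done for the preceding invariance/monotonicity lemmas in this subsection. The only subtlety is that the statement has two alternatives (covariant versus contravariant $F$), and each branch invokes a different theorem from \S\ref{hs-sec}.

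First, I would handle the covariant case. Apply $F$ to the commutative square $\alpha\circ\phi=\psi\circ\alpha$ in $\mathfrak X$ to obtain the commutative square
$$\xymatrix{F(X)\ar[r]^{F(\phi)}\ar[d]_{F(\alpha)} & F(X)\ar[d]^{F(\alpha)} \\
F(Y) \ar[r]_{F(\psi)}& F(Y)}$$
in $\Se$. By hypothesis, $F(\alpha)$ is a surjective contractive homomorphism, so the hypotheses of Lemma~\ref{mono/fac} are satisfied with $S=F(X)$, $T=F(Y)$, and endomorphisms $F(\phi)$ and $F(\psi)$. That lemma yields $h_\Se(F(\psi))\leq h_\Se(F(\phi))$, which by the very definition of the functorial entropy translates into $\h_F(\psi)\leq \h_F(\phi)$.

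For the contravariant case, applying $F$ to the same square reverses the horizontal and vertical arrows. One obtains the commutative square
$$\xymatrix{F(Y)\ar[r]^{F(\psi)}\ar[d]_{F(\alpha)} & F(Y)\ar[d]^{F(\alpha)} \\
F(X) \ar[r]_{F(\phi)}& F(X)}$$
in $\Se$, where now $F(\alpha):F(Y)\to F(X)$ is, by hypothesis, a subobject embedding, i.e., $F(Y)$ is realized as an $F(\phi)$-invariant normed subsemigroup of $F(X)$ with $F(\psi)=F(\phi)\restriction_{F(Y)}$ (up to identification along $F(\alpha)$). Then Lemma~\ref{mons} gives $h_\Se(F(\phi)\restriction_{F(Y)})\leq h_\Se(F(\phi))$, i.e., $h_\Se(F(\psi))\leq h_\Se(F(\phi))$, which again is $\h_F(\psi)\leq \h_F(\phi)$.

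The only real point to be careful about is the identification in the contravariant branch: one must note that $F(\alpha)$ being a ``subobject embedding in $\Se$'' does indeed mean that $F(Y)$ can be viewed as a normed subsemigroup of $F(X)$ with the restricted norm, so that the hypothesis of Lemma~\ref{mons} applies verbatim. This is essentially a bookkeeping step, so I do not expect any genuine obstacle; the proof is entirely a transfer through $F$ of the corresponding properties of $h_\Se$.
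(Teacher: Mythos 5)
Your proof is correct and follows essentially the same route as the paper: the covariant case is exactly the paper's argument (apply $F$ to the commuting square and invoke Lemma~\ref{mono/fac}). The paper leaves the contravariant case implicit under its blanket convention of proving only the covariant case; your explicit treatment of it via the subobject identification and Lemma~\ref{mons} is the intended completion and is sound.
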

\begin{proof}
Assume that $F$ is covariant and $F(\alpha):F(X)\to F(Y)$ is a surjective homomorphism in $\Se$. Since $F(\alpha\circ\phi)=F(\psi\circ\alpha)$ implies $F(\alpha)\circ F(\phi)=F(\psi)\circ F(\alpha)$, Lemma~\ref{mono/fac} yields $$\h_F(\psi)=h_\Se(F(\psi))\leq h_\Se(F(\phi))=\h_F(\phi),$$
and this concludes the proof.
\end{proof}

Next we discuss the ``continuity" of the functorial entropy with respect to direct and inverse limits.

\begin{Lemma}\label{limfin}\index{Continuity for direct/inverse limits}
Assume that for every flow $(X,\phi)$ of $\mathfrak X$, $F(X)$ is a preordered normed semigroup such that the norm is monotone, and $F(\phi):F(X)\to F(X)$ is monotone as well.
\begin{itemize}
\item[(a)] \emph{(Continuity for direct limits)}
Assume that the functor $F$ is covariant.
Let $(X,\phi)$ be a flow of $\mathfrak X$ and $X=\varinjlim X_i$, with $X_i$ a $\phi$-invariant subobject of $X$ for every $i\in I$. If $\varinjlim F(X_i)$ is cofinal in $F(X)$, then 
\begin{equation}\label{new:eq}
\h_F(\phi)=\sup_{i\in I} \h_F(\phi\restriction_{X_i}).
\end{equation}
\item[(b)] \emph{(Continuity for inverse limits)}
If $F$ is contravariant and $(X,\phi)$ is a flow of $\mathfrak X$ such that $X=\varprojlim X_i$, with $(X_i,\phi_i)$ a factor of $(X,\phi)$ for every $i\in I$, and $\varinjlim F(X_i)$ is cofinal in $F(X)$, one has
\begin{equation}\label{new:eq1}
\h_F(\phi)=\sup_{i\in I} \h_F(\phi_i).
\end{equation}
\end{itemize}
\end{Lemma}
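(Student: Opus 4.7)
The plan is to reduce both (a) and (b) to successive applications of the cofinality part of Lemma~\ref{mons} and of Proposition~\ref{cont} inside the normed semigroup $S=F(X)$ with endomorphism $\Phi=F(\phi)$. Our standing hypotheses tell us that $S$ is preordered with monotone norm and that $\Phi$ is monotone, which is exactly what both of those results need in order to pass from a strict inequality to an equality under cofinality, and to commute the entropy with a direct limit of invariant subsemigroups.

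For (a), covariance of $F$ together with the $\phi$-invariance of each $X_i$ gives that $F(X_i)\subseteq S$ is a $\Phi$-invariant normed subsemigroup and that $\Phi\restriction_{F(X_i)}=F(\phi\restriction_{X_i})$. The family $\{F(X_i)\}_{i\in I}$ is directed, so its direct limit $T=\varinjlim_{i\in I}F(X_i)$ is itself a $\Phi$-invariant normed subsemigroup of $S$; by hypothesis $T$ is cofinal in $S$. Lemma~\ref{mons} applied to $T\subseteq S$ yields $h_\Se(\Phi)=h_\Se(\Phi\restriction_T)$, and Proposition~\ref{cont} applied to the expression $T=\varinjlim_{i\in I}F(X_i)$ gives
$$h_\Se(\Phi\restriction_T)=\sup_{i\in I}h_\Se(\Phi\restriction_{F(X_i)})=\sup_{i\in I}\h_F(\phi\restriction_{X_i}).$$
Chaining these two equalities produces \eqref{new:eq}.

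For (b), each quotient $\alpha_i:X\to X_i$ witnessing $(X_i,\phi_i)$ as a factor of $(X,\phi)$ is sent by the contravariant $F$ to a monomorphism $F(\alpha_i):F(X_i)\hookrightarrow S$, and the equation $\phi_i\circ\alpha_i=\alpha_i\circ\phi$ becomes $F(\alpha_i)\circ F(\phi_i)=\Phi\circ F(\alpha_i)$. Identifying each $F(X_i)$ with its image in $S$ under $F(\alpha_i)$, we again obtain a directed family of $\Phi$-invariant normed subsemigroups of $S$ on which $\Phi$ restricts to $F(\phi_i)$; by hypothesis their direct limit $T=\varinjlim_{i\in I}F(X_i)$ is cofinal in $S$. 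Running the same two-step argument as in (a) produces
$$h_\Se(\Phi)=h_\Se(\Phi\restriction_T)=\sup_{i\in I}h_\Se(F(\phi_i))=\sup_{i\in I}\h_F(\phi_i),$$
which is \eqref{new:eq1}.

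The main subtlety I anticipate is formal rather than computational: in (b) one must justify that $F(\alpha_i)$ is injective (so that $F(X_i)$ can legitimately be viewed as a normed subsemigroup of $S$) and that its image is genuinely $\Phi$-invariant with $\Phi$ restricting to $F(\phi_i)$. This is the standard categorical consequence that a contravariant functor sends epimorphisms to monomorphisms, but it deserves to be spelled out explicitly. Once this is in place, both (a) and (b) are a direct concatenation of Lemma~\ref{mons} (cofinality promotes monotonicity to equality) and Proposition~\ref{cont} (entropy commutes with directed unions of invariant subsemigroups).
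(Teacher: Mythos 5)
Your proof is correct and follows the paper's own route: part (a) is exactly the paper's two-step argument (Lemma~\ref{mons} applied to the cofinal $F(\phi)$-invariant subsemigroup $\varinjlim F(X_i)$, then Proposition~\ref{cont} for the directed family), and your direct treatment of (b) simply unfolds what the paper compresses into the one-line remark that a contravariant $F$ is a covariant functor on $\mathfrak X^{op}$, so (a) applies. One caveat on your closing remark: it is \emph{not} a general categorical fact that a contravariant functor sends epimorphisms to monomorphisms, so the injectivity of $F(\alpha_i)$ is not automatic; rather, the identification of each $F(X_i)$ with a subsemigroup of $F(X)$ is implicitly built into the hypothesis that $\varinjlim F(X_i)$ is cofinal in $F(X)$, and it is verified concretely in each application (e.g.\ Proposition~\ref{New:corollary2}).
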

\begin{proof}
(a) Let $Y = \varinjlim F(X_i)$.  By Lemma~\ref{mons},  $h_\Se(F(\phi))=h_\Se(F(\phi \restriction_Y))$. On the other hand,  
$$h_\Se(F(\phi \restriction_Y))=\sup_{i\in I} h_\Se(F(\phi\restriction_{X_i})),$$ 
by Proposition~\ref{cont}. This proves \eqref{new:eq}. 

(b) As $F$ is contravariant, the functor $F:\mathfrak X^{op}\to \Se$ is covariant, so (a) applies.
\end{proof}

\begin{Corollary}\label{contlim}
\begin{itemize}
\item[(a)] \emph{(Continuity for direct limits)}
Assume that the functor $F$ is covariant, sending direct limits to direct limits, and $(X,\phi)$ is a flow of $\mathfrak X$ with $X=\varinjlim X_i$,  such that $X_i$ a $\phi$-invariant subobject of $X$ for every $i\in I$. Then \eqref{new:eq} holds. 
\item[(b)] \emph{(Continuity for inverse limits)}
For a contravariant functor $F$, sending inverse limits to direct limits, and a flow $(X,\phi)$ of $\mathfrak X$ such that $X=\varprojlim X_i$, with $(X_i,\phi_i)$ a factor of $(X,\phi)$ for every $i\in I$,  \eqref{new:eq1} holds. 
\end{itemize}
\end{Corollary}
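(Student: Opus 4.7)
The plan is to derive both statements of Corollary~\ref{contlim} as straightforward specializations of Lemma~\ref{limfin}, where the stronger hypothesis that $F$ preserves (co)limits is used to eliminate the cofinality assumption. The key observation is that under the strengthened hypothesis on $F$, the direct limit $\varinjlim F(X_i)$ actually coincides with $F(X)$, making the cofinality requirement in Lemma~\ref{limfin} trivially satisfied (a subsemigroup is always cofinal in itself).

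For part (a), I would argue as follows. Since $F$ is covariant and sends direct limits to direct limits, the assumption $X=\varinjlim X_i$ in $\mathfrak X$ yields $F(X)=\varinjlim F(X_i)$ in $\Se$; in particular, $\varinjlim F(X_i)$ is cofinal in $F(X)$ (indeed, equal to it). Since each $X_i$ is $\phi$-invariant, each $F(X_i)$ is an $F(\phi)$-invariant subsemigroup of $F(X)$, so the hypotheses of Lemma~\ref{limfin}(a) are fulfilled, yielding \eqref{new:eq}. Strictly speaking, to match the preamble of Lemma~\ref{limfin} one either appeals directly to Proposition~\ref{cont} (which does not require monotonicity), since the equality $F(X)=\varinjlim F(X_i)$ and $F(\phi)=\varinjlim F(\phi\restriction_{X_i})$ are exactly the hypotheses of that proposition.

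For part (b), the argument is symmetric. Viewing $F:\mathfrak X\to \Se$ as a covariant functor $F:\mathfrak X^{\mathrm{op}}\to \Se$, the hypothesis that $F$ sends inverse limits in $\mathfrak X$ to direct limits in $\Se$ translates into the statement that this covariant functor sends direct limits to direct limits. Hence the decomposition $X=\varprojlim X_i$ in $\mathfrak X$ becomes $F(X)=\varinjlim F(X_i)$ in $\Se$. Because each $(X_i,\phi_i)$ is a factor of $(X,\phi)$ in $\mathfrak X$ (i.e.\ a subobject in $\mathfrak X^{\mathrm{op}}$), each $F(X_i)$ becomes an $F(\phi)$-invariant subsemigroup of $F(X)$ with $F(\phi)\restriction_{F(X_i)}=F(\phi_i)$. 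Now Proposition~\ref{cont} (or equivalently Lemma~\ref{limfin}(b)) delivers \eqref{new:eq1}.

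There is no genuine obstacle here: the content has already been established in Proposition~\ref{cont} and packaged in Lemma~\ref{limfin}, and the only point to verify is that preservation of (co)limits by $F$ is strictly stronger than the cofinality condition appearing in Lemma~\ref{limfin}. The sole mild subtlety is being careful about what ``direct limit'' means in $\Se$ versus in the underlying category of semigroups, but this is precisely the statement that $F$ takes values in $\Se$ and preserves the categorical (co)limit, so no further argument is required.
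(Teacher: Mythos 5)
Your proposal is correct and matches the paper's intent exactly: the corollary is stated without proof precisely because preservation of (co)limits by $F$ makes $\varinjlim F(X_i)$ equal to $F(X)$, so the cofinality hypothesis of Lemma~\ref{limfin} is automatic. Your remark that one can invoke Proposition~\ref{cont} directly, thereby sidestepping the monotonicity preamble of Lemma~\ref{limfin} (which the corollary does not assume), is a sensible and correct refinement of the same argument rather than a different route.
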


Now we pass to finite products and coproducts.

\begin{Lemma}\label{wATg}
Assume that the functor $F:\mathfrak X\to\mathfrak M$ preserves subobjects and that, for every object $X$ in $\mathfrak X$, $F(X)$ is a preordered normed monoid such that the norm is subadditive and monotone.

 Let $(X,\phi)$ and $(Y,\psi)$ be flows of $\mathfrak X$ such that $F(\phi):F(X)\to F(X)$ and $F(\psi):F(Y)\to F(Y)$ are monotone. 
\begin{itemize}
\item[(a)] Assume that $F$ is covariant and sends the finite coproduct $X\oplus Y$ in $\mathfrak X$ to an object $F(X\oplus Y)$ in $\mathfrak M$ such that $F(X)\oplus F(Y)$ is contained and cofinal in $F(X\oplus Y)$.  Then $$\h_F(\phi\oplus\psi)=\h_F(\phi)+\h_F(\psi).$$
\item[(b)] Assume that $F$ is contravariant and sends the finite product $X\times Y$ in $\mathfrak X$ to an object $F(X\times Y)$ in $\mathfrak M$ such that $F(X)\oplus F(Y)$ is contained and cofinal in $F(X\times Y)$.  Then $$\h_F(\phi\times\psi)=\h_F(\phi)+\h_F(\psi).$$
\end{itemize}
\end{Lemma}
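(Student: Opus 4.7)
My plan is to derive Lemma~\ref{wATg} by combining the Weak Addition Theorem for coproducts in $\mathfrak{M}^\dag$ (Theorem~\ref{wAT}) with the cofinal case of Monotonicity for Subflows (Lemma~\ref{mons}), and then pass to the opposite category for part~(b).

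For part~(a), first observe that the coproduct injections $\iota_X:X\to X\oplus Y$ and $\iota_Y:Y\to X\oplus Y$ in $\mathfrak{X}$ satisfy $(\phi\oplus\psi)\circ\iota_X=\iota_X\circ\phi$ and $(\phi\oplus\psi)\circ\iota_Y=\iota_Y\circ\psi$. Since $F$ is covariant and preserves subobjects, applying $F$ yields that the embedding $F(X)\oplus F(Y)\hookrightarrow F(X\oplus Y)$ is a $F(\phi\oplus\psi)$-equivariant inclusion of normed monoids, with $F(\phi\oplus\psi)$ restricting to $F(\phi)\oplus F(\psi)$ on the subobject $F(X)\oplus F(Y)$. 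By the hypothesis this inclusion is also cofinal with respect to the preorder on $F(X\oplus Y)$, and by assumption both $F(\phi\oplus\psi)$ and the norm of $F(X\oplus Y)$ are monotone. Hence the equality case in Lemma~\ref{mons} gives
\[
\h_F(\phi\oplus\psi)=h_\Se(F(\phi\oplus\psi))=h_\Se\bigl(F(\phi\oplus\psi)\restriction_{F(X)\oplus F(Y)}\bigr)=h_\Se\bigl(F(\phi)\oplus F(\psi)\bigr).
\]
Since $F(X)$ and $F(Y)$ have subadditive norms, so does their coproduct $F(X)\oplus F(Y)$ in $\mathfrak{M}^\dag$, and Theorem~\ref{wAT} (the equality case) applies to give
\[
h_\Se\bigl(F(\phi)\oplus F(\psi)\bigr)=h_\Se(F(\phi))+h_\Se(F(\psi))=\h_F(\phi)+\h_F(\psi),
\]
which completes (a).

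For part~(b), I would reduce to part~(a) by considering the covariant functor $F^{\mathrm{op}}:\mathfrak{X}^{\mathrm{op}}\to\mathfrak{M}$, noting that the product $X\times Y$ in $\mathfrak{X}$ is the coproduct $X\oplus Y$ in $\mathfrak{X}^{\mathrm{op}}$, and that $\phi\times\psi$ becomes $\phi\oplus\psi$ in the opposite category. The hypothesis that $F(X)\oplus F(Y)$ is contained and cofinal in $F(X\times Y)$ then becomes exactly the hypothesis of (a) for $F^{\mathrm{op}}$. Since functorial entropy is defined identically for covariant and contravariant functors, the conclusion transfers verbatim.

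The main subtlety I anticipate is the compatibility claim that $F(\phi\oplus\psi)$ genuinely restricts to $F(\phi)\oplus F(\psi)$ on the subobject $F(X)\oplus F(Y)$. This is really a naturality statement: it uses both that $F$ preserves subobjects (so the inclusion $F(X)\oplus F(Y)\hookrightarrow F(X\oplus Y)$ is a bona fide subsemigroup embedding in $\mathfrak{M}$) and that the universal property of the coproduct forces $\phi\oplus\psi$ to act componentwise on the images of $\iota_X,\iota_Y$. Once this is established, everything else reduces cleanly to the already proved properties of the semigroup entropy $h_\Se$.
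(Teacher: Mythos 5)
Your proposal is correct and follows essentially the same route as the paper: establish that $F(X)\oplus F(Y)$ is an $F(\phi\oplus\psi)$-invariant subobject on which $F(\phi\oplus\psi)$ restricts to $F(\phi)\oplus F(\psi)$, apply the cofinal equality case of Lemma~\ref{mons}, then conclude with the subadditive equality case of Theorem~\ref{wAT}, and dualize for part~(b). The extra attention you pay to the naturality of the restriction is a welcome elaboration of a step the paper treats as obvious, but it does not change the argument.
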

\begin{proof}
(a) Let $f = \phi\oplus\psi$. 
We first show that the subobject $F(X)\oplus F(Y)$ of $F(X\oplus Y)$ is $F(f)$-invariant. To this end we note that $X$ and $Y$ are $f$-invariant subobjects of 
$X\oplus Y$. This obviously implies that both  $F(X)$ and $F(Y)$ are $F(f)$-invariant subobjects of 
$F(X\oplus Y)$. Then $F(X)\oplus F(Y)$, as a sum of two $F(f)$-invariant subobjects, is still an $F(f)$-invariant subobject of $F(X\oplus Y)$.
Now we can apply Lemma~\ref{mons} to the $F(f)$-invariant subobject $F(X)\oplus F(Y)$ of $F(X\oplus Y)$ to deduce 
$$h_\Se(F(\phi\oplus\psi))=h_\Se(F(\phi)\oplus F(\psi)).$$ 
Furthermore, Theorem~\ref{wAT} yields the equality $$h_\Se(F(\phi)\oplus F(\psi))=h_\Se(F(\phi))+h_\Se(F(\psi)).$$ To conclude it suffices to apply the definition of functorial entropy.

(b) As $F$ is contravariant, the functor $F:\mathfrak X^{op}\to \Se$ is covariant, so (a) applies.
\end{proof}

The first part of the following result follows from the above proposition, the second part from the first one by the Duality Principle in category theory.

\begin{Corollary}[weak Addition Theorem\index{weak Addition Theorem}]\label{wATco}
Let $(X,\phi)$ and $(Y,\psi)$ be flows of $\mathfrak X$.
\begin{itemize}
\item[(a)] Assume that $F:\mathfrak X\to \mathfrak M^\dag$ is covariant and sends finite coproducts in $\mathfrak X$ to finite coproducts in $\mathfrak M^\dag$.  
Then $$\h_F(\phi\oplus\psi)=\h_F(\phi)+\h_F(\psi).$$
\item[(b)] Assume that $F:\mathfrak X\to \mathfrak M^\dag$ is contravariant and sends finite products in $\mathfrak X$ to finite coproducts in $\mathfrak M^\dag$.
Then $$\h_F(\phi\times\psi)=\h_F(\phi)+\h_F(\psi).$$
\item[(a$^*$)]  Assume that $F:\mathfrak X\to \mathfrak M^\dag$ is covariant and sends finite products in $\mathfrak X$ to finite products in $\mathfrak M^\dag$.  
Then $$\h_F(\phi\oplus\psi)=\max\{\h_F(\phi),\h_F(\psi)\}.$$
\item[(b$^*$)] Assume that $F:\mathfrak X\to \mathfrak M^\dag$ is contravariant and sends finite coproducts in $\mathfrak X$ to finite products in $\mathfrak M^\dag$.
\end{itemize}
\end{Corollary}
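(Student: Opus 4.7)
The plan is to reduce each of the four cases to one of the two weak Addition Theorems already proved at the semigroup level, namely Theorem~\ref{wAT} (for the coproduct norm $v_\oplus$) for parts~(a), (b), and Theorem~\ref{WAT} (for the product norm $v_\Pi$) for parts~(a$^*$), (b$^*$). The only real content is translating the categorical preservation hypotheses on $F$ into the precise cofinality hypotheses required by Lemma~\ref{wATg} and Theorem~\ref{WAT}, after which the duality principle takes care of the contravariant versions.

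For part~(a), since $F$ is covariant and sends the coproduct $X\oplus Y$ in $\mathfrak X$ to the coproduct in $\mathfrak M^\dag$, one has a natural isomorphism $F(X\oplus Y)\cong F(X)\oplus F(Y)$ in $\mathfrak M^\dag$ under which $F(\phi\oplus\psi)$ corresponds to $F(\phi)\oplus F(\psi)$. In particular $F(X)\oplus F(Y)$ is contained and (trivially) cofinal in $F(X\oplus Y)$, so Lemma~\ref{wATg}(a) applies and gives $\h_F(\phi\oplus\psi)=\h_F(\phi)+\h_F(\psi)$. For part~(b), the analogous identification of $F(X\times Y)$ with the coproduct $F(X)\oplus F(Y)$ in $\mathfrak M^\dag$, via the contravariant $F$, lets us invoke Lemma~\ref{wATg}(b); equivalently, apply (a) to the covariant functor $F:\mathfrak X^{op}\to\mathfrak M^\dag$.

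For parts~(a$^*$) and (b$^*$), Lemma~\ref{wATg} is not the right tool, since it is tailored to the coproduct with $v_\oplus$. Instead, the assumption that $F$ preserves finite products provides a natural isomorphism in $\mathfrak M^\dag$, and hence in $\Se$, between $F(X\times Y)$ and the binary product $F(X)\times F(Y)$ equipped with the max-norm $v_\Pi$ of \eqref{max-norm}, under which $F(\phi\times\psi)$ is conjugate to $F(\phi)\times F(\psi)$. Since $h_\Se$ is invariant under conjugation (Corollary~\ref{iuc}), Theorem~\ref{WAT} then yields
\begin{equation*}
\h_F(\phi\times\psi)=h_\Se(F(\phi)\times F(\psi))=\max\{h_\Se(F(\phi)),h_\Se(F(\psi))\}=\max\{\h_F(\phi),\h_F(\psi)\}.
\end{equation*}
For (b$^*$), the same argument applied to the covariant functor $F:\mathfrak X^{op}\to\mathfrak M^\dag$ (which now sends finite products in $\mathfrak X^{op}$, i.e.\ finite coproducts in $\mathfrak X$, to finite products in $\mathfrak M^\dag$) delivers $\h_F(\phi\oplus\psi)=\max\{\h_F(\phi),\h_F(\psi)\}$.

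The only point that needs a moment of care is the identification of the ambient objects: in (a)/(b) one must verify that the (co)product in $\mathfrak M^\dag$ really coincides, as a normed monoid, with the $v_\oplus$-normed direct sum used in Theorem~\ref{wAT}, and in (a$^*$)/(b$^*$) that the binary product in $\mathfrak M^\dag$ is the $v_\Pi$-normed product used in Theorem~\ref{WAT}. Both are routine consequences of the universal properties in the category of normed monoids, and I expect no further obstacle once this bookkeeping is done; the duality half of the statement is then purely formal, via the $\mathfrak X\rightsquigarrow\mathfrak X^{op}$ reduction already used in Lemma~\ref{limfin}(b) and Lemma~\ref{wATg}(b).
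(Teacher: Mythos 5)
Your proposal is correct and follows essentially the same route as the paper, which derives (a) and (b) from Lemma~\ref{wATg} (via the trivial cofinality of $F(X)\oplus F(Y)$ in $F(X\oplus Y)$ when $F$ preserves coproducts) and handles the contravariant cases by passing to $\mathfrak X^{op}$; for the starred parts you correctly identify Theorem~\ref{WAT} together with Invariance under conjugation as the right tool, which is exactly what the paper's (unwritten) argument must be, and you also correctly supply the conclusion missing from item~(b$^*$) in the statement.
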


\subsection{Shifts}\label{shiftsec}

We define the Bernoulli shifts in an arbitrary category $\mathfrak X$ admitting countably infinite powers. Let $K$ be an object of $\mathfrak X$, consider $K^\N$ and for every $n\in\N$ let $\pi_n:K^\N\to K$ be the $n$-th projection.
The \emph{(one-sided) left Bernoulli shift}\index{one-sided left Bernoulli shift}\index{Bernoulli shift}\index{left Bernoulli shift}
\begin{equation*}\label{Xbeta}
\newsym{(one-sided) left Bernoulli shift}{{}_K\beta}:K^\N\to K^\N
\end{equation*}
is the unique morphism in $\mathfrak X$ such that, for every $n\in\N$,
\begin{equation}\label{pbp}
\pi_n\circ {}_K\beta=\pi_{n+1}.
\end{equation}
$$\xymatrix{K^\N\ar[dr]_{\pi_{n+1}}\ar[r]^{{}_K\beta} & K^\N \ar[d]^{\pi_n}\\
& K}$$
Analogously, the \emph{(two-sided) left Bernoulli shift}\index{two-sided left Bernoulli shift}
\begin{equation}\label{Xbarbeta}
\newsym{(two-sided) left Bernoulli shift}{{}_K\bar\beta}:K^\Z\to K^\Z
\end{equation}
is the unique morphism in $\mathfrak X$ such that \eqref{pbp} holds for every $n\in\Z$.

Moreover, the \emph{(two-sided) right Bernoulli shift}\index{two-sided right Bernoulli shift}
\begin{equation}\label{barbetaX}
\newsym{(two-sided) right Bernoulli shift}{\bar\beta_K}:K^\Z\to K^\Z
\end{equation}
is the unique morphism in $\mathfrak X$ such that, for every $n\in\Z$,
\begin{equation}\label{pbp*}
\pi_{n+1}\circ \bar\beta_K=\pi_{n}.
\end{equation}
$$\xymatrix{K^\Z\ar[dr]_{\pi_{n}}\ar[r]^{\bar\beta_K} & K^\Z \ar[d]^{\pi_{n+1}}\\
& K}$$
It is easy to deduce from \eqref{pbp} and \eqref{pbp*}, that ${}_K\bar\beta\circ \bar\beta_K =  \bar\beta_K \circ {}_K\bar\beta = id_{K^\Z}$, 
so ${}_K\bar\beta$ and $\bar\beta_K$ are isomorphisms in $\mathfrak X$, inverse to each other.

We will see in \S\ref{known-sec} the Bernoulli shifts in concrete categories.

\medskip
Now we introduce the notion of backward generalized shift in an arbitrary category $\mathfrak X$ admitting arbitrary powers, which extends that of Bernoulli shift.
First we note that this blanket condition on $\mathfrak X$ implies the existence of a terminal object of $\mathfrak X$ that is uniquely determined up to isomorphism (as it is the product of the empty family of objects of $\mathfrak X$; it will be denoted hereinafter by $1$). 
Let $K$ be an object of $\mathfrak X$ and let $X$ be a non-empty set. For $x\in X$ we denote by $\pi_x: K^X \to K$ the projection relative to the $x$-th member 
of the family consisting of $|X|$-many copies of $K$. 

\begin{Definition}\label{bgs}
For non-empty sets $X$, $Y$, and a map $\lambda:X\to Y$, define the \emph{backward} (or \emph{contravariant}) \emph{generalized shift}\index{backward generalized shift}
$$\newsym{backward generalized shift}{\sigma_\lambda}:K^Y \to K^X$$
as the unique morphism in $\mathfrak X$ such that, for every $x\in X$,
\begin{equation*}\label{pbp1}
\pi_x\circ \sigma_\lambda=\pi_{\lambda(x)}, 
\end{equation*}
\end{Definition}

$$\xymatrix{K^Y\ar[dr]_{\pi_{\lambda(x)}}\ar@{.>}[r]^{\sigma_\lambda} & K^X \ar[d]^{\pi_x}\\ & K}$$
If $\mathfrak X$ is a concrete category with forgetful functor $U: \mathfrak X \to \Set$ such that $U(K^X)=U(K)^X$ and the $U(\pi_x)$ are the canonical projections $U(K)^X\to U(K)$ in $\mathbf{Set}$, then
$$U(\sigma_\lambda): U(K)^Y \to U(K)^X$$
is simply the map $f\mapsto f\circ \lambda$. 

For a selfmap $\lambda : X \to X$, the backward generalized shift  $\sigma_\lambda: K^X \to K^X$ is an endomorphism in $\mathfrak X$, so one can discuss its entropy once $ \mathfrak X $ has an entropy defined on its flows. See \S\ref{htop-sec}, \S\ref{sec:h_alg} and \S\ref{BTset} for more details on the backward generalized shifts in specific categories.

\begin{Remark}\label{shiftrem}
If $X=Y=\N$ and $\lambda:\N\to\N$ is defined by $n\mapsto n+1$, then $$\sigma_\lambda={}_K\beta$$ is the one-sided left Bernoulli shift. Analogously, the two sided Bernoulli shifts can be seen as backward generalized shifts as well.
\end{Remark}

Now we see that the generalized shifts represent essentially all contravariant functors $\mathbf{Set}\to\mathfrak X$ sending coproducts to products.
For a fixed $K\in \mathfrak X $, let
\begin{equation}\label{represenatble}
\newsym{functor of the backward generalized shift}{\mathcal B_K}: \mathbf{Set}\to  \mathfrak X 
\end{equation}
be the contravariant functor defined by sending a non-empty set $X$ to $$\mathcal B_K(X)= K^X$$ and $\emptyset$ to the fixed terminal object  $1$ of $\mathfrak X$. 
For a map $\lambda:X\to Y$ let $$\mathcal B_K(\lambda)=\sigma_\lambda:K^Y\to K^X$$ when both $X$ and $Y$ are non-empty, and let $\mathcal B_K(\lambda): K^Y \to 1 =\mathcal B_K(\emptyset)$ be the only morphism to $1$ when $X$ is empty. 

\begin{Remark}
By the functoriality of $\mathcal B_K$, if $\lambda:X\to X$ is a selfmap of a set $X$, then
\begin{equation}\label{powers}
\sigma_{\lambda^m} = (\sigma_\lambda)^m\ \text{for all}\ m\in \N. 
\end{equation}
\end{Remark}

It is not hard to prove that the contravariant functors $\mathcal B_K: \mathbf{Set}\to \X$ send coproducts to products. Up to natural equivalence these are the unique functors $ \mathbf{Set}\to \X$ with this property:

\begin{Theorem}\label{New_Lemma}
Every contravariant functor $\varepsilon: \mathbf{Set}\to \X$ sending coproducts to products is naturally equivalent to 
$\mathcal B_K$ for an appropriate $K\in \X$. 
\end{Theorem}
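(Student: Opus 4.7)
The plan is to construct the representing object $K$ directly from $\varepsilon$ and to build the natural equivalence from the universal property of products. First, set $K:=\varepsilon(\{*\})$, where $\{*\}$ is any chosen singleton. The guiding idea is that every set $X$ decomposes in $\mathbf{Set}$ as a coproduct of singletons, $X=\coprod_{x\in X}\{x\}$, with canonical inclusions $i_x:\{x\}\hookrightarrow X$. Applying $\varepsilon$ (which sends coproducts to products by hypothesis) should then exhibit $\varepsilon(X)$ as a product of $|X|$ copies of $K$, i.e.\ as $\mathcal B_K(X)$.

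Concretely, for any non-empty $X$ consider the morphisms $\varepsilon(i_x):\varepsilon(X)\to K$ for $x\in X$. Since $\varepsilon$ sends the coproduct $X=\coprod_{x\in X}\{x\}$ to the product $\varepsilon(X)=\prod_{x\in X}\varepsilon(\{x\})$ with structure maps $\varepsilon(i_x)$, the universal property of $K^X$ produces a unique morphism $\eta_X:\varepsilon(X)\to K^X=\mathcal B_K(X)$ with $\pi_x\circ\eta_X=\varepsilon(i_x)$ for all $x\in X$, and $\eta_X$ is an isomorphism because $\varepsilon(X)$ itself satisfies the defining universal property of the product. For $X=\emptyset$, which is the empty coproduct (the initial object) in $\mathbf{Set}$, the hypothesis yields that $\varepsilon(\emptyset)$ is a terminal object of $\mathfrak X$, hence isomorphic to the chosen terminal object $1=\mathcal B_K(\emptyset)$; fix any such iso as $\eta_\emptyset$.

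It remains to check naturality: for every map $\lambda:X\to Y$ in $\mathbf{Set}$ the square
$$\xymatrix{\varepsilon(Y)\ar[r]^{\varepsilon(\lambda)}\ar[d]_{\eta_Y} & \varepsilon(X)\ar[d]^{\eta_X} \\ K^Y\ar[r]_{\sigma_\lambda} & K^X}$$
commutes. By the universal property of $K^X$ this reduces to verifying, for each $x\in X$, that $\pi_x\circ\eta_X\circ\varepsilon(\lambda)=\pi_x\circ\sigma_\lambda\circ\eta_Y$. The left side equals $\varepsilon(i_x)\circ\varepsilon(\lambda)=\varepsilon(\lambda\circ i_x)=\varepsilon(i_{\lambda(x)})$, using the set-theoretic identity $\lambda\circ i_x=i_{\lambda(x)}$ and functoriality of $\varepsilon$. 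The right side equals $\pi_{\lambda(x)}\circ\eta_Y=\varepsilon(i_{\lambda(x)})$ by the defining property of $\eta_Y$ and of $\sigma_\lambda$. The two agree, so naturality holds; when $X$ or $Y$ is empty, commutativity follows from the universal property of the terminal object $1$.

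The main subtle point is to articulate cleanly why ``$\varepsilon$ sends coproducts to products'' already forces $\eta_X$ to be an isomorphism rather than just a comparison morphism: the key is that the cone $(\varepsilon(X),\varepsilon(i_x))_{x\in X}$ is, by hypothesis, itself a product cone, so it is canonically isomorphic to any other product cone on the same family—in particular to $K^X$ with its projections. Once this identification is recorded, everything else (including the boundary case $X=\emptyset$ and the compatibility with $\sigma_\lambda$) follows mechanically from the respective universal properties.
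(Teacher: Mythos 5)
Your strategy is exactly the paper's: set $K=\varepsilon(\{*\})$, decompose $X$ in $\mathbf{Set}$ as the coproduct of its singletons, use the hypothesis to turn $(\varepsilon(X),\varepsilon(i_x))_{x\in X}$ into a product cone, and compare it with $K^X$ via the universal property; your remark that $\eta_X$ is an isomorphism because $\varepsilon(X)$ is itself a product cone over the same family is correct and is the right justification for the paper's bare assertion that ``$\eta_X$ is clearly an isomorphism''.

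There is, however, one step that fails as literally written, and it is precisely where the paper's proof does its real work. The ``set-theoretic identity'' $\lambda\circ i_x=i_{\lambda(x)}$ is false: $\lambda\circ i_x$ has domain $\{x\}$ while $i_{\lambda(x)}$ has domain $\{\lambda(x)\}$, so the correct identity is $\lambda\circ i_x=i_{\lambda(x)}\circ j_{x,\lambda(x)}$ with $j_{x,\lambda(x)}:\{x\}\to\{\lambda(x)\}$ the unique bijection. For the same reason $\varepsilon(i_x)$ lands in $\varepsilon(\{x\})$, which is only isomorphic to $K=\varepsilon(\{*\})$ rather than equal to it, so your defining condition $\pi_x\circ\eta_X=\varepsilon(i_x)$ does not typecheck either. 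The repair is routine but not optional: one must fix the isomorphisms $\varepsilon(j_x):\varepsilon(\{x\})\to K$ induced by the unique bijections $j_x:\{*\}\to\{x\}$, define $\eta_X$ by $\pi_x\circ\eta_X=\varepsilon(j_x)\circ\varepsilon(i_x)$, and verify that in the naturality computation the isomorphisms $\xi_{\lambda(x),x}=\varepsilon(j_{x,\lambda(x)})$ introduced by $\varepsilon(\lambda)$ cancel against the $\varepsilon(j_x)$'s, using the coherence $j_{\lambda(x)}=j_{x,\lambda(x)}\circ j_{x}$. With these insertions your three-term chain becomes the paper's five-term chain and the argument is complete.
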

\begin{proof}
For a singleton $\{x\}$ let $K_x=\varepsilon(\{x\})$, and for a pair of singletons $\{x_1\}, \{x_2\}$ let $$j_{x_1,x_2}:\{x_1\}\to  \{x_2\}$$ be the unique bijective map. This gives an isomorphism
$$\xi_{x_2,x_1}=\varepsilon(j_{x_1,x_2}):K_{x_2}\to K_{x_1}$$
in $\X$.
Fix a singleton $*$ and for every singleton $\{x\}$ rename, for brevity, $j_x =j_{*,x} :*\to\{x\}$ and $K=\varepsilon(*) =K_*$. This gives the following corresponding commutative diagrams of isomorphisms in $\X$.
\begin{equation}\label{jxi}
\xymatrix{ {*} \ar[r]^{j_{x_1}} \ar[dr]_{j_{x_2}} & \{x_1\} \ar[d]^{j_{x_1,x_2}} & & K   & K_{x_1} \ar[l]_{\varepsilon(j_{x_1})} \\
& \{x_2\} & & & K_{x_2} \ar[ul]^{\varepsilon(j_{x_2})} \ar[u]_{\xi_{x_2,x_1}}}
\end{equation}

Every non-empty set $X$ can be written as a coproduct of its singletons $$X=\bigoplus_{x\in X}\{x\}$$ determined by the inclusions $$i_x:\{x\}\to X.$$ Then, by hypothesis,
$$ \varepsilon(X)=\prod_{x\in X}K_x $$ and $$p_x=\varepsilon(i_x):\prod_{x\in X}K_x\to K_x$$
is the canonical projection. We distinguish the product $\prod_{x\in X}K_x$ from the power $K^X$, where all components coincide with $K$. In this case we denote the projection relative to $x\in X$ by $\pi_x: K^X \to K$. 

\smallskip
Since the initial object of $\mathbf{Set}$ is $\emptyset$, while the terminal object of $\X$ is $1$, we deduce that $\varepsilon(\emptyset) =1$, in view of our hypotheses.

The morphism  $\varepsilon(\lambda) :\prod_{y\in Y}K_y\to\prod_{x\in X}K_x$ corresponding to a map $\lambda: X \to Y$ between two sets $X$, $Y$ is the 
unique morphism in  $\X$ such that, for every $x\in X$, the following square commutes.
\begin{equation}\label{eqq}
\xymatrix{\prod_{x\in X}K_x \ar[d]_{p_x} & \prod_{y\in Y}K_y \ar[l]_{\varepsilon(\lambda)} \ar[d]^{p_{\lambda(x)}} \\
K_x & K_{\lambda(x)} \ar[l]^{\xi_{\lambda(x),x}}}
\end{equation}

Let $\eta_\emptyset= id_1:1\to 1$ and, for every non-empty set $X$, let
$$\eta_X:\prod_{x\in X}K_x\to K^X$$
be the unique morphism $\prod_{x\in X}K_x\to K^X$ in $\mathfrak X$ such that, for every $x\in X$, 
\begin{equation}\label{eqqq}
\pi_x\circ\eta_X=\varepsilon(j_x)\circ p_x,
\end{equation}
i.e., the following diagram commutes.
\begin{equation*}
\xymatrix{\prod_{x\in X} K_x\ar[d]_{p_x}\ar[r]^{\eta_X} & K^X\ar[d]^{\pi_x}\\
K_x\ar[r]_{\varepsilon(j_x)} & K}
\end{equation*}

It remains to prove that $\eta$ is a natural equivalence between $\varepsilon$ and $\mathcal B_K$. Since $\eta_X$ is clearly an isomorphism for every $X$ in $\mathbf{Set}$, we need to prove that, for every map $\lambda:X\to Y$ in $\mathbf{Set}$,
\begin{equation}\label{naturaleq}
\eta_X\circ\varepsilon(\lambda)=\sigma_\lambda\circ\eta_Y,
\end{equation}
namely, the following diagram commutes.
$$\xymatrix{
\varepsilon(X)\ar[r]^{\eta_X} & K^X\\
\varepsilon(Y)\ar[r]_{\eta_Y}\ar[u]^{\varepsilon(\lambda)} & K^Y\ar[u]_{\sigma_\lambda}}$$
By the categorical properties of the product $K^X$, \eqref{naturaleq} is equivalent to the conjunction of the equalities 
$$\pi_x\circ \eta_X\circ\varepsilon(\lambda)=\pi_x\circ\sigma_\lambda\circ\eta_Y, \  \  x\in X.$$
Note that  $\pi_x\circ \eta_X = \varepsilon(j_x)\circ p_x$ and $\pi_x\circ\sigma_\lambda= \pi_{\lambda(x)}$, in view of \eqref{eqqq} and the definition of the generalized shift $\sigma_\lambda$. This ensures the first and the last equality in the following chain of five equalities
\begin{equation*}\begin{split}
\pi_x\circ \eta_X\circ\varepsilon(\lambda) = \varepsilon(j_x)\circ p_x \circ\varepsilon(\lambda) = 
\varepsilon(j_x)\circ \xi_{\lambda(x),x}\circ p_{\lambda(x)}= \\= \varepsilon(j_{\lambda(x)})\circ p_{\lambda(x)} = \pi_{\lambda(x)}\circ \eta_Y = \pi_x\circ\sigma_\lambda\circ\eta_Y,
\end{split}\end{equation*}
while the second one follows from \eqref{eqq}, the third one from \eqref{jxi}, and the fourth one from \eqref{eqqq}, applied to $\eta_Y$ and $\lambda(x) \in Y$. 
\end{proof}

Now we introduce the notion of forward generalized shift in an arbitrary category $\mathfrak X$ with arbitrary coproducts. This blanket condition on $\mathfrak X$ implies the existence of an initial object of $\mathfrak X$ that is uniquely determined up to isomorphism (as is the coproduct of the empty family of objects of $\mathfrak X$; it will be denoted by $0$).  Let $K$ be an object of $\mathfrak X$ and let $X$ be a non-empty set. For $x\in X$ we denote by $\iota_x: K \to K^{(X)}$ the canonical morphism relative to the $x$-th member. 

\begin{Definition}\label{fgs}
For non-empty sets $X$, $Y$, and a map $\lambda:X\to Y$, define the \emph{forward} (or \emph{covariant}) \emph{generalized shift}\index{forward generalized shift}
$$\newsym{forward generalized shift}{\tau_\lambda}:K^{(X)} \to K^{(Y)}$$
as the unique morphism in $\mathfrak X$ such that, for every $x\in X$,
\begin{equation}\label{ibi}
\tau_\lambda\circ\iota_x=\iota_{\lambda(x)}. 
\end{equation}
\end{Definition}
$$\xymatrix{K^{(X)}\ar@{.>}[r]^{\tau_\lambda} & K^{(Y)}\\
 K\ar[u]^{\iota_x}\ar[ur]_{\iota_{\lambda(x)}} &
}$$

For a selfmap $\lambda : X \to X$ of a set $X$, the forward generalized shift $\tau_\lambda: K^{(X)} \to K^{(X)}$ is an endomorphism in $\mathfrak X$, so one can discuss its entropy once $\mathfrak X $ has an entropy defined on its flows. See \S\ref{sec:h_alg} and \S\ref{BTset} for more details on the generalized shifts in concrete categories.

\medskip
Now we see  that the forward generalized shifts represent essentially all covariant functors $\mathbf{Set}\to\mathfrak X$ sending coproducts to coproducts.
For a fixed $K\in \mathfrak X $, let
\begin{equation}\label{represenatble2}
\newsym{functor of the forward generalized shift}{\mathcal F_K}: \mathbf{Set}\to  \mathfrak X 
\end{equation}
be the covariant functor defined by sending a non-empty set $X$ to $$\mathcal F_K(X)= K^{(X)}$$ and $\emptyset$ to the fixed initial object $0$ of $\mathfrak X$. 
For a map $\lambda:X\to Y$ let $$\mathcal F_K(\lambda)=\tau_\lambda:K^{(X)}\to K^{(Y)}$$ when both $X$ and $Y$ are non-empty, and let $\mathcal F_K(\lambda): 0 =\mathcal F_K(\emptyset) \to K^{(Y)}$ be the only morphism from $0$, when $X$ is empty. 

It is not hard to prove that the covariant functors $\mathcal B_K: \mathbf{Set}\to \X$ send coproducts to coproducts. That up to natural equivalence these are the unique functors $\mathbf{Set}\to \X$ with this property follows from Theorem~\ref{New_Lemma} by the general Duality Principle in category theory.

\begin{Theorem}\label{New_Lemma2}
Every covariant functor $\gamma: \mathbf{Set}\to \X$ sending coproducts to coproducts is naturally equivalent to $\mathcal F_K$ for an appropriate $K\in \X$. 
\end{Theorem}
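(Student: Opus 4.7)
The plan is to mimic the proof of Theorem~\ref{New_Lemma} with all arrows reversed, replacing products by coproducts, projections by coproduct injections, and the terminal object by the initial object. (Alternatively, one can invoke Theorem~\ref{New_Lemma} for the contravariant functor $\mathbf{Set}\to \mathfrak X^{op}$ corresponding to $\gamma$, since coproducts in $\mathfrak X$ are products in $\mathfrak X^{op}$, and then transport the natural equivalence back to $\mathfrak X$; the two routes are formally the same.)

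First I would fix a singleton $*$ and set $K:=\gamma(*)\in\mathfrak X$. For every singleton $\{x\}$, let $K_x=\gamma(\{x\})$ and let $j_x:{*}\to\{x\}$ be the unique bijection, giving an isomorphism $\xi_x:=\gamma(j_x):K\to K_x$ in $\mathfrak X$. For any two singletons the unique bijection $j_{x,x'}:\{x\}\to\{x'\}$ satisfies $j_{x'}=j_{x,x'}\circ j_x$, so functoriality yields the coherence relation $\xi_{x'}=\gamma(j_{x,x'})\circ\xi_x$, the analogue of \eqref{jxi}. Next, every non-empty set $X$ decomposes in $\mathbf{Set}$ as a coproduct $X=\bigoplus_{x\in X}\{x\}$ with injections $i_x:\{x\}\to X$; since $\gamma$ preserves coproducts, $\gamma(X)$ is a coproduct of the $K_x$'s with canonical injections $q_x:=\gamma(i_x):K_x\to\gamma(X)$. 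Moreover $\gamma(\emptyset)$ is the empty coproduct, hence $\gamma(\emptyset)=0$, the initial object of $\mathfrak X$. For any map $\lambda:X\to Y$, the morphism $\gamma(\lambda)$ is by the universal property of the coproduct $\gamma(X)$ the unique morphism satisfying
$$\gamma(\lambda)\circ q_x=q_{\lambda(x)}\circ \gamma(j_{x,\lambda(x)})\quad\text{for every }x\in X.$$

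Second, I would construct the natural equivalence. Set $\eta_\emptyset=\mathrm{id}_0$. For non-empty $X$, using the universal property of $K^{(X)}=\coprod_{x\in X}K$ define $\eta_X:K^{(X)}\to\gamma(X)$ as the unique morphism in $\mathfrak X$ such that
$$\eta_X\circ\iota_x=q_x\circ\xi_x\quad\text{for every }x\in X.$$
Since each $\xi_x$ is an isomorphism and the $q_x$ make $\gamma(X)$ into a coproduct of the $K_x$'s, the composites $q_x\circ\xi_x$ present $\gamma(X)$ as a coproduct of copies of $K$; uniqueness of coproducts up to unique isomorphism then forces $\eta_X$ to be an isomorphism.

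Third, I would verify naturality of $\eta$, i.e. that for every map $\lambda:X\to Y$ the square
$$\xymatrix{K^{(X)}\ar[r]^{\eta_X}\ar[d]_{\tau_\lambda} & \gamma(X)\ar[d]^{\gamma(\lambda)}\\ K^{(Y)}\ar[r]_{\eta_Y} & \gamma(Y)}$$
commutes. By the universal property of $K^{(X)}$ it suffices to check equality after precomposition with each $\iota_x$. Using $\tau_\lambda\circ\iota_x=\iota_{\lambda(x)}$ from \eqref{ibi}, the defining property of $\eta_X,\eta_Y$, the behaviour of $\gamma(\lambda)$ on the $q_x$, and the coherence $\gamma(j_{x,\lambda(x)})\circ\xi_x=\xi_{\lambda(x)}$, both sides reduce to $q_{\lambda(x)}\circ\xi_{\lambda(x)}$. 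The empty case is trivial because $0$ is initial and there is only one morphism from $0$ to any object. This shows $\gamma\simeq\mathcal F_K$.

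The main obstacle is, as in Theorem~\ref{New_Lemma}, the careful bookkeeping of the transfer isomorphisms between the $K_x$'s: one must ensure that the choices $\xi_x=\gamma(j_x)$ are coherent under the action of $\gamma$ on maps of singletons, so that the naturality square closes for arbitrary $\lambda$. Once this coherence (the dual of diagram \eqref{jxi}) is in place, the remaining verifications are purely formal consequences of the universal property of coproducts.
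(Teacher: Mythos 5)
Your proposal is correct and matches the paper's approach: the paper proves Theorem~\ref{New_Lemma2} precisely by appealing to Theorem~\ref{New_Lemma} and the Duality Principle, and your explicit dualization (with $K=\gamma(*)$, the coherence isomorphisms $\xi_x=\gamma(j_x)$, and the comparison morphisms $\eta_X$ defined on the coproduct injections) is exactly what that duality argument unwinds to. All the verifications you sketch, in particular the naturality check reducing both composites to $q_{\lambda(x)}\circ\xi_{\lambda(x)}$, are sound.
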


\section{Obtaining known dynamical invariants}\label{known-sec}

In this section we describe how the known entropies can be obtained as functorial entropies $\h_F$ for appropriate functors $F:\mathfrak X\to \Se$.
The range of the functors $F:\mathfrak X\to \Se$ considered in \S\ref{known-sec} and \S\ref{BT} is most of the time the category $\SL^\dag$, sometimes $\PSL^\dag$, only once $\mathfrak M_p^\dag$ and once $\Se^*$. Also in \S\ref{NewSec2} we make use of functors $F:\mathfrak X\to \Se^*$. 

\smallskip
By an observation made after the definition of the category $\SL$, for a functor $F:\mathfrak X\to \SL^\dag$ and a morphism $\phi:X_1\to X_2$ in $\mathfrak X$, the morphism $F(\phi)$ is automatically monotone, so we do not write it explicitly each time.

\medskip
We follow the next general scheme.
\begin{enumerate}[(FE1)]
\item Definition of the specific (classical) entropy $h:\mathfrak X\to \R_+$.
\item Description of the assigned normed semigroup and the functor $F:\mathfrak X\to \Se$.
\item Proof of the equality $h=\h_F$.
\item Basic properties of $h$ derived from the known general properties of $\h_F$: 
\begin{enumerate}[-]
\item Invariance under conjugation, 
\item Invariance under inversion, 
\item Logarithmic Law,
\item Vanishing on quasi-periodic flows,
\item Monotonicity for subflows, 
\item Monotonicity for factors, 
\item Continuity for direct/inverse limits, 
\item weak Addition Theorem.
\end{enumerate}
\end{enumerate}

We use the following observations. As the norms are subadditive in all cases considered in this section, the limit in the definition of each entropy is justified in view of Theorem~\ref{limit}.  This holds true with the exception of the contravariant set-theoretic entropy, for which we use a functor with target $\Se^*$ instead of $\Se$. Moreover, as noted in \S\ref{preorder-sec}, if $(S,v)$ is a normed semilattice then $v$ is arithmetic. By Lemma~\ref{cone}, if $(S,v)$ is a normed preordered monoid and $S=P_+(S)$ (in particular, when $S$ is a semilattice), then $v$ is d-monotone.

\subsection{Set-theoretic entropy}\label{set-sec}

First we consider the category $\mathbf{Set}$ of sets and maps and we construct the functor $\atr:\mathbf{Set}\to\SL^\dag$, which gives the covariant set-theoretic entropy $\mathfrak h$ introduced in \cite{AZD} as a functorial entropy. Then we discuss the contravariant set-theoretic entropy $\mathfrak h^*$ from \cite{DG-islam}. These entropies $\mathfrak h$ and $\mathfrak h^*$ are related to invariants for selfmaps of sets (i.e., the string number and the antistring number, see \cite{AADGH,DGV,G0,GV1}).

\smallskip
For a set $X$, denote by \newsym{family of all finite subsets of a set $X$}{$\mathcal S(X)$} the family of all finite subsets of $X$.

\begin{Definition}
Let $X$ be a set and $\lambda: X \to X$  a selfmap. For $D\in\mathcal S(X)$ and $n\in\N_+$ the \emph{$n$-th $\lambda$-trajectory of $D$} is 
$$\mathfrak T_n(\lambda,D) = D\cup\lambda(D)\cup\cdots\cup\lambda^{n-1}(D).$$
The \emph{covariant set-theoretic entropy of $\lambda$ with respect to $D\in\mathcal S(X)$} is
$$\mathfrak h (\lambda, D)=\lim_{n\to\infty} \frac{|\mathfrak T_n(\lambda,D) |}{n}.$$ 
The \emph{covariant set-theoretic entropy}\index{covariant set-theoretic entropy} of $\lambda$ is $\newsym{covariant set-theoretic entropy}{\mathfrak h}(\lambda) = \sup\left\{ \mathfrak h (\lambda, D) :D \in\mathcal S(X)\right\}.$
\end{Definition}

For a set $X$, define $v(A) = |A|$ for every $A\in\mathcal S(X)$. Then:
\begin{itemize}
\item[(i)] $({\mathcal S}(X),\cup,v,\subseteq)$ is a normed semilattice with neutral element $\emptyset$;
\item[(ii)] $v$ is subadditive, arithmetic, monotone and d-monotone. 
\end{itemize}
Consider a map $\lambda:X\to Y$ between sets and define $\atr(\lambda):\mathcal S(X)\to \mathcal S(Y)$ by $A\mapsto \lambda(A)$ for every $A\in\mathcal S(X)$. With $\atr(X)=\mathcal S(X)$, we have a covariant functor 
$$\boxed{\newsym{functor $\atr: \mathbf{Set} \to \mathfrak L^\dag$}{\atr}: \mathbf{Set} \to \mathfrak L^\dag.}$$

\begin{Theorem}
Let $X$ be a set and $\lambda:X\to X$ a selfmap. Then $\mathfrak h(\lambda,D)=\H_{\atr}(\lambda,D)$ for every $D\in\mathcal S(X)$, so $$\mathfrak h(\lambda)=\h_{\atr}(\lambda).$$
\end{Theorem}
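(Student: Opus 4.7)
The plan is to unwind both sides of the claimed equality to definitions and observe that they match term-by-term; the theorem should reduce to essentially a notational identification.

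First I would verify that the data introduced before the theorem statement genuinely produces the functor $\mathfrak{im}:\mathbf{Set}\to\mathfrak{L}^\dag$. The semilattice operation on $\mathcal{S}(X)$ is $\cup$ with neutral element $\emptyset$, and for any map $\lambda:X\to Y$ the assignment $A\mapsto\lambda(A)$ is a semilattice homomorphism since $\lambda(A\cup B)=\lambda(A)\cup\lambda(B)$ and $\lambda(\emptyset)=\emptyset$. It is contractive because $v(\lambda(A))=|\lambda(A)|\le|A|=v(A)$. Functoriality is immediate from $\mu\circ\lambda$ inducing $A\mapsto\mu(\lambda(A))$. Consequently $\mathfrak{im}(\lambda):\mathcal{S}(X)\to\mathcal{S}(X)$ is a contractive semilattice endomorphism, so $\mathbf{H}_{\mathfrak{im}}(\lambda,D)=h_{\mathfrak{S}}(\mathfrak{im}(\lambda),D)$ is well defined for every $D\in\mathcal{S}(X)$.

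Next I would compare the two notions of trajectory. Iterating $\mathfrak{im}(\lambda)$ gives $\mathfrak{im}(\lambda)^k(D)=\lambda^k(D)$ for every $k\in\mathbb{N}$, and the semigroup trajectory in $(\mathcal{S}(X),\cup)$ is
$$T_n(\mathfrak{im}(\lambda),D)=D\cup\mathfrak{im}(\lambda)(D)\cup\cdots\cup\mathfrak{im}(\lambda)^{n-1}(D)=D\cup\lambda(D)\cup\cdots\cup\lambda^{n-1}(D)=\mathfrak{T}_n(\lambda,D).$$
Taking the norm,
$$c_n(\mathfrak{im}(\lambda),D)=v(T_n(\mathfrak{im}(\lambda),D))=|\mathfrak{T}_n(\lambda,D)|.$$
Since $v$ is subadditive on the semilattice $\mathcal{S}(X)$ (as $|A\cup B|\le|A|+|B|$), Theorem~\ref{limit} guarantees that the sequence $c_n(\mathfrak{im}(\lambda),D)/n$ has a genuine limit, so the $\limsup$ in Definition~\ref{SEofEndos} collapses to the limit used in the definition of $\mathfrak{h}(\lambda,D)$. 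Hence
$$\mathbf{H}_{\mathfrak{im}}(\lambda,D)=\lim_{n\to\infty}\frac{c_n(\mathfrak{im}(\lambda),D)}{n}=\lim_{n\to\infty}\frac{|\mathfrak{T}_n(\lambda,D)|}{n}=\mathfrak{h}(\lambda,D).$$

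Finally, I would take the supremum over $D\in\mathcal{S}(X)$. By definition $\mathfrak{h}(\lambda)=\sup_D\mathfrak{h}(\lambda,D)$, while $\mathbf{h}_{\mathfrak{im}}(\lambda)=h_{\mathfrak{S}}(\mathfrak{im}(\lambda))=\sup_{A\in\mathcal{S}(X)}h_{\mathfrak{S}}(\mathfrak{im}(\lambda),A)$, and these sets are indexed by the same $\mathcal{S}(X)$, yielding $\mathbf{h}_{\mathfrak{im}}(\lambda)=\mathfrak{h}(\lambda)$. There is no genuine obstacle here: the whole content is that the $n$-th semigroup trajectory in $(\mathcal{S}(X),\cup)$ coincides literally with Adler-style $n$-th set-theoretic trajectory, and the norm $|\cdot|$ plays the same role on both sides; the only mildly delicate point is invoking subadditivity of $v$ to replace $\limsup$ by $\lim$ so that the two formulas match on the nose rather than merely as inequalities.
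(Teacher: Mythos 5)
Your proposal is correct and follows essentially the same route as the paper: identify $T_n(\atr(\lambda),D)$ with $\mathfrak T_n(\lambda,D)$, hence $c_n(\atr(\lambda),D)=|\mathfrak T_n(\lambda,D)|$, and pass to the (sup of the) limit. The extra checks you include (contractivity of $\atr(\lambda)$, subadditivity of $v$ so that the $\limsup$ is a genuine limit via Theorem~\ref{limit}) are handled by the paper in the setup preceding the theorem and in the general remarks opening \S\ref{known-sec}, so they are welcome but not a departure from the paper's argument.
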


\begin{proof} Let $D\in\mathcal S(X)$. Since $\mathfrak T_n(\lambda,D)=T_n(\lambda,D)$ for every $n\in\N_+$, we have that $|\mathfrak T_n(\lambda,D)|=c_n(\atr(\lambda),D)$ for every $n\in\N_+$. Hence, $$\mathfrak h(\lambda, D)=h_\Se(\atr(\lambda),D)=\H_{\atr}(\lambda,D),$$ so the thesis.
\end{proof}

In view of the properties of the functor $\atr$, by the results in \S\ref{f-sec}, it is easy to check that $\mathfrak h$ is invariant under conjugation and under inversion, is monotone for invariant subsets and for functors, satisfies the Logarithmic Law and vanishes on quasi-periodic flows. Moreover, the weak Addition Theorem holds for coproducts in $\mathbf{Set}$, indeed, if $(X_1,\lambda_1)$ and $(X_2,\lambda_2)$ are flows of $\mathbf{Set}$ then for the coproduct $X_1\sqcup X_2$, then $\mathfrak h(\lambda_1\sqcup\lambda_2)=\mathfrak h(\lambda_1)+\mathfrak h(\lambda_2)$.

\medskip
In analogy with the covariant set-theoretic entropy, we give here another notion of entropy for selfmaps, using counterimages in place of images. 

\begin{Definition}
Let $X$ be a set and $\lambda: X \to X$ a finite-to-one selfmap of $X$. For $D\in\mathcal S(X)$ and $n\in\N_+$, the \emph{$n$-th $\lambda$-cotrajectory of $D$} is
$$\mathfrak T_n^*(\lambda,D) = D\cup\lambda^{-1}(D)\cup\ldots\cup\lambda^{-n+1}(D).$$
The \emph{contravariant set-theoretic entropy of $\lambda$ with respect to $D\in\mathcal S(X)$} is $$\mathfrak h^*(\lambda, D)=\limsup_{n\to\infty} \frac{|\mathfrak T_n^*(\lambda,D)|}{n}.$$
The \emph{contravariant set-theoretic entropy}\index{contravariant set-theoretic entropy} of $\lambda$ is $\newsym{contravariant set-theoretic entropy}{\mathfrak h^*}(\lambda) = \sup\left\{ \mathfrak h^*(\lambda, D) :D \in\mathcal S(X)\right\}.$
\end{Definition}

\begin{Remark}\label{h*tilde}
For surjective finite-to-one selfmaps the contravariant set-theoretic entropy ${\mathfrak h^*}$ defined above coincides
with the  contravariant set-theoretic entropy defined in \cite{DG-islam} (this occurs for injective selfmaps as well, see below). We denote  here that entropy by $\mathfrak h_p^*$, in order to distinguish it from $\mathfrak h^*$, as these two entropies may differ for non-surjective finite-to-one selfmaps.

To recall the definition of $\mathfrak h_p^*$, we first recall the definition of surjective core of a map $\lambda:X\to X$ of a set $X$, given by 
$$\mathrm{sc}(\lambda)=\bigcap_{n\in\N}\lambda^n(X).$$ 
Then $\lambda\restriction_{\mathrm{sc}(\lambda)}$ is surjective
and this is the largest restriction of $\lambda$ that is surjective. 

For a selfmap $\lambda:X\to X$ in $\mathbf{Set}_\mathrm{fin}$, $$\mathfrak h^*_p(\lambda)=\mathfrak h^*(\lambda\restriction_{\mathrm{sc}(\lambda)}).$$ 
Clearly, $\mathfrak h_p^*(\lambda)\leq \mathfrak h^*(\lambda)$ for every $\lambda:X\to X$ in $\mathbf{Set}_\mathrm{fin}$.

For every $D\in\mathcal S(X)$, $$\mathfrak h^*( \lambda,D) =\mathfrak h^*(\lambda,D\cap \mathrm{sc}(\lambda)),$$
as the increasing chain $\{\mathfrak T_n^*(\lambda,D)\}_{n\in\N_+}$ stabilizes whenever $D\cap \mathrm{sc}(\lambda) = \emptyset$. 
Then the computation of  $\mathfrak h^*(\lambda,D)$ can be limited to finite subsets $D$ of $\mathrm{sc}(\lambda)$. Nevertheless, even for $D \subseteq \mathrm{sc}(\lambda)$, the trajectory $\mathfrak T_n^*(\lambda,D)$ may be much larger than $\mathfrak T_n^*(\lambda\restriction_{\mathrm{sc}(\lambda)},D)$; in particular, $\mathfrak h_p^*(\lambda)=1< \infty = \mathfrak h^*(\lambda)$ may occur (for an example see \cite[Remark 3.2.41]{DG-islam}). 

On the other hand, if $\lambda$ is injective, then $\mathfrak T_n^*(\lambda,D)\subseteq  \mathrm{sc}(\lambda)$
for every finite $D \subseteq  \mathrm{sc}(\lambda)$, so 
$$\mathfrak h^*(\lambda)=\mathfrak h^*(\lambda\restriction_{\mathrm{sc}(\lambda)}) = \mathfrak h^*_p(\lambda).$$

\smallskip
Our preference to $\mathfrak h^*$ here is based on the possibility to obtain it as a functorial entropy (see Theorem~\ref{PakiPaki}) in the sense of this paper. Further comments on the possibility of obtaining also $\mathfrak h_p^*$ in a functorial way are given in Remark~\ref{TSUNAMI-h*}.
\end{Remark}

The limit superior in the above definition was proved to be a limit when $\lambda$ is surjective in \cite{DG-islam}, even if in general the sequence $\{|\mathfrak T_n^*(\lambda,D)|\}_{n\in\N_+}$  does not need to be subadditive, as the following example from \cite{DG-islam} shows, and so Fekete Lemma does not applies (one can see that it is subadditive when $\lambda$ is injective).

\begin{Example}\label{pakex}
Let $\lambda:\N\to \N$ be a selfmap defined by $\lambda(1)=\lambda(0)=0$, $\lambda(2n+2)=2n$ and $\lambda(2n+3)=2n+1$ for every $n\in\N$.
Then $\mathfrak T_2^*(\lambda,\{0\})=\{0,1,2\}$ and so $|\mathfrak T_2^*(\lambda,\{0\})|=3$, while $\mathfrak T_1^*(\lambda,\{0\})=\{0\}$ and hence $|\mathfrak T^*_1(\lambda,\{0\})|+|\mathfrak T^*_1(\lambda,\{0\})|=2<3$.
\end{Example}

Now we aim to obtain $\mathfrak h^*$ as a functorial entropy. To this end, for a set $X$, let $\str(X)=\mathcal S(X)$, while for a finite-to-one map $\lambda:X\to Y$ the morphism $\str(\lambda):\str(Y)\to\str(X)$ is given by $A \mapsto\lambda^{-1}(A)$. This defines a contravariant functor 
$$\boxed{\newsym{functor $\str: \mathbf{Set}_\mathrm{fin}\to\Se^*$}{\str}: \mathbf{Set}_\mathrm{fin}\to\Se^*.}$$
The necessity to ``enlarge" the target category (from $\Se$ to $\Se^*$) comes from the fact that $\str(\lambda)$ is contractive if and only if $\lambda$ is injective. So one necessarily ends up in $\Se^*$, not in $\Se$. 
In particular, this shows that the blanket hypothesis on contractivity of the endomorphisms in Theorem~\ref{limit}  is necessary.

\begin{Theorem}\label{PakiPaki}
Let $X$ be a set and $\lambda:X\to X$ a selfmap. Then $\mathfrak h^*(\lambda,D)=\H_{\str}(\lambda,D)$ for every $D\in\mathcal S(X)$, so $$\mathfrak h^*(\lambda)=\h_{\str}(\lambda).$$
\end{Theorem}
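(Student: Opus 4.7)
The plan is simply to unwrap the definitions and match the trajectories. The proof proceeds as follows.

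First, I would observe that for a selfmap $\lambda: X \to X$ in $\mathbf{Set}_{\mathrm{fin}}$, the morphism $\str(\lambda): \mathcal S(X) \to \mathcal S(X)$ is an endomorphism of the normed semigroup $(\mathcal S(X), \cup, v)$ in $\Se^*$ (where $v(A) = |A|$), given explicitly by $A \mapsto \lambda^{-1}(A)$. Note that in general $\str(\lambda)$ is only a semigroup homomorphism and need not be contractive, which is why the computation must take place in $\Se^*$ rather than in $\Se$; this is the reason the $\limsup$ in the definition of $\mathfrak h^*(\lambda, D)$ is really a $\limsup$ and not a $\lim$, as illustrated by Example~\ref{pakex}.

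Fix $D \in \mathcal S(X)$. The heart of the argument is to compute the $n$-th $\str(\lambda)$-trajectory of $D$ in $\mathcal S(X)$. Since the semigroup operation on $\mathcal S(X)$ is $\cup$, and since $\str(\lambda)^k(D) = \lambda^{-k}(D)$ for every $k \in \N$ by the functoriality of $\str$ (cf.\ the analogue of \eqref{powers} for $\str$), I would verify:
\begin{equation*}
T_n(\str(\lambda), D) = D \cup \str(\lambda)(D) \cup \cdots \cup \str(\lambda)^{n-1}(D) = D \cup \lambda^{-1}(D) \cup \cdots \cup \lambda^{-(n-1)}(D) = \mathfrak T_n^*(\lambda, D).
\end{equation*}
Applying the norm $v$ yields $c_n(\str(\lambda), D) = |\mathfrak T_n^*(\lambda, D)|$ for every $n \in \N_+$.

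Dividing by $n$ and taking $\limsup_{n \to \infty}$ on both sides, the definition of the semigroup entropy in $\Se^*$ (Definition~\ref{SEofEndos}) gives
\begin{equation*}
\H_{\str}(\lambda, D) = h_{\Se^*}(\str(\lambda), D) = \limsup_{n \to \infty} \frac{|\mathfrak T_n^*(\lambda, D)|}{n} = \mathfrak h^*(\lambda, D),
\end{equation*}
which is the first assertion. Taking the supremum over $D \in \mathcal S(X)$ and using the definitions of $\mathfrak h^*(\lambda)$ and $\h_{\str}(\lambda) = h_{\Se^*}(\str(\lambda)) = \sup_{D \in \mathcal S(X)} h_{\Se^*}(\str(\lambda), D)$ yields $\mathfrak h^*(\lambda) = \h_{\str}(\lambda)$. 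There is no serious obstacle here; the only subtle point is simply recognizing that $\str$ has to land in $\Se^*$ rather than in $\Se$, since the preimage map of a non-injective $\lambda$ need not be contractive with respect to the cardinality norm.
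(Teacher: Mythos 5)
Your proof is correct and follows essentially the same route as the paper: identify $T_n(\str(\lambda),D)$ with the cotrajectory $\mathfrak T_n^*(\lambda,D)$, apply the cardinality norm, and pass to the $\limsup$ and the supremum over $D$. The only difference is that the paper opens with an (unnecessary for this computation) reduction to surjective $\lambda$, which you rightly omit; your explicit remark that the argument lives in $\Se^*$ because $\str(\lambda)$ need not be contractive matches the discussion the paper places just before the theorem.
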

\begin{proof}
We can assume without loss of generality that $\lambda$ is surjective. Let $D\in\mathcal S(X)$. Since $\mathfrak T_n^*(\lambda,D)=T_n(\lambda,D)$ for every $n\in\N_+$, we have that $|\mathfrak T_n^*(\lambda,D)|=c_n(\str(\lambda),D)$ for every $n\in\N_+$. Hence, $$\mathfrak h^*(\lambda, D)=h_\Se(\str(\lambda),D)=\H_{\str}(\lambda,D),$$
and this concludes the proof.
\end{proof}

It is known from \cite{DG-islam} that the entropy $\mathfrak h_p^*$ is invariant under conjugation and inversion and it is monotone for invariant subsets and for factors. Moreover, $\mathfrak h_p^*$ vanishes on locally quasi-periodic flows, the Logarithmic Law holds and the weak Addition Theorem holds for coproducts
(that is, if $(X_1,\lambda_1)$ and $(X_2,\lambda_2)$ are flows of $\mathbf{Set}$ then their coproduct $\lambda_1\sqcup\lambda_2: X_1\sqcup X_2 \to X_1\sqcup X_2$ satisfies
$\mathfrak h^*_p(\lambda_1\sqcup\lambda_2)=\mathfrak h_p^*(\lambda_1)+\mathfrak h_p^*(\lambda_2)$). 
On the other hand, the Continuity for inverse limits is not available, since it was observed in \cite{AZD} that in the category $\mathbf{Set}_\mathrm{fin}$ the inverse limits need not exist.

The entropy $\mathfrak h^*$ introduced here shares the same properties as those of $\mathfrak h^*_p$, with a few exceptions: 
$\mathfrak h^*$ need not vanishes on locally quasi-periodic flows and we are  not aware whether $\mathfrak h^*$ satisfies the Logarithmic Law (nevertheless, $\mathfrak h^*$ vanishes on quasi-periodic flows).

\subsection{Topological entropy for compact spaces}\label{htop-sec}

In this subsection we show that the topological entropy $h_{top}$ introduced in \cite{AKM} can be obtained as a functorial entropy via an appropriate functor $\cov:\mathbf{CTop}\to\PSL^\dag$, where $\mathbf{CTop}$ is the category of compact spaces and continuous maps.

\medskip
For a topological space $X$ let \newsym{family of all open covers $\mathcal U$ of a topological space $X$}{$\cov(X)$} be the family of all open covers $\mathcal U$ of $X$, with the convention that $\emptyset$ may belong to $\mathcal U$. Clearly, every base of $X$ belongs to $\cov(X)$. For $m\in\N_+$ and $\U_1, \ldots ,\U_m\in\cov(X)$, let 
$$\U_1 \vee \ldots \vee \U_m=\left\{\bigcap _{i=1}^m U_i: U_i\in \U_i\right\}.$$ 
For a continuous selfmap $\phi:X\to X$, $\U\in \cov (X)$ and $n\in\N_+$, let 
$$\phi^{-n}(\U)=\{\phi^{-n}(U): U\in \U\}.$$ 
Then $\phi^{-n}(\U_1\vee \ldots \vee\U_m)=\phi^{-n}(\U_1)\vee \ldots \vee \phi^{-n}(\U_m)$ for every $n,m\in\N_+$.

\begin{Definition}\label{htop-def}
Let $X$ be a compact space and $\phi:X\to X$ a continuous selfmap. For $\U\in\cov(X)$ let $$N(\U)=\min\{|\mathcal V|: \mathcal V\ \text{is a finite subcover of}\ \U\}.$$
The \emph{topological entropy of $\phi$ with respect to $\U\in\cov(X)$} is 
\begin{equation}
H_{top}(\phi,\U)=\lim_{n\to\infty} \frac{\log N(\U\vee \phi^{-1}(\U) \vee \ldots \vee \phi^{-n+1}(\U))}{n}.
\end{equation}
The \emph{topological entropy}\index{topological entropy} of $\phi$ is $$\newsym{topological entropy}{h_{top}}(\phi)=\sup\{H_{top}(\phi,\U):\U \in \cov(X)\}.$$
\end{Definition}

For a topological space $X$, let $\U_X$ denote the largest open cover (i.e., the whole topology of $X$) and $\mathcal E_X=\{X\}$ the trivial cover. 
Then $(\cov(X), \vee, \mathcal E_X)$ is a commutative monoid. This monoid has a natural partial order by inclusion that turns it into a (pre)ordered monoid. In what follows we consider a richer preorder that turns out to be more relevant.

For $\U,\V \in \cov(X)$ we say that $\V$ \emph{refines} $\U$ (denoted by $\U \prec\V $) if for every $V \in \V$ there exists $U\in \U$ such that $V\subseteq U$. Let $\U \sim \V$ if $ \U \prec \V$ and $\V \prec\U$.
Then $\prec$ is a preorder on $\cov(X)$ that is not an order and has bottom element $\mathcal E_X\sim\U_X$ (if $\U$ is an open cover of $X$ and $X\in\U$ then $\U\sim\mathcal E_X$).  

For $\U, \V \in \cov (X)$ we let $$\V_\U=\{V \in \V: V \subseteq U\ \text{for some}\ U\in \U\}.$$ Clearly, $\V_\U = \V$ if and only if $\U\prec \V$.  Moreover,  $\V_\U $ refines $\U$, although it need not be a cover when $\V$ itself does not refine $\U$. If $\V$ is a base, then $\V_\U$ is still a base, so in particular, a cover. In this case $\V_\U$ is a subcover of $\V$ that refines $\U$. 

For a compact space $X$ and $\U\in \cov(X)$, let $$v(\U)=\log N(\U).$$ If $\U\prec\V$ then $v(\U)\leq v(\V)$.

In general,  $\U \vee \U \ne \U $, yet  $\U \vee \U \sim \U $, and more generally $\U \vee \U  \vee \ldots \vee \U \sim \U$.
Therefore, $v(\U \vee \U  \vee \ldots \vee \U) = v(\U)$. 

Then:
\begin{itemize}
\item[(i)] $(\mathfrak{cov}(X), \vee, v,\prec)$ is a normed pre-semilattice with zero $\mathcal E_X$;
\item[(ii)] $v$ is subadditive, arithmetic, monotone and d-monotone.
\end{itemize}
For $X$, $Y$ topological spaces, a continuous map $\f:X\to Y$ and $\U\in \cov (Y)$, let 
$$\cov (\f): \cov (Y)\to \cov (X)$$ 
be defined by $\U \mapsto \f^{-1}(\U)$. Obviously,  $\cov(\phi)$ is monotone with respect to the order $\prec$.
So, this defines a contravariant functor $\mathfrak{cov}$ from the category of all topological spaces to the category of commutative semigroups (actually, presemilattices). 

For every continuous map $\f:X\to Y$ of compact spaces and $\W\in \cov(Y)$, the inequality  $v(\f^{-1}(\W))\leq v(\W)$ holds (if $\phi$ is surjective, then equality holds).
Consequently, the assignments $X \mapsto \cov(X)$ and $\phi\mapsto\cov(\phi)$ define a contravariant functor 
$$\boxed{\newsym{functor $\cov:\mathbf{CTop}\to \PSL^\dag$}{\cov}:\mathbf{CTop}\to \PSL^\dag.}$$

\begin{Theorem}\label{realization:top:ent}
Let $X$ be a compact space and $\phi:X\to X$ a continuous selfmap. Then $H_{top}(\phi,\mathcal U)=\H_{\mathfrak{cov}}(\phi,\mathcal U)$ for every $\mathcal U\in\cov(X)$, and so $$h_{top}(\phi)=\h_{\cov}(\phi).$$
\end{Theorem}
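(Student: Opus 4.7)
The plan is essentially to unwind the definitions and verify that the partial trajectories in the normed presemilattice $(\cov(X),\vee,v,\prec)$ are exactly the joins appearing in Adler--Konheim--McAndrew's definition of $H_{top}$. The key observation is that the functor $\cov$ has been built to encode the $\phi^{-n}$-iteration combinatorially, with norm $v(\mathcal U)=\log N(\mathcal U)$ counting the least cardinality of a finite subcover.

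First I would observe, using the contravariant functoriality of $\cov$, that for every $k\in\N$ one has $\cov(\phi)^k=\cov(\phi^k)$, and hence $\cov(\phi)^k(\mathcal U)=\phi^{-k}(\mathcal U)$ for every $\mathcal U\in\cov(X)$. Therefore the $n$-th $\cov(\phi)$-trajectory of $\mathcal U$, computed in the semigroup $(\cov(X),\vee)$, is
\begin{equation*}
T_n(\cov(\phi),\mathcal U)=\mathcal U\vee\cov(\phi)(\mathcal U)\vee\cdots\vee\cov(\phi)^{n-1}(\mathcal U)=\mathcal U\vee\phi^{-1}(\mathcal U)\vee\cdots\vee\phi^{-(n-1)}(\mathcal U).
\end{equation*}
Applying the norm $v$ gives
\begin{equation*}
c_n(\cov(\phi),\mathcal U)=v(T_n(\cov(\phi),\mathcal U))=\log N\bigl(\mathcal U\vee\phi^{-1}(\mathcal U)\vee\cdots\vee\phi^{-(n-1)}(\mathcal U)\bigr),
\end{equation*}
which is exactly the sequence inside the limit in Definition~\ref{htop-def}.

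Next I would invoke Theorem~\ref{limit}: since $(\cov(X),v)\in\PSL^\dag\subseteq\Se^\dag$ has a subadditive norm, the $\limsup$ in the definition of $h_\Se(\cov(\phi),\mathcal U)$ is in fact a limit, matching the limit in the definition of $H_{top}(\phi,\mathcal U)$. Consequently
\begin{equation*}
\H_{\cov}(\phi,\mathcal U)=h_\Se(\cov(\phi),\mathcal U)=\lim_{n\to\infty}\frac{c_n(\cov(\phi),\mathcal U)}{n}=H_{top}(\phi,\mathcal U).
\end{equation*}
Taking the supremum over $\mathcal U\in\cov(X)$ on both sides yields $h_{top}(\phi)=\h_{\cov}(\phi)$. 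There is no substantive obstacle here: the definition of $\cov$ (operation $\vee$, norm $\log N$, morphism $\mathcal U\mapsto\phi^{-1}(\mathcal U)$) was designed precisely so that the general semigroup entropy machinery reproduces $H_{top}$, and the only non-trivial ingredient is the passage from $\limsup$ to $\lim$ provided by Fekete's lemma via Theorem~\ref{limit}.
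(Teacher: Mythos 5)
Your proof is correct and follows essentially the same route as the paper's: both identify $T_n(\cov(\phi),\mathcal U)$ with $\mathcal U\vee\phi^{-1}(\mathcal U)\vee\cdots\vee\phi^{-n+1}(\mathcal U)$, conclude $c_n(\cov(\phi),\mathcal U)=\log N(\cdot)$, and pass to the limit. Your explicit appeal to Theorem~\ref{limit} for the $\limsup$-versus-$\lim$ issue is handled in the paper by a blanket remark at the start of \S\ref{known-sec}, so nothing is missing.
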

\begin{proof}
Let $\mathcal U\in\cov(X)$. For every $n\in\N_+$, since $$\U\vee \phi^{-1}(\U) \vee \ldots \vee \phi^{-n+1}(\U)=T_n(\cov(\phi),\mathcal U),$$ we conclude that $$\log N(\U\vee \phi^{-1}(\U) \vee \ldots \vee \phi^{-n+1}(\U))=c_n(\cov(\phi),\mathcal U),$$ and so $H_{top}(\phi,\mathcal U)=h_\Se(\cov(\phi),\mathcal U)=\H_{\mathfrak{cov}}(\phi,\mathcal U)$.
\end{proof}

The contravariant functor $\mathfrak{cov}$ takes factors in $\CT$ to subobject embeddings in $\PSL^\dag$, subobject embeddings in $\CT$ to surjective morphisms in $\PSL^\dag$. Therefore, the properties proved in \S\ref{f-sec} yield that the topological entropy $h_{top}$ is invariant under conjugation and inversion, it is monotone for restrictions to invariant subspaces and for factors, it satisfies the Logarithmic Law, and it vanishes on quasi-periodic continuous selfmaps.

The fact that $\prec$ is a preorder on $\cov(X)$ compatible with $\cov(\phi)$ will  now be used to check the Continuity for inverse limits. %and the weak Addition Theorem.
To see this, we apply Proposition~\ref{New:corollary2} below, Lemma~\ref{limfin} and Theorem~\ref{realization:top:ent}, to conclude that $$h_{top}(\phi)=\sup_{i\in I}h_{top}(\phi_i),$$ for an inverse system $\{(X_i,p_{i,j})\}_{i\in I}$ of compact spaces $X_i$ with $p_{i,j}: X_i \to X_j$, for $i,j\in I$ with $j\leq i$, and its inverse limit $X=\varprojlim_{i\in I} X_i$ with canonical projections $p_i: X \to X_i$. Indeed, in this case each $\cov(p_i): \cov(X_i) \to \cov(X)$ is an embedding that allows us to consider $\varinjlim_{i\in I} \cov(X_i)$ in $\cov(X)$. 

\begin{Proposition}\label{New:corollary2} 
In the above notation, $\varinjlim_{i\in I} \cov(X_i)$ is cofinal in $\cov(X)$.  \end{Proposition}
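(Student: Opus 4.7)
The plan is to exploit the fact that $\mathcal B := \{p_i^{-1}(U): i\in I,\ U\ \text{open in}\ X_i\}$ forms a base for the topology of the inverse limit $X=\varprojlim_{i\in I}X_i$; this follows at once by directedness, since any subbasic open set $\bigcap_{k=1}^n p_{i_k}^{-1}(U_k)$ can be rewritten as $p_i^{-1}\!\bigl(\bigcap_{k=1}^n p_{i_k,i}^{-1}(U_k)\bigr)\in\mathcal B$ upon choosing $i\geq i_1,\ldots,i_n$. This is the only structural input from inverse limits needed.

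Given $\mathcal V\in\cov(X)$, I would, for every $x\in X$, pick $V_x\in\mathcal V$ with $x\in V_x$ and, using that $\mathcal B$ is a base, an index $i(x)\in I$ together with an open set $U_x\subseteq X_{i(x)}$ such that $x\in p_{i(x)}^{-1}(U_x)\subseteq V_x$. Compactness of $X$ would then provide a finite subcover $\{p_{i(x_l)}^{-1}(U_{x_l})\}_{l=1}^N$, and directedness of $I$ an index $i\in I$ with $i\geq i(x_l)$ for all $l$. Setting $\tilde U_l := p_{i(x_l),i}^{-1}(U_{x_l})$, one gets $p_i^{-1}(\tilde U_l)=p_{i(x_l)}^{-1}(U_{x_l})\subseteq V_{x_l}$ and $\bigcup_l p_i^{-1}(\tilde U_l)=X$.

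The remaining task is to promote $\U_0=\{\tilde U_1,\ldots,\tilde U_N\}$ to an honest cover $\U\in\cov(X_i)$ whose pullback $p_i^{-1}(\U)$ still refines $\mathcal V$. The equality $\bigcup_l p_i^{-1}(\tilde U_l)=X$ is equivalent to $p_i(X)\subseteq\bigcup_l\tilde U_l$, so only the set $X_i\setminus p_i(X)$ may still need to be covered; and any open set of $X_i$ disjoint from $p_i(X)$ has empty preimage under $p_i$, which is harmless for the refinement condition. In the compact Hausdorff setting, $p_i(X)$ is closed in $X_i$ as the continuous image of a compact space, so $W:=X_i\setminus p_i(X)$ is open and $\U:=\U_0\cup\{W\}\in\cov(X_i)$ does the job: $p_i^{-1}(\U)=\{p_i^{-1}(\tilde U_1),\ldots,p_i^{-1}(\tilde U_N),\emptyset\}$, each element of which lies inside some $V\in\mathcal V$, yielding $\mathcal V\prec p_i^{-1}(\U)\in\varinjlim_{i\in I}\cov(X_i)$, as required.

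The principal obstacle is precisely this last step, of converting the cylindrical refinement of $\mathcal V$ inside $X$ into an honest open cover of $X_i$ whose pullback under $p_i$ still refines $\mathcal V$; closedness of $p_i(X)$ is the geometric input that makes the argument work in compact Hausdorff spaces. For general (non-Hausdorff) compact $X_i$ one would need either to pass to the cofinal subsystem where the connecting maps are surjective, or to work modulo the equivalence on $\cov(X_i)$ which identifies covers differing only by open sets disjoint from $p_i(X)$.
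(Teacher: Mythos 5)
Your argument is correct and follows the same skeleton as the paper's proof: the cylinders $p_i^{-1}(U)$ form a base of $X=\varprojlim_{i\in I}X_i$ (the paper cites Engelking 2.5.5 for exactly this), compactness extracts a finite refining subfamily, and directedness of $I$ pulls everything back to a single index $i_0$. The one place where you genuinely diverge is the final step, where the finite family $\{\tilde U_1,\ldots,\tilde U_N\}$ must be upgraded to an element of $\cov(X_{i_0})$. The paper simply asserts that $p_{i_0}:X\to X_{i_0}$ is surjective and deduces that $\W^*=\{W_k\}$ is already a cover of $X_{i_0}$ from the fact that $p_{i_0}^{-1}(\W^*)$ covers $X$; you instead avoid any surjectivity hypothesis by adjoining the open set $W=X_{i_0}\setminus p_{i_0}(X)$, whose preimage is empty and hence harmless for the refinement relation (the paper's convention explicitly allows $\emptyset$ as a member of a cover, so this is legitimate). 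Your variant buys independence from surjectivity of the projections at the price of needing $p_{i_0}(X)$ closed, i.e.\ the Hausdorff separation you flag at the end; the paper's variant is cleaner when the bonding maps (hence the projections) are surjective, which is the standard reduction for inverse systems of compact spaces and is what the authors tacitly assume. Both resolutions are valid in their respective settings, and your closing remark correctly identifies the exact hypothesis each one consumes.
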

\begin{proof}
For every $i\in I$ we identify $\cov(X_i)$ with the family $$\mathcal B_i^*=\{ \cov(p_i)(\V): \V\in \cov(X_i)\}$$
in $\cov(X)$. Then  $ \varinjlim_{i\in I} \cov(X_i)$ can be identified with $L=\bigcup_{i\in I}\mathcal B_i^*$. It is known that $$\mathcal B= \bigcup L$$ is a base of $X$ by \cite[2.5.5]{E}, in particular $\mathcal B\in \cov(X)$.
 
In order to check that $L$  is cofinal in $\cov(X)$, pick  $\U\in\cov(X)$. Then $\mathcal B_\U$ is a subcover of $\mathcal B$ such that $\U\prec \mathcal B_\U$; moreover, $\mathcal B_\U$ is a base of $X$ as noted above, so $\mathcal B_\U \in \cov(X)$. 
By the compactness of $X$ there exists a finite subcover $$\W = \{ p_{i_{k}}^{-1}(V_{i_k}): V_{i_k}\in \U_{X_{i_k} },  p_{i_{k}}^{-1}(V_{i_k}) \subseteq U_k,\ \text{for some}\ U_k \in \U,\ k = 1, \ldots,n\}$$ of $\mathcal B_\U$, in particular $\U \prec \W$. 
It remains to prove that $$\W\in L.$$ To this end, take an index $i_0 \geq i_k$, for $k = 1,\ldots,n$ and let $$W_k= p_{i_0, i_k}^{-1}(V_{i_k})$$ for $k = 1, \ldots , n$. Since $p_{i_k} = p_{i_0, i_k}\circ p_{i_0}$, we have that $p_{i_{k}}^{-1}(V_{i_k})=p_{i_0}^{-1}(W_k)$, and obviously $W_k$ is an open subset of $X_{i_0}$ for $k = 1, \ldots , n$. 
Let $$\W^*= \{W_k: k = 1, \ldots , n\}.$$
Since $p_{i_0}:X \to X_{i_0}$ is surjective and $\W = p_{i_0}^{-1}(\W^*) \in \cov(X)$, we deduce that 
$\W^*\in \cov(X_{i_0}).$
Hence, $\W= \cov(p_{i_0})(W^*) \in L$.
\end{proof}

%As far as the weak Addition Theorem is concerned, note that for two compact spaces $X$ and $Y$ with canonical projections $p_1:X\times Y \to X$ and $p_2: X\times Y \to Y$ one can consider $\cov(X)\oplus \cov(Y)$ as a submonoid of $\cov(X\times Y)$ as follows. For $\mathcal U\in \cov(X)$ and $\mathcal V\in \cov(Y)$ denote $$\mathcal U\times \mathcal V=\{U\times V: U \in \mathcal U, V \in \mathcal V\}\in \cov(X\times Y).$$
%As $$\mathcal U\times \mathcal V=\cov(p_1)(\U) \vee \cov(p_2)(\V).$$ the correspondence $$(\U,\V)\mapsto \mathcal U\times \mathcal V$$ allows us to identify $\cov(X)\oplus \cov(Y)$ with a submonoid of $\cov(X\times Y)$. 
%It is cofinal in $\cov(X\times Y)$ by the proof of \cite[Theorem 3]{AKM}. Moreover, the restriction of the norm $v$ of $\cov(X\times Y)$ on $\cov(X)\oplus \cov(Y)$ coincides with $v_\oplus$. Hence, it suffices to apply Lemma~\ref{wATg}, together with Theorem~\ref{realization:top:ent}, to see 
It is known that the Weak Addition Theorem holds for the topological entropy, that is, for any pair of continuous selfmaps $\phi:X\to X$ and $\psi:Y\to Y$ one has $$h_{top}(\phi\times\psi)=h_{top}(\phi)+h_{top}(\psi).$$
This was announced in \cite[Theorem 3]{AKM} and correctly proved in \cite{Good}.

The topological entropy satisfies also another version of weak Addition Theorem for coproducts (see \cite[Theorem 4]{AKM} and \cite[Proposition 4.1.9]{DG-islam}). Indeed, if $(X,\phi)$ and $(Y,\psi)$ are flows in $\mathbf{CTop}$, and we consider the coproduct $X\sqcup Y$, then $h_{top}(\phi\sqcup\psi)=\max\{h_{top}(\phi),h_{top}(\psi)\}$.
 This result follows from Corollary~\ref{wATco} and Theorem~\ref{realization:top:ent}.

\smallskip
It is worth recalling that in the computation of the topological entropy it is possible to reduce to surjective continuous selfmaps of compact spaces (see \cite{S,Wa}). 

\medskip
In the category $\mathbf{CTop}$ the Bernoulli shifts defined in \eqref{Xbeta}, \eqref{Xbarbeta} and \eqref{barbetaX} have obviously the following concrete form. 
For $K\in\mathbf{CTop}$
\begin{equation}\label{leftBernoulli1}
{}_K\beta:K^{\N}\to K^{\N},\quad {}_K\beta(x_0,x_1,\ldots,x_n,\ldots)=(x_1,x_2, \ldots,x_{n+1},\ldots),
\end{equation}
while
\begin{equation}\label{leftBernoulli2}
{}_K\bar\beta:K^{\Z}\to K^{\Z},\quad {}_K\beta((x_n)_{n\in\Z})=(x_{n+1})_{n\in\Z},
\end{equation}
and
\begin{equation}\label{rightBernoulli2}
\bar\beta_K:K^{\Z}\to K^{\Z},\quad {}_K\beta((x_n)_{n\in\Z})=(x_{n-1})_{n\in\Z}.
\end{equation} 

As far as the topological entropy of the Bernoulli shifts is concerned, it is known (see also Theorem~\ref{SetBT2star} and Corollary~\ref{reflection:corollary} below) that in case $K$ is a compact Hausdorff space,
\begin{equation}\label{htopbeta}
h_{top}({}_K\beta)=h_{top}({}_K\bar\beta)=h_{top}(\bar\beta_K)=\log|K|,
\end{equation}
with the convention that $\log|K|=\infty$ if $X$ is infinite.

\smallskip
More generally, for a selfmap $\lambda:X\to X$ of a non-empty set $X$, and $K$ a topological space, the backward generalized shift from Definition~\ref{bgs} has the form
\begin{equation}\label{bgseq}
\sigma_\lambda:K^X\to K^X,\quad f\mapsto \lambda\circ f;
\end{equation}
it was introduced and studied in \cite{AADGH,AZD} as a generalized version of the Bernoulli shifts recalled in \eqref{leftBernoulli1}, \eqref{leftBernoulli2}, \eqref{rightBernoulli2} (see Remark~\ref{shiftrem}).
It is known from \cite{AZD} that, if $K$ is a compact Hausdorff space, then
\begin{equation}\label{htoph}
h_{top}(\sigma_\lambda)=\mathfrak h(\lambda)\cdot\log|K|,
\end{equation}
with the convention that $\log|K|=\infty$ if $K$ is infinite. In particular, this covers the formula in \eqref{htopbeta} and will follow from Theorem~\ref{SetBT2star}.

\subsection{Entropy for topological spaces and entropy for frames}\label{fr-sec}

Topological entropy functions for non-compact topological spaces were discussed by Hofer \cite{Hof}, who proposed a quite natural extension of the topological entropy $h_{top}$ to continuous selfmaps of arbitrary topological spaces, by simply replacing the open covers by finite open covers.  For a topological space $X$ let \newsym{family of all finite open covers of a topological space $X$}{$\fc(X)$} denote the subfamily of $\cov(X)$ consisting of all finite open covers of $X$. For every continuous selfmap $\f: X \to X$ and for every $\mathcal U \in \fc(X)$ define $\Hf(\f, \mathcal U)$ and \newsym{topological entropy}{$\hf(\f)$} as in Definition~\ref{htop-def}.

For continuous selfmaps of compact spaces obviously $\hf = h_{top}$, as every open cover of a compact space has a finite open subcover. 

\medskip
Obviously, $\fc(X)$ is a submonoid of the commutative monoid $(\mathfrak{cov}(X), \vee)$. 
It is important to underline that if the topological space $X$ is not compact, then the latter monoid is not normed. Nevertheless, the norm $v=\log N(-)$ is well-defined on the submonoid $\fc(X)$ and furthermore, considering on $\fc(X)$ the restriction of the refinement relation $\prec$ recalled above for $\cov(X)$:
\begin{itemize}
\item[(i)]  $(\fc(X), \vee, v,\prec)$ is a normed presemilattice with zero $\mathcal E_X$;
\item[(ii)] $v$ is subadditive, arithmetic, monotone and d-monotone. 
\end{itemize}
Since for every continuous map $f:X \to Y$ the map $\cov(f): \cov(Y) \to \cov (X)$ sends $\fc(Y)$ to $\fc(X)$, the restriction $\fc(f)$ of $\cov(f)$ to $\fc(Y)$ 
defines a morphism $\fc(f) : \fc(Y) \to \fc(X)$ which is monotone and so it is in $\PSL^\dag$. In this way we obtain a new contravariant functor 
$$\boxed{\newsym{functor $\fc: \Top \to \PSL^\dag$}{\fc}: \Top \to \PSL^\dag.}$$

The proof of the next theorem is similar to that of Theorem~\ref{realization:top:ent}. 

\begin{Theorem}\label{realization:fin-top:ent}
Let $X$ be a topological space and $\phi:X\to X$ a continuous selfmap. Then $\Hf(\phi,\mathcal U)=\H_{\mathfrak{cov}}(\phi,\mathcal U)$ for every $\mathcal U\in\fc(X)$, 
and so 
$$\hf(\phi)=\h_{\fc}(\phi).$$
\end{Theorem}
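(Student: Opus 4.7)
The plan is to mimic almost verbatim the proof of Theorem~\ref{realization:top:ent}, with the only change being that one restricts attention to finite open covers. The essential content is purely bookkeeping: one needs to recognize that the $n$-th trajectory $T_n(\fc(\phi),\mathcal U)$ of $\mathcal U$ in the normed presemilattice $\fc(X)$ (whose operation is $\vee$) is precisely the join $\mathcal U\vee \phi^{-1}(\mathcal U)\vee \ldots \vee \phi^{-n+1}(\mathcal U)$, and that the norm $v=\log N(-)$ evaluated on this trajectory produces exactly the quantity appearing in the definition of $H_{\mathrm{fin\text{-}top}}(\phi,\mathcal U)$.

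More explicitly, first I would fix $\mathcal U\in\fc(X)$ and observe that, by the definition of the functor $\fc$, one has $\fc(\phi)(\mathcal V)=\phi^{-1}(\mathcal V)$ for every $\mathcal V\in\fc(X)$; an easy induction then yields $\fc(\phi)^{k}(\mathcal U)=\phi^{-k}(\mathcal U)$ for all $k\in\N$. Combined with the definition of the $n$-th trajectory in the semigroup $(\fc(X),\vee)$, this gives
\[
T_n(\fc(\phi),\mathcal U)=\mathcal U\vee \phi^{-1}(\mathcal U)\vee \ldots \vee \phi^{-n+1}(\mathcal U).
\]
Applying the norm $v=\log N(-)$ to both sides yields
\[
c_n(\fc(\phi),\mathcal U)=\log N\bigl(\mathcal U\vee \phi^{-1}(\mathcal U)\vee \ldots \vee \phi^{-n+1}(\mathcal U)\bigr).
\]
Since $v$ is subadditive on $\fc(X)$, Theorem~\ref{limit} (Fekete Lemma applied to $\{c_n(\fc(\phi),\mathcal U)\}$) ensures that the superior limit in the definition of $h_\Se(\fc(\phi),\mathcal U)$ is actually a limit, which matches the limit in Definition~\ref{htop-def} (extended to finite covers). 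Dividing by $n$ and passing to the limit gives
\[
H_{\mathrm{fin\text{-}top}}(\phi,\mathcal U)=h_\Se(\fc(\phi),\mathcal U)=\H_\fc(\phi,\mathcal U).
\]

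Finally, taking the supremum over $\mathcal U\in\fc(X)$ on both sides and invoking the convention $\h_F(\phi)=\sup_{x\in F(X)}\H_F(\phi,x)$ yields $\hf(\phi)=\h_\fc(\phi)$, which is the claimed equality. There is no genuine obstacle here; the only subtle point worth double-checking is that the identification $\fc(\phi)(\mathcal V)=\phi^{-1}(\mathcal V)$ is well-defined within $\fc(X)$, i.e.\ that the preimage under a continuous map of a finite open cover is again a finite open cover, which is immediate. All other ingredients (subadditivity, arithmeticity, monotonicity of $v$, contractivity of $\fc(\phi)$ in $\PSL^\dag$) have already been recorded in the paragraphs preceding the theorem.
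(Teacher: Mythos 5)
Your proposal is correct and follows exactly the route the paper intends: the paper itself gives no separate argument here, stating only that the proof is ``similar to that of Theorem~\ref{realization:top:ent}'', and your verification that $T_n(\fc(\phi),\mathcal U)=\mathcal U\vee\phi^{-1}(\mathcal U)\vee\ldots\vee\phi^{-n+1}(\mathcal U)$, that $v=\log N(-)$ turns this into $c_n(\fc(\phi),\mathcal U)$, and that Theorem~\ref{limit} justifies the limit is precisely that argument restricted to finite covers. The one point you flag for checking (preimages of finite open covers are finite open covers) is indeed the only adjustment needed, and it is immediate.
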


\begin{Remark}\label{fcrem} The functor $\fc$ does not extend the functor $\cov: \CT \to \PSL^\dag$ defined above. In fact, if $X$ is a compact space, then $$\fc(X)\subseteq \cov(X).$$ Anyway, $\fc(X)$ is cofinal in $\cov(X)$.
Moreover, if $\phi:X\to X$ is a continuous selfmap of a compact space $X$, then $\fc(X)$ is a $\cov(\phi)$-invariant normed subsemigroup of $\cov(X)$, since $\fc(\phi)$ is defined as the restriction of $\cov(\phi)$ to $\fc(X)$.
Then Lemma~\ref{mons} gives that $$\h_{\fc}(\phi)=h_\Se(\fc(\phi))=h_\Se(\cov(\phi))=\h_\cov(\phi),$$ and hence $h_{top}(\phi)=\hf(\phi)$ by Theorems~\ref{realization:top:ent} and~\ref{realization:fin-top:ent}.
\end{Remark}

By the results in \S\ref{f-sec}, $\hf$ is invariant under conjugation and invariant under inversion, moreover it satisfies the Logarithmic Law and it vanishes on quasi-periodic continuous selfmaps.
We consider the monotonicity for closed invariant subspaces and for factors in Theorem~\ref{realization:fintop:ent} below.

\begin{Lemma}\label{surj}
If $Y$ is a closed subspace of a topological space $X$, and $j: Y \hookrightarrow X$ is the subspace embedding of $Y$ in $X$, then $\fc(j): \fc(X) \to \fc(Y)$ is surjective. 
\end{Lemma}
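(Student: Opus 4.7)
The plan is to unfold the definitions and use the closedness of $Y$ to patch up the cover.

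First I would recall that for a continuous map $f:X\to Y$ between topological spaces, the map $\fc(f): \fc(Y)\to\fc(X)$ sends a finite open cover $\mathcal{W}=\{W_1,\ldots,W_k\}$ of $Y$ to $f^{-1}(\mathcal{W})=\{f^{-1}(W_1),\ldots,f^{-1}(W_k)\}$. In our situation, with $j:Y\hookrightarrow X$ the inclusion, this becomes $\fc(j)(\mathcal{U})=\{U\cap Y:U\in\mathcal{U}\}$ for $\mathcal{U}\in\fc(X)$.

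Then, given $\mathcal{V}=\{V_1,\ldots,V_n\}\in\fc(Y)$, I would use the definition of the subspace topology to pick, for each $i=1,\ldots,n$, an open subset $\widetilde{V}_i$ of $X$ with $\widetilde{V}_i\cap Y=V_i$. The family $\{\widetilde{V}_1,\ldots,\widetilde{V}_n\}$ need not cover $X$, but its union covers $Y$. Here is where the hypothesis that $Y$ is closed in $X$ enters: the complement $X\setminus Y$ is open in $X$, so one could consider the family $\{\widetilde{V}_1,\ldots,\widetilde{V}_n,X\setminus Y\}$.

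The mild snag — and the only thing to watch out for — is that this naive family would map under $\fc(j)$ to $\mathcal{V}\cup\{\emptyset\}$ rather than to $\mathcal{V}$ itself. To repair this, I would absorb $X\setminus Y$ into one of the $\widetilde{V}_i$'s, setting
\[
U_1=\widetilde{V}_1\cup (X\setminus Y),\qquad U_i=\widetilde{V}_i\ \text{for}\ i=2,\ldots,n,
\]
and letting $\mathcal{U}=\{U_1,\ldots,U_n\}$. Each $U_i$ is open in $X$; since the $V_i$ cover $Y$, we have $U_1\cup\cdots\cup U_n\supseteq Y\cup(X\setminus Y)=X$, so $\mathcal{U}\in\fc(X)$. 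Moreover $U_1\cap Y=(\widetilde{V}_1\cap Y)\cup((X\setminus Y)\cap Y)=V_1\cup\emptyset=V_1$ and $U_i\cap Y=V_i$ for $i\geq 2$. Hence $\fc(j)(\mathcal{U})=\mathcal{V}$, proving surjectivity. The trivial edge case $Y=\emptyset$ (equivalently $n=0$) is handled by taking $\mathcal{U}=\{X\}$.
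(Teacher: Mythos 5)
Your proof is correct and follows essentially the same route as the paper: extend each member of the given cover of $Y$ to an open set of $X$ and use the openness of $X\setminus Y$ to complete the family to a cover of $X$. Your extra step of absorbing $X\setminus Y$ into $\widetilde{V}_1$ is a nice refinement, since the paper's cover $\{X\setminus Y,\widetilde{V}_1,\ldots,\widetilde{V}_n\}$ maps to $\mathcal{V}\cup\{\emptyset\}$ rather than to $\mathcal{V}$ itself.
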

\begin{proof}
For every $\ms{U} = \{U_1, \ldots, U_n\}\in \fc(Y)$ define a cover $\ms{U}^*=  \{U_0^*, U_1^*, \ldots, U_n^*\}\in \fc(X)$, where $U_0^*=X\setminus Y$ and $U_i^*$ is an open set of $X$ such that $U_i^*\cap Y = U_i$ for $i=1,2,\ldots, n$. 
\end{proof}

On the other hand, if $Y$ is not closed in $X$, then  $\fc(j)$ need not be surjective even when $X$ is compact, as the following example shows. 

\begin{Example}\label{ExaNatale}
Let $Y=\N$ endowed with the discrete topology, and $X = a\N$ be the one-point Aleksandrov compactification of $Y$. Let $U_1$ (respectively, $U_2$) be the set of all odd (respectively, even) numbers in $Y$. Then $\ms{U} = \{U_1, U_2\} \in \fc(Y)$, yet $\ms{U} \ne \fc(j)(\ms{U}^*)$ for any $\ms{U}^* \in \fc(X)$. 
\end{Example}

As a corollary of the general properties in \S\ref{f-sec}, we obtain the following properties of the entropy $\hf$, announced in \cite{Hof} and proved in \cite{DK}.

\begin{Theorem} \label{realization:fintop:ent}
Let $X$ be a topological space and $\f: X\to X$ a selfmap.
\begin{itemize}
\item[(a)] If $Y$ is a topological space, $q: X\to Y$ is a  continuous surjective map and $\overline\f: Y \to Y$ a selfmap such that $\overline\f \circ q =q \circ \f$, then $\hf(\overline\f) \leq \hf(\f)$. 
\item[(b)] If $Y$ is a closed invariant subspace of $X$, then $\hf(\f\restriction_Y)\leq  \hf(\f)$. 
\end{itemize}
\end{Theorem}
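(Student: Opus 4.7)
The plan is to deduce both statements from the functorial realization $\hf = \h_{\fc}$ (Theorem~\ref{realization:fin-top:ent}) together with the general monotonicity principles for functorial entropies established in Section~\ref{f-sec}, applied to the contravariant functor $\fc : \Top \to \PSL^\dag$. Concretely, part (a) will follow from the contravariant form of Monotonicity for factors (Lemma~\ref{Fmonf}), while part (b) will follow from the contravariant form of Monotonicity for subflows (Lemma~\ref{Fmons}); in each case what remains is to check, from the specific topological hypothesis on the map in $\Top$, that $\fc$ turns that map into a morphism of the kind required in $\PSL^\dag$.

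For part (a), I would take $\alpha = q : X \to Y$ and verify that
\[
\fc(q) : \fc(Y) \longrightarrow \fc(X), \qquad \mathcal U \longmapsto q^{-1}(\mathcal U),
\]
is a subobject embedding in $\PSL^\dag$, i.e.\ injective and norm-preserving. Injectivity on individual open sets follows from the identity $q(q^{-1}(U)) = U$, valid because $q$ is surjective, and this propagates to injectivity of $\fc(q)$ on finite covers. For the norm $N(-)$, the surjectivity of $q$ yields a bijection $V \mapsto q^{-1}(V)$ between subcovers of $\mathcal U$ and subcovers of $q^{-1}(\mathcal U)$, whence $N(\fc(q)(\mathcal U)) = N(\mathcal U)$. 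Lemma~\ref{Fmonf} then gives $\hf(\overline\phi) \leq \hf(\phi)$. For part (b), I would take $j : Y \hookrightarrow X$ and invoke Lemma~\ref{Fmons}, whose hypothesis in the contravariant case is precisely that $\fc(j) : \fc(X) \to \fc(Y)$ be surjective. That surjectivity is the content of Lemma~\ref{surj}: any $\mathcal U \in \fc(Y)$ lifts to a finite open cover of $X$ by adjoining the open set $X \setminus Y$ to arbitrary open extensions of its members, using crucially that $Y$ is closed.

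The main point of the argument lies in these ``morphism upgrade'' checks inside $\PSL^\dag$, and this is where each topological hypothesis is forced. In (a) the surjectivity of $q$ is what makes $\fc(q)$ both injective on covers and isometric with respect to $N$; without surjectivity one obtains only a contractive monoid homomorphism, which is not enough to apply Lemma~\ref{Fmonf}. In (b) the closedness of $Y$ is what permits the one-step extension of a finite open cover of $Y$ to a finite open cover of $X$; as Example~\ref{ExaNatale} shows, $\fc(j)$ may fail to be surjective for non-closed $Y$, so that the clean functorial argument breaks down. Thus the only nontrivial work sits in these two verifications, after which the conclusions are immediate from the general machinery of Section~\ref{f-sec}.
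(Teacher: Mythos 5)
Your proposal is correct and follows essentially the same route as the paper: part (a) reduces to the fact that, for surjective $q$, the map $\fc(q)$ is an injective, norm-preserving embedding onto an $\fc(\phi)$-invariant subsemigroup of $\fc(X)$, and part (b) reduces to Lemma~\ref{surj} on the surjectivity of $\fc(j)$ for closed $Y$, each combined with the general monotonicity results of \S\ref{f-sec}. If anything, your attribution of (a) to Lemma~\ref{Fmonf} and (b) to Lemma~\ref{Fmons} matches the contravariant hypotheses of those lemmas more accurately than the paper's printed proof, which cites them in the opposite order.
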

\begin{proof} 
(a) The map $\fc(q): \fc(Y) \to \fc(X)$ is injective and the image of $ \fc(Y)$ in $ \fc(X)$ is invariant under $\fc(\overline\f)$. Hence, Lemma~\ref{Fmons} applies. 

(b) Let $j: Y \hookrightarrow X$ be the subspace embedding of $Y$ in $X$. As we noticed in Lemma~\ref{surj}, $\fc(j): \fc(Y) \to \fc(X)$ is surjective, so Lemma~\ref{Fmonf} applies. 
\end{proof}

One may object that $\hf(\f\restriction_Y)\leq \hf(\f)$ may still remain true regardless of the fact that $Y$ is closed or not in $X$. That this is not the case follows from an example similar to that provided in Example~\ref{ExaNatale}, where $Y$ is just $\Z$ and $X = aY$ is the one-point Aleksandrov compactification of $Y$. The selfmap $\f: X \to X$ is defined by $\f(p) = p$ (that is the extra point added to $Y$ to get $X = aY$) and $\f(n) = n+1$. Then $\hf(\f\restriction_Y)= \infty$ (see \cite{Hof}), while $\hf(\f) = 0$ (actually, $\hf(\f)= 0$ for every continuous selfmap $\f$ of $X$, according to \cite{AZD}). 

We do not know whether the Continuity for inverse limits and the weak Addition Theorem hold for $\hf$. The arguments applied in the previous section for $h_{top}$ use the compactness of the spaces, so they do not apply to the present case.

\bigskip
A careful analysis of the above definitions of topological entropy shows that the points of the space are completely absent from the definitions. All necessary information to define the entropies is encoded in the complete lattice \newsym{family of all open sets of a topological space $X$}{$\ms{O}(X)$} of all open sets of the topological space $X$. 
Note that in $\ms{O}(X)$ one has the distributive law $$\left(\bigcup_{i\in I} U_i\right )  \cap V = \bigcup_{i\in I} (U_i  \cap V).$$
Recall that an algebraic structure with this property is called a \emph{frame}\index{frame} (or a \emph{complete Heyting algebra}). Namely, a
frame consists of a supporting set $L$ with two operations $\vee$ and $\wedge$ such that $(L,\vee)$ is a complete semilattice, $(L,\vee,\wedge)$ is a distributive lattice and the following stronger distributive law holds as well; in fact,  for arbitrary sets $I$,
$$\left(\bigvee_{i\in I} u_i\right ) \wedge v = \bigvee_{i\in I} (u_i  \wedge v).$$ In particular, $L$ has a top element $1 = \bigvee_{u\in L} u$. A frame homomorphism $\f: L \to L'$ preserves the operations (hence, it preserves the bottom and the top element as well). This defines the category \newsym{category of frames and frame homomorphisms}{$\Fr$} of all frames and frame homomorphisms, and we will introduce entropy in the category $\Fr$. 

\begin{Definition}
Let $L$ be a frame. A \emph{(finite) cover} of $L$ is a (finite) subset $\ms{U}= \{u_i: i \in I\}$ of $L$ such that $\bigvee_{i\in I} u_i = 1$. 
\end{Definition}

If $\;\ms{U}$ is a cover and $\;\ms{U}' \subseteq \ms{U}$ is still a cover of $L$, then we call $\;\mathcal{U}'$ a \emph{subcover} of $\;\mathcal{U}$. 
For two covers $\ms{U}$ and  $\ms{U}'$ of a frame $L$, let $$\ms{U}\vee \ms{U}' = \{u\wedge u': u\in \ms{U}, u' \in \ms{U}'\}.$$ One can check that this is still a cover of $L$. 
Denoting by $\fc_{fr}(L)$ the family of all finite covers of a frame $L$, it is easy to see that if $\ms{U}, \ms{U'} \in \fc_{fr}(L)$, then $\ms{U} \vee \ms{U'} \in \fc_{fr}(L)$. 
This turns $(\fc_{fr}(L), \vee)$ into a commutative monoid with neutral element the cover $\{1\}$.  One can define also a preorder $\prec$ on $\fc_{fr}(L)$ as above given by the refinement.
Moreover, define a norm on $\fc_{fr}(L)$ by letting $v(\ms{U})$ be the logarithm of the minimum size of a subcover of $\ms{U}$. Obviously, $v(\{1\}) = 0$, so $v$ is a monoid norm. Similarly to the case of $\fc$, we have that:
\begin{itemize}
\item[(i)] $(\fc_{fr}(L), \vee, v,\prec)$ is a normed presemilattice with zero $\mathcal E_X$;
\item[(ii)] $v$ is subadditive, arithmetic, monotone and d-monotone. 
\end{itemize}
For a frame homomorphism $\f:L\to L'$ and $\U\in \fc_{fr}(L)$ let $\f(\U) = \{\f(u): u\in \U\}$. This defines a a monotone monoid homomorphism $\fc_{fr}(\f): \fc_{fr}(L) \to \fc_{fr}(L')$, and consequently a covariant functor
$$\boxed{\newsym{functor $\fc_{fr}:\Fr\to\PSL^\dag$}{\fc_{fr}}:\Fr\to\PSL^\dag}$$
by sending $L\mapsto \fc_{fr}(L)$ and $\phi\mapsto \fc_{fr}(\phi)$. 

\medskip
In particular, for every frame endomorphism $\f:L\to L$ and $n\in\N_+$ one has the possibility to define the $n$-th $\phi$-trajectory $T_n(\phi,\ms{U})=\ms{U}\vee \f(\ms{U}) \vee \ldots 
\vee \f^{n-1}(\ms{U}) $ of a (finite) cover $\ms{U}$. If $\ms{U}\in \fc_{fr}(L)$, then also $T_n(\phi,\ms{U})\in \fc_{fr}(L)$, so one can define the frame entropy as follows. 

\begin{Definition} Let $(L,\phi)$ be a flow of $\Fr$. The \emph{frame entropy of $\phi$ with respect to $\ms{U}\in \fc_{fr}(L)$} is
$$H_{fr}(\phi,\mathcal U)=\lim_{n\to\infty}\frac{v(T_n(\phi,\ms{U}))}{n}.$$
The \emph{frame entropy}\index{frame entropy} of $\phi$ is $$\newsym{frame entropy}{h_{fr}}(\phi)=\sup\{H_{fr}(\phi,\mathcal U):\mathcal U\in\fc_{fr}(L)\}.$$ 
\end{Definition}

It follows directly from the definition that $H_{fr}(\phi,\mathcal U)=\H_{\fc_{fr}}(\phi,\mathcal U)$ for every flow $(L,\phi)$ of $\Fr$ and every $\mathcal U\in \fc_{fr}(L)$. So we can conclude that 
\begin{equation}\label{realization:hfr}
h_{fr}=\h_{\fc_{fr}}.
\end{equation}

\subsection{Measure entropy}\label{mes-sec}

In this subsection we consider the category $\MS$ of probability measure spaces $(X, \mathfrak B, \mu)$ and measure preserving maps, constructing a functor $\mathfrak{mes}:\MS\to\mathfrak L^\dag$ in order to obtain from our general scheme the measure entropy $h_{mes}$ from \cite{K} and \cite{Sinai}.

\medskip
We recall that a \emph{measure space} is a triple $(X,{\mathfrak B},\mu)$, where $X$ is a set, $\mathfrak B$ is a $\sigma$-algebra over $X$ (the elements of $\mathfrak B$ are called \emph{measurable sets}) and $\mu:\mathfrak B\to \R_{\geq0}\cup\{\infty\}$ is a probability measure. A selfmap $\psi:X\to X$ is a \emph{measure preserving trasformation} if $\mu(\psi^{-1}(B))=\mu(B)$ for every $B\in\mathfrak B$.
 
\smallskip
For a measure space $(X,{\mathfrak B},\mu)$ and a measurable partition $\xi=\{A_i:i=1,\ldots,k\}$ of $X$, define the \emph{entropy} of $\xi$ by Boltzmann's Formula  $$H(\xi)=-\sum_{i=1}^k \mu(A_i)\log \mu(A_i).$$

For two partitions $\xi, \eta$ of $X$, let $$\xi \vee \eta=\{U\cap V: U\in \xi, V\in \eta\}$$ and define $\xi_1\vee \xi_2\vee \ldots \vee \xi_n$ analogously for partitions $\xi_1,\xi_2,\ldots,\xi_n$ of $X$. For a measure preserving transformation $\psi:X\to X$ and a measurable partition $\xi=\{A_i:i=1,\ldots,k\}$ of $X$, let $$\psi^{-j}(\xi)=\{\psi^{-j}(A_i):i=1,\ldots,k\}.$$

For a measure space $(X,\mathfrak{B},\mu)$ let \newsym{family of measurable partitions of a probability measure space $X$}{$\mathfrak{P}(X)$} 
be the family of all measurable partitions $\xi=\{A_1,A_2,\ldots,A_k\}$ of $X$.

\begin{Definition}
Let $X$ be a measure space and $\psi:X\to X$ a measure preserving transformation. The \emph{measure entropy of $\psi$ with respect to $\xi\in\mathfrak P(X)$} is $$H_{mes}(\psi,\xi)=\lim_{n\to\infty}\frac{H(\bigvee_{j=0}^{n-1}\psi^{-j}(\xi))}{n}.$$
The \emph{measure entropy}\index{measure entropy} of $\psi$ is $$\newsym{measure entropy}{h_{mes}}(\psi)=\sup\{H_{mes}(\psi,\xi): \xi\in\mathfrak P(X)\}.$$
\end{Definition}

%For other results in this direction see the paper of Berg \cite{Berg}, showing that Haar measure maximizes the measure theoretic entropy of a continuous automorphism of a compact metrizable group, and under conditions of finiteness and ergodicity it does so uniquely.
%Let $X$ be a compact topological group, let $\mu$ be its Haar measure and let $\phi: G \to G$ be a continuous endomorphism. 
%\begin{itemize} 
%\item[(a)] [Halmos] $\phi$ is measure preserving if and only if $\phi$ is surjective. 
%\item[(b)]  [Aoki] if $\phi$ is surjective, then  {$h_{m}(\phi) = h_{top}(\phi)$}.
%\item[(c)]  [Variational principle] if $X$ is a compact  {space} and $f:X\to X$ a continuous map, then $h_{top}(\phi)= \sup \{h_{\mu}(\phi): \mu is an $f$-invariant measure on X\}$.

For a measure space $(X,\mathfrak{B},\mu)$ and $\xi\in\mathfrak P(X)$, we have that $\xi \vee \xi = \xi$. Consider again on $\mathfrak P(X)$ the preorder $\prec$ given by the refinement. Then:
\begin{itemize}
\item[(i)] $(\mathfrak{P}(X),\vee,H,\prec)$ is a normed semilattice with zero $\xi_0=\{X\}$;
\item[(ii)] $H$ is subadditive (see \cite{Wa}), arithmetic, monotone and d-monotone.
\end{itemize}
Consider a measure preserving map $T:X\to Y$. For $\xi=\{A_i\}_{i=1}^k\in \mathfrak{P}(Y)$ let $$T^{-1}(\xi)=\{T^{-1}(A_i)\}_{i=1}^k.$$ 
Since $T$ is measure preserving, one has $T^{-1}(\xi)\in \mathfrak{P}(X)$ and $\mu (T^{-1}(A_i)) = \mu(A_i)$ for all $i=1,\ldots,k$. Hence, $$H(T^{-1}(\xi)) = H(\xi)$$ and so
$\mathfrak{mes}(T):\mathfrak{P}(Y)\to\mathfrak{P}(X)$, defined by $\xi\mapsto T^{-1}(\xi)$, is a morphism in $\SL^\dag$.
Therefore the assignments $X \mapsto\mathfrak{P}(X)$ and $T\mapsto\mathfrak{mes}(T)$ define a contravariant functor 
$$\boxed{\newsym{functor $\mathfrak{mes}:\MS\to\SL^\dag$}{\mathfrak{mes}}:\MS\to\SL^\dag.}$$

\begin{Theorem}\label{realization:mes:ent}
Let $X$ be a measure space and $\psi:X\to X$ a measure preserving transformation. Then $H_{mes}(\psi,\xi)=\H_{\mathfrak{mes}}(\psi,\xi)$ for every $\xi\in\mathfrak P(X)$, and so $$h_{mes}(\psi)=\h_{\mathfrak{mes}}(\psi).$$
\end{Theorem}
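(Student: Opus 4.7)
The plan is to mimic the pattern used already for Theorems~\ref{realization:top:ent} and~\ref{realization:fin-top:ent}, namely to unfold the definitions on both sides and recognize that the $n$-th iterated join in the definition of $H_{mes}(\psi,\xi)$ is literally the $n$-th $\mathfrak{mes}(\psi)$-trajectory of $\xi$ in the normed semilattice $(\mathfrak{P}(X),\vee,H,\prec)$.

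First I would fix a measure space $X$, a measure preserving transformation $\psi:X\to X$, and a partition $\xi\in\mathfrak{P}(X)$. By definition of the contravariant functor $\mathfrak{mes}$, the morphism $\mathfrak{mes}(\psi):\mathfrak{P}(X)\to\mathfrak{P}(X)$ sends a partition $\eta$ to $\psi^{-1}(\eta)$. Consequently, for every $n\in\N_+$,
$$
T_n(\mathfrak{mes}(\psi),\xi)=\xi\vee\mathfrak{mes}(\psi)(\xi)\vee\ldots\vee\mathfrak{mes}(\psi)^{n-1}(\xi)=\bigvee_{j=0}^{n-1}\psi^{-j}(\xi),
$$
and, since the norm on $\mathfrak{P}(X)$ is the Boltzmann entropy $H$,
$$
c_n(\mathfrak{mes}(\psi),\xi)=H\!\left(\bigvee_{j=0}^{n-1}\psi^{-j}(\xi)\right).
$$

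Next I would invoke the fact that $H$ is a subadditive norm on $\mathfrak{P}(X)$, so $\mathfrak{P}(X)$ lies in $\SL^\dag$ and Theorem~\ref{limit} applies, guaranteeing that the $\limsup$ in the definition of $h_\Se(\mathfrak{mes}(\psi),\xi)$ is a genuine limit. This matches the limit appearing in the definition of $H_{mes}(\psi,\xi)$, and thus
$$
H_{mes}(\psi,\xi)=\lim_{n\to\infty}\frac{c_n(\mathfrak{mes}(\psi),\xi)}{n}=h_\Se(\mathfrak{mes}(\psi),\xi)=\H_{\mathfrak{mes}}(\psi,\xi).
$$

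Finally, taking the supremum over $\xi\in\mathfrak{P}(X)$ on both sides yields $h_{mes}(\psi)=\h_{\mathfrak{mes}}(\psi)$. There is no real obstacle here: the only mildly delicate point is making sure that the trajectory in the normed semilattice, computed via the contravariant functor, coincides with the iterated join under $\psi^{-j}$ appearing in the classical definition; this is immediate from how $\mathfrak{mes}$ was defined on morphisms. Everything else is bookkeeping supplied by the already-established facts that $\mathfrak{P}(X)$ is a normed semilattice with subadditive norm $H$ and that $\mathfrak{mes}(\psi)$ is a morphism in $\SL^\dag$.
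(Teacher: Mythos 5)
Your proposal is correct and follows essentially the same route as the paper: identify $\bigvee_{j=0}^{n-1}\psi^{-j}(\xi)$ with $T_n(\mathfrak{mes}(\psi),\xi)$, hence $H$ of the join with $c_n(\mathfrak{mes}(\psi),\xi)$, and take suprema. Your additional remark that Theorem~\ref{limit} justifies the limit is a point the paper makes once, globally, at the start of \S\ref{known-sec} rather than inside this particular proof.
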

\begin{proof}
Let $\xi\in\mathcal P(X)$. For every $n\in\N_+$, since $$\xi\vee \psi^{-1}(\xi) \vee \ldots \vee \psi^{-n+1}(\xi)=T_n(\mathfrak{mes}(\psi),\xi),$$ applying the definitions we can conclude that $$H(\U\vee \psi^{-1}(\U) \vee \ldots \vee \psi^{-n+1}(\U))=c_n(\mathfrak{mes}(\psi),\xi),$$ and so $H_{mes}(\psi,\xi)=h_\Se(\mathfrak{mes}(\psi),\xi)=\H_{\mathfrak{mes}}(\psi,\xi)$.
\end{proof}

The functor $\mathfrak{mes}$ is covariant, and sends subobjects embeddings in $\MS$ to surjective morphisms in $\SL$ and  surjective maps in $\MS$ to embeddings in $\SL$. Hence, similarly to $h_{top}$, also the measure entropy $h_{mes}$ is invariant under conjugation and inversion, it is monotone with respect to taking restrictions to invariant subspaces and factors, it satisfies the Logarithmic Law and it vanishes on quasi-periodic measure preserving transformations. 

It is known that the measure entropy satisfies also the weak Addition Theorem, namely, if $(X,\phi)$ and $(Y,\psi)$ are flows of $\mathbf{Mes}$, then $h_{mes}(\phi\times\psi)=h_{mes}(\phi)+h_{mes}(\psi)$, where $\phi\times\psi:X\times Y\to X\times Y$. But it is not clear whether it is possible to apply directly Lemma~\ref{wATg}, indeed to prove the needed cofinality a preliminary restriction is required (see \cite[Theorem 4.23]{Wa} and its proof).

\subsection{Algebraic entropy}\label{sec:h_alg}

Here we consider the category $\mathbf{Grp}$ of all groups and their homomorphisms and its subcategory $\mathbf{AG}$ of all abelian groups. We construct two functors $\mathfrak{sub}:\mathbf{AG}\to\SL^\dag$ and $\mathfrak{pet}:\mathbf{Grp}\to\mathfrak M^\dag_p$ that permit to find from the general scheme the two algebraic entropies $\ent$ and $h_{alg}$ as functorial entropies.

\medskip
We start recalling the definition of the entropies $\ent$ and $h_{alg}$ following \cite{DG-islam}. For a group $G$ let $\newsym{family of all finite non-empty subsets of a group $G$}{\sM(G)}=\mathcal S(G)\setminus\{\emptyset\}$
be the family of all finite non-empty subsets of $G$ and \newsym{family of all finite subgroups of a group $G$}{$\sF(G)$} be its subfamily of all finite subgroups of $G$. 

\begin{Definition}
Let $\phi:G\to G$ be an endomorphism. For $F\in\mathcal H(G)$, and for $n\in\N_+$, the \emph{$n$-th $\phi$-trajectory} of $F$ is 
\begin{equation*}\label{T_n}
T_n(\phi,F)=F\cdot\phi(F)\cdot\ldots\cdot\phi^{n-1}(F).
\end{equation*}
The \emph{algebraic entropy of $\phi$ with respect to $F\in H(G)$} is 
\begin{equation*}\label{H}
H_{alg}(\phi,F)={\lim_{n\to \infty}\frac{\log|T_n(\phi,F)|}{n}};
\end{equation*}
The \emph{algebraic entropy}\index{algebraic entropy} of $\phi:G\to G$ is $$\newsym{algebraic entropy}{h_{alg}}(\phi)=\sup\{H_{alg}(\phi,F): F\in\mathcal H(G)\},$$
while $$\newsym{algebraic entropy}{\ent}(\phi)=\sup\{H_{alg}(\phi,F): F\in\mathcal F(G)\}.$$
\end{Definition}

If $G$ is abelian, then 
\begin{equation}\label{H---}
\ent(\phi)=\ent(\phi\restriction_{t(G)})= h_{alg}(\phi\restriction_{t(G)}).
\end{equation}
Moreover, $h_{alg}(\phi) = \ent(\phi)$ if $G$ is locally finite, that is every finite subset of $G$ generates a finite subgroup; note that every locally finite group is obviously torsion, while the converse holds true under the hypothesis that the group is solvable (the solution of Burnside's problem shows that even groups of finite exponent fail to be locally finite).

\medskip
For a group $G$, let $v(F) = \log|F|$ for every $F \in \sH(G)$. In case $G$ is abelian one has: 
\begin{itemize}
\item[(i)] $(\sF(G),+,v,\subseteq)$ is a normed semilattice with zero $\{0\}$;
\item[(ii)] $v$ is subadditive, arithmetic, monotone and d-monotone.
\end{itemize}

For every group homomorphism $\f: G \to H$, the map $\sF(\f): \sF(G) \to \sF(H)$, defined by $F\mapsto \f(F)$, is a morphism in $\SL^\dag$.
Therefore, the assignments $G\mapsto \sF(G)$ and $\phi\mapsto \sF(\phi)$ define a covariant functor 
$$\boxed{\newsym{functor $\mathfrak{sub}: \AG \to \SL^\dag$}{\mathfrak{sub}}: \AG \to \SL^\dag.}$$

\begin{Theorem}\label{halg-sub}
Let $G$ be a group and $\phi:G\to G$ an endomorphism. Then $H_{alg}(\phi,F)=\H_{\mathfrak{sub}}(\phi,F)$ for every $F\in\mathcal F(G)$, and so $$\ent(\phi)=\h_{\mathfrak{sub}}(\phi).$$
\end{Theorem}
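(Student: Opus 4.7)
The plan is to compare the two trajectories directly and observe that they coincide as finite subgroups. Recall that for $G$ abelian, the semilattice operation on $\mathfrak{sub}(G)=\mathcal F(G)$ is the join of finite subgroups, which (since $G$ is abelian) coincides with the internal sum $F_1+F_2 = \{x_1+x_2:x_i\in F_i\}$. Moreover, the functor $\mathfrak{sub}$ sends the endomorphism $\phi$ to the map $F\mapsto \phi(F)$, so that $\mathfrak{sub}(\phi)^i(F)=\phi^i(F)$ for each $i\in\N$. Hence, in the semigroup $\mathfrak{sub}(G)$, the $n$-th $\mathfrak{sub}(\phi)$-trajectory of $F$ is
\[
T_n(\mathfrak{sub}(\phi),F) = F+\phi(F)+\cdots+\phi^{n-1}(F).
\]

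First, I would fix $F\in \mathcal F(G)$ and verify that the algebraic trajectory $T_n(\phi,F)=F\cdot\phi(F)\cdot\ldots\cdot\phi^{n-1}(F)$ coincides with the subgroup sum $F+\phi(F)+\cdots+\phi^{n-1}(F)$. Since $F$ is a subgroup and $\phi$ is a homomorphism, each $\phi^i(F)$ is a subgroup of $G$; in an abelian group, the set-theoretic product of finitely many subgroups is precisely the subgroup they generate, so the two trajectories coincide as finite subgroups of $G$. In particular, both are finite and have the same cardinality, which yields
\[
\log|T_n(\phi,F)| = v\bigl(T_n(\mathfrak{sub}(\phi),F)\bigr) = c_n(\mathfrak{sub}(\phi),F)
\]
for every $n\in\N_+$, where $v(-)=\log|-|$ is the norm on $\mathfrak{sub}(G)$.

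Next, since $(\mathfrak{sub}(G),v)$ is subadditive, Theorem~\ref{limit} guarantees that the $\limsup$ in the definition of $h_\Se$ is actually a limit, matching the limit in the definition of $H_{alg}(\phi,F)$. Dividing the above equality by $n$ and passing to the limit gives
\[
H_{alg}(\phi,F) = \lim_{n\to\infty}\frac{c_n(\mathfrak{sub}(\phi),F)}{n} = h_\Se(\mathfrak{sub}(\phi),F) = \H_{\mathfrak{sub}}(\phi,F).
\]
Finally, taking the supremum over $F\in\mathcal F(G)$ and applying the definitions of $\ent(\phi)$ and of $\h_{\mathfrak{sub}}(\phi)=h_\Se(\mathfrak{sub}(\phi))=\sup_{F\in\mathcal F(G)}\H_{\mathfrak{sub}}(\phi,F)$ yields the desired equality $\ent(\phi)=\h_{\mathfrak{sub}}(\phi)$.

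There is no real obstacle here: the only point requiring a moment's care is the identification of the set-theoretic product $F\cdot\phi(F)\cdot\ldots\cdot\phi^{n-1}(F)$ with the join in the semilattice $\mathcal F(G)$, which rests squarely on commutativity of $G$ (so that products of subgroups are themselves subgroups).
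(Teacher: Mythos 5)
Your proof is correct and follows essentially the same route as the paper's: the paper simply observes that $T_n(\phi,F)=T_n(\mathcal F(\phi),F)$, so $\log|T_n(\phi,F)|=c_n(\mathcal F(\phi),F)$, and then passes to the limit and the supremum. You merely spell out the (correct) justification that the set-theoretic product of the finite subgroups $F,\phi(F),\ldots,\phi^{n-1}(F)$ equals their join in $\mathcal F(G)$, which rests on the commutativity of $G$ exactly as you note.
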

\begin{proof}
Let $F\in\mathcal F(G)$. Since $T_n(\phi,F)=T_n(\mathcal F(\phi),F)$ for every $n\in\N_+$, applying the definitions we can conclude that $\log|T_n(\phi,F)|=c_n(\mathcal F(\phi),F)$ for every $n\in\N_+$, and so $$H_{alg}(\phi,F)=h_\Se(\mathcal F(\phi), F)=\H_{\mathfrak{sub}}(\phi,F);$$
this concludes the proof.
\end{proof}

Since the covariant functor $\mathfrak{sub}$ takes factors in $\mathbf{AG}$ to surjective morphisms in $\Se$, embeddings in $\mathbf{AG}$ to embeddings in $\Se$, and direct limits in $\mathbf{AG}$ to direct limits in $\Se$, in view of the properties proved in \S\ref{f-sec}, the algebraic entropy $\ent$ is invariant under conjugation and inversion, it is monotone for restrictions to invariant subspaces and for factors, it satisfies the Logarithmic Law, it vanishes on quasi-periodic endomorphisms and it is continuous for direct limits.

The weak Addition Theorem holds as well. Indeed, for an abelian group $G$ and an endomorphism $\phi:G\to G$, the order $\subseteq$ is compatible with $\mathcal F(\phi)$. So, if $\psi:H\to H$ is another group endomorphism and $H$ is abelian, since $\mathcal F(G)\oplus\mathcal F(H)$ is cofinal in $\mathcal F(G\oplus H)$ and the restriction of the norm of $\mathcal F(G\oplus H)$ to $\mathcal F(G)\oplus\mathcal F(H)$ coincides with $v_\oplus$,  Lemma~\ref{wATg} applies and together with Theorem~\ref{halg-sub} gives $$\ent(\phi\oplus\psi)=\ent(\phi)+\ent(\psi).$$

\medskip
For an arbitrary group $G$, one has:
\begin{itemize}
\item[(i)] $(\sM(G),\cdot,v,\subseteq)$ is an ordered normed monoid with neutral element $\{1\}$;
\item[(ii)] $v$ is subadditive, monotone and d-monotone.
\end{itemize}

\begin{Remark}\label{polgrowth}
If $G$ is an abelian group, then $\mathcal H(G)$ is commutative and $v$ is arithmetic since for every $F\in \sM(G)$,
$$|T_n(id_G,F)|\leq (n+1)^{|F|},$$
where clearly $$T_n(id_G,F)= \underbrace{F + \ldots + F}_n$$ (note that the latter inequality extends also to nilpotent groups, see \cite{dlH-ue}).

More precisely, for a group $G$ the following conditions are equivalent:
\begin{enumerate}[(a)]
\item $(\mathcal H(G),v)$ is arithmetic;
\item $G$ has polynomial growth;
\item $G$ is locally virtually nilpotent.
\end{enumerate}
Indeed, we say that $G$ has polynomial growth if for every $F\in\mathcal H(G)$ with $1\in G$, the map $n\mapsto |T_n(id_G,F)|$ is polynomial, so the equivalence (a)$\Leftrightarrow$(b) follows from a straightforward computation. Moreover, (b)$\Leftrightarrow$(c) follows from the celebrated Gromov's theorem, stating that a finitely generated group $G$ has polynomial growth if and only if $G$ is virtually nilpotent. For more details on the connection of the algebraic entropy with the group growth from geometric group theory see \cite{DG-islam,DGB_PC,GBSp,GBSp1}.
\end{Remark}

For every group homomorphism $\f:G \to H$, the map $\sM(\f): \sM(G) \to \sM(H)$, defined by $F\mapsto \f(F)$, is a morphism in $\mathfrak{M}^\dag_p$.
Consequently the assignments $G \mapsto (\sM(G),v)$ and $\phi\mapsto \sM(\phi)$ give a covariant functor 
$$\boxed{\newsym{functor $\mathfrak{pet}:\mathbf{Grp}\to \mathfrak{M}^\dag_p$}{\mathfrak{pet}}:\mathbf{Grp}\to \mathfrak{M}^\dag_p}$$
(the notation $\mathfrak{pet}$ was chosen to honor Justin Peters who inspired the definition of the entropy $h_{alg}$). 
The functor $\mathfrak{sub}$ is a subfunctor of $\mathfrak{pet}$ as $ \sF(G) \subseteq  \sH(G)$ for every abelian group $G$. 

\begin{Theorem}\label{hpet}
Let $G$ be a group and $\phi:G\to G$ an endomorphism. Then $H_{alg}(\phi,F)=\H_{\mathfrak{pet}}(\phi,F)$ for every $F\in\mathcal H(G)$, and so $$h_{alg}(\phi)=\h_{\mathfrak{pet}}(\phi).$$
\end{Theorem}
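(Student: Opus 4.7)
The plan is to argue exactly along the lines of Theorem~\ref{halg-sub}, using that $\mathfrak{pet}$ is defined so that its multiplicative structure on $\mathcal H(G)$ and its norm $v(F)=\log|F|$ reproduce the ingredients appearing in the definition of $H_{alg}$ and $h_{alg}$. The verification reduces to checking that the trajectories in the normed monoid $(\mathcal H(G),\cdot,v)$ coincide with the $\phi$-trajectories defined at the start of \S\ref{sec:h_alg}.

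First I would fix $F\in\mathcal H(G)$ and unwind the definitions. Since $\mathfrak{pet}(\phi):\mathcal H(G)\to\mathcal H(G)$ acts by $A\mapsto \phi(A)$, a straightforward induction gives $\mathfrak{pet}(\phi)^k(F)=\phi^k(F)$ for every $k\in\N$. Therefore, using the multiplication on $\mathcal H(G)$ (i.e. setwise product in $G$),
\[
T_n(\mathfrak{pet}(\phi),F)=F\cdot \mathfrak{pet}(\phi)(F)\cdot\ldots\cdot \mathfrak{pet}(\phi)^{n-1}(F)=F\cdot\phi(F)\cdot\ldots\cdot\phi^{n-1}(F)=T_n(\phi,F)
\]
for every $n\in\N_+$. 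Applying the norm $v=\log|\cdot|$ yields $c_n(\mathfrak{pet}(\phi),F)=\log|T_n(\phi,F)|$.

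Next I would pass to the entropy. Since $v$ is subadditive and $\mathfrak{pet}(\phi)$ is a morphism in $\mathfrak M^\dag_p\subseteq \Sesu$, Theorem~\ref{limit} guarantees that the superior limit defining $h_\Se(\mathfrak{pet}(\phi),F)$ is an actual limit, and it equals
\[
\lim_{n\to\infty}\frac{c_n(\mathfrak{pet}(\phi),F)}{n}=\lim_{n\to\infty}\frac{\log|T_n(\phi,F)|}{n}=H_{alg}(\phi,F).
\]
By definition of the functorial entropy this is $\H_{\mathfrak{pet}}(\phi,F)$, establishing the pointwise equality $H_{alg}(\phi,F)=\H_{\mathfrak{pet}}(\phi,F)$.

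Finally, taking the supremum over $F\in\mathcal H(G)$ on both sides gives $h_{alg}(\phi)=\sup_{F\in\mathcal H(G)}H_{alg}(\phi,F)=\sup_{F\in\mathcal H(G)}\H_{\mathfrak{pet}}(\phi,F)=\h_{\mathfrak{pet}}(\phi)$, since by construction $\h_{\mathfrak{pet}}(\phi)=h_\Se(\mathfrak{pet}(\phi))=\sup_{F\in\mathcal H(G)}h_\Se(\mathfrak{pet}(\phi),F)$. There is no real obstacle here; the only thing to double-check is that the definition of the product $F\cdot\phi(F)\cdot\ldots\cdot\phi^{n-1}(F)$ used in \S\ref{sec:h_alg} matches the convention (left-to-right factors) adopted in Definition~\ref{SEofEndos} for $T_n(\mathfrak{pet}(\phi),F)$ — the proof is insensitive to that choice because both use the same convention, and in any case the norm $v=\log|\cdot|$ is invariant under reversing the order of the factors.
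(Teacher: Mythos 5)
Your proof is correct and follows essentially the same route as the paper's: identify $T_n(\mathfrak{pet}(\phi),F)$ with $T_n(\phi,F)$, apply the norm $v=\log|\cdot|$ to get $c_n(\mathfrak{pet}(\phi),F)=\log|T_n(\phi,F)|$, and pass to the (sup)limit and the supremum over $F$. One small caveat: your closing parenthetical that the norm is ``invariant under reversing the order of the factors'' is not justified for non-abelian $G$ (in general $|AB|$ need not equal $|BA|$; cf.\ \S\ref{alternative}, where the reversal requires composing with the inversion anti-isomorphism), but this aside is dispensable since, as you correctly note first, both definitions use the same left-to-right convention.
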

\begin{proof}
Let $F\in\mathcal H(G)$. Since, for every $n\in\N_+$, $$T_n(\phi,F)=T_n(\mathcal H(\phi),F),$$ applying the definitions we conclude that $$\log|T_n(\phi,F)|=c_n(\mathcal H(\phi),F).$$ So, $H_{alg}(\phi,F)=h_\Se(\mathcal H(\phi), F)=\H_{\mathfrak{pet}}(\phi,F)$.
\end{proof}

As for the algebraic entropy $\ent$, since the covariant functor $\mathfrak{pet}$ takes factors in $\mathbf{Grp}$ to surjective morphisms in $\Se_p^\dag$, embeddings in $\mathbf{Grp}$ to embeddings in $\Se_p^\dag$, and direct limits in $\mathbf{Grp}$ to direct limits in $\Se$, in view of the properties in \S\ref{f-sec} we have automatically that the algebraic entropy $h_{alg}$ is invariant under conjugation, it is monotone for restrictions to invariant subgroups and for quotients, it satisfies the Logarithmic Law in the abelian case (for the general setting see \cite[Proposition 5.1.8]{DG-islam}) and it is continuous for direct limits. 
 The Invariance under inversion follows in the abelian case from Lemma~\ref{inv}, in the general case from Proposition~\ref{antiiso} applied to the inversion $x\buildrel{i}\over\mapsto x^{-1}$; it follows also from a remark in \cite{DGB_PC}.

The weak Addition Theorem holds as well. Indeed, for a group $G$ and an endomorphism $\phi:G\to G$, the order $\subseteq$ is compatible with $\mathcal H(\phi)$. So, if $\psi:H\to H$ is another group endomorphism,  since $\mathcal H(G)\oplus\mathcal H(H)$ is cofinal in $\mathcal H(G\oplus H)$ and the restriction of the norm of $\mathcal H(G\oplus H)$ to $\mathcal H(G)\oplus\mathcal H(H)$ coincides with $v_\oplus$,  Lemma~\ref{wATg} applies and together with Theorem~\ref{hpet} gives $$h_{alg}(\phi\oplus\psi)=h_{alg}(\phi)+h_{alg}(\psi).$$

In view of Remark~\ref{polgrowth} and Lemma~\ref{teo:provvisorio}, the algebraic entropy $h_{alg}$ vanishes on quasi-periodic endomorphisms of locally virtually nilpotent groups. In particular, $h_{alg}$ of the identity automorphism of a locally virtually nilpotent group is zero, while this is no more true in general.  In fact, $h_{alg}(id_G)>0$ precisely when the group $G$ has exponential growth, and in this case $h_{alg}(id_G)=\infty$ by the Logarithmic Law. On the other hand, there exist groups $G$ of intermediate growth, as Grigorchuck's group, namely, groups having growth that is neither polynomial nor exponential (and so $h_{alg}(id_G)=0$ yet $G$ is not locally virtually nilpotent).

\medskip
In the context of the algebraic entropy one considers the right Bernoulli shifts for {\em direct sums}. Let $G$ be a group; the \emph{(one-sided) right Bernoulli shift}\index{Bernoulli shift}\index{one-sided right Benoulli shift}\index{right Bernoulli shift} is
\begin{equation}\label{rightBernoulli1}
\newsym{(one-sided) right Bernoulli shift of a group $G$}{\beta^\oplus_G}:G^{(\N)}\to G^{(\N)},\quad (x_0,x_1,\ldots,x_n,\ldots)\mapsto (1,x_0,x_1, \ldots,x_{n-1},\ldots),
\end{equation}
while the \emph{(two-sided) right Bernoulli shift} is
\begin{equation}\label{rightBernoulli2a}
\newsym{{(two-sided) right Bernoulli shift} of a group $G$}{\bar\beta_G^\oplus}:G^{(\Z)}\to G^{(\Z)},\quad (x_n)_{n\in\Z}\mapsto (x_{n-1})_{n\in\Z}.
\end{equation}
Obviously, the two-sided right Bernoulli shift $\bar\beta_G^\oplus$ coincides with the restriction to the direct sum of the two-sided right Bernoulli shift defined in \eqref{barbetaX} considered in the category $\mathbf{Grp}$.

\begin{Remark}
We recall that, if $G$ is an abelian group and $\widehat G$ is its compact Pontryagin dual group, then $$\widehat{\beta_G^\oplus}={}_{\widehat G}\beta\quad\text{and}\quad\widehat{\bar\beta_G^\oplus}={}_{\widehat G}\bar\beta.$$
\end{Remark}

It is known that, for any group $G$,
\begin{equation}\label{halgbeta}
h_{alg}(\beta_G^\oplus)=h_{alg}(\bar\beta_G^\oplus)=\log|G|,
\end{equation}
with the convention that $\log|G|=\infty$ if $G$ is infinite.

\smallskip
For a finite-to-one map $\lambda:X\to Y$ between two non-empty sets, and a group $K$,  the generalized shift $\sigma_\lambda:K^Y \to K^X$ sends $K^{(Y)}$ to $K^{(X)}$ (as $f \in K^{(X)}$ precisely when $f$ has finite support). We denote 
\begin{equation}\label{^oplus}
\sigma_\lambda^\oplus=\sigma_\lambda\restriction_{K^{(Y)}}.
\end{equation}
In particular, when $Y =X$, then $K^{(X)}$ is a $\sigma_\lambda$-invariant subgroup of $K^X$ precisely when $\lambda$ is finite-to-one. It is known from  \cite[Theorem 7.3.3]{DG-islam} and \cite{AADGH} that, for the contravariant set-theoretic entropy $\mathfrak h_p^*$ defined there (see Remark~\ref{h*tilde}),
\begin{equation}\label{halgh}
h_{alg}(\sigma_\lambda^\oplus)=\mathfrak h_p^*(\lambda)\log|K|,
\end{equation}
with the convention that $\log|K|=\infty$ if $K$ is infinite.  As proved in \cite{AADGH}, the formula in \eqref{halgbeta} can be deduced from \eqref{halgh}. 

Moreover, in the concrete category $\mathbf{AG}$, the forward generalized shift from Definition~\ref{fgs} has the following description.
Let $K$ be an abelian group and $\lambda:X\to Y$ a map. Then 

\begin{equation}\label{fgseq}
\tau_\lambda:K^{(X)}\to K^{(Y)},\quad (k_x)_{x\in X}\mapsto \left(\sum_{x\in\lambda^{-1}(y)}k_x\right)_{y\in Y},
\end{equation}
with the convention that a sum on the empty set is $0$.

\begin{Remark}
Let us see now that the right Bernoulli shifts are forward generalized shifts, in the same way as we have seen above that the left Bernoulli shifts are backward generalized shifts. Indeed, if $\lambda:\N\to\N$ is defined by $n\mapsto n+1$, then $$\tau_\lambda=\beta_K^\oplus$$ is the one-sided right Bernoulli shift recalled in \eqref{rightBernoulli1}. The same occurs for the two-sided right Bernoulli shift $\bar\beta^\oplus_K$ in \eqref{rightBernoulli2a}.
\end{Remark}

It is proved in \cite{DFGB}, in the more general case of amenable semigroup actions, that, for $K$ an abelian group and $\lambda:X\to X$ a selfmap,
$$h_{alg}(\tau_\lambda)=\mathfrak h(\lambda)\cdot\log|K|,$$
with the convention that $\log|K|=\infty$ if $K$ is infinite. This generalizes the formula \eqref{halgbeta} for the right Bernoulli shifts and it will follow from Lemma~\ref{SetBT3} below.

\subsection{Algebraic $i$-entropy}\label{i-sec}

Let $R$ be a ring. Here $i: \mathbf{Mod}_R \to \R_{\geq0}$ is an invariant of $\mathbf{Mod}_R$ (i.e., $i(0)=0$ and $i(M) = i(N)$ whenever $M\cong N$).
We consider the algebraic $i$-entropy introduced in \cite{SZ}, giving a functor ${\mathfrak{sub}_i}:\mathbf{Mod}_R\to \SL^\dag$, to find $\ent_i$ from the general scheme. 

Consider the following conditions: 
\begin{itemize}
\item[(a)] $i(N_1 + N_2)\leq i(N_1) + i(N_2)$ for all submodules $N_1$, $N_2$ of $M$;
\item[(b)] $i(M/N)\leq i(M)$ for every submodule $N$ of $M$;
\item[(b$^*$)] $i(N)\leq i(M)$ for every submodule $N$ of $M$. 
\end{itemize}
The invariant $i$ is called \emph{subadditive} if (a) and (b) hold, \emph{additive} if equality holds in (a), \emph{preadditive} if (a) and (b$^*$) hold.

\smallskip
For a right $R$-module $M$, denote by $\La(M)$ the family of all its submodules and let
$$\sF_i(M)=\{N\in\La(M): i(N)< \infty\}.$$

\begin{Definition}
Let $i$ be a subadditive invariant, $M$ be an $R$-module and $\phi:M\to M$ an endomorphism. 
The \emph{algebraic $i$-entropy of $\phi$ with respect to $N\in\mathcal F_i(M)$} is 
\begin{equation*}
H_i(\phi,N)=\lim_{n\to \infty}\frac{i(T_n(\phi,N))}{n};
\end{equation*}
The \emph{algebraic $i$-entropy}\index{algebraic $i$-entropy} of $\phi$ is $$\newsym{algebraic $i$-entropy}{\ent_i}(\phi)=\sup\{H_i(\phi,N): N\in \mathcal F_i(M)\}.$$
\end{Definition}

For $M\in\mathbf{Mod}_R$, $\La(M)$ is a lattice with the operations of intersection and sum of two submodules, the bottom element is $\{0\}$ and the top element is $M$. For a subadditive invariant $i$ of $\mathbf{Mod}_R$ and for a right $R$-module $M$, letting $v=i$ on $\sF_i(M)$, we have that:
\begin{itemize}
 \item[(i)] $(\sF_i(M),+,i,\subseteq)$ is a normed subsemilattice of $\La(M)$ with zero $\{0\}$;
 \item[(ii)] $v$ is subadditive, since $i$ is subadditive; moreover, $v$ is arithmetic.
\end{itemize}
The norm $v$ is not necessarily monotone. On the other hand,
\begin{itemize}
 \item[(iii)]  if $i$ is both subadditive and preadditive, $v$ is monotone and d-monotone.
\end{itemize}
For every homomorphism $\f: M \to N$ in $\mathbf{Mod}_R$, $\sF_i(\f): \sF_i(M) \to \sF_i(N)$, defined by $\sF_i(\f)(H) =\f(H)$, is a morphism in $\SL^\dag$.
Therefore, the assignments $M \mapsto \sF_i(M)$ and $\phi\mapsto \sF_i(\phi)$ define a covariant functor 
$$\boxed{\newsym{functor $\mathfrak{sub}_i:\mathbf{Mod} _R\to \SL^\dag$}{\mathfrak{sub}_i}:\mathbf{Mod} _R\to \SL^\dag.}$$

\begin{Theorem}\label{subi}
Let $i$ be a subadditive invariant. Let $M$ be a right $R$-module and $\phi:M\to M$ an endomorphism. Then $H_i(\phi,N)=\H_{\mathfrak{sub}_i}(\phi,N)$ for every $N\in\mathcal F_i(M)$, and so $$\ent_i(\phi)=\h_{\mathfrak{sub}_i}(\phi).$$
\end{Theorem}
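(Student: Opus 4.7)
The plan is to mimic the pattern established by the analogous realization theorems (Theorem~\ref{realization:top:ent}, Theorem~\ref{realization:mes:ent}, Theorem~\ref{halg-sub}, Theorem~\ref{hpet}): the proof reduces to observing that the module-theoretic trajectory coincides with the $\Se$-theoretic trajectory computed with respect to $\mathfrak{sub}_i(\phi)$ in $(\mathcal F_i(M),+,i)$, and that the norms agree on the nose.

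First I would fix $N\in\mathcal F_i(M)$ and unwind the definitions. The $n$-th $\phi$-trajectory of $N$ in $M$ is $T_n(\phi,N)=N+\phi(N)+\ldots+\phi^{n-1}(N)$, where $+$ is the sum of submodules; but this is exactly the $n$-th trajectory of $N$ with respect to $\mathfrak{sub}_i(\phi)$ inside the normed semilattice $(\mathcal F_i(M),+)$, namely $T_n(\mathfrak{sub}_i(\phi),N)$. Along the way one should verify that $T_n(\phi,N)$ still lies in $\mathcal F_i(M)$: since $\phi(N)$ is a quotient of $N$, subadditivity condition (b) gives $i(\phi(N))\leq i(N)<\infty$, hence by induction and subadditivity condition (a) one obtains $i(T_n(\phi,N))\leq n\cdot i(N)<\infty$. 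Incidentally, this also checks that $\mathfrak{sub}_i(\phi)$ is a contractive semigroup endomorphism of $(\mathcal F_i(M),+,i)$, so it is a morphism in $\Se$ and the functor $\mathfrak{sub}_i$ lands in $\Sesu$ (in fact in $\SL^\dag$).

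Since $v=i$ on $\mathcal F_i(M)$, we get $c_n(\mathfrak{sub}_i(\phi),N)=v(T_n(\mathfrak{sub}_i(\phi),N))=i(T_n(\phi,N))$ for every $n\in\N_+$. Because $v$ is subadditive, Theorem~\ref{limit} ensures that the $\limsup$ in the definition of $h_\Se(\mathfrak{sub}_i(\phi),N)$ is an actual limit, so
\[
\H_{\mathfrak{sub}_i}(\phi,N)=h_\Se(\mathfrak{sub}_i(\phi),N)=\lim_{n\to\infty}\frac{c_n(\mathfrak{sub}_i(\phi),N)}{n}=\lim_{n\to\infty}\frac{i(T_n(\phi,N))}{n}=H_i(\phi,N).
\]
Taking the supremum over $N\in\mathcal F_i(M)$ on both sides yields $\ent_i(\phi)=\h_{\mathfrak{sub}_i}(\phi)$.

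There is no substantial obstacle here; the only point requiring a brief verification is that the trajectories do not escape the subsemilattice $\mathcal F_i(M)$ of finite-$i$-valued submodules and that $\mathfrak{sub}_i(\phi)$ is genuinely contractive, both of which rely squarely on the subadditivity hypothesis on $i$ (conditions (a) and (b)). Everything else is bookkeeping about trajectories, exactly parallel to the proofs of Theorem~\ref{halg-sub} and Theorem~\ref{hpet}.
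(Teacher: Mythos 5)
Your proof is correct and follows essentially the same route as the paper's: identify $T_n(\phi,N)$ with $T_n(\mathfrak{sub}_i(\phi),N)$, note that the norm $v=i$ makes $c_n(\mathfrak{sub}_i(\phi),N)=i(T_n(\phi,N))$, and pass to the limit. The extra checks you include (that trajectories stay in $\mathcal F_i(M)$ and that $\mathfrak{sub}_i(\phi)$ is contractive) are sound and rely on the subadditivity of $i$ exactly as you say; the paper leaves them implicit in the construction of the functor $\mathfrak{sub}_i$.
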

\begin{proof}
Let $N\in\mathcal F_i(G)$. Since, for every $n\in\N_+$, $$T_n(\phi,N)=T_n(\mathcal F_i(\phi),N),$$ applying the definitions we can conclude that $i(T_n(\phi,N))=c_n(\mathcal F_i(\phi),N).$ So, $$H_i(\phi,N)=h_\Se(\mathcal F_i(\phi), N)=\H_{\mathfrak{sub}_i}(\phi,N),$$
and this concludes the proof.
\end{proof}

By the results in \S\ref{f-sec}, $\ent_i$ is invariant under conjugation and under inversion,  moreover it vanishes on quasi-periodic endomorphisms.

It is known that in general $\ent_i$ is not monotone for quotients.
If $i$ is preadditive, the covariant functor ${\mathfrak{sub}_i}$ sends monomorphisms to embeddings and so $\ent_i$ is monotone for invariant submodules.
If $i$ is both subadditive and preadditive then for every $R$-module $M$ the norm of ${\mathfrak{sub}_i}(M)$ is d-monotone, so $\ent_{i}$ satisfies also the Logarithmic Law. 

Under these assumptions also the weak Addition Theorem holds for the algebraic $i$-entropy. Indeed, for every $R$-module $M$ and every endomorphism $\phi:M\to M$, the order given by the inclusion $\subseteq$ is compatible with $\sF_i(\phi):\sF_i(M)\to \sF_i(M)$. So let $N$ be another $R$-module and $\psi:N\to N$ an endomorphism. Now $\sF_i(M)\oplus \sF_i(N)$ is cofinal in $\sF_i(M\oplus N)$ with respect to the order given by $\subseteq$, and the restriction of the norm of $\sF_i(M\oplus N)$ to $\sF_i(M)\oplus \sF_i(N)$ coincides with $v_\oplus$. Therefore, Lemma~\ref{wATg} gives $$\h_{\mathfrak{sub}_i}(\phi\oplus\psi)=\h_{\mathfrak{sub}_i}(\phi)+\h_{\mathfrak{sub}_i}(\psi),$$ so by Theorem~\ref{subi} we have the weak Addition Theorem for the algebraic $i$-entropy.

\medskip
It was proved in \cite{SVV} that, if $i$ is a length function (i.e., $i$ is upper continuous and additive) and $i$ is discrete (i.e., the set of finite values of $i$ is order-isomorphic to $\N$), then $\ent_i$ is continuous for direct limits. More precisely, under these assumptions, for an $R$-module endomorphism $\phi:M\to M$, we have that $$\ent_i(\phi)=\sup_{F\in\sF^{f}_i(M)}H_i(\phi,F),$$ where $\sF^{f}_i(M)$ is the subsemilattice of $\sF_i(M)$ of all finitely generated submodules of $M$ with finite $i$ (see \cite{SVV}). 
Therefore, if one defines the functor $$\mathfrak{sub}^f_i:\mathbf{Mod} _R\to \SL^\dag$$ by $M \mapsto \sF^f_i(M)$ and $\phi\mapsto \sF^f_i(\phi)$ as above, one obtains $$\ent_i=\h_{\sub_i^f}.$$ 
Now $\sub_i^f$ sends direct limits to direct limits and so $\ent_i$ is continuous for direct limits by Corollary~\ref{contlim} and Theorem~\ref{subi}.

\medskip
A clear example, studied in detail in \cite{GBS}, is given by vector spaces and $i=\dim$ (see \S\ref{V-sec}).

\subsection{Adjoint algebraic entropy}\label{adj-sec}

Again we consider the category $\mathbf{Grp}$ of all groups and their homomorphisms, giving a functor $\mathfrak{sub}^\star:\mathbf{Grp}\to \SL^\dag$ such that the entropy defined using this functor coincides with the adjoint algebraic entropy $\ent^\star$ introduced in \cite{DGS} for abelian groups.  The abelian groups of zero adjoint entropy are studied in \cite{SZ1}.

\medskip
For a group $G$ denote by \newsym{family of all finite index subgroups of a grouo $G$}{$\sC(G)$} the family of all subgroups of finite index in $G$. For an endomorphism $\phi:G\to G$, $N\in \sC(G)$ and $n\in\N_+$, 
defined the \emph{$n$-th $\phi$-cotrajectory of $N$}\index{contrajectory} by
$$C_n(\phi,N)=N\cap\phi^{-1}(N)\cap\ldots\cap\phi^{-n+1}(N).$$ 
Since the map induced by $\phi^n$ on the partitions $\{\phi^{-n}(N)g:g\in G\}\to \{Ng:g\in G\}$ is injective, it follows that $\phi^{-n}(N)\in\sC(G)$ for every $n\in\N$. Therefore, $C_n(\phi,N)\in \sC(G)$ for every $n\in\N_+$, because $\sC(G)$ is closed under finite intersections. 

\begin{Definition}
Let $G$ be a group and $\phi:G\to G$ an endomorphism.
The \emph{adjoint algebraic entropy of $\phi$ with respect to $N\in\sC(G)$} is 
\begin{equation}\label{H*}
H^\star(\phi,N)={\lim_{n\to \infty}\frac{\log[G:C_n(\phi,N)]}{n}}.
\end{equation}
The \emph{adjoint algebraic entropy of $\phi$}\index{adjoint algebraic entropy} is $$\newsym{adjoint algebraic entropy}{\ent^\star}(\phi)=\sup\{H^\star(\phi,N):N\in\sC(G)\}.$$
\end{Definition}

The pair $(\sC(G),\cap)$ is a subsemilattice of $(\La(G), \cap)$. For $N\in\sC(G)$, let $v(N) = \log[G:N]$. Then:
\begin{itemize}
\item[(i)] $(\sC(G),\cap,v, \supseteq)$ is a normed semilattice with zero $G$;
\item[(ii)] $v$ is subadditive, arithmetic, monotone and d-monotone.
\end{itemize}

For every group homomorphism $\f: G \to H$, the map $\sC(\f): \sC(H) \to \sC(G)$, defined by $N\mapsto \f^{-1}(N)$, is a morphism in $\SL^\dag$. 
Then the assignments $G\mapsto\sC(G)$ and $\phi\mapsto\sC(\phi)$ define a contravariant functor 
$$\boxed{\newsym{functor $\mathfrak{sub}^\star:\mathbf{Grp}\to \SL^\dag$}{\mathfrak{sub}^\star}:\mathbf{Grp}\to \SL^\dag.}$$

\begin{Theorem}\label{aent}
Let $G$ be a group and $\phi:G\to G$ an endomorphism. Then $H^\star(\phi,N)=\H_{\mathfrak{sub}^\star}(\phi,N)$ for every $N\in\sC(G)$, and so $$\ent^\star(\phi)=\h_{\mathfrak{sub}^\star}(\phi).$$
\end{Theorem}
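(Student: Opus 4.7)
The plan is to reduce the statement to a direct unfolding of definitions, exactly parallel to the realization theorems already proved in this section (Theorems~\ref{realization:top:ent}, \ref{realization:mes:ent}, \ref{halg-sub}, \ref{hpet}, \ref{subi}), with the only extra care required by the contravariance of $\mathfrak{sub}^\star$.

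First, I would fix $N \in \sC(G)$ and identify the $n$-th trajectory in the normed semilattice $(\sC(G),\cap)$ of the endomorphism $\mathfrak{sub}^\star(\phi)$. Since $\mathfrak{sub}^\star$ is contravariant, applied to the endomorphism $\phi$ of $G$ it yields an endomorphism of $\sC(G)$ acting as $M \mapsto \phi^{-1}(M)$. By functoriality, its $k$-th iterate sends $N$ to $\phi^{-k}(N)$. Using the semigroup operation $\cap$ of $\sC(G)$, one obtains
\[
T_n(\mathfrak{sub}^\star(\phi),N)=N\cap \phi^{-1}(N)\cap\ldots\cap\phi^{-n+1}(N)=C_n(\phi,N),
\]
so in particular the right-hand side lies in $\sC(G)$ (as already noted before Definition of $H^\star$).

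Next, I would apply the norm $v(M)=\log[G:M]$ of $\sC(G)$ to this equality:
\[
c_n(\mathfrak{sub}^\star(\phi),N)=v(T_n(\mathfrak{sub}^\star(\phi),N))=\log[G:C_n(\phi,N)].
\]
Since $v$ is subadditive (as $\sC(G)\in\SL^\dag$), Theorem~\ref{limit} guarantees that $h_\Se(\mathfrak{sub}^\star(\phi),N)$ exists as a genuine limit, and dividing the previous identity by $n$ and passing to the limit yields
\[
H^\star(\phi,N)=\lim_{n\to\infty}\frac{\log[G:C_n(\phi,N)]}{n}=\lim_{n\to\infty}\frac{c_n(\mathfrak{sub}^\star(\phi),N)}{n}=h_\Se(\mathfrak{sub}^\star(\phi),N)=\H_{\mathfrak{sub}^\star}(\phi,N).
\]
Finally, taking the supremum over $N\in\sC(G)=\mathfrak{sub}^\star(G)$ and using the definitions of $\ent^\star(\phi)$ and $\h_{\mathfrak{sub}^\star}(\phi)=h_\Se(\mathfrak{sub}^\star(\phi))$ gives the desired equality $\ent^\star(\phi)=\h_{\mathfrak{sub}^\star}(\phi)$.

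There is essentially no hard step: the only point that deserves a moment of attention is the bookkeeping forced by contravariance, namely that the semigroup trajectory $T_n(\mathfrak{sub}^\star(\phi),N)$ is computed using preimages $\phi^{-k}(N)$ (not images) and the meet $\cap$ (not a sum), so that it matches the cotrajectory $C_n(\phi,N)$ used in the definition of $H^\star$. Once this identification is made, the rest is the standard pattern already exploited earlier.
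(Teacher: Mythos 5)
Your proposal is correct and follows essentially the same route as the paper's proof: identify the cotrajectory $C_n(\phi,N)$ with the semigroup trajectory $T_n(\mathfrak{sub}^\star(\phi),N)$ in $(\sC(G),\cap)$, apply the norm $\log[G:-]$, pass to the limit, and take the supremum over $N\in\sC(G)$. The extra remarks on contravariance and on invoking Theorem~\ref{limit} for the existence of the limit are sound but only make explicit what the paper leaves implicit.
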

\begin{proof}
Let $N\in\sC(G)$. Since, for every $n\in\N_+$, $$C_n(\phi,N)=T_n(\sC(\phi),N),$$ the definitions yield the equality $\log[G:C_n(\phi,N)]=c_n(\sC(\phi),N)$. So, 
$$H^\star(\phi,N)=h_\Se(\sC(\phi),N)=\H_{\mathfrak{sub}^\star}(\phi,N),$$
and this concludes the proof.
\end{proof}

The contravariant functor $\mathfrak{sub}^\star$ need not send the subgroup inclusion $j: H \hookrightarrow G$ to a 
surjective  map $\sC(j):\sC(G) \to \sC(H)$; indeed, take an abelian group $H$ with $\sC(H)\neq\{H\}$ and $G$ its divisible hull (so $\sC(G)=\{G\}$). This fact suggests that the adjoint algebraic entropy is not monotone under taking restrictions to invariant subgroups; indeed, this is the case as shown by a counterexample in \cite[Example 4.8]{DGS}.

Moreover, the adjoint algebraic entropy is not continuous for inverse limits (see \cite{DGS}).

On the other hand, since the contravariant functor $\mathfrak{sub}^\star$ takes factors in $\mathbf{Grp}$ to embeddings in $\mathfrak L$, by the properties proved in \S\ref{f-sec} we have automatically that $\ent^\star$ is invariant under conjugation and inversion, it is monotone for factors, it satisfies the Logarithmic Law and it vanishes on quasi-periodic endomorphisms.

As in the case of the algebraic entropy, the weak Addition Theorem holds for the adjoint algebraic entropy. Indeed, for every group $G$ and every endomorphism $\phi:G\to G$, the order given by the containment $\supseteq$ is compatible with $\sC(\phi):\sC(G)\to \sC(G)$.
So let $\psi:H\to H$ be another group endomorphism. Then $\mathcal C(G)\oplus\mathcal C(H)$ is cofinal in $\mathcal C(G\times H)$ with respect to $\supseteq$ and the norm of $\mathcal C(G\times H)$ restricted to $\mathcal C(G)\oplus\mathcal C(H)$ coincides with $v_\oplus$.
So Lemma~\ref{wATg} applies and together with Theorem~\ref{aent} gives $$\ent^\star(\phi\times\psi)=\ent^\star(\phi)+\ent^\star(\psi).$$

\medskip
There exists also a version of the adjoint algebraic entropy for modules, namely the \emph{adjoint algebraic $i$-entropy}\index{adjoint algebraic $i$-entropy} \newsym{adjoint algebraic $i$-entropy}{$\ent_i^\star$} (see \cite{Vi}), obtained by a preadditive invariant $i:\mathbf{Mod}_R\to \R_{\geq0}$ for a ring $R$.  We omit the detailed outline of this case that can be treated analogously.

\subsection{Topological entropy for linearly topologized precompact groups}\label{lintop-sec}

Let $(G,\tau)$ be a linearly topologized precompact group, i.e., $(G,\tau)$ has a local base $\V_G(1)$ at $1$ consisting of open normal subgroups of $G$ of finite index. Each $V\in \V_G(1)$ defines a finite open cover $\U_V=\{x\cdot V\}_{x\in G}\in\fc(G)$ with $N(\U_V)=[G:V]$. 
Moreover, let $$\newsym{family of the finite open covers $\{\U_V:V \in \V_G(1)\}$ of a linearly topologized precompact group $G$}{\fc_s(G)}=\{\U_V:V \in \V_G(1)\}\subseteq \fc(G).$$
We first prove the following useful claim.

\begin{claim}\label{Claim18March} 
Let $G$ be a linearly topologized precompact group, $\phi:G\to G$ a continuous endomorphism. Then
$\fc_s(G)$ is a $\fc(\phi)$-invariant cofinal subsemigroup of $\fc(G)$.
\end{claim}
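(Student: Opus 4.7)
I will verify the three asserted properties of $\fc_s(G)$ separately, handling the subsemigroup and invariance checks by direct algebraic computation and deferring the main work to the cofinality step, which I will prove by passing to the compact completion of $G$. For the subsemigroup property, given $V_1, V_2 \in \V_G(1)$, the intersection $V_1 \cap V_2$ remains open, normal, and of finite index (the last via the injection $G/(V_1 \cap V_2) \hookrightarrow G/V_1 \times G/V_2$), hence lies in $\V_G(1)$; a direct check shows every non-empty element of $\U_{V_1} \vee \U_{V_2} = \{xV_1 \cap yV_2 : x,y \in G\}$ is a coset of $V_1 \cap V_2$, and every such coset arises, so up to members equal to the empty set (permitted by the paper's convention on covers) $\U_{V_1} \vee \U_{V_2} = \U_{V_1 \cap V_2} \in \fc_s(G)$. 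For $\fc(\phi)$-invariance, continuity of $\phi$ makes $\phi^{-1}(V)$ open, normality of $V$ transfers to $\phi^{-1}(V)$, and the natural injection $G/\phi^{-1}(V) \hookrightarrow G/V$, $g\phi^{-1}(V) \mapsto \phi(g)V$, gives finite index, so $\phi^{-1}(V) \in \V_G(1)$; the same coset analysis yields $\fc(\phi)(\U_V) = \{\phi^{-1}(xV) : x\in G\} = \U_{\phi^{-1}(V)}$ up to empty members.

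The cofinality step is the main technical ingredient. Given a finite open cover $\U = \{U_1,\ldots,U_n\}$ of $G$, I pass to the completion $\widehat G$ of $G$, a compact linearly topologized group in which $G$ embeds densely and whose open normal finite-index subgroups are the closures $\widehat V = \overline{V}^{\widehat G}$ of the members of $\V_G(1)$, with $G/V \cong \widehat G/\widehat V$. Extend each $U_i$ to the maximal open subset $\widetilde U_i := \widehat G \setminus \overline{G \setminus U_i}^{\widehat G}$, which satisfies $\widetilde U_i \cap G = U_i$. Because the basic cosets $\widehat x\, \widehat V$ are clopen in the totally disconnected $\widehat G$, the key observation is that $\widehat x\, \widehat V \cap G \subseteq U_i$ forces $\widehat x\, \widehat V \subseteq \widetilde U_i$. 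Density of $G$ together with openness of the $U_i$ yields, for each $\widehat y \in \widehat G$, some $\widehat V_{\widehat y} \in \V_{\widehat G}(1)$ and an index $i(\widehat y)$ with $\widehat y\, \widehat V_{\widehat y} \subseteq \widetilde U_{i(\widehat y)}$. Compactness of $\widehat G$ then extracts a finite subcover $\{\widehat y_k\, \widehat V_{\widehat y_k}\}_{k=1}^K$; setting $\widehat V := \bigcap_{k=1}^K \widehat V_{\widehat y_k} \in \V_{\widehat G}(1)$ makes every coset $\widehat z\, \widehat V$ contained in some $\widetilde U_i$, and the restriction $V := \widehat V \cap G \in \V_G(1)$ yields $\U_V$ refining $\U$.

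The main obstacle is the cofinality step. A purely internal attempt within $G$---fixing $V_0 \in \V_G(1)$, choosing representatives $g_1,\ldots,g_m$ of $G/V_0$, selecting $V_{g_j} \in \V_G(1)$ with $g_j V_{g_j} \subseteq U_{i(g_j)}$, and forming $V = \bigcap_j V_{g_j}$---breaks down because the representatives depend on $V_0$ and the resulting $V$ controls only the cosets $g_jV$, while generic elements of $G$ can fall in cosets straddling multiple $U_i$'s. Precompactness of $G$ alone does not supply the needed Lebesgue-number-type uniformity; it is genuine compactness of the completion $\widehat G$ that enables the argument to go through.
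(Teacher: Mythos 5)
Your subsemigroup and invariance verifications are correct (they amount to the identities $\U_{V_1}\vee\U_{V_2}=\U_{V_1\cap V_2}$ and $\fc(\phi)(\U_V)=\U_{\phi^{-1}(V)}$ that the paper records as \eqref{eqApril15}), and your cofinality argument follows the same route as the paper's: pass to the compact completion $K$ of $G$, extend the given cover to $K$, and conclude by compactness --- the paper finishes via zero-dimensionality and a clopen partition where you use your ``key observation'' together with a finite subcover, and these two endgames are essentially equivalent. The gap is the step you justify by ``density of $G$ together with openness of the $U_i$'': the assertion that every point of $K$ admits a basic coset neighbourhood contained in some $\widetilde U_{i}$, equivalently that the $\widetilde U_i$ cover $K$. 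This does not follow from density and openness, and any attempt to derive it reproduces exactly the circularity you yourself diagnose for the ``purely internal attempt'': density produces a point $g$ of $G$ in a prescribed coset neighbourhood of $y$, openness produces $V'\in\V_G(1)$ with $gV'\subseteq U_i$, and your key observation places the corresponding coset of the closure of $V'$ inside $\widetilde U_i$; but $y$ lies in that coset only if the prescribed neighbourhood was already small relative to $V'$, while $V'$ depends on $g$, which depends on the prescribed neighbourhood. Passing to the completion does not break this circle, and compactness of $K$ is of no help at this point --- it enters only after the pointwise statement has been secured.

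Moreover, the pointwise statement genuinely fails in the stated generality. Take $G=\bigoplus_{n\geq1}\Z/2\Z$ with the topology induced from $K=\prod_{n\geq1}\Z/2\Z$, for $m\geq 1$ let $s_m\in G$ be the element whose first $m$ coordinates equal $1$ and whose remaining coordinates equal $0$, and put $F_1=\{s_m: m\ \text{odd}\}$ and $F_2=\{s_m: m\ \text{even}\}$. One checks that $F_1$ and $F_2$ are disjoint closed subsets of $G$, so $\U=\{G\setminus F_1,\,G\setminus F_2\}\in\fc(G)$; yet the point $y=(1,1,1,\dots)\in K$ lies in the closure of both $F_1$ and $F_2$, hence belongs to no $\widetilde U_i$. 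Worse, every $V\in\V_G(1)$ contains some $V_m=\{x\in G: x_1=\dots=x_m=0\}$, and the coset $s_m+V\supseteq s_m+V_m$ contains both $s_m$ and $s_{m+1}$, hence meets both $F_1$ and $F_2$ and is contained in neither member of $\U$; so no $\U_V$ refines this $\U$ at all. You should be aware that the paper's own proof contains the same unproved step: the assertion that ``there exists $\mathcal W\in\fc(K)$ such that $\mathcal U=\{G\cap W: W\in\mathcal W\}$'' is precisely the extension claim above. So your proposal faithfully mirrors the published argument, gap included; but as written the step is not justified, and the example shows it cannot be repaired in the stated generality. The argument (and the cofinality statement) is sound when $G$ is itself compact, as in the application to $\mathbf{TdCG}$ of Remark~\ref{Rem18March}, since then $G=K$ and no extension is needed.
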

\begin{proof}
Take $\mathcal U\in\fc(G)$. Let $K$ be the compact completion of $G$. There exists $\mathcal W\in\fc(K)$ such that $$\mathcal U= \{G \cap W: W\in \mathcal W\}.$$ Since $\mathcal W$ is a finite open cover of $K$ and since $K$ is zero dimensional (being linearly topologized), there is a finite refinement $\mathcal V$ of $\mathcal W$ that is an open partition of $K$. Then each $V\in \mathcal V$ is a clopen set of $K$. Hence, there exists a finite set  $\{N_{i,V}: i\in I_V\}$ of open subgroups of $K$ and 
(finitely many) cosets $x_{i,V}  N_{i,V}$ such that $$V= \bigcup_{i\in I_V}x_{i,V}  N_{i,V}$$ (note that the enumeration $i\mapsto N_{i,V}$ need not be one-to-one). 
Then $$N=\bigcap_{V \in \mathcal V}\bigcap_{i \in I_V} N_{i,V}$$ is still an open subgroup of $K$ as the intersection is finite. 
The finite cover $$\mathcal U_N^K = \{y\cdot N\}_{y\in K}\in\fc(K)$$ of $K$ refines $\mathcal V$. Therefore, for $N\cap G \in \V_G(1)$, 
$\mathcal U_{N\cap G}$ is a refinement of $\mathcal U$.
\end{proof}

The following result was proved in \cite{DG-islam} under the stronger hypothesis that $G$ is a totally disconnected compact group (so, $\hf$ coincides with $h_{top}$). 
  
\begin{Proposition}\label{no-mu}
Let $G$ be a linearly topologized precompact group, $\phi:G\to G$ a continuous endomorphism. Then $$\hf(\phi)=\sup\{H^\star(\phi,U):U\in\V_G(1)\}.$$
\end{Proposition}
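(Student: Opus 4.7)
The plan is to use Claim~\ref{Claim18March} together with the monotonicity of the semigroup entropy on cofinal subsemigroups, reducing the supremum defining $\hf(\phi)$ to a supremum over the special covers $\mathcal U_V$ associated to elements $V\in\V_G(1)$, and then to recognize $H_{\mathrm{fin\text{-}top}}(\phi,\mathcal U_V)$ as the adjoint-type quantity $H^\star(\phi,V)$.

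First, I would verify the key combinatorial identity: for each $V\in\V_G(1)$ and each $k\in\N$, $\phi^{-k}(\mathcal U_V)=\mathcal U_{\phi^{-k}(V)}$. Indeed, for $x\in G$, $\phi^{-k}(xV)=g\,\phi^{-k}(V)$ for any $g$ with $\phi^k(g)\in xV$, and is empty otherwise; since both $xV$ and $g\,\phi^{-k}(V)$ are cosets of the respective finite-index open normal subgroups, one obtains the equality of covers. Consequently, for every $n\in\N_+$,
\[
\mathcal U_V\vee \phi^{-1}(\mathcal U_V)\vee\ldots\vee \phi^{-n+1}(\mathcal U_V)
\]
is the partition of $G$ into cosets of $C_n(\phi,V)=V\cap\phi^{-1}(V)\cap\ldots\cap\phi^{-n+1}(V)$, so its minimal subcover cardinality equals $[G:C_n(\phi,V)]$. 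Taking logarithms, dividing by $n$ and letting $n\to\infty$ gives
\[
H_{\mathrm{fin\text{-}top}}(\phi,\mathcal U_V)=H^\star(\phi,V).
\]

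Next I would invoke Claim~\ref{Claim18March}, which says that $\fc_s(G)=\{\mathcal U_V:V\in\V_G(1)\}$ is a $\fc(\phi)$-invariant cofinal subsemigroup of $\fc(G)$. Since $(\fc(G),\vee,v,\prec)$ is a normed presemilattice with monotone norm $v=\log N(-)$ and $\fc(\phi)$ is monotone with respect to $\prec$, the second assertion of Lemma~\ref{mons} (Monotonicity for subflows, equality case under cofinality) applies, yielding
\[
h_\Se\bigl(\fc(\phi)\bigr)=h_\Se\bigl(\fc(\phi)\restriction_{\fc_s(G)}\bigr).
\]
Combined with the identification $\hf(\phi)=\h_{\fc}(\phi)=h_\Se(\fc(\phi))$ established in Theorem~\ref{realization:fin-top:ent}, this gives
\[
\hf(\phi)=\sup_{V\in\V_G(1)}H_{\mathrm{fin\text{-}top}}(\phi,\mathcal U_V)=\sup_{V\in\V_G(1)}H^\star(\phi,V),
\]
as required.

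The only nontrivial step is the cofinality in Claim~\ref{Claim18March}, which is already at our disposal; the rest is a straightforward bookkeeping identity between joins of coset covers and intersections of open subgroups. The main conceptual obstacle would have been to justify the reduction from arbitrary finite open covers to the ``algebraic'' covers $\mathcal U_V$, but Claim~\ref{Claim18March} plus the general monotonicity lemma handle this cleanly, so no further technical work (in particular, no approximation through the completion beyond what is already done in the claim) is needed here.
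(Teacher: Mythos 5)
Your proof is correct and follows essentially the same route as the paper: the same coset-cover identities $\phi^{-k}(\mathcal U_V)=\mathcal U_{\phi^{-k}(V)}$ and $\mathcal U_{V_1}\vee\mathcal U_{V_2}=\mathcal U_{V_1\cap V_2}$ giving $\Hf(\phi,\mathcal U_V)=H^\star(\phi,V)$, followed by Claim~\ref{Claim18March} and the equality case of Lemma~\ref{mons} to restrict the supremum to the cofinal invariant subsemigroup $\fc_s(G)$. No gaps.
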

\begin{proof}
Each $V\in \V_G(1)$ defines a finite cover $\U_V=\{x\cdot V\}_{x\in G}\in\fc(G)$ with $N(\U_V)=[G:V]$. Clearly, 
\begin{equation}\label{eqApril15}
\f^{-i}(\U_{V})= \U_{\f^{-i}(V)}\ \text{and}\ \U_{V_1}\vee \U_{V_2}=  \U_{V_1\cap V_2}.
\end{equation} 
for $i\in \N$ and $V_1, V_2\in \V_G(1)$. Therefore, 
$$\U_V\vee \phi^{-1}(\U_V) \vee \ldots \vee \phi^{-n+1}(\U_V)=\U_{C_n(\phi,V)}.$$
for every $n\in\N_+$.
This gives
$$N(\U_V\vee \phi^{-1}(\U_V) \vee \ldots \vee \phi^{-n+1}(\U_V))=\log[G:C_n(\phi,V)].$$
Then, for every $V\in\V_G(1)$, $$\Hf(\phi,\U_V)=H^\star(\phi,V).$$
Now it suffices to note that, by Claim~\ref{Claim18March}, Lemma~\ref{mons} and Theorem~\ref{realization:fintop:ent}, we have that 
$$\hf(\phi)=\sup\{\Hf(\phi,\mathcal U_V):V\in \V_G(1)\},$$
and this concludes the proof.
\end{proof}

For a linearly topologized precompact group $G$, let $v(V)=\log [G:V]$ for every $V\in\V_G(1)$. Then:
\begin{itemize}
\item[(i)] $(\V_G(1), \cap,v,\supseteq)$ is a normed semilattice with zero $G$;
\item[(ii)] $v$ is subadditive, arithmetic, monotone and d-monotone.
\end{itemize}
For a continuous homomorphism $\f: G\to H$ between linearly topologized precompact groups, the map $\V_H(1)\to \V_G(1)$, defined by $V \mapsto \f^{-1}(V)$, is a morphism in $\SL^\dag$. Letting also $\sub_o^\star (G)= \V_G(1)$, this defines a contravariant functor 
$$\boxed{\newsym{functor $\mathfrak{sub}_o^\star:\mathbf{LPG}\to\SL^\dag$}{\mathfrak{sub}_o^\star}:\mathbf{LPG}\to\SL^\dag,}$$
which satisfies $\mathfrak{sub}_o^\star= \mathfrak{sub}^\star\circ U$, where $U: \mathbf{LPG}\to \mathbf{Grp}$ is the standard forgetful functor. Therefore the functor $\mathfrak{sub}_o^\star$ has the properties of the functor $\mathfrak{sub}^\star$ mentioned in \S\ref{adj-sec} (see Remark~\ref{Laaast:Remark} for the properties of the entropy $\hf$).

\medskip
It is easy to deduce the following result from Proposition~\ref{no-mu}.

\begin{Theorem}\label{htopsubo}
Let $G$ be a linearly topologized precompact group and $\phi:G\to G$ a continuous endomorphism. Then $$\hf(\phi)=\h_{\mathfrak{sub}_o^\star}(\phi).$$
\end{Theorem}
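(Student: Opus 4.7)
The plan is to deduce the theorem essentially immediately from Proposition~\ref{no-mu} by unfolding the definition of the functorial entropy $\h_{\mathfrak{sub}_o^\star}$. Since $\mathfrak{sub}_o^\star(G) = \V_G(1)$ as a normed semilattice (with operation $\cap$, norm $V \mapsto \log[G:V]$, and preorder $\supseteq$), and $\mathfrak{sub}_o^\star(\phi): \V_G(1) \to \V_G(1)$ acts by $V \mapsto \phi^{-1}(V)$, the key point is that trajectories in this semilattice coincide with the cotrajectories appearing in the adjoint-style formula of Proposition~\ref{no-mu}.

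Concretely, I would first fix $V \in \V_G(1)$ and compute, for each $n \in \N_+$,
$$T_n(\mathfrak{sub}_o^\star(\phi), V) = V \cap \phi^{-1}(V) \cap \ldots \cap \phi^{-n+1}(V) = C_n(\phi,V),$$
so that
$$c_n(\mathfrak{sub}_o^\star(\phi), V) = v(C_n(\phi,V)) = \log[G : C_n(\phi,V)].$$
Dividing by $n$ and passing to the limit (which exists because the norm on $\V_G(1)$ is subadditive, by Theorem~\ref{limit}) yields
$$\H_{\mathfrak{sub}_o^\star}(\phi, V) = h_\Se(\mathfrak{sub}_o^\star(\phi), V) = H^\star(\phi, V).$$

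Taking the supremum over $V \in \V_G(1)$ then gives
$$\h_{\mathfrak{sub}_o^\star}(\phi) = \sup\{H^\star(\phi, V) : V \in \V_G(1)\},$$
and combining this with Proposition~\ref{no-mu} completes the proof. There is essentially no obstacle here: all the real work (namely, reducing from arbitrary finite open covers to covers of the form $\U_V$ and identifying $\Hf(\phi, \U_V)$ with $H^\star(\phi,V)$ via the identities in \eqref{eqApril15}) has already been done in Proposition~\ref{no-mu} and in Claim~\ref{Claim18March}. The only point that merits a brief check is that the formal trajectory in the semilattice $(\V_G(1), \cap)$ matches the cotrajectory $C_n(\phi,V)$ as defined in \S\ref{adj-sec}, which is immediate from how $\mathfrak{sub}_o^\star$ is defined on morphisms.
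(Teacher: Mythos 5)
Your proof is correct and follows exactly the route the paper intends: the paper states that the theorem "is easy to deduce from Proposition~\ref{no-mu}", and your unfolding of $T_n(\mathfrak{sub}_o^\star(\phi),V)=C_n(\phi,V)$ to get $\H_{\mathfrak{sub}_o^\star}(\phi,V)=H^\star(\phi,V)$ is precisely the omitted deduction (and mirrors the paper's own proof of Theorem~\ref{aent}). Nothing is missing.
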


\begin{Remark}\label{Rem18March}
One can apply Proposition~\ref{no-mu}, Claim~\ref{Claim18March} and Theorem~\ref{htopsubo} to totally disconnected compact groups, since they are known to be linearly topologized by a theorem of van Dantzig. Consider the restriction of the functor $\sub_o^\star$ to the category $\mathbf{TdCG}$. As noticed above, $h_{top}$ coincides with $\hf$ on $\mathbf{TdCG}$.
Therefore, $h_{top}(\phi)=\h_{\sub_o^\star}(\phi)$ for any  continuous endomorphism $\phi:G\to G$ of a totally disconnected compact group $G$.  
\end{Remark}

\begin{Remark}\label{Laaast:Remark}
As mentioned in \S\ref{fr-sec}, $\hf$ is invariant under conjugation and invariant under inversion, moreover it satisfies the Logarithmic Law and it vanishes on quasi-periodic continuous selfmaps when considered on the category $ \Top$. Moreover, due to Theorem~\ref{realization:fintop:ent} monotonicity under taking closed invariant subspaces and factors is also available.
It is not hard to see that the passage to dense invariant subgroups in $ \mathbf{LPG}$ actually preserves the entropy $\hf$, so in conjunction with the above mentioned properties in $ \Top$, 
one has monotonicity under taking arbitrary invariant subgroups in $\mathbf{LPG}$.
\end{Remark}

\subsection{Algebraic and topological entropy for vector spaces}\label{V-sec}

Fix a discrete field $\mathbb K$. The algebraic dimension entropy for discrete $\mathbb K$-vector spaces (see Definition~\ref{Def:vector_space}) 
is a particular case of the $i$-entropy recalled in \S\ref{i-sec} (for the only invariant $i$ available for $\mathbb K$-vector spaces, namely the dimension), and was studied in detail in \cite{GBS}. For every $\mathbb K$-vector space $V$ let $\sF_d(V)$ be the family of all finite-dimensional linear subspaces $N$ of $V$. 

\begin{Definition}\label{Def:vector_space}
Let $V$ be a vector space over $\mathbb K$ and $\phi:V\to V$ an endomorphism. The \emph{algebraic dimension entropy of $\phi$ with respect to $N\in\mathcal F_d(V)$} is 
\begin{equation*}
H_{\dim}(\phi,N)=\lim_{n\to \infty}\frac{\dim T_n(\phi,N)}{n};
\end{equation*}
the \emph{algebraic dimension entropy}\index{algebraic dimension entropy} of $\phi$ is
$$ \newsym{algebraic dimension entropy}{\ent_{\dim}}(\phi)=\sup\{H_{\dim}(\phi,N): N\in \mathcal F_d(V)\}.$$
\end{Definition}

For a $\mathbb K$-vector space $V$ and $v=\dim$ we have that:
\begin{itemize}
 \item[(i)] $(\sF_d(V),+,\dim,\subseteq)$ is a normed semilattice with zero $\{0\}$;
 \item[(ii)] $v$ is subadditive, arithmetic, monotone and d-monotone.
\end{itemize}

For every morphism $\f: V \to W$ in $\mathbf{Mod}_\mathbb K$, the map $\sF_d(\f): \sF_d(V) \to \sF_d(W)$, defined by $N\mapsto\f(N)$, is a morphism in $\SL^\dag$.
Therefore, as a particular case of what is described in \S\ref{i-sec}, the assignments $V \mapsto \sF_d(V)$ and $\phi\mapsto \sF_d(\phi)$ define a covariant functor 
$$\boxed{\newsym{functor $\mathfrak{sub}_{\dim}:\mathbf{Mod}_\mathbb K\to \SL^\dag$}{\mathfrak{sub}_{\dim}}:\mathbf{Mod}_\mathbb K\to \SL^\dag.}$$
Then, as a particular case of Theorem~\ref{subi}, we have that 
\begin{equation}\label{entdimeq}
\h_{\mathfrak{sub}_{\dim}}=\ent_{\dim}.
\end{equation}

 As described in detail in \S\ref{i-sec}, it can be deduced by the results in \S\ref{f-sec} that $\ent_{\dim}$ is invariant under conjugation and under inversion, monotone under taking quotients and invariant linear subspaces, satisfies the Logarithmic Law and the weak Addition Theorem, moreover it is continuous for direct limits. This entropy can be computed also as follows.

\begin{Remark}
Every flow $\phi: V \to V$ of $\mathbf{Mod}_\mathbb K$ can be considered as a $\mathbb K[X]$-module $V_\phi$ letting $X$ act on $V$ as $\phi$. Then $\ent_{\dim}(\phi)$ coincides with the rank of the $\mathbb K[X]$-module $V_\phi$. 
\end{Remark}

In Lefschetz duality (see \cite{Lef}), the dual vector space of a discrete vector space is linearly compact. 
Recall that, given a linearly topologized vector space $V$ over the discrete field $\mathbb K$ (i.e., $V$ has a local base at $0$ consisting of linear subspaces), a \emph{linear variety} $M$ of $V$ is a coset $v+W$, where $v\in V$ and $W$ is a linear subspace of $V$. A linear variety $M=v+W$ is said to be \emph{open} (respectively, \emph{closed}) in $V$ if $W$ is open (respectively, closed) in $V$. 
A linearly topologized vector space $V$ over $\mathbb K$ is \emph{linearly compact} if any collection of closed linear varieties of $V$ with the finite intersection property has non-empty intersection (equivalently, any collection of open linear varieties of $V$ with the finite intersection property has non-empty intersection) (see \cite{Lef}).

In \cite{CGBtop} the topological counterpart of the algebraic dimension entropy is introduced and studied for linearly compact vector spaces.
For a linearly compact vector space $V$ denote by $\sF_o(V)$ the family of all open linear subspaces $U$ of $V$ (since $V/U$ is discrete and linearly compact, we have that $V/U$ has finite dimension).  

\begin{Definition}\label{LAAAAst_def}
Let $V$ be a linearly compact vector space over $\mathbb K$ and $\phi:V\to V$ a continuous endomorphism.
The \emph{topological dimension entropy of $\phi$ with respect to $N\in\sF_o(G)$} is
$$H_{\dim}^\star(\phi,N)={\lim_{n\to \infty}\frac{\dim(V/C_n(\phi,N))}{n}}.$$
The \emph{topological dimension entropy of $\phi$}\index{topological dimension entropy} is $$\newsym{topological dimension entropy}{\ent_{\dim}^\star}(\phi)=\sup\{H_{\dim}^\star(\phi,N):N\in\sF_o(G)\}.$$
\end{Definition}

The pair $(\sF_o(V),\cap)$ is a semilattice. For $N\in\sF_o(V)$, let $v(N)=\dim(V/N)$. Then:
\begin{itemize}
\item[(i)] $(\sF(V),\cap,v, \supseteq)$ is a normed semilattice with zero $V$;
\item[(ii)] $v$ is subadditive, arithmetic, monotone and d-monotone.
\end{itemize}

 Let $\mathbf{LCVect}_\mathbb K$ be the category of all linearly compact vector spaces over the discrete field $\mathbb K$.
For every continuous homomorphism $\f: V \to W$, the map $\sF_o(\f): \sF_o(W) \to \sF_o(V)$, defined by $N\mapsto \f^{-1}(N)$, is a morphism in $\SL^\dag$. 
Then the assignments $V\mapsto\sF_o(V)$ and $\phi\mapsto\sF_o(\phi)$ define a contravariant functor 
$$\boxed{\newsym{functor $\mathfrak{sub}_{\dim}^\star:\mathbf{LCVect_\mathbb K}\to \SL^\dag$}{\mathfrak{sub}_{\dim}^\star}:\mathbf{LCVect_\mathbb K}\to \SL^\dag.}$$

Analogously to Theorem~\ref{aent}, it is possible to prove that, if $V$ is a linearly compact vector spaces over $\mathbb K$ and $\phi:V\to V$ is a continuous endomorphism, then 
$$H_{\dim}^\star(\phi,N)=\H_{\mathfrak{sub}_{\dim}^\star}(\phi,N)$$ for every $N\in\sF_o(V)$, and so 
\begin{equation}\label{entdim*eq}
\ent_{\dim}^\star(\phi)=\h_{\mathfrak{sub}_{\dim}^\star}(\phi).
\end{equation}

As described in \S\ref{adj-sec}, it can be deduced by the results in \S\ref{f-sec}, that $\ent^\star_{\dim}$ is invariant under conjugation and under inversion, monotone under taking quotients, satisfies the Logarithmic Law and the weak Addition Theorem, and it vanishes on quasi-periodic continuous endomorphisms.  
Since the functor $\mathfrak{sub}_{\dim}^\star$ sends inclusions to quotient maps, it can be deduced from the results in \S\ref{f-sec} that $\ent^\star_{\dim}$ is also monotone under taking closed invariant linear subspaces and it is continuous for inverse limits (for the latter property one can use the same argument as in the case of 
compact topological space, see the text preceeding Proposition~\ref{New:corollary2}). 

\begin{Remark}\label{llc}
In \cite{CGBalg} and \cite{CGBtop}, the algebraic and the topological dimension entropy are studied in the more general case of continuous endomorphisms $\phi:V\to V$ of locally linearly compact vector spaces $V$; a linearly topologized vector space $V$ over a discrete field $\mathbb K$ is \emph{locally linearly compact} if it admits a local base at $0$ consisting of linearly compact open linear subspaces (see \cite{Lef}).

It is easy to see that for a flow $(V,\phi)$ of $\mathbf{Mod}_\mathbb K$ and $N\in\mathcal F_d(V)$, one has $$H_{\dim}(\phi,N)=\lim_{n\to \infty}\frac{\dim(T_n(\phi,N)/N)}{n}$$ 
for the algebraic dimension entropy recalled in Definition~\ref{Def:vector_space}.
In \cite{CGBalg} this limit is used as a definition of the algebraic dimension entropy of a flow $(V,\phi)$ with $V$ a locally linearly compact vector space and $N$ an open linearly compact linear subspace of $V$.

Analogously, for a flow $(V,\phi)$ of $\mathbf{LCVect_\mathbb K}$ and $N\in\sF_o(V)$, one can obtain the topological dimension entropy recalled in Definition \ref{LAAAAst_def} as 
$$H_{\dim}^\star(\phi,N)=\lim_{n\to \infty}\frac{\dim(N/C_n(\phi,N))}{n}.$$ 
In \cite{CGBtop} this limit is used as a definition of the topological dimension entropy of a flow $(V,\phi)$ with $V$ a locally linearly compact vector space $V$ and $N$ an open linearly compact linear subspace of $V$.

As it occurs for the intrinsic algebraic entropy, the algebraic and the topological dimension entropy for locally linearly compact vector spaces do not fit our general scheme. 
\end{Remark}

\section{Bridge Theorems}\label{BT}

In this section we study in detail the notion of Bridge Theorem for functorial entropies, pointing out various situations where such a phenomenon arises.

We start by introducing appropriate weaker forms of isomorphisms in $\Se^*$, in order to have a general scheme that covers all possible Bridge Theorems, including the inspiring one (namely, Theorem~\ref{BT1} above for torsion abelian groups and totally disconnected compact abelian groups; see also Theorems~\ref{WeissBT} and~\ref{WBT} below).

\subsection{Uniform and weak isomorphisms in $\Se^*$}\label{Se*}

The property of Invariance under conjugation in $\Se$ ensures the invariance of the  semigroup entropy for isomorphic flows of $\Se$, but isomorphisms in $\Se$ that are not easy to come by. Our aim in this subsection is to replace isomorphisms in $\Se$ by (appropriate) isomorphisms in $\Se^*$. 

The isomorphisms $\phi: S \to S$ in $\Se$ must be norm-preserving, i.e., $v(\phi(x)) = v(x)$ for all $x\in S$. Since this property is not available in  $\Se^*$, we introduce the following notion providing a partial remedy to this problem. 

\begin{Definition} 
A homomorphism $\alpha:(S,v)\to (S', v')$ in $\Se^*$ is a \emph{uniform isomorphism with coefficient}\index{uniform isomorphism}  $0<r\in\R$ if $\alpha$ is a semigroup isomorphism and $v'(\alpha(x)) = rv(x)$ for every $x\in S$.
\end{Definition}

If $\alpha: S \to S'$ is a uniform isomorphism in $\Se^*$ with coefficient $r$, then $\alpha^{-1}: S' \to S$ is a uniform isomorphism as well and has coefficient $r^{-1}$. 

\begin{Lemma}\label{unif1}
A uniform isomorphism in $\Se^*$ with coefficient $r$ is an isomorphism in $\Se$ if and only if $r=1$.
\end{Lemma}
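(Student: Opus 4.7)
The plan is to prove both implications directly from the definitions, using the observation already recorded in the paper that an isomorphism in $\Se$ is automatically norm-preserving.

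For the ``if'' direction, I would assume $r=1$. Then the defining identity $v'(\alpha(x)) = rv(x)$ reduces to $v'(\alpha(x)) = v(x)$ for every $x\in S$, so $\alpha$ is contractive. Applying this to $\alpha^{-1}$, which by the remark immediately following the definition is itself a uniform isomorphism with coefficient $r^{-1}=1$, I obtain $v(\alpha^{-1}(y)) = v'(y)$ for every $y\in S'$, so $\alpha^{-1}$ is contractive too. Thus $\alpha$ is an isomorphism in $\Se$.

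For the ``only if'' direction, I would assume $\alpha$ is an isomorphism in $\Se$. Then both $\alpha$ and $\alpha^{-1}$ are morphisms in $\Se$, i.e., contractive: $v'(\alpha(x)) \leq v(x)$ and $v(\alpha^{-1}(y))\leq v'(y)$ for all $x\in S$, $y\in S'$. Setting $y=\alpha(x)$ in the second inequality yields $v(x)\leq v'(\alpha(x))$, so combined with the first we get $v'(\alpha(x)) = v(x)$ (this is exactly the observation recorded after the definition of a morphism in $\Se$). Comparing with $v'(\alpha(x)) = rv(x)$, we deduce $(r-1)v(x) = 0$ for every $x\in S$.

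The only subtlety, which I would address in a brief remark, is the degenerate case in which $v$ is identically zero on $S$: there the conclusion $r=1$ cannot be forced from the equality above, but in that case $v'\equiv 0$ on $S'$ as well (since $\alpha$ is surjective), so the statement is vacuous and $r$ is irrelevant. In the generic situation where there exists $x\in S$ with $v(x)>0$, the equation $(r-1)v(x)=0$ forces $r=1$, completing the proof. No real obstacle is expected; the argument is essentially unwinding the two definitions.
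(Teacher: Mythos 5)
Your proof is correct and is exactly the definitional unwinding that the paper leaves implicit (Lemma~\ref{unif1} is stated there without proof, relying on the earlier observation that isomorphisms in $\Se$ are norm-preserving). Your remark on the degenerate case $v\equiv 0$ is a legitimate refinement the paper glosses over: there the coefficient of a uniform isomorphism is not unique, so the ``only if'' direction can only be read as asserting that $1$ is among the admissible coefficients, which is how you resolve it.
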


For a normed semigroup $S$ and a normed preordered semigroup $T$, we give a weaker notion than that of uniform isomorphism:

\begin{Definition}
A homomorphism $\alpha: S \to T$ in $\Se^*$, where $T$ is a normed preordered semigroup, is a \emph{weak isomorphism with coefficient}\index{weak isomorphism} $0<r\in\R$ if $\alpha:S\to \alpha(S)$ a uniform isomorphism with coefficient $r$ and $\alpha(S)$ is a cofinal subsemigroup of $T$.
\end{Definition}

Every semigroup admits the discrete preorder, in this case ``cofinal" in the above definition means equal.

\medskip
A uniform isomorphism is always a semigroup isomorphism, whereas a weak isomorphism is a semigroup isomorphism if and only if it is a uniform isomorphism (with the same coefficient).

\medskip
Easy examples show that the Invariance under conjugation, which holds in $\Se$, drastically fails in $\Se^*$. Nevertheless, one can obtain a sharp substitute in $\Se^*$ as follows.

\begin{Proposition}\label{uInvariance}
Let $\f: S \to S$ be an endomorphism in $\Se$. If $\alpha:S\to T$ is a uniform isomorphism in $\Se^*$ with coefficient $r>0$, then: 
\begin{enumerate}[(a)]
     \item $\psi = \alpha\circ\f\circ\alpha^{-1} : T \to T$ is an endomorphism in $\Se$;
     \item $h_{\Se}(\psi)= r h_{\Se}(\f)$. 
\end{enumerate}
\end{Proposition}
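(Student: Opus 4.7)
The plan is to verify both assertions by direct computation using only the two defining features of a uniform isomorphism $\alpha \colon (S,v) \to (T,v')$ with coefficient $r$: namely, $\alpha$ is a semigroup isomorphism, and $v'(\alpha(x)) = r\, v(x)$ for every $x \in S$.

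For (a), note first that $\psi = \alpha \circ \phi \circ \alpha^{-1}$ is automatically a semigroup endomorphism of $T$ as a composition of three semigroup homomorphisms (since $\alpha^{-1}$ is a homomorphism too, $\alpha$ being a semigroup isomorphism). To verify that $\psi$ lies in $\Se$, I would pick an arbitrary $y \in T$, set $x = \alpha^{-1}(y)$, and compute
\[
v'(\psi(y)) = v'(\alpha(\phi(x))) = r\, v(\phi(x)) \leq r\, v(x) = v'(\alpha(x)) = v'(y),
\]
where the middle inequality uses that $\phi \in \Se$ is contractive.

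For (b), the key observation is that $\psi^k = \alpha \circ \phi^k \circ \alpha^{-1}$ for every $k \in \N$ (trivial induction). Therefore, for any $y = \alpha(x) \in T$ and $n \in \N_+$, since $\alpha$ is a semigroup homomorphism,
\[
T_n(\psi, y) = \alpha(x) \cdot \alpha(\phi(x)) \cdot \ldots \cdot \alpha(\phi^{n-1}(x)) = \alpha(T_n(\phi, x)).
\]
Applying $v'$ and using the scaling property of $\alpha$ gives $c_n(\psi, y) = r \cdot c_n(\phi, x)$. Dividing by $n$ and taking the $\limsup$ yields $h_{\Se}(\psi, y) = r \cdot h_{\Se}(\phi, x)$. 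Finally, since $\alpha$ is a bijection between $S$ and $T$, taking the supremum over $y \in T$ is the same as taking the supremum over $x \in S$, which gives $h_{\Se}(\psi) = r \cdot h_{\Se}(\phi)$.

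There is essentially no obstacle here: both parts reduce to the compatibility of $\alpha$ with products and with the norm (up to the factor $r$). The only minor point worth flagging is that, although $\alpha$ itself need not live in $\Se$ (it is only in $\Se^*$), one never invokes contractivity of $\alpha$ in the argument; the contractivity of $\psi$ follows purely from that of $\phi$ together with the scaling identity, and the trajectory computation uses only that $\alpha$ is a semigroup homomorphism.
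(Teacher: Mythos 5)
Your proof is correct and follows essentially the same route as the paper: both rest on the identity $c_n(\psi,\alpha(x)) = r\,c_n(\phi,x)$, obtained from $T_n(\psi,\alpha(x))=\alpha(T_n(\phi,x))$ and the scaling of the norm. The only cosmetic difference is that you conclude $h_{\Se}(\psi)=r\,h_{\Se}(\phi)$ directly from the pointwise equality and the bijectivity of $\alpha$, whereas the paper derives the two inequalities $h_{\Se}(\psi)\leq r\,h_{\Se}(\phi)$ and $h_{\Se}(\phi)\leq r^{-1}h_{\Se}(\psi)$ by symmetry; both are valid.
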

\begin{proof}
(a) Follows from the definition of uniform isomorphism and contractive endomorphism. 

(b) Fix $y\in T$ and find $x \in S$ with $y= \alpha(x)$. Then $c_n(\psi,y)=r c_n(\phi, x)$ for every $n\in\N_+$, and so $h_{\Se}(\psi,y) = r\cdot h_{\Se}(\phi,x)$. So $h_{\Se}(\psi,y)\leq r\cdot  h_{\Se}(\phi,x)$. Therefore, 
$$h_{\Se}(\psi) \leq r\cdot h_{\Se}(\phi).$$
Analogously, using $\alpha^{-1}$ in place of $\alpha$, one can prove that 
$$h_{\Se}(\phi) \leq r^{-1}\cdot h_{\Se}(\psi).$$ 
This concludes the proof.
\end{proof}

A weaker form of the Invariance under conjugation property remains true also considering weak isomorphisms in $\Se^*$:

\begin{Corollary}\label{mons*}
Let $\f: S \to S$ be an endomorphism in $\Se$. Let $T$ be a normed semigroup with a preorder compatible with the endomorphism $\psi:T\to T$ in $\Se$. 
If $\alpha: S \to T$ is a weak isomorphism with coefficient $0<r\in\R$ and such that $\psi\circ\alpha=\alpha\circ\phi$, then:
\begin{enumerate}[(a)]
\item $\alpha(S)$ is $\psi$-invariant;
\item $h_{\Se}(\psi)= r h_{\Se}(\phi).$
\end{enumerate}
\end{Corollary}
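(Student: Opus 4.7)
The plan is to split the proof into the two parts. Part (a) is immediate: for any $y = \alpha(x) \in \alpha(S)$, the commutation $\psi \circ \alpha = \alpha \circ \phi$ gives $\psi(y) = \alpha(\phi(x)) \in \alpha(S)$, so $\alpha(S)$ is a $\psi$-invariant subsemigroup of $T$.

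For part (b), the strategy is to reduce to Proposition~\ref{uInvariance} by passing through $\alpha(S)$, and then to eliminate the difference between $\psi$ and $\psi\restriction_{\alpha(S)}$ using the cofinality built into the definition of weak isomorphism. First, by (a), the map $\psi\restriction_{\alpha(S)} : \alpha(S) \to \alpha(S)$ is a well-defined endomorphism, and it is contractive because $\psi$ is. By the definition of weak isomorphism, the corestriction $\alpha : S \to \alpha(S)$ is a uniform isomorphism in $\Se^*$ with coefficient $r$, and by construction it conjugates $\phi$ to $\psi\restriction_{\alpha(S)}$. Proposition~\ref{uInvariance} then yields
\[
h_\Se\bigl(\psi\restriction_{\alpha(S)}\bigr) = r\, h_\Se(\phi).
\]

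To finish, I would invoke the second (``equality'') statement of Lemma~\ref{mons} applied to the $\psi$-invariant normed subsemigroup $\alpha(S)$ of $T$: the hypothesis that the preorder on $T$ is compatible with $\psi$ means exactly that both $\psi$ and the norm of $T$ are monotone, and $\alpha(S)$ is cofinal in $T$ since $\alpha$ is a weak isomorphism. Therefore
\[
h_\Se(\psi) = h_\Se\bigl(\psi\restriction_{\alpha(S)}\bigr),
\]
and combining the two displayed equalities gives $h_\Se(\psi) = r\, h_\Se(\phi)$, as required.

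The only step that needs any care is the verification that Lemma~\ref{mons} applies in the intended direction: one must read the statement with $T$ (in the notation of the lemma) being our ambient semigroup and the ``invariant normed subsemigroup'' being our $\alpha(S)$. I do not expect a genuine obstacle here, since all hypotheses of Lemma~\ref{mons} (preorder compatible with $\psi$, monotone norm, cofinality of $\alpha(S)$) are exactly what is provided by ``preorder compatible with $\psi$'' together with ``$\alpha$ is a weak isomorphism''. Both Part (a) and the contractivity of $\psi\restriction_{\alpha(S)}$ are immediate, so the whole argument reduces to a clean composition of Proposition~\ref{uInvariance} and Lemma~\ref{mons}.
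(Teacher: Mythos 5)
Your proof is correct and follows essentially the same route as the paper: part (a) from the commutation relation, and part (b) by combining Proposition~\ref{uInvariance} applied to the corestriction $\alpha: S\to\alpha(S)$ with the equality case of Lemma~\ref{mons} via cofinality and compatibility of the preorder. The paper's own proof is just a terser version of exactly this argument.
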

\begin{proof}
(a) Follows immediately from the hypothesis.

(b) By Lemma~\ref{mons}, $h_\Se(\psi)=h_\Se(\psi\restriction_{\alpha(S)})$. Moreover, $h_\Se(\psi\restriction_{\alpha(S)})=r\cdot h_\Se(\phi)$ by Proposition~\ref{uInvariance}.
\end{proof}

We will need the following property.

\begin{Lemma}\label{circwi}
Let $S\in\Se$ and $S',S''\in\Se_p$. If $\alpha:S\to S'$ is a weak isomorphism with coefficient $r>0$ and $\beta:S'\to S''$ is a monotone weak isomorphism with coefficient $s>0$, then $\beta\circ\alpha$ is a weak isomorphism with coefficient $rs$.
\end{Lemma}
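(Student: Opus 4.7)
The plan is to verify the two conditions in the definition of weak isomorphism for $\beta\circ\alpha$: first, that it restricts to a uniform isomorphism onto its image with coefficient $rs$; second, that this image is cofinal in $S''$.

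For the first condition, I would observe that $\alpha : S \to \alpha(S)$ and $\beta : S' \to \beta(S')$ are both semigroup isomorphisms by hypothesis. Since $\beta$ is injective on $S'$, its restriction $\beta\restriction_{\alpha(S)}$ is an injective semigroup homomorphism onto $\beta(\alpha(S))$, hence a semigroup isomorphism $\alpha(S) \to \beta(\alpha(S))$. Composing, $\beta\circ\alpha : S \to (\beta\circ\alpha)(S) = \beta(\alpha(S))$ is a semigroup isomorphism. For the norm, using the respective coefficients of $\alpha$ and $\beta$, for every $x\in S$ we get
\[
v''((\beta\circ\alpha)(x)) = s\cdot v'(\alpha(x)) = s\cdot r\cdot v(x) = rs\cdot v(x),
\]
so $\beta\circ\alpha : S \to (\beta\circ\alpha)(S)$ is a uniform isomorphism with coefficient $rs$.

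For the second condition, given $z\in S''$, I first use that $\beta(S')$ is cofinal in $S''$ to find $y\in S'$ with $z\leq \beta(y)$. Then, since $\alpha(S)$ is cofinal in $S'$, there exists $x\in S$ with $y\leq \alpha(x)$. Applying the monotonicity of $\beta$ gives $\beta(y)\leq \beta(\alpha(x))$, whence $z\leq (\beta\circ\alpha)(x)\in (\beta\circ\alpha)(S)$. Thus $(\beta\circ\alpha)(S)$ is cofinal in $S''$, completing the verification.

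The only nontrivial point is the cofinality step, and this is precisely where the hypothesis that $\beta$ is monotone is indispensable: without it, the inequality $y\leq \alpha(x)$ in $S'$ could not be transported across $\beta$ to an inequality in $S''$. The remaining computations are direct bookkeeping with the definitions.
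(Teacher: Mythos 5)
Your proposal is correct and follows exactly the same route as the paper's proof: the paper likewise notes that $\beta\circ\alpha:S\to\beta(\alpha(S))$ is a uniform isomorphism with coefficient $rs$ and derives cofinality of $\beta(\alpha(S))$ in $S''$ from the monotonicity of $\beta$ together with the two cofinality hypotheses. You have merely written out in full the details that the paper leaves as ``clearly'' and ``follows from''.
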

\begin{proof}
Clearly, $\beta\circ\alpha:S\to\beta(\alpha(S))$ is a uniform isomorphism with coefficient $rs$. That $\beta(\alpha(S))$ is cofinal in $S''$ follows from the monotonicity of $\beta$, the cofinality of $\alpha(S)$ in $S'$ and the cofinality of $\beta(S')$ in $S''$.
\end{proof}

\subsection{General scheme}\label{BTsec}

The Bridge Theorem  from Definition~\ref{BTdef} is a property of a functor $\varepsilon: \mathfrak X_1 \to \mathfrak X_2$ (with respect to entropies $h_1:\mathfrak X_1 \to \R_+$ and $h_2:\mathfrak X_2 \to \R_+$). It is natural to expect that it is invariant under natural equivalence of functors, as we observe now.

\begin{Remark}\label{BTne}
Let $\varepsilon,\varepsilon': \mathfrak X_1 \to \mathfrak X_2$ be naturally equivalent functors and let $h_1:\mathfrak X_1 \to \R_+$ and $h_2:\mathfrak X_2 \to \R_+$ be entropies. For $C>0$, the pair $(h_1, h_2)$ satisfies $BT_{\varepsilon,C}$ if and only if $(h_1, h_2)$ satisfies $BT_{\varepsilon',C}$.
\end{Remark}

Throughout this section $\varepsilon: \mathfrak X_1\to \mathfrak X_2$ is a functor and $\h_{F_1}$, $\h_{F_2}$ are functorial entropies arising from functors $F_1:\mathfrak X_1\to \Se_p$ and $F_2:\mathfrak X_2\to \Se_p$ such that $F_1$ and $F_2\varepsilon$ are simultaneously covariant or contravariant. Our aim is to verify whether the pair $(\h_{F_1},\h_{F_2})$ satisfies the Bridge Theorem. To do this, it seems natural to consider the following notion which yields a higher level of connection between the entropies $\h_{F_1}$, $\h_{F_2}$ than the one given by the Bridge Theorem, as we shall see in Theorem~\ref{MainBT}.

\begin{Definition}\label{SBT}
The pair $(\h_{F_1}, \h_{F_2})$ satisfies the \emph{Strong Bridge Theorem   with respect to $\varepsilon$ with coefficient}\index{Strong Bridge Theorem} $0<C\in\R$ (briefly, \newsym{Strong Bridge Theorem with respect to the functor $\varepsilon$ with coefficient $C>0$}{$SBT_{\varepsilon,C}$}) if there exists a natural transformation $$\eta:  F_1 \to F_2 \varepsilon$$ such that $\eta_X: F_1(X) \to F_2 \varepsilon (X)$ is a uniform isomorphism with coefficient $C$.

If each $\eta_X: F_1(X) \to F_2 \varepsilon (X)$ is only a weak isomorphism with coefficient $C$, we say that $(\h_{F_1}, \h_{F_2})$ satisfies 
the \emph{Strong$^w$ Bridge Theorem with coefficient $C$} (briefly, \newsym{Strong$^w$ Bridge Theorem with respect to the functor $\varepsilon$ with coefficient $C>0$}{$SBT^w_{\varepsilon,C}$}).

In case $C=1$ we write simply \newsym{Strong Bridge Theorem with respect to the functor $\varepsilon$ with coefficient $1$}{$SBT_\varepsilon$} (respectively, \newsym{Strong$^w$ Bridge Theorem with respect to the functor $\varepsilon$ with coefficient $1$}{$SBT^w_\varepsilon$}) and say that  $(\h_{F_1}, \h_{F_2})$ satisfies the \emph{Strong Bridge Theorem}
(respectively,  the \emph{Strong$^w$ Bridge Theorem}).
\end{Definition}

As the next diagram shows, this definition provides two levels of a sort of ``functorial Bridge Theorem", that connects both functors $F_1, F_2$.
\begin{equation}\label{A}
\xymatrix@R=5pt@C=40pt
{\mathfrak{X}_1\ar[dddd]_{\varepsilon}\ar[ddrr]^{F_1}\ar@/^1pc/[rrrdd]^{h_{1}=\h_{F_1}} & & & \\
& \ar[dd]^\eta & & \\
			&	& {\Se_p}\ar[r]^{ {h_\Se}}   & \R_+\\
 & & &			\\
\mathfrak{X}_2\ar[uurr]_{F_2}\ar@/_1pc/[rrruu]_{h_{2}=\h_{F_2}} &	& &
}
\end{equation}

In the next theorem we show that, roughly speaking, 
$$SBT_{\varepsilon,C}\ \Rightarrow\ SBT^w_{\varepsilon,C}\ \Rightarrow\ BT_{\varepsilon,C},$$
in particular, both the Strong Bridge Theorem and the Strong$^w$ Bridge Theorem yield the Bridge Theorem.

\begin{Theorem}\label{MainBT} In the above notation, we have that:
\begin{enumerate}[(a)]
\item if $(\h_{F_1},\h_{F_2})$ satisfies $SBT_{\varepsilon,C}$ then $(\h_{F_1},\h_{F_2})$ satisfies $SBT^w_{\varepsilon,C}$;
\item if $(\h_{F_1},\h_{F_2})$ satisfies $SBT^w_{\varepsilon,C}$ then $(\h_{F_1},\h_{F_2})$ satisfies $BT_{\varepsilon,C}$.
\end{enumerate}
In particular, if the pair $(\h_{F_1},\h_{F_2})$ satisfies $SBT_{\varepsilon}$ then $(\h_{F_1},\h_{F_2})$ satisfies $SBT^w_{\varepsilon}$, and if $(\h_{F_1},\h_{F_2})$ satisfies $SBT^w_{\varepsilon}$ then $(\h_{F_1},\h_{F_2})$ satisfies $BT_{\varepsilon}$.
\end{Theorem}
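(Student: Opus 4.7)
The plan is to observe that both parts reduce immediately to facts already proved earlier in the paper, namely the trivial inclusion of uniform isomorphisms into weak isomorphisms and Corollary~\ref{mons*}.

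For part (a), I would simply unfold the definitions. Suppose $\eta \colon F_1 \to F_2 \varepsilon$ is a natural transformation such that each component $\eta_X \colon F_1(X) \to F_2\varepsilon(X)$ is a uniform isomorphism with coefficient $C$. Then $\eta_X$ is in particular a semigroup isomorphism, so $\eta_X(F_1(X)) = F_2\varepsilon(X)$; hence the corestriction $\eta_X \colon F_1(X) \to \eta_X(F_1(X))$ is itself a uniform isomorphism with coefficient $C$, and its image is (trivially) cofinal in $F_2\varepsilon(X)$ with respect to the preorder coming from $\Se_p$. By definition, this means $\eta_X$ is a weak isomorphism with coefficient $C$, so the same $\eta$ witnesses $SBT^w_{\varepsilon,C}$.

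For part (b), fix an endomorphism $\phi \colon X \to X$ in $\mathfrak X_1$ and set $S = F_1(X)$, $T = F_2\varepsilon(X)$, $\alpha = \eta_X$, and $\psi = F_2\varepsilon(\phi)$. Naturality of $\eta$ applied to the morphism $\phi$ yields, in both the covariant and the contravariant case, the same commutative square
$$\psi \circ \alpha = \alpha \circ F_1(\phi).$$
By hypothesis, $\alpha$ is a weak isomorphism with coefficient $C$. Moreover, since $F_2$ takes values in $\Se_p$, the semigroup $T$ carries a preorder with respect to which both its norm and the endomorphism $\psi$ are monotone; in particular this preorder is compatible with $\psi$ in the sense of Definition~\ref{normedpreordered}(b). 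The endomorphism $F_1(\phi) \colon S \to S$ lives in $\Se$ (again because $\Se_p \subseteq \Se$), so all the hypotheses of Corollary~\ref{mons*} are fulfilled, and it yields
$$h_\Se(\psi) = C \cdot h_\Se(F_1(\phi)).$$
Translating through the definition of functorial entropy, $h_\Se(F_1(\phi)) = \h_{F_1}(\phi) = h_1(\phi)$ and $h_\Se(\psi) = h_\Se(F_2 \varepsilon(\phi)) = \h_{F_2}(\varepsilon(\phi)) = h_2(\varepsilon(\phi))$, so we obtain $h_2(\varepsilon(\phi)) = C h_1(\phi)$, which is exactly $BT_{\varepsilon,C}$.

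I do not expect any genuine obstacle; the whole content is already packaged in Corollary~\ref{mons*} and the definitions of uniform and weak isomorphism. The only mild point worth flagging explicitly in the write-up is that the naturality square takes the same algebraic form $\psi \circ \alpha = \alpha \circ F_1(\phi)$ irrespective of whether $F_1$ and $F_2\varepsilon$ are covariant or contravariant, so that the single invocation of Corollary~\ref{mons*} handles both cases uniformly.
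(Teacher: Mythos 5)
Your proof is correct and follows essentially the same route as the paper: part (a) is the trivial observation that uniform isomorphisms are weak isomorphisms, and part (b) is a direct application of Corollary~\ref{mons*} to the naturality square $F_2\varepsilon(\phi)\circ\eta_X=\eta_X\circ F_1(\phi)$. The paper's own proof is just a terser version of yours (it cites Corollary~\ref{mons*} without spelling out the square or the verification that the preorder on $F_2\varepsilon(X)$ is compatible with $F_2\varepsilon(\phi)$), so your extra detail, including the remark that the naturality square has the same algebraic form in the covariant and contravariant cases, is a faithful expansion rather than a different argument.
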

\begin{proof} 
(a) Follows from the fact that a uniform isomorphism is also a weak isomorphism.

(b) If $(\h_{F_1},\h_{F_2})$ satisfies $SBT_{\varepsilon,C}$, in view of Corollary~\ref{mons*} we have that 
$$h_\Se (F_2 \varepsilon (\phi))=C\cdot  h_\Se (F_1(\phi))$$ 
for every endomorphism $\phi$ in $\mathfrak X_1$. This means that  $(\h_{F_1},\h_{F_2})$ satisfies $BT_{\varepsilon,C}$.
\end{proof}

\begin{Remark}
In the above notation, even when the target of the functors $F_1$ and $F_2$ is the category $\Se$, the existence of a natural transformation 
$\eta:  F_1 \to F_2 \varepsilon$ in $\Se^*$ need not ensure that this natural transformation is in $\Se$ (i.e., the morphism $\eta_X: F_1(X) \to F_2 \varepsilon (X)$ ensured by $SBT_{\varepsilon,C}$ is an isomorphism in $\Se^*$, but need not be an  isomorphism in $\Se$). According to Lemma~\ref{unif1},
$\eta_X: F_1(X) \to F_2 \varepsilon (X)$ is an isomorphism in $\Se$  if and only if $C=1$ (i.e., $SBT_{\varepsilon}$ holds).  
\end{Remark}

In the specific Bridge Theorems stated below, (at least) $SBT^w_\varepsilon$ is verified in many cases, but actually $SBT_\varepsilon$ is available in most of those cases. 

Our choice to use $\mathfrak S_p$ as a target is motivated by the fact that the definition of $SBT^w_{\varepsilon, C}$ makes recourse to a preorder on the semigroups (whereas $SBT^w_{\varepsilon, C}$ perfectly works also for functors $F_1:\mathfrak X_1\to \Se$ and $F_2:\mathfrak X_2\to \Se$).  
Let us point out that this choice is quite painless since in all cases considered in \S\ref{known-sec} the functors have $\Se_p$ as a target with the exception of $\str$; more precisely, with the exception of $\pet$, the targets are the subcategories $\PSL^\dag$ and $\SL^\dag$ of $\Se_p$.

\begin{Remark}\label{remark**}
Assume that $h_1:\mathfrak X_1\to\Se_p$ and $h_2:\mathfrak X_2\to \Se_p$ are entropies and that the functor $\varepsilon:\mathfrak X_1\to\mathfrak X_2$ is invertible.
\begin{itemize}
\item[(a)] Then $(h_1,h_2)$ satisfies $BT_{\varepsilon,C}$ with $0<C\in\R$ if and only if $(h_2,h_1)$ satisfies $BT_{\varepsilon^{-1},C^{-1}}$.
\item[(b)] Unlike the Strong$^w$ Bridge Theorem $SBT_{\varepsilon,C}^w$, the Strong Bridge Theorem $SBT_{\varepsilon,C}$ can be ``inverted" in the following more precise sense. 
If $h_1=\h_{F_1}$ and $h_2=\h_{F_2}$ for functors $F_1:  \mathfrak{X}_1 \to \Se$ and $F_2:\mathfrak X_2\to \Se$, then $(\h_{F_1},\h_{F_2})$ satisfies $SBT_{\varepsilon,C}$ if and only if $(\h_{F_2},\h_{F_1})$ satisfies $SBT_{\varepsilon^{-1},C^{-1}}$.
\end{itemize}
\end{Remark}

The following facts are trivial, yet it is worth  noting the preservation of entropy in these cases. Moreover, we see in \S\ref{forg} an interesting occurrence of the case of the forgetful functor and one of the identity functor.

\begin{Example}
Let $\varepsilon: \mathfrak X_1 \to \mathfrak X_2$ be a functor and $h:\mathfrak X_2 \to \R_+$ an entropy. Assume that $\varepsilon$ is either an inclusion functor or a forgetful functor. In the first case one can consider the restriction $h:\mathfrak X_1 \to \R_+$ keeping the same notation {\em par abus de language}. Clearly, the pair $(h,h)$ satisfies $BT_\varepsilon$. Moreover, if $h=\h_F$ for some functor $F:\mathfrak X_2\to \Se$, then $(\h_{F\varepsilon},\h_F)$ satisfies $SBT_\varepsilon$. All this applies also when $\varepsilon$ is a forgetful functor.
\end{Example}

\begin{Lemma}\label{SBTid}
In the above notation, let $\mathfrak X=\mathfrak X_1=\mathfrak X_2$ and $\varepsilon=id_{\mathfrak X}$. 
\begin{enumerate}[(a)]
\item If $F_1$ and $F_2$ are naturally equivalent, then $(\h_{F_1},\h_{F_2})$ satisfies $SBT_{id_\mathfrak X}$.
\item If $(\h_{F_1},\h_{F_2})$ satisfies $SBT^w_{id_\mathfrak X}$, then $\h_{F_1}=\h_{F_2}$. 
\end{enumerate}
\end{Lemma}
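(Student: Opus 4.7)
The plan for part (a) is to unpack the definition of natural equivalence of functors $F_1, F_2 : \mathfrak X \to \Se_p$: this yields a natural transformation $\eta : F_1 \to F_2$ whose components $\eta_X : F_1(X) \to F_2(X)$ are isomorphisms in $\Se_p$. The key observation is that an isomorphism $\eta_X$ in $\Se$ (and a fortiori in the subcategory $\Se_p$) is automatically norm-preserving, i.e., $v_2(\eta_X(x)) = v_1(x)$ for every $x \in F_1(X)$. Indeed, since both $\eta_X$ and $\eta_X^{-1}$ are contractive, one has the chain $v_2(\eta_X(x)) \leq v_1(x) = v_1(\eta_X^{-1}(\eta_X(x))) \leq v_2(\eta_X(x))$, forcing equality. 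This is precisely the defining condition for $\eta_X$ to be a uniform isomorphism with coefficient $C = 1$. Since $F_2 \circ id_{\mathfrak X} = F_2$, the natural transformation $\eta : F_1 \to F_2\, \varepsilon$ with $\varepsilon = id_\mathfrak X$ witnesses $SBT_{id_\mathfrak X}$ (which by convention stands for $SBT_{id_\mathfrak X, 1}$).

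For part (b), the plan is simply to invoke Theorem~\ref{MainBT}(b): the hypothesis $SBT^w_{id_\mathfrak X}$ abbreviates $SBT^w_{id_\mathfrak X, 1}$, and Theorem~\ref{MainBT}(b) delivers $BT_{id_\mathfrak X} = BT_{id_\mathfrak X, 1}$. Unfolding Definition~\ref{BTdef} with $C = 1$ and $\varepsilon = id_\mathfrak X$ yields $\h_{F_2}(id_\mathfrak X(\phi)) = \h_{F_1}(\phi)$ for every endomorphism $\phi$ in $\mathfrak X$, which is precisely the desired equality $\h_{F_1} = \h_{F_2}$.

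No real obstacle is expected: part (a) reduces to the simple observation that isomorphisms in $\Se$ preserve norms exactly, so the categorical data in Definition~\ref{SBT} collapse cleanly when $\varepsilon = id_\mathfrak X$; part (b) is an immediate specialization of the already-established implication $SBT^w \Rightarrow BT$ from Theorem~\ref{MainBT}. The only nuance worth flagging is that the coefficient is forced to be $C = 1$ in both parts—this is what makes the identity functor produce the Strong Bridge Theorem in its sharpest form rather than merely up to a multiplicative constant, and it ultimately traces back to the rigidity of isomorphisms in the contractive category $\Se$.
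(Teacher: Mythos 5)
Your proposal is correct and follows essentially the same route as the paper, which disposes of (a) as an immediate consequence of the fact that isomorphisms in $\Se$ preserve the norm (so the components of the natural equivalence are uniform isomorphisms with coefficient $1$) and of (b) by Corollary~\ref{mons*} — the same ingredient underlying the implication $SBT^w_{\varepsilon,C}\Rightarrow BT_{\varepsilon,C}$ of Theorem~\ref{MainBT}(b) that you invoke. Your explicit two-sided contractivity argument for norm preservation is exactly the observation the paper records after the definition of $\Se$, so nothing is missing.
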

\begin{proof}
(a) is obvious and (b) follows directly from Corollary~\ref{mons*}.
\end{proof}

In particular, given an entropy function $h:\mathfrak X\to \R_+$, if $h=\h_{F_1}$ for some functor $F_1:\mathfrak X\to \Se$ and $(\h_{F_1},\h_{F_2})$ satisfies $SBT^w_{id_\mathfrak X}$ for some other functor $F_2:\mathfrak X\to\Se$, then $h=\h_{F_2}$, and so one has an alternative description of $h$ as a functorial entropy.
A relevant example to this effect is the following.

\begin{Example}\label{remex}
\begin{enumerate}[(a)]
\item Consider the functors $\fc:\CT\to\PSL^\dag$ and $\cov:\CT\to\PSL^\dag$. In view of Remark~\ref{fcrem}, the pair $(\h_{\fc},\h_{\cov})$ satisfies $SBT^w_{id_{\CT}}$. In particular, if $\phi:X\to X$ is a morphism in $\CT$, then $\hf(\phi)=h_{top}(\phi)$ by Lemma~\ref{SBTid}(b).
\item Consider $\mathbf{TdCG}$ and the functors 
$$\cov:\mathbf{TdCG}\to \PSL^\dag\quad\text{and}\quad\sub_o^\star:\mathbf{TdCG}\to \PSL^\dag.$$
Let $\phi:K\to K$ be a morphism in $\mathbf{TdCG}$. By Theorem~\ref{realization:top:ent}, we have that $h_{top}(\phi)=\h_{\cov}(\phi)$.  We see that
\begin{center}
the pair $(\h_{\sub_o^\star},\h_{\cov})$ satisfies $SBT^w_{id_{\mathbf{TdCG}}}$,
\end{center} 
and hence $h_{top}(\phi)=\h_{\sub^\star_o}(\phi)$ by Lemma~\ref{SBTid}(b).

To verify that $(\h_{\sub_o^\star},\h_{\cov})$ satisfies $SBT^w_{id_{\mathbf{TdCG}}}$, let $$\eta_K:\V_K(1)\to\cov(K),\ V\mapsto\U_V,$$ where we recall that $\U_V=\{xV:x\in K\}$. Hence, $\eta_K(\V_K(1))=\fc_s(K)$. By Claim~\ref{Claim18March}, $\fc_s(K)$ is cofinal in $\fc(K)$, while $\fc(K)$ is cofinal in $\cov(K)$ since $K$ is compact (see Remark~\ref{fcrem}); therefore, $\eta_K(\V_K(1))$ is cofinal in $\cov(K)$. Moreover, $\eta_K:\V_K(1)\to\fc_s(K)$ is an isomorphism in $\Se$ since $$v(V)=\log[K:V]=\log N(\U_V).$$
Finally, $\eta:\sub_o^\star\to \cov$ is a natural transformation, since $\phi^{-1}(\U_V)=\U_{\phi^{-1}(V)}$ for every $V\in\V_K(1)$.
\end{enumerate}
\end{Example}

 The next result extends the observation in Remark~\ref{BTne} to Strong Bridge Theorems.

\begin{Lemma}\label{neSBT} 
Let $\varepsilon,\varepsilon': \mathfrak X_1 \to \mathfrak X_2$ be naturally equivalent functors and $C>0$. Then:
\begin{enumerate}[(a)]
\item $(\h_{F_1},\h_{F_2})$ satisfies $SBT^w_{\varepsilon,C}$ if and only if $(\h_{F_1},\h_{F_2})$ satisfies $SBT^w_{\varepsilon',C}$;
\item $(\h_{F_1},\h_{F_2})$ satisfies $SBT_{\varepsilon,C}$ if and only if $(\h_{F_1},\h_{F_2})$ satisfies $SBT_{\varepsilon',C}$.
\end{enumerate}
\end{Lemma}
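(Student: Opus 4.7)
The plan is to transport a witness of the Strong (respectively, Strong$^w$) Bridge Theorem across the natural equivalence $\theta: \varepsilon \to \varepsilon'$. By symmetry it suffices to show one implication in each item, so suppose $(\h_{F_1},\h_{F_2})$ satisfies $SBT_{\varepsilon,C}$ (respectively, $SBT^w_{\varepsilon,C}$) via a natural transformation $\eta: F_1 \to F_2\varepsilon$. First I would apply $F_2$ to the natural isomorphism $\theta$: since $F_2: \mathfrak X_2 \to \Se_p$ is a functor and each $\theta_X: \varepsilon(X)\to \varepsilon'(X)$ is an isomorphism in $\mathfrak X_2$, one obtains a natural equivalence $F_2\theta: F_2\varepsilon \to F_2\varepsilon'$ in $\Se_p$. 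In particular, every component $F_2\theta_X$ is an isomorphism in $\Se_p \subseteq \Se$, so it is automatically norm-preserving (that is, a uniform isomorphism with coefficient $1$) and monotone.

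Next, define $\eta'_X = F_2\theta_X \circ \eta_X: F_1(X) \to F_2\varepsilon'(X)$. To check that $\eta': F_1 \to F_2\varepsilon'$ is a natural transformation, let $\phi: X\to Y$ be a morphism in $\mathfrak X_1$. Then naturality of $\theta$ applied to $\phi$ and the functoriality of $F_2$ yield $F_2\varepsilon'(\phi)\circ F_2\theta_X = F_2\theta_Y\circ F_2\varepsilon(\phi)$, and combining this with the naturality of $\eta$ gives the chain
\begin{equation*}
F_2\varepsilon'(\phi) \circ \eta'_X = F_2\theta_Y \circ F_2\varepsilon(\phi) \circ \eta_X = F_2\theta_Y \circ \eta_Y \circ F_1(\phi) = \eta'_Y\circ F_1(\phi),
\end{equation*}
so $\eta'$ is natural.

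It remains to verify that each $\eta'_X$ has the appropriate isomorphism property. For item (b), $\eta_X$ is a uniform isomorphism with coefficient $C$ and $F_2\theta_X$ is a uniform isomorphism with coefficient $1$, so their composition $\eta'_X$ is a semigroup isomorphism satisfying $v''(\eta'_X(x)) = v'(\eta_X(x)) = C\cdot v(x)$, hence a uniform isomorphism with coefficient $C$; this establishes $SBT_{\varepsilon',C}$. For item (a), $\eta_X$ is a weak isomorphism with coefficient $C$ and $F_2\theta_X$ is a monotone weak isomorphism with coefficient $1$ (it is an isomorphism in $\Se_p$, and any isomorphism is cofinal onto its target), so Lemma~\ref{circwi} applied to $\alpha = \eta_X$ and $\beta = F_2\theta_X$ immediately gives that $\eta'_X$ is a weak isomorphism with coefficient $C$; this establishes $SBT^w_{\varepsilon',C}$. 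The main (and rather mild) obstacle is the cofinality step hidden in Lemma~\ref{circwi}, which is precisely the reason why we need $F_2\theta_X$ to be monotone; the fact that $F_2$ lands in $\Se_p$ supplies this monotonicity for free.
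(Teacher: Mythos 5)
Your proof is correct and follows the same route as the paper, whose entire proof reads ``To prove (a) apply Lemma~\ref{circwi}, while (b) follows from the definitions'': you have simply made explicit the transported transformation $\eta'_X=F_2\theta_X\circ\eta_X$, its naturality, and the coefficient bookkeeping that the paper leaves implicit. The only cosmetic remark is that your naturality computation is written for covariant $F_2$; the contravariant case is handled by the paper's standing convention of passing to $\mathfrak X^{op}$.
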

\begin{proof}
To prove (a) apply Lemma~\ref{circwi}, while (b) follows from the definitions.
\end{proof}

\subsection{Preservation of entropy along (co)reflections and forgetful functors}\label{forg}

In Lemma~\ref{SBTid} we have seen that two distinct realizations of an entropy $\mathfrak X\to \R$ as a functorial entropy can be seen as a Strong Bridge Theorem with respect to the identity functor. In this section we push further this line with respect to (co)reflections and forgetful functors. 

\medskip
In the next remark we briefly discuss the connection between measure entropy and topological entropy.

\begin{Remark}
If $X$ is a compact metric space and $\phi: X \to X$ is a continuous surjective selfmap, by Krylov-Bogolyubov Theorem \cite{BK} there exists some $\phi$-invariant Borel probability measure $\mu$ on $X$ (i.e., making $\phi:(X,\mu) \to (X,\mu)$ measure preserving). Denote by $h_\mu$ the measure entropy with respect to $\mu$.
The inequality $h_{\mu}(\phi)\leq h_{top}(\phi)$ for every $\mu$ is due to Goodwyn \cite{Goo}. Moreover, the Variational Principle (see \cite{Goodman} and \cite[Theorem 8.6]{Wa}) gives the ultimate connection between these two entropies:
$$h_{top}(\phi)=  \sup \{h_{\mu}(\phi):  \mu\ \text{$\phi$-invariant measure on $X$}\}.$$
\end{Remark}

In the case of a compact group $K$ and a continuous surjective endomorphism $\phi:K\to K$, the uniqueness of the Haar measure of $K$ implies that $\phi$ is measure preserving, as noted by Halmos \cite{Halmos}. In particular, both $h_{top}$ and $h_{mes}$ are available for surjective continuous endomorphisms of compact groups, and they coincide as proved by Stoyanov \cite{S} (see \cite{B} for metrizable compact groups).

Denote by $\newsym{category of all compact Hausdorff groups and continuous homomorphisms}{\mathbf{CG}}$ the category of all compact groups and continuous homomorphisms, and by $\newsym{subcategory of $\textbf{CG}$ with morphisms all continuous surjective homomorphisms}{\textbf{CG}_e}$ the non-full subcategory of $\textbf{CG}$ having as morphisms all  continuous surjective homomorphisms in $\textbf{CG}$. Then the above fact can be stated as a Bridge Theorem as follows: 

\begin{Theorem}
Consider the forgetful functor $V: \mathbf{CG}_e\to \mathbf{Mes}$. The pair $(h_{top},h_{mes})$ satisfies $BT_V$.
\end{Theorem}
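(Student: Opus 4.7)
The plan is to unfold the statement $BT_V$ to its concrete content: for every $\phi\colon K\to K$ in $\mathbf{CG}_e$, one has $h_{mes}(V(\phi))=h_{top}(\phi)$, with $V(\phi)$ viewed as a measure preserving transformation of $(K,\mu_K)$, where $\mu_K$ is the normalized Haar measure of $K$. So the very first thing I need to verify is that $V$ is well-defined on morphisms, i.e., that every $\phi\in\mathbf{CG}_e$ preserves $\mu_K$. Since $\phi$ is a continuous surjective group homomorphism, the push-forward $\phi_\ast\mu_K$ is a Borel probability measure on $K$ which is invariant under all translations (using surjectivity of $\phi$ together with the invariance of $\mu_K$). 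By the uniqueness of the Haar measure on the compact group $K$, one concludes $\phi_\ast\mu_K=\mu_K$, i.e., $V(\phi)\colon(K,\mu_K)\to(K,\mu_K)$ is a morphism in $\mathbf{Mes}$. This is the observation originally due to Halmos mentioned in the introductory discussion.

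Once $V$ is in place, the inequality $h_{mes}(V(\phi))\leq h_{top}(\phi)$ is immediate from Goodwyn's theorem, which asserts that for any continuous selfmap of a compact space and any invariant Borel probability measure $\mu$, one has $h_\mu(\f)\leq h_{top}(\f)$; in particular this holds for $\mu=\mu_K$. (Alternatively, this follows as the trivial direction of the Variational Principle $h_{top}(\phi)=\sup_\mu h_\mu(\phi)$, with the supremum ranging over the $\phi$-invariant Borel probability measures on $K$.)

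The non-trivial direction $h_{top}(\phi)\leq h_{mes}(V(\phi))$ is the main obstacle and is precisely the content of Stoyanov's theorem (extending Berg's result for metrizable compact groups): for a continuous surjective endomorphism of a compact group, the Haar measure realizes the supremum in the Variational Principle, so that $h_{top}(\phi)=h_{\mu_K}(\phi)$. The heart of Stoyanov's argument is a comparison procedure which, given any $\phi$-invariant Borel probability measure $\nu$ on $K$, exploits the translation-invariance of $\mu_K$ to transfer a measurable partition of $(K,\nu)$ to a partition of $(K,\mu_K)$ whose measure-theoretic entropy with respect to $\phi$ majorizes $h_\nu(\phi)$; this shows that the supremum in the Variational Principle is attained at $\mu_K$. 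We simply invoke this classical result, which combined with the Variational Principle (or with Goodwyn's inequality applied in both directions) gives the converse.

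Combining the two inequalities yields $h_{mes}(V(\phi))=h_{top}(\phi)$ for every $\phi:K\to K$ in $\mathbf{CG}_e$, which by Definition~\ref{BTdef} is exactly $BT_V$ with constant $C=1$. The main obstacle is, as pointed out, the maximality of the Haar measure, which lies well outside the scheme developed in the paper and must be imported as a black box from the ergodic-theoretic literature; all other steps are bookkeeping once $V$ has been shown to be well-defined.
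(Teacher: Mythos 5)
Your proposal is correct and follows essentially the same route as the paper, which offers no independent proof but presents the theorem as a restatement of the classical facts: Halmos' observation that uniqueness of the Haar measure makes every continuous surjective endomorphism of a compact group measure preserving (so $V$ is well defined), and Stoyanov's theorem that $h_{top}$ and $h_{mes}$ coincide for such endomorphisms. Your additional decomposition into Goodwyn's inequality plus the maximality of the Haar measure is a finer reading of the same classical input and does not change the argument.
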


We are not aware if this Bridge Theorem holds true at a ``higher level", namely as a Strong Bridge Theorem: 

\begin{question}
Does the pair $(\h_\cov,\h_{\mathfrak{mes}})$ satisfy $SBT_V$?
\end{question}

One can see the interaction between the topological entropy and the frame entropy as a Strong Bridge Theorem. Indeed, to every topological space $X$ corresponds the frame $(\ms{O}(X),\cup,\cap)$ of open sets of $X$, with top element $X$ and bottom element $\emptyset$. Every continuous selfmap $\f: X\to X$ gives rise to a frame endomorphism $\ms{O}(\f): \ms{O}(X) \to \ms{O}(X)$ defined by $\ms{O}(\f)(U) = \f^{-1}(U)$ for $U \in \ms{O}(X)$. This gives a contravariant functor $$\ms{O}: \Top \to \Fr.$$
Moreover, every (finite) open cover $\U$ of $X$ gives rise to a (finite) cover of $\mathcal O(X)$ and $\ms{O}(\f): \ms{O}(X) \to \ms{O}(X)$  takes (finite) covers of $\ms{O}(X)$ to (finite) covers of $\ms{O}(X)$. Together with Theorem~\ref{realization:fin-top:ent} and \eqref{realization:hfr}, this proves the following
 
\begin{Theorem}\label{BTframe}
Consider the functor $\ms{O}: \Top \to \Fr$. The pair $(\h_{\fc},\h_{\fc_{fr}})$ satisfies $SBT_\ms{O}$.
In particular, $(\hf,h_{fr})$ satisfies $BT_\ms{O}$.
\end{Theorem}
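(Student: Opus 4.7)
The plan is to build a natural transformation $\eta:\fc\to\fc_{fr}\circ\ms{O}$ whose components are uniform isomorphisms with coefficient $1$, and then invoke Theorem~\ref{MainBT}(b) together with Theorem~\ref{realization:fin-top:ent} and \eqref{realization:hfr} for the ``in particular'' part. The central observation is almost tautological: a finite open cover of a topological space $X$ \emph{is literally the same thing} as a finite subset $\{u_1,\ldots,u_n\}$ of the frame $\ms{O}(X)$ with $\bigvee u_i=1$. Thus for every $X\in\Top$ the set $\fc(X)$ coincides, as a set, with $\fc_{fr}(\ms{O}(X))$.

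Accordingly, I would define $\eta_X:\fc(X)\to\fc_{fr}(\ms{O}(X))$ to be the identity map. The first step is to check that $\eta_X$ is an isomorphism in $\PSL^\dag$: the operation $\vee$ (refinement-join) and the neutral element $\{X\}$ coincide on the two sides by construction, the preorder $\prec$ of refinement is the same relation, and the norm $v(\ms{U})=\log N(\ms{U})$ is literally the same function (the minimum cardinality of a subcover). In particular each $\eta_X$ is a uniform isomorphism in $\Se^*$ with coefficient $1$.

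The second step is the naturality square. For a continuous map $f:X\to Y$ in $\Top$ one has $\fc(f)(\ms{U})=\{f^{-1}(U):U\in\ms{U}\}$ by definition, while $\ms{O}(f):\ms{O}(Y)\to\ms{O}(X)$ sends $U\mapsto f^{-1}(U)$ and therefore $\fc_{fr}(\ms{O}(f))(\ms{U})=\{f^{-1}(U):U\in\ms{U}\}$ as well; both functors being contravariant (and $\fc_{fr}$ covariant, so that $\fc_{fr}\circ\ms{O}$ is contravariant, matching $\fc$), the square
\[
\xymatrix{
\fc(Y)\ar[r]^-{\eta_Y}\ar[d]_{\fc(f)} & \fc_{fr}(\ms{O}(Y))\ar[d]^{\fc_{fr}(\ms{O}(f))}\\
\fc(X)\ar[r]_-{\eta_X} & \fc_{fr}(\ms{O}(X))
}
\]
commutes trivially. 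This gives the desired natural transformation, hence $(\h_{\fc},\h_{\fc_{fr}})$ satisfies $SBT_{\ms{O}}$.

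For the final assertion, Theorem~\ref{MainBT}(b) converts $SBT_{\ms{O}}$ into $BT_{\ms{O}}$ for the pair $(\h_{\fc},\h_{\fc_{fr}})$. Invoking Theorem~\ref{realization:fin-top:ent} (which gives $\hf=\h_{\fc}$) and the identity $h_{fr}=\h_{\fc_{fr}}$ from \eqref{realization:hfr}, the pair $(\hf,h_{fr})$ satisfies $BT_{\ms{O}}$. I do not foresee any real obstacle: the whole argument is essentially the observation that the definitions of $\fc(X)$ and $\fc_{fr}(\ms{O}(X))$ have been crafted to match, so the ``bridge'' is realized by the identity natural transformation, and the only thing to verify carefully is the bookkeeping of variances so that $\fc$ and $\fc_{fr}\circ\ms{O}$ are indeed simultaneously contravariant, as required by the setup of \S\ref{BTsec}.
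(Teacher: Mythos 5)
Your proposal is correct and follows exactly the paper's route: the paper's justification is precisely the observation that finite open covers of $X$ coincide with finite covers of the frame $\ms{O}(X)$ (with matching $\vee$, preorder, norm, and induced morphisms), so the identity natural transformation realizes $SBT_{\ms{O}}$, and the ``in particular'' clause follows from Theorem~\ref{MainBT}, Theorem~\ref{realization:fin-top:ent} and \eqref{realization:hfr}. You have merely written out in full the naturality and variance checks that the paper leaves implicit.
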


Consider the $T_0$-reflection $$r:\mathbf{Top}\to \mathbf{Top}_0,$$ where $\Top_0$ is the full subcategory of $\Top$ of $T_0$ spaces. 
If $rX$ is the $T_0$-reflection of a topological space $X$, then for every continuous selfmap $\f: X \to X$ the reflection $\overline\f: rX \to rX$ in $\mathbf{Top}_0$ has the same topological entropy as $\f$. We want to present this result from \cite{DK} as a Strong Bridge Theorem as follows.

\begin{Theorem}\label{reflection}
Consider $r:\mathbf{Top}\to \mathbf{Top}_0$. The pair $(\h_{\fc},\h_{\fc})$ satisfies $SBT_r$. In particular, $(\hf,\hf)$ satisfies $BT_r$.
\end{Theorem}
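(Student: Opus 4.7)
The plan is to construct an explicit natural transformation $\eta: \fc \to \fc\circ r$ whose components are uniform isomorphisms with coefficient $1$, and then invoke Theorem~\ref{MainBT}(b) to deduce $BT_r$ for $(\hf,\hf)$.

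First, I would recall the concrete description of the $T_0$-reflection. For $X\in \Top$ the relation of topological indistinguishability ($x\sim y$ iff $x$ and $y$ belong to the same open sets) is an equivalence relation, $rX = X/{\sim}$ carries the quotient topology, and $q_X: X\to rX$ is the quotient map, which is the reflection unit. The crucial fact is that every open set $U$ of $X$ is $\sim$-saturated (if $x\in U$ and $x\sim y$, then $y\in U$), so $q_X^{-1}(q_X(U))=U$, and conversely every open set of $rX$ has the form $q_X(U)$ for a (unique) open $U \in \ms{O}(X)$. In other words, $U\mapsto q_X(U)$ and $W \mapsto q_X^{-1}(W)$ are mutually inverse bijections $\ms{O}(X)\leftrightarrow\ms{O}(rX)$ preserving arbitrary unions and finite intersections.

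Then I would define $\eta_X: \fc(X) \to \fc(rX)$ by $\eta_X(\mathcal U)=\{q_X(U): U\in\mathcal U\}$. By the preceding bijection, $\eta_X$ is a bijection carrying $\vee$ to $\vee$ (it is a semilattice isomorphism) and preserving the refinement preorder $\prec$. Moreover, a subfamily of $\mathcal U$ covers $X$ if and only if the corresponding subfamily of $\eta_X(\mathcal U)$ covers $rX$ (again by saturation), so $N(\mathcal U)=N(\eta_X(\mathcal U))$ and consequently $v(\eta_X(\mathcal U))=v(\mathcal U)$. Hence each $\eta_X$ is a uniform isomorphism in $\Se^*$ (indeed an isomorphism in $\PSL^\dag$) with coefficient $1$.

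Next I would verify naturality of $\eta: \fc \to \fc\circ r$. For a continuous $f: X\to Y$ with induced $\bar f = r(f): rX\to rY$, the defining property $\bar f\circ q_X = q_Y\circ f$ gives, for every open $V\subseteq Y$, the identity $q_X^{-1}(\bar f^{-1}(q_Y(V)))=f^{-1}(q_Y^{-1}(q_Y(V)))=f^{-1}(V)$, which by the bijection above forces $\bar f^{-1}(q_Y(V))=q_X(f^{-1}(V))$. Applying this entry-wise to $\mathcal V\in\fc(Y)$ yields $\fc(\bar f)(\eta_Y(\mathcal V))=\eta_X(\fc(f)(\mathcal V))$, which is exactly the naturality square (taking into account that both $\fc$ and $\fc\circ r$ are contravariant). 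This establishes $SBT_r$ with coefficient $C=1$; the consequence $BT_r$ for $(\hf,\hf)$ then follows from Theorem~\ref{MainBT}(b) together with the identification $\hf=\h_{\fc}$ of Theorem~\ref{realization:fin-top:ent}. I do not foresee a serious obstacle: the only delicate point is the careful bookkeeping of directions in the naturality square caused by the contravariance of $\fc$, and this is handled by the adjunction identity $\bar f\circ q_X=q_Y\circ f$ together with the saturation of open sets.
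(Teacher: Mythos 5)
Your proposal is correct and follows essentially the same route as the paper: the paper takes $\eta_X=\fc(r_X)^{-1}$, i.e.\ the inverse of the preimage map $\fc(r_X):\fc(rX)\to\fc(X)$, which is exactly your map $\mathcal U\mapsto\{q_X(U):U\in\mathcal U\}$, and relies on the same saturation property of open sets under the $T_0$-quotient to see that it is an isomorphism in $\Se$. Your write-up merely makes explicit the bijection $\ms{O}(X)\leftrightarrow\ms{O}(rX)$, the norm preservation, and the naturality square, all of which the paper leaves implicit.
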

\begin{proof}
Let $\phi:X\to X$ in $\Top$. The reflection $r$ assigns to $X$ a surjective continuos map $r_X:X\to rX$. Then $\fc(r_X):\fc(rX)\to \fc(X)$ is an isomorphism in $\Se$ such that $\fc(r\phi)$ and $\fc(\phi)$ are conjugate by $\fc(r_X)^{-1}$. So it suffices to take $\eta_X=\fc(r_X)^{-1}$.
For the second part, $\hf=\h_{\fc}$ by Theorem~\ref{realization:fin-top:ent}.
\end{proof}

This theorem reduces the study of $\hf$ to the category of $T_0$-spaces. 
In contrast with Theorem~\ref{reflection}, one can show that the reflection $\Top \to \Top_1$, where $\newsym{category of $T_1$ topological spaces and continuous maps}{\Top_1}$ denotes the (full) subcategory of $\Top$ of $T_1$ topological spaces, strongly fails to preserve the topological entropy. 
For an example consider the topological space $X$ obtained from $\Z$, equipped with the discrete topology and an extra point $a$, so that $X = \Z \cup \{a\}$ has $a$ as an isolated dense point, i.e., $\{\{a,n\}: n  \in \Z\}$ is a base of the topology of $X$. Clearly, $X$ is a compact $T_0$-space whose $T_1$-reflection is a singleton, as $\{a\}$ is dense in $X$. Let $\f: X \to X$ be defined by $\f(a) = a$ and $\f(n) = n + 1$ for all $n \in \Z$. Then $\f$ is a homeomorphism having $\Z$ as a closed invariant subspace. Since $\hf(\phi\restriction_\Z)=\infty$ (see \cite{Ellis,Hof}), we have that $\hf(\f) =\infty$.

\smallskip
For convenience we give the following obvious corollary of the theorem about the restriction
\begin{equation}\label{TGG}
r':\mathbf{CTG}\to \mathbf{CG}
\end{equation}
of the $T_0$-reflection $r:\mathbf{Top}\to \mathbf{Top}_0$ to the subcategory $\newsym{category of compact topological groups and continuous homomorphisms}{\mathbf{CTG}}$ of  compact topological groups (recall that $T_0$-topological groups are Hausdorff): 

\begin{Corollary}\label{reflection:corollary} 
For the functor \eqref{TGG} the pair $(\h_{\fc},\h_{\fc})$ satisfies $SBT_{r'}$. In particular, $(\hf,\hf)$ satisfies $BT_{r'}$. Hence, $(h_{top},h_{top})$ satisfies $BT_{r'}$.
\end{Corollary}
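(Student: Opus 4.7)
The plan is to derive this corollary almost immediately from Theorem~\ref{reflection} by restriction to the appropriate subcategory, and then use the identification of $\hf$ with $h_{top}$ on compact spaces to obtain the last statement.

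First I would verify that $r'$ is well-defined, i.e.\ that the $T_0$-reflection $rK$ of a compact topological group $K$ is indeed an object of $\mathbf{CG}$. This is standard: $rK$ is the quotient of $K$ by the closure of $\{1\}$, which is a closed normal subgroup, so $rK$ is a Hausdorff compact topological group, and the reflection morphism $r_K:K\to rK$ is a continuous surjective homomorphism; functoriality of $r$ carries continuous group homomorphisms to continuous group homomorphisms, so $r'$ is nothing but the restriction of the $T_0$-reflection functor $r:\Top\to\Top_0$ to the (non-full) subcategory $\mathbf{CTG}$, with values in $\mathbf{CG}$.

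Next I would invoke Theorem~\ref{reflection}, which provides a natural transformation $\eta:\fc\to\fc\circ r$ in the form $\eta_X=\fc(r_X)^{-1}$, each $\eta_X$ being an isomorphism in $\Se$ (in particular a uniform isomorphism with coefficient $1$), witnessing $SBT_r$ for the pair $(\h_{\fc},\h_{\fc})$. Restricting $\eta$ to the subcategory $\mathbf{CTG}$ yields a natural transformation between $\fc$ and $\fc\circ r'$ on $\mathbf{CTG}$, with the same fibrewise property. This is exactly the data required by Definition~\ref{SBT} to conclude that $(\h_{\fc},\h_{\fc})$ satisfies $SBT_{r'}$. Applying Theorem~\ref{MainBT}(b) then gives $BT_{r'}$ for the pair $(\h_{\fc},\h_{\fc})$, which by Theorem~\ref{realization:fin-top:ent} is the same as $BT_{r'}$ for $(\hf,\hf)$.

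Finally, for the last assertion I would use that the objects of $\mathbf{CTG}$ and $\mathbf{CG}$ are compact; by Remark~\ref{fcrem} (or equivalently Example~\ref{remex}(a)) the entropies $\hf$ and $h_{top}$ coincide on continuous selfmaps of compact spaces. Thus for every continuous endomorphism $\phi:K\to K$ in $\mathbf{CTG}$, $\hf(\phi)=h_{top}(\phi)$ and $\hf(r'\phi)=h_{top}(r'\phi)$, so the previous step immediately yields $h_{top}(r'\phi)=h_{top}(\phi)$, i.e.\ $(h_{top},h_{top})$ satisfies $BT_{r'}$. There is no real obstacle here; the only point that needs a brief check is that restricting the natural transformation $\eta$ in Theorem~\ref{reflection} to $\mathbf{CTG}$ stays within $\Se$ (respectively $\PSL^\dag$) with the same coefficient $1$, which is automatic since both the morphisms and the uniform isomorphism property are preserved by restriction to a subcategory.
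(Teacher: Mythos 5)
Your proposal is correct and follows essentially the same route as the paper, which presents this as an ``obvious corollary'' obtained by restricting Theorem~\ref{reflection} (and its natural transformation $\eta_X=\fc(r_X)^{-1}$) to the subcategory $\mathbf{CTG}$, with the last assertion coming from Remark~\ref{fcrem} (equivalently Example~\ref{remex}(a)). The extra checks you include (that $rK=K/\overline{\{1\}}$ is a compact Hausdorff group and that restriction preserves the uniform-isomorphism data with coefficient $1$) are exactly the points the paper leaves implicit.
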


The last assertion of the above corollary follows from Remark~\ref{fcrem} (see also Example~\ref{remex}(a)). It shows that it makes sense to consider the topological entropy for compact groups that are Hausdorff (indeed, in this paper compact groups are usually intended to be Hausdorff). 

\medskip
Denote by $\mathbf{TAG}$ the full subcategory of $\mathbf{AG}$ whose objects are all torsion abelian groups. The coreflection 
\begin{equation}\label{funtor:t}
t:\mathbf{AG}\to \mathbf{TAG}
\end{equation}
 that assigns to every abelian group $G$ its torsion subgroup $t(G)$ preserves the algebraic entropy $\ent$ in the sense of the first equality in \eqref{H---}, since $\sub(G)=\sub(t(G))$. 

\begin{Theorem} 
The pair $(\h_\sub,\h_\sub)$ satisfies $SBT_t$, where $t$ is as in \eqref{funtor:t}. In particular, $(\ent,\ent)$ satisfies $BT_t$.
\end{Theorem}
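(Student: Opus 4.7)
The plan is to exhibit a natural transformation $\eta: \sub \to \sub\circ t$ whose components are isomorphisms in $\Se$ (i.e., uniform isomorphisms of coefficient $1$), and then invoke the general machinery from \S\ref{BTsec}.

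First I would observe that for every abelian group $G$, every finite subgroup $F$ of $G$ is necessarily contained in $t(G)$, so as sets
$$\mathcal F(G)=\mathcal F(t(G)),$$
and the norm $v(F)=\log|F|$ is the same whether $F$ is viewed inside $G$ or inside $t(G)$. Hence the identity map
$$\eta_G=\mathrm{id}:\sub(G)\to\sub(t(G))$$
is a norm-preserving semilattice isomorphism, i.e., an isomorphism in $\SL^\dag\subseteq\Se$, so in particular a uniform isomorphism of coefficient $1$.

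Next I would check naturality. Given a homomorphism $\phi:G\to H$ in $\mathbf{AG}$, the coreflection yields $t(\phi)=\phi\restriction_{t(G)}:t(G)\to t(H)$. For any $F\in\mathcal F(G)\subseteq t(G)$,
$$\sub(t(\phi))(\eta_G(F))=t(\phi)(F)=\phi(F)=\sub(\phi)(F)=\eta_H(\sub(\phi)(F)),$$
so the square
$$\xymatrix{\sub(G)\ar[r]^{\sub(\phi)}\ar[d]_{\eta_G}&\sub(H)\ar[d]^{\eta_H}\\ \sub(t(G))\ar[r]_{\sub(t(\phi))}&\sub(t(H))}$$
commutes. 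This shows $\eta$ is a natural transformation, and since each $\eta_G$ is a uniform isomorphism of coefficient $1$, the pair $(\h_\sub,\h_\sub)$ satisfies $SBT_t$.

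For the final assertion, apply Theorem~\ref{MainBT}: $SBT_t$ implies $BT_t$, so $\h_\sub(t(\phi))=\h_\sub(\phi)$ for every endomorphism $\phi$ in $\mathbf{AG}$. By Theorem~\ref{halg-sub}, $\h_\sub=\ent$, hence $(\ent,\ent)$ satisfies $BT_t$. There is no real obstacle here: the whole argument rests on the trivial identification $\mathcal F(G)=\mathcal F(t(G))$, which makes $\eta$ essentially tautological; the only point worth attention is verifying that this identification is compatible with the functors (both on objects, where norms coincide, and on morphisms, where $\phi$ and $\phi\restriction_{t(G)}$ agree on finite subgroups).
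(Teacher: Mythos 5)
Your proof is correct and is essentially the paper's own argument: the paper takes $\eta_G=\sub(t_G)^{-1}$ where $t_G:t(G)\to G$ is the coreflection embedding, which on finite subgroups is exactly the tautological identification $\mathcal F(G)=\mathcal F(t(G))$ that you use. Your explicit naturality check is a welcome detail the paper leaves implicit, but the route is the same.
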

\begin{proof}
Let $\phi:G\to G$ in $\mathbf{AG}$. The coreflection $t$ assigns to $G$ the embedding $t_G:t(G)\to G$. Then $\sub(t_G):\sub(t(G))\to \sub(G)$ is an isomorphism in $\Se$ such that $\sub(t\phi)$ and $\sub(\phi)$ are conjugate by $\sub(t_G)^{-1}$. So it suffices to take $\eta_G=\sub(t_G)^{-1}$.
For the second part,  $\ent=\h_{\sub}$ by Theorem~\ref{halg-sub}.
\end{proof}

Let \newsym{category of residually finite abelian groups and group homomorphisms}{$\mathbf{RFAG}$} be the full subcategory of $\mathbf{AG}$ with objects all residually finite groups. For an abelian group $G$ the first Ulm subgroup is 
$$
G^1=\bigcap_{m>0} mG.
$$
 The assignment $G\mapsto G/G^1$ gives a  reflection $$r:\mathbf{AG}\to\mathbf{RFAG}.$$ Since for every abelian group $G$ the canonical homomorphism $q: G \to G/G^1$ induces an isomorphism $\mathfrak{sub}^\star(q): \mathfrak{sub}^\star(G/G^1) \to \mathfrak{sub}^\star(G)$, we have the following theorem (its proof is analogous to that of Theorem~\ref{reflection}).

\begin{Theorem}
Consider $r:\mathbf{AG}\to\mathbf{RFAG}$. The pair $(\h_{\sub^\star},\h_{\sub^\star})$ satisfies $SBT_r$. In particular, $(\ent^\star,\ent^\star)$ satisfies $BT_r$.
\end{Theorem}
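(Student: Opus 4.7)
The plan is to mimic the strategy used for the $T_0$-reflection in Theorem~\ref{reflection} and the torsion-coreflection: exhibit a natural transformation $\eta: \sub^\star \to \sub^\star\circ r$ whose components $\eta_G$ are uniform isomorphisms with coefficient $1$, and then invoke Theorem~\ref{MainBT} together with Theorem~\ref{aent}.

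The key algebraic observation is that every $H \in \mathcal C(G)$ satisfies $G^1 \subseteq H$. Indeed, if $n = [G:H]$, then $nG \subseteq H$ (as $G/H$ has exponent dividing $n$), and by definition $G^1 = \bigcap_{m>0} mG \subseteq nG \subseteq H$. Consequently, the canonical projection $q_G: G \to G/G^1$ gives, via the correspondence theorem, a bijection
$$\mathcal C(G) \longleftrightarrow \mathcal C(G/G^1),\qquad H\mapsto H/G^1,\quad N\mapsto q_G^{-1}(N),$$
which preserves the semilattice operation $\cap$, the partial order $\supseteq$, and the norm (since $[G:H] = [G/G^1 : H/G^1]$). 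Therefore $\sub^\star(q_G): \sub^\star(G/G^1)\to \sub^\star(G)$, which is exactly $N\mapsto q_G^{-1}(N)$, is an isomorphism in $\SL^\dag$ (i.e., a uniform isomorphism in $\Se$ with coefficient $1$). Define
$$\eta_G := \sub^\star(q_G)^{-1} : \sub^\star(G) \longrightarrow \sub^\star(G/G^1) = \sub^\star(rG),\qquad H\mapsto H/G^1.$$

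Naturality with respect to a morphism $\phi: G\to H$ in $\mathbf{AG}$ follows from the universal property of the reflection, which dictates that $r\phi: G/G^1\to H/H^1$ satisfies $r\phi\circ q_G = q_H\circ\phi$. Given $M\in\mathcal C(H)$, using $H^1\subseteq M$ and $G^1\subseteq \phi^{-1}(M)$ one computes
$$(r\phi)^{-1}(M/H^1) = \phi^{-1}(q_H^{-1}(M/H^1))/G^1 = \phi^{-1}(M)/G^1,$$
so $\eta_G\circ \sub^\star(\phi) = \sub^\star(r\phi)\circ \eta_H$ (taking into account the contravariance of $\sub^\star$). Hence $\eta$ is a natural transformation whose components are uniform isomorphisms with coefficient $1$, which is precisely $SBT_r$ for $(\h_{\sub^\star},\h_{\sub^\star})$. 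Applying Theorem~\ref{MainBT}(b) gives $BT_r$ for $(\h_{\sub^\star},\h_{\sub^\star})$, and since $\ent^\star = \h_{\sub^\star}$ by Theorem~\ref{aent}, the pair $(\ent^\star,\ent^\star)$ satisfies $BT_r$ as well.

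The only nontrivial step is the containment $G^1\subseteq H$ for every finite-index subgroup $H$, which is exactly what makes $r$ the reflection into residually finite abelian groups and ensures that $q_G$ does not collapse any element of $\mathcal C(G)$. Once this is in place the rest is purely formal: preservation of the norm is obvious from the index equality, and naturality is a routine diagram chase relying on $r\phi\circ q_G = q_H\circ\phi$.
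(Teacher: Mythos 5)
Your proposal is correct and follows essentially the same route as the paper: the paper's (very condensed) proof likewise rests on the observation that $q_G: G \to G/G^1$ induces an isomorphism $\sub^\star(q_G):\sub^\star(G/G^1)\to\sub^\star(G)$ and then proceeds exactly as in the $T_0$-reflection case by taking $\eta_G=\sub^\star(q_G)^{-1}$. You have merely supplied the details the paper leaves implicit, namely the containment $G^1\subseteq H$ for every $H\in\sC(G)$ and the naturality check.
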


\subsection{Bridge Theorems for the algebraic entropy and the topological entropy}\label{BTat}

For a locally compact abelian group $G$ the Pontryagin dual $\widehat G$ is the group of all continuous character $\chi:G\to \T$, endowed with the compact-open topology; moreover, for a continuous endomorphism $\phi:G\to G$, its dual endomorphism $\widehat\phi:\widehat G\to\widehat G$ is continuous (see \cite{Po,HR}). This gives the Pontryagin duality functor $$\ \widehat{}: \mathbf{LCA} \to \mathbf{LCA},$$ where $\mathbf{LCA}$ is the category of all locally compact abelian groups and all continuous homomorphisms. The Pontryagin duality functor is invertible and coincides with its inverse (up to natural equivalence), by Pontryagin-van Kampen duality theorem.

For a subset $A$ of $G$, the annihilator of $A$ in $\widehat G$ is $$A^\perp=\{\chi\in\widehat G:\chi(A)=0\},$$ while for a subset $B$ of $\widehat G$, the annihilator of $B$ in $G$ is $$B^\top=\{x\in G:\chi(x)=0\ \text{for every }\chi\in B\}.$$ Clearly, ${}^\perp$ reverses the inclusions.

\medskip
The following Bridge Theorem was proved in \cite{DGS}. We consider the restrictions of the Pontryagin duality functor 
$$\ \widehat{}: \AG \to \CAG\quad\text{and}\quad \widehat{}: \CAG \to \AG,$$
which are one inverse to each other.

\begin{Theorem}
\begin{itemize}
\item[(a)] Consider $\ \widehat{}:\AG \to \CAG$.
The pair $(\h_{\mathfrak{sub}^\star},\h_{\mathfrak{sub}})$ satisfies $SBT_{\;\widehat{}}$. In particular, the pair $(\ent^\star,\ent)$ satisfies $BT_{\;\widehat{}}$.
\item[(b)] Consider $\ \widehat{}:\CAG \to \AG$.
The pair $(\h_{\mathfrak{sub}},\h_{\mathfrak{sub}^\star})$ satisfies $SBT_{\;\widehat{}}$. In particular, the pair $(\ent,\ent^\star)$ satisfies $BT_{\;\widehat{}}$.
\end{itemize}
\end{Theorem}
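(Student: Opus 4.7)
The plan is to prove part (a) directly by exhibiting a natural transformation $\eta:\sub^\star\to\sub\circ\,\widehat{\;}$ whose components are uniform isomorphisms in $\SL^\dag$ with coefficient $1$, and then to deduce part (b) from (a) by using the self-duality of the Pontryagin functor together with Remark~\ref{remark**}(b) and Lemma~\ref{neSBT}.

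For each $G\in\AG$, define $\eta_G:\sC(G)\to\sF(\widehat G)$ by $\eta_G(N)=N^\perp$. For $N$ of finite index in $G$, Pontryagin duality applied to the finite abelian group $G/N$ identifies $N^\perp$ with $\widehat{G/N}$, so $N^\perp\in\sF(\widehat G)$ and $|N^\perp|=[G:N]$; i.e., $\eta_G$ preserves the norm. The assignment $N\mapsto N^\perp$ is a bijection $\sC(G)\to\sF(\widehat G)$ with inverse $F\mapsto F^\top$; it transforms $\cap$ into $+$ via the identity $(N_1\cap N_2)^\perp=N_1^\perp+N_2^\perp$, and it reverses inclusions, which under the preorders $\supseteq$ on $\sC(G)$ and $\subseteq$ on $\sF(\widehat G)$ is exactly the required monotonicity. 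Consequently, each $\eta_G$ is a uniform isomorphism in $\SL^\dag$ with coefficient $1$.

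The main step is the naturality of $\eta$, i.e., the identity
\[
(\phi^{-1}(N))^\perp=\widehat\phi(N^\perp)
\]
for every $\phi:G\to G'$ in $\AG$ and every $N\in\sC(G')$. The inclusion $\widehat\phi(N^\perp)\subseteq(\phi^{-1}(N))^\perp$ is immediate, since $\widehat\phi(\chi)=\chi\circ\phi$ vanishes on $\phi^{-1}(N)$ whenever $\chi$ vanishes on $N$. The reverse inclusion is the only step that I anticipate to be genuinely subtle: given $\psi\in(\phi^{-1}(N))^\perp$, one factors $\psi$ through the finite quotient $G/\phi^{-1}(N)$, uses the injection $G/\phi^{-1}(N)\hookrightarrow G'/N$ induced by $\phi$, and invokes the injectivity of $\T$ in $\AG$ (divisibility) to extend the resulting character on the image to a character of $G'/N$; pulling this back to $G'$ produces $\chi\in N^\perp$ with $\chi\circ\phi=\psi$.

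With naturality established, $(\h_{\sub^\star},\h_{\sub})$ satisfies $SBT_{\;\widehat{}\;}$ in the sense of Definition~\ref{SBT}, and then Theorem~\ref{MainBT}(b), combined with Theorems~\ref{aent} and~\ref{halg-sub}, yields $BT_{\;\widehat{}\;}$ for $(\ent^\star,\ent)$. For part (b), Remark~\ref{remark**}(b) applied to (a) gives that $(\h_{\sub},\h_{\sub^\star})$ satisfies $SBT_{\;\widehat{}^{-1}\;}$, where $\widehat{\;}^{-1}$ denotes a quasi-inverse of $\widehat{\;}:\AG\to\CAG$; since Pontryagin--van Kampen duality makes the functor $\widehat{\;}:\CAG\to\AG$ naturally equivalent to this quasi-inverse, Lemma~\ref{neSBT}(b) transports this to $SBT_{\;\widehat{}\;}$, and Theorem~\ref{MainBT}(b) then delivers $BT_{\;\widehat{}\;}$ for $(\ent,\ent^\star)$.
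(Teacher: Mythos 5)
Your proof is correct and follows essentially the same route as the paper: the natural transformation is the annihilator map $\eta_G(N)=N^\perp$, which is a norm-preserving semilattice isomorphism $\sC(G)\to\sF(\widehat G)$, and part (b) is deduced from (a) via Remark~\ref{remark**}(b). You simply make explicit some details the paper leaves implicit, namely the verification of naturality $(\phi^{-1}(N))^\perp=\widehat\phi(N^\perp)$ using the divisibility of $\T$, and the use of Lemma~\ref{neSBT}(b) to pass from the quasi-inverse to the Pontryagin functor $\widehat{\;}:\CAG\to\AG$ itself.
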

\begin{proof}
(a) Let $G$ be an abelian group and $\f: G\to G$ an endomorphism.
The semilattice isomorphism $\mathcal C(G) \to \mathcal F(\widehat G) $ given by $N \mapsto N^\perp$ preserves the norms, so it is an isomorphism in $\Se$, and it suffices to take $\eta={}^\perp$. 
\begin{equation*}
\xymatrix@R=10pt@C=40pt{
(G,\phi)\ar[dd]_{\widehat{}\;} \ar[r]^{\mathfrak{sub}^\star}\ar@/^2.5pc/[rrd]^{\ent^\star}& (\mathcal C(G),\mathfrak{sub}^\star(\phi)) \ar[dr]^{h_\Se}\ar[dd]^{\perp} & \\
 & &  \mathbb R_+ \\
(\widehat G,\widehat \phi) \ar[r]^{\mathfrak{sub}}\ar@/_2.5pc/[rru]_{\ent} & (\mathcal F(\widehat G),\mathfrak{sub}(\widehat\phi))  \ar[ur]_{h_\Se}
}
\end{equation*}
For the second statement, note that $\ent^\star=\h_{\mathfrak{sub}^\star}$ and $\ent=\h_{\mathfrak{sub}}$ by Theorems~\ref{aent} and~\ref{halg-sub}.

(b) Follows from (a) and Remark~\ref{remark**}(b). 
\end{proof}

The following result, covering (and inspired by) Weiss' Bridge Theorem from \cite{W}, is our leading example. The proof follows the idea of the original one. The restrictions of the Pontryagin duality functor 
$$\widehat{}:\mathbf{TAG} \to \mathbf{TdCAG}\quad\text{and}\quad\widehat{}:\mathbf{TdCAG} \to \mathbf{TAG}$$
are inverse to each other.

\begin{Theorem}\label{WeissBT}
Consider $\ \widehat{}:\mathbf{TAG} \to \mathbf{TdCAG}$. The pair $(\h_{\sub},\h_{\cov})$ satisfies $SBT^w_{\;\widehat{}\;}$.
In particular, the pair $(\ent,h_{top})$ satisfies $BT_{\;\widehat{}\;}$, and for $\ \widehat{}:\mathbf{TdCAG} \to \mathbf{TAG}$, the pair $(\ent,h_{top})$ satisfies $BT_{\;\widehat{}\;}$.
\end{Theorem}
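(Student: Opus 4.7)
The plan is to construct a natural transformation $\eta:\sub\Rightarrow\cov\circ\,\widehat{\,}\,$ that witnesses $SBT^w_{\,\widehat{\,}\,}$: for each $G\in\mathbf{TAG}$, the component $\eta_G$ is to be a weak isomorphism with coefficient $1$. The natural candidate is
$$\eta_G:\mathcal F(G)\to\cov(\widehat G),\qquad F\mapsto \mathcal U_{F^\perp},$$
where $F^\perp=\{\chi\in\widehat G:\chi(F)=0\}$ is the annihilator of $F$ in $\widehat G$ and $\mathcal U_{F^\perp}=\{\chi+F^\perp:\chi\in\widehat G\}$ is the associated coset cover. For a finite subgroup $F$ of $G$, Pontryagin duality identifies $\widehat G/F^\perp$ with $\widehat F$, so $F^\perp$ is an open subgroup of $\widehat G$ of finite index $[\widehat G:F^\perp]=|F|$.

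First I would verify that each $\eta_G$ is a norm-preserving semilattice isomorphism onto its image, i.e., a uniform isomorphism with coefficient $1$. It is a semilattice homomorphism because $(F_1+F_2)^\perp=F_1^\perp\cap F_2^\perp$, which yields $\mathcal U_{(F_1+F_2)^\perp}=\mathcal U_{F_1^\perp}\vee \mathcal U_{F_2^\perp}$. Injectivity follows from $F=F^{\perp\top}$. Norm-preservation is the identity $v(\mathcal U_{F^\perp})=\log N(\mathcal U_{F^\perp})=\log[\widehat G:F^\perp]=\log|F|=v(F)$. Monotonicity from $\subseteq$ to $\prec$ is the standard fact $F_1\subseteq F_2\Rightarrow F_2^\perp\subseteq F_1^\perp\Rightarrow\mathcal U_{F_1^\perp}\prec\mathcal U_{F_2^\perp}$.

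Next I would verify cofinality of $\eta_G(\mathcal F(G))$ in $\cov(\widehat G)$. Since $G$ is torsion, $\widehat G$ is a compact totally disconnected abelian group, hence lies in $\mathbf{TdCAG}\subseteq\mathbf{LPG}$. By compactness of $\widehat G$, every $\mathcal U\in\cov(\widehat G)$ admits a finite subcover refining it, and by Claim~\ref{Claim18March} applied to $\widehat G$, this finite subcover is in turn refined by some $\mathcal U_V\in\fc_s(\widehat G)$, with $V$ an open subgroup of $\widehat G$ of finite index. Setting $F:=V^\top$, Pontryagin duality yields that $F$ is a finite subgroup of $G$ with $F^\perp=V$, so $\mathcal U_V=\eta_G(F)\in\eta_G(\mathcal F(G))$ and $\mathcal U\prec\eta_G(F)$, as required.

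Finally, naturality is the identity $\cov(\widehat\phi)\circ\eta_G=\eta_H\circ\sub(\phi)$ for $\phi:G\to H$ in $\mathbf{TAG}$; evaluating at $F\in\mathcal F(G)$ reduces it to the Pontryagin-duality identity $\phi(F)^\perp=\widehat\phi^{-1}(F^\perp)$, which is immediate from the definition of $\widehat\phi$. The main technical obstacle is that $\widehat\phi^{-1}(\mathcal U_{F^\perp})$ may contain a spurious $\emptyset$-coset when $\widehat\phi(\widehat H)$ misses some coset of $F^\perp$, whereas $\mathcal U_{\phi(F)^\perp}$ does not; however the two covers have identical non-empty members and identical norm, so under the paper's convention on $\cov$ (which admits $\emptyset$ and identifies $\prec$-equivalent covers in the pre-semilattice) they represent the same element of $\cov(\widehat H)$. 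Once naturality is verified, $\eta$ yields $SBT^w_{\,\widehat{\,}\,}$ for $(\h_\sub,\h_\cov)$, so Theorem~\ref{MainBT} gives $BT_{\,\widehat{\,}\,}$ for the same pair; Theorems~\ref{halg-sub} and~\ref{realization:top:ent} identify this pair with $(\ent,h_{top})$, giving the first particular statement. The companion Bridge Theorem for $\,\widehat{\,}:\mathbf{TdCAG}\to\mathbf{TAG}$ then follows from Remark~\ref{remark**}(a), using that Pontryagin duality is its own inverse (up to natural equivalence).
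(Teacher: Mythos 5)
Your proposal is correct and follows essentially the same route as the paper: the paper factors your $\eta_G$ as the norm-preserving annihilator isomorphism $\mathcal F(G)\to\sub_o^\star(\widehat G)$, $F\mapsto F^\perp$, followed by the weak isomorphism $V\mapsto\mathcal U_V$ whose cofinality is exactly Claim~\ref{Claim18March} (together with the cofinality of $\fc$ in $\cov$ for compact spaces), and then invokes Theorem~\ref{MainBT}, Theorems~\ref{halg-sub} and~\ref{realization:top:ent}, and Remark~\ref{remark**} just as you do. Your explicit verification of naturality (including the harmless $\emptyset$-coset issue) and of the monotonicity/cofinality details only makes explicit what the paper leaves implicit.
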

\begin{proof}
Let $\f:G\to G$ be an endomorphism of a torsion abelian group $G$. Then $\widehat G$ is a totally disconnected compact abelian group and $\widehat \f: \widehat G \to \widehat G$ a continuous endomorphism. 
The semilattice isomorphism $\mathcal F(G) \to \mathfrak{sub}_o^\star(\widehat G) $ given by $N \mapsto N^\perp$ preserves the norms, so it is an isomorphism in $\Se$. 
By Claim~\ref{Claim18March}, there exists a weak isomorphism $\iota_G:\mathfrak{sub}^\star_o(\widehat G)\to \mathfrak{cov}(\widehat G)$ with coefficient $1$.
Therefore, $\eta_G=\iota_G \circ \bot : \mathcal F(G) \to \mathfrak{cov}(\widehat G)$ is a weak isomorphism with coefficient $1$.
\begin{equation*}\label{W}
\xymatrix@R=25pt@C=40pt
{(G,\phi)\ar[dd]_{\widehat{}\;}\ar[r]^{\mathfrak{sub}}\ar@/^3.3pc/[rrd]^{\mathbf{h}_{\mathfrak{sub}}= \ent}&(\sF(G),\mathfrak{sub}(\phi))\ar[d]_{\bot}\ar[dr]^{h_\Se}&\\
& (\mathfrak{sub}^\star_o(\widehat G),\mathfrak{sub}^\star_o(\widehat \phi)) \ar[d]_{\iota_G}&\R_+	\\
(\widehat G, \widehat \phi)\ar[r]_{\mathfrak{cov}}\ar@/_3.3pc/[rru]_{\mathbf{h}_{\mathfrak{cov}} = h_{top}}&(\cov(\widehat  G),\mathfrak{cov}(\widehat \phi)) \ar[ur]_{h_\Se} &}
\end{equation*}
The second statement follows from the fact that $\ent=\h_{\sub}$ and $h_{top}=\h_{\cov}$ by Theorems~\ref{halg-sub} and~\ref{realization:top:ent}, and from Remark~\ref{remark**}(b).
\end{proof}

\begin{question}
Does the pair $(\h_\cov,\h_\sub)$ satisfy $SBT_{\;\widehat{}\;}$?
\end{question}

The conclusion of Theorem~\ref{WeissBT} that the pair $(\ent,h_{top})$ satisfies $BT_{\;\widehat{}\;}$ can be obtained in an easier way by chosing a different pair of functors.
This option, exploited in Theorem~\ref{WBT}, has also the advantage of producing a Strong$^w$ Bridge Theorem (consequently, also a Bridge Theorem) in both directions $\ \widehat{}:\mathbf{TAG} \to \mathbf{TdCAG}$ and $\ \widehat{}:\mathbf{TdCAG} \to \mathbf{TAG}$, whereas Theorem~\ref{WeissBT} provides a Strong$^w$ Bridge Theorem only for the first functor. 

\begin{Theorem}\label{WBT}
\begin{itemize}
\item[(a)] Consider $\ \widehat{}:\mathbf{TAG} \to \mathbf{TdCAG}$. The pair $(\h_{\sub},\h_{\mathfrak{sub}_o^\star})$ satisfies $SBT_{\;\widehat{}\;}$.
In particular, the pair $(\ent,h_{top})$ satisfies $BT_{\;\widehat{}\;}$.
\item[(b)] Consider $\ \widehat{}:\mathbf{TdCAG} \to \mathbf{TAG}$. The pair $(\h_{\mathfrak{sub}_o^\star},\h_{\sub})$ satisfies $SBT_{\;\widehat{}\;}$.
In particular, the pair $(h_{top},\ent)$ satisfies $BT_{\;\widehat{}\;}$ with $C=1$.
\end{itemize}
\end{Theorem}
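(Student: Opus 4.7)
My plan is to construct, in both directions, an explicit natural equivalence built from the Pontryagin annihilator and then quote Theorem~\ref{MainBT} to deduce the usual Bridge Theorem at the numerical level.

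For part~(a), with $\widehat{}:\mathbf{TAG}\to\mathbf{TdCAG}$, I would set
\[
\eta_G:\sub(G)=\mathcal F(G)\longrightarrow\sub_o^\star(\widehat G)=\mathcal V_{\widehat G}(1),\qquad F\mapsto F^\perp.
\]
The key facts to verify are that $\eta_G$ is a well-defined uniform isomorphism in $\Se$ with coefficient $1$, and that the assignment $G\mapsto \eta_G$ is natural in $G$. Well-definedness is Pontryagin duality: for $G$ torsion abelian, finite subgroups of $G$ correspond bijectively, via annihilators, to open subgroups of finite index of $\widehat G$, with $\widehat G/F^\perp\cong\widehat F$, hence $[\widehat G:F^\perp]=|F|$, so $v(F^\perp)=\log[\widehat G:F^\perp]=\log|F|=v(F)$. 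The semilattice structure is preserved since $\{0\}^\perp=\widehat G$ and $(F_1+F_2)^\perp=F_1^\perp\cap F_2^\perp$, which matches the operations ``$+$ with zero $\{0\}$'' on $\sub(G)$ and ``$\cap$ with zero $\widehat G$'' on $\sub_o^\star(\widehat G)$. Naturality amounts to the identity $\phi(F)^\perp=\widehat\phi^{-1}(F^\perp)$, which is immediate from the definition $\widehat\phi(\chi)=\chi\circ\phi$.

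For part~(b), with $\widehat{}:\mathbf{TdCAG}\to\mathbf{TAG}$, I would symmetrically define
\[
\eta'_K:\sub_o^\star(K)=\mathcal V_K(1)\longrightarrow\sub(\widehat K)=\mathcal F(\widehat K),\qquad V\mapsto V^\perp,
\]
noting that $V^\perp\cong\widehat{K/V}$ is finite precisely because $V$ is open of finite index, and $|V^\perp|=[K:V]$, giving the coefficient-$1$ uniform isomorphism. The semilattice and naturality checks are dual to those of (a). Once these verifications are in hand, both constructions give $SBT_{\;\widehat{}\;}$ by Definition~\ref{SBT}. Applying Theorem~\ref{MainBT}(b), we get $BT_{\;\widehat{}\;}$ for the pairs $(\h_\sub,\h_{\sub_o^\star})$ and $(\h_{\sub_o^\star},\h_\sub)$ respectively. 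The ``in particular'' assertions then follow from the identifications $\ent=\h_\sub$ (Theorem~\ref{halg-sub}) and $h_{top}=\h_{\sub_o^\star}$ on totally disconnected compact groups (Remark~\ref{Rem18March}), together with Remark~\ref{remark**}(b), which ensures that (b) really is the ``inverse'' statement to (a) along the Pontryagin duality functor.

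There is no deep obstacle to this plan; the main thing to be careful about is the direction of the preorders and the precise semilattice operations involved, so that the annihilator is confirmed to be order-and-operation preserving between $(\mathcal F(G),+,\subseteq)$ and $(\mathcal V_{\widehat G}(1),\cap,\supseteq)$ (rather than order-reversing in an unhelpful sense). Once this bookkeeping is done, the argument reduces to standard Pontryagin duality together with the elementary isomorphism $\widehat G/F^\perp\cong\widehat F$.
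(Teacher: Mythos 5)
Your proposal is correct and follows essentially the same route as the paper: the annihilator map $F\mapsto F^\perp$ is taken as the natural transformation $\eta$, checked to be a norm-preserving semilattice isomorphism (hence a uniform isomorphism with coefficient $1$), and the numerical Bridge Theorem is then deduced via Theorem~\ref{MainBT} together with $\ent=\h_\sub$ and $h_{top}=\h_{\sub_o^\star}$. The only cosmetic difference is that for part~(b) the paper simply invokes Remark~\ref{remark**}(b) to invert part~(a), whereas you also spell out the symmetric construction $V\mapsto V^\perp$ explicitly; both are fine.
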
 
\begin{proof}
(a) Let $G$ be a torsion abelian group and $\f: G\to G$ an endomorphism. The semilattice isomorphism $\mathcal F(G)\to \mathfrak{sub}_o^\star(\widehat G)$ given by $N \mapsto N^\perp$ preserves the norms, so it is an isomorphism in $\Se$. Hence it suffices to take $\eta={}^\perp$.
\begin{equation*}
\xymatrix@R=10pt@C=40pt{
(G,\phi)\ar[dd]_{\widehat{}\;} \ar[r]^{\mathfrak{sub}}\ar@/^2.5pc/[rrd]^{\ent}& (\mathcal F(G),\mathfrak{sub}(\phi)) \ar[dd]_{\perp}\ar[dr]^{h_\Se} & \\
 & &  \mathbb R_+ \\
(\widehat G,\widehat \phi) \ar[r]^{\mathfrak{sub}_o^\star}\ar@/_2.5pc/[rru]_{h_{top}} & (\mathfrak{sub}^\star_o(\widehat G),\mathfrak{sub}^\star_o(\widehat\phi)) \ar[ur]_{h_\Se}
}
\end{equation*}
For the second statement, note that $\ent=\h_{\mathfrak{sub}}$ and $h_{top}=\h_{\mathfrak{sub}^\star_o}$ by Theorems~\ref{halg-sub} and~\ref{htopsubo}. 

(b) Follows from (a) in view of Remark~\ref{remark**}.
\end{proof}

The general Bridge Theorem recalled in Theorem~\ref{BT1} can be stated as follows.

\begin{Theorem}\label{BTgen} 
For the functor $\ \widehat{}: \AG \to \CAG$ and its inverse $\ \widehat{}: \CAG \to \AG$, the pairs $(h_{alg},h_{top})$ and $(h_{top},h_{alg})$ satisfy $BT_{\;\widehat{}\;}$.
\end{Theorem}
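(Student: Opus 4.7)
The plan is to derive Theorem~\ref{BTgen} by reducing it to the torsion case, which is Weiss' Bridge Theorem already established as Theorem~\ref{WBT}. By Remark~\ref{remark**}(a) applied to $\,\widehat{}\,$, which is inverse to itself up to natural equivalence, it suffices to prove the Bridge Theorem in one direction, say for $\,\widehat{}\colon\mathbf{AG}\to\mathbf{CAG}$. Fix then an endomorphism $\phi\colon G\to G$ in $\mathbf{AG}$ with dual $\widehat\phi\colon\widehat G\to\widehat G$.

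First I would reduce to the case of a countable $G$. The group $G$ is the directed union of its countable $\phi$-invariant subgroups $H_F=\sum_{n\geq 0}\phi^n(\langle F\rangle)$, indexed by the finite subsets $F$ of $G$. By Continuity for direct limits of $h_{alg}$ (available in $\mathbf{AG}$), $h_{alg}(\phi)=\sup_F h_{alg}(\phi\restriction_{H_F})$. Dually, $H_F^\perp$ is a closed $\widehat\phi$-invariant subgroup of $\widehat G$, the family $\{H_F^\perp\}_F$ is filtered with intersection $0$, and $\widehat G\cong\varprojlim_F \widehat G/H_F^\perp\cong \varprojlim_F \widehat{H_F}$ in $\mathbf{CAG}$. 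By Continuity for inverse limits of $h_{top}$, $h_{top}(\widehat\phi)=\sup_F h_{top}(\widehat{\phi\restriction_{H_F}})$. So one may assume $G$ countable.

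Next I would split along the fully invariant torsion subgroup $t(G)$. This is $\phi$-invariant; writing $\overline\phi$ for the induced endomorphism of $G/t(G)$, Pontryagin duality yields a short exact sequence
\begin{equation*}
0\to\widehat{G/t(G)}\to\widehat G\to\widehat{t(G)}\to 0,
\end{equation*}
where $\widehat{G/t(G)}\cong t(G)^\perp$ is the connected component of $0$ in $\widehat G$, and $\widehat{t(G)}\cong\widehat G/t(G)^\perp$ is totally disconnected. Invoking the Addition Theorem for $h_{alg}$ along $t(G)$ and the Addition Theorem for $h_{top}$ along $t(G)^\perp$, the equality $h_{alg}(\phi)=h_{top}(\widehat\phi)$ reduces to the two parallel equalities
\begin{equation*}
h_{alg}(\phi\restriction_{t(G)})=h_{top}(\widehat{\phi\restriction_{t(G)}})\quad\text{and}\quad h_{alg}(\overline\phi)=h_{top}(\widehat{\overline\phi}).
\end{equation*}
The first is exactly Weiss' Bridge Theorem (Theorem~\ref{WBT}): indeed $\phi\restriction_{t(G)}$ is an endomorphism of a torsion abelian group, its dual acts on a totally disconnected compact abelian group, and on torsion groups $h_{alg}$ coincides with $\ent$ by \eqref{H---}.

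The main obstacle is then the second equality, concerning the torsion-free countable group $G/t(G)$ and its endomorphism. My plan is to further reduce to the finitely generated case via Continuity for direct limits (on the algebraic side) and Continuity for inverse limits (on the topological side), so that $G/t(G)\cong\Z^n$ and $\overline\phi$ is an integer $n\times n$ matrix; at this point I would invoke the algebraic Yuzvinski formula, which expresses both $h_{alg}(\overline\phi)$ and $h_{top}(\widehat{\overline\phi})$ as the Mahler measure of the characteristic polynomial of $\overline\phi$. Matching these two formulas yields the equality. I expect this last step to be by far the hardest, since the Yuzvinski formula itself is a deep number-theoretic input, and because the Addition Theorem for $h_{alg}$ in the mixed abelian setting (used above to split at $t(G)$) already relies on delicate arguments involving the splitting behaviour of trajectories under restriction to pure subgroups. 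Care is also required to check that the cofinality hypotheses of the Continuity lemmas apply to the chosen systems of $\phi$-invariant (respectively, $\widehat\phi$-invariant) sub- and quotient-objects.
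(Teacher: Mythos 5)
First, a point of comparison: the paper does not actually prove Theorem~\ref{BTgen}. It is introduced by the sentence ``The general Bridge Theorem recalled in Theorem~\ref{BT1} can be stated as follows'', i.e.\ it is Theorem~\ref{BT1} rewritten in the language of $BT_{\varepsilon}$, and Theorem~\ref{BT1} is quoted from \cite{DG-bridge} without proof. So your proposal is not competing with an internal argument of this paper; it is an attempt to reconstruct the external proof. As such, your architecture --- reduce to one direction via Remark~\ref{remark**}(a), reduce to countable groups by Continuity for direct/inverse limits, split at $t(G)$ via the Addition Theorems, settle the torsion part by Weiss' theorem and the torsion-free part by the Yuzvinski formulas --- does match the known strategy in broad outline.

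There is, however, a concrete gap in the torsion-free step as you wrote it: you cannot reduce to $G/t(G)\cong\Z^n$ with $\overline\phi$ an integer matrix. Continuity for direct limits only lets you pass to the $\phi$-invariant subgroups $\sum_{n\geq 0}\phi^n(\langle F\rangle)$ with $F$ finite, and these need not be finitely generated: for $G=\Z[1/2]$ and $\phi(x)=x/2$ the only finitely generated $\phi$-invariant subgroup is $0$, while $h_{alg}(\phi)=\log 2$. The correct reduction is to \emph{finite rank}, after which one passes to the divisible hull $\Q^n$, where $\overline\phi$ is a rational matrix, and applies the algebraic Yuzvinski formula of \cite{GV}, whose Mahler-measure term is supplemented by $\log s$ with $s$ the least common denominator of the characteristic polynomial; only in that form does it match the topological Yuzvinski formula on the dual solenoid. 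Two further cautions: the Addition Theorem for $h_{alg}$ on arbitrary abelian groups, which you use to split at $t(G)$, is itself among the deepest results of the theory (proved in \cite{DG}, and already resting on the algebraic Yuzvinski formula), so it cannot be treated as an elementary off-the-shelf tool; and on the topological side you should check that the quotient $\widehat G\to\widehat{t(G)}$ and the subgroup $t(G)^\perp$ really give the factor/subflow configuration required by Yuzvinski's Addition Theorem for $h_{top}$, which they do, but this deserves a line of justification rather than being implicit in the exact sequence.
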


On the other hand, it is not known whether the respective functorial entropies satisfy the Strong Bridge Theorem.

\begin{question}
Do the pairs $(\h_\pet,\h_\cov)$ and $(\h_\cov,\h_\pet)$ satisfy $SBT_{\;\widehat{}\;}$ or $SBT^w_{\;\widehat{}\;}$?
\end{question}

The following Bridge Theorem was proved in \cite{CGBtop} in the more general case of locally linearly compact vector spaces over a discrete field $\mathbb K$. We consider the restrictions of the Lefschetz duality functor from the category of locally linearly compact vector spaces to the cases 
\begin{equation}\label{Aug2}
\ \widetilde{}: \mathbf{LCVect_\mathbb K} \to \mathbf{Mod}_\mathbb K\quad\text{and}\quad \widetilde{}:  \mathbf{Mod}_\mathbb K \to \mathbf{LCVect_\mathbb K},
\end{equation}
which are inverse to each other. 
 
\begin{Theorem}
For the functors \eqref{Aug2}:
\begin{itemize}
\item[(a)] the pair $(\h_{\mathfrak{sub}_o^\star},\h_{\mathfrak{sub}_d})$ satisfies $SBT_{\;\widetilde{}}$, so in particular, $(\ent_{\dim}^\star,\ent_{\dim})$ satisfies $BT_{\;\widetilde{}}$;
\item[(b)] the pair $(\h_{\mathfrak{sub}_d},\h_{\mathfrak{sub}^\star_o})$ satisfies $SBT_{\;\widetilde{}}$, so in particular,  $(\ent_{\dim},\ent^\star_{\dim})$ satisfies $BT_{\;\widetilde{}}$.
\end{itemize}
\end{Theorem}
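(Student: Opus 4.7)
The plan is to mimic the proof strategy of Theorem~\ref{WBT}, replacing Pontryagin duality on abelian groups by Lefschetz duality on vector spaces. The key point is that annihilators under Lefschetz duality provide an order-reversing bijection between the open linear subspaces of a linearly compact vector space and the finite-dimensional linear subspaces of its discrete dual, a bijection that also preserves the relevant numerical invariant.

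For part (a), given $W \in \mathbf{LCVect}_\mathbb K$ I would define
$$\eta_W: \mathcal F_o(W) \to \mathcal F_d(\widetilde W), \qquad N \mapsto N^\perp,$$
and verify the following standard consequences of Lefschetz duality: (i) $\eta_W$ is well-defined, as for $N$ open in $W$ the quotient $W/N$ is discrete and linearly compact, hence finite-dimensional, and $N^\perp \cong (W/N)\widetilde{}\;$, so $N^\perp \in \mathcal F_d(\widetilde W)$; (ii) $\eta_W$ is a bijection with inverse $M \mapsto M^\top$; (iii) $\eta_W$ intertwines the operations, namely $(N_1 \cap N_2)^\perp = N_1^\perp + N_2^\perp$, so it is a semilattice isomorphism from $(\mathcal F_o(W),\cap)$ to $(\mathcal F_d(\widetilde W),+)$; (iv) $\eta_W$ preserves norms, as the equality $\dim(W/N) = \dim(N^\perp)$ is again Lefschetz duality. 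Thus each $\eta_W$ is a uniform isomorphism in $\Se$ with coefficient $1$.

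Naturality of $\eta: \mathfrak{sub}_o^\star \to \mathfrak{sub}_d \circ \widetilde{}\;$ amounts to the standard identity $\widetilde\phi(N^\perp) = (\phi^{-1}(N))^\perp$ for every continuous endomorphism $\phi:W\to W$ and every $N\in\mathcal F_o(W)$; the inclusion $\widetilde\phi(N^\perp) \subseteq (\phi^{-1}(N))^\perp$ is immediate, while the reverse inclusion uses the injectivity of the map $(W/\phi^{-1}(N))\widetilde{} \to (W/N)\widetilde{}\;$ induced by $\phi$. This establishes $SBT_{\;\widetilde{}\;}$, and the conclusion for the ordinary entropies $(\ent_{\dim}^\star,\ent_{\dim})$ follows from \eqref{entdim*eq}, \eqref{entdimeq} and Theorem~\ref{MainBT}.

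For part (b) I would avoid repeating the argument and instead observe that the two Lefschetz duality functors in \eqref{Aug2} are mutually (quasi-)inverse, so that applying Lemma~\ref{neSBT} together with Remark~\ref{remark**}(b) to part (a) yields $SBT_{\;\widetilde{}\;}$ for the pair $(\h_{\mathfrak{sub}_d},\h_{\mathfrak{sub}_o^\star})$. Alternatively one could repeat the construction with $\eta_V : \mathcal F_d(V) \to \mathcal F_o(\widetilde V)$, $N \mapsto N^\perp$, for $V$ discrete; the verifications are entirely symmetric. The main obstacle is the careful bookkeeping of the annihilator correspondence under Lefschetz duality — in particular, checking that annihilators of open subspaces are finite-dimensional with matching dimension and that they are compatible with duals of morphisms — but these are well-established facts from \cite{Lef}, so no genuinely new input is required beyond their clean categorical packaging.
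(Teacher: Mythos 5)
Your proposal follows essentially the same route as the paper: the natural transformation is the annihilator map $N\mapsto N^\perp$ from $\mathcal F_o(V)$ to $\mathcal F_d(\widetilde V)$, shown to be a norm-preserving semilattice isomorphism via Lefschetz duality, with part (b) obtained by inverting the duality functor through Remark~\ref{remark**}(b). The paper's proof is just a more compressed version (delegating the duality bookkeeping to \cite{CGBtop}), so your additional verifications of well-definedness, naturality and norm-preservation are a faithful expansion rather than a different argument.
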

\begin{proof}
(a) Let $V$ be a linearly compact vector space over $\mathbb K$ and let $\f: V\to V$ be a continuous endomorphism. The semilattice isomorphism 
$\sF_o(V) \to \sF_d(\widetilde V)$ given by 
$$N \mapsto N^\perp=\{\chi\in\mathrm{CHom(V,\mathbb K)}:\chi(N)=0\}$$
 preserves the norms, by Lefschetz duality theory, so it is an isomorphism in $\Se$ (see \cite{CGBtop} for the details). Then it suffices to take $\eta={}^\perp$. 
\begin{equation*}
\xymatrix@R=10pt@C=40pt{
(V,\phi)\ar[dd]_{\widetilde{}\;} \ar[r]^{\mathfrak{sub}_{\dim}^\star}\ar@/^2.5pc/[rrd]^{\ent_{\dim}^\star}& (\sF_o(V),\mathfrak{sub}_o^\star(\phi)) \ar[dr]^{h_\Se}\ar[dd]^{\perp} & \\
 & &  \mathbb R_+ \\
(\widetilde V,\widetilde \phi) \ar[r]^{\mathfrak{sub}_{\dim}}\ar@/_2.5pc/[rru]_{\ent_{\dim}} & (\sF_d(\widetilde V),\mathfrak{sub}_d(\widetilde\phi))  \ar[ur]_{h_\Se}
}
\end{equation*}
For the second statement, note that $\ent_{\dim}^\star=\h_{\mathfrak{sub}_{\dim}^\star}$ and $\ent_{\dim}=\h_{\mathfrak{sub}_{\dim}}$ by \eqref{entdim*eq} and \eqref{entdimeq}.

(b) Follows from (a) and Remark~\ref{remark**}(b). 
\end{proof}

\subsection{Bridge Theorems for the set-theoretic entropy}\label{BTset}

The Bridge Theorems that we analyze until the end of this section connect the topological entropy $h_{top}$ and the algebraic entropy $h_{alg}$ with the covariant set-theoretic entropy $\mathfrak h$ by means of the backward and the forward generalized shifts, respectively.

\subsubsection{Topological entropy and backward generalized shifts}

We start proving the following (Strong) Bridge Theorem between the set-theoretic and the topological entropy, then we give its counterpart for $\mathbf{CTop}$.

\begin{Theorem}\label{SetBT2*}
Let $\varepsilon: \mathbf{Set}\to \mathbf{CG}$ be a contravariant functor sending coproducts to products, and let $K = \varepsilon(*)$. Then: 
\begin{itemize}
\item[(a)] the pair $(\mathfrak h, h_{top})$ satisfies $BT_{\varepsilon,\infty}$, if $K$ is infinite; 
\item[(b)] the pair $(\h_{\atr},\h_{\mathfrak{sub}^\star_o})$ satisfies $SBT_{\varepsilon,\log|K|}^w$, if $K$ is finite.
\end{itemize}
In particular, the pair $(\mathfrak h, h_{top})$ satisfies $BT_{\varepsilon,\log|K|}$ (with the convention that $\log|K|=\infty$ if $K$ is infinite).
\end{Theorem}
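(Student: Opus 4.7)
The plan is to first reduce to the case $\varepsilon = \mathcal B_K$ via Theorem~\ref{New_Lemma}, establish part (b) by constructing an explicit natural transformation $\eta \colon \atr \to \sub_o^\star\circ\varepsilon$, and then leverage part (b) (or its analogue for compact spaces) to prove part (a) through an approximation by finite subspaces of $K$. Finally, the two parts combine, under the stated convention, to yield the ``In particular'' conclusion.

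First I would observe that, by Theorem~\ref{New_Lemma}, any contravariant functor $\varepsilon\colon \mathbf{Set}\to \mathbf{CG}$ sending coproducts to products is naturally equivalent to $\mathcal B_K$ with $K=\varepsilon(*)$. Using Remark~\ref{BTne} and Lemma~\ref{neSBT} (applied on both sides of the bridge) I may then assume $\varepsilon=\mathcal B_K$, so that $\varepsilon(X)=K^X$ and $\varepsilon(\lambda)=\sigma_\lambda$ for every $\lambda\colon X\to Y$. Note that $\atr$ is covariant, $\sub_o^\star$ is contravariant and $\varepsilon$ is contravariant, so $\sub_o^\star\circ \varepsilon$ is covariant and has the correct variance for a natural transformation from $\atr$.

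For part (b), with $K$ finite, $K^X$ is a profinite compact group so that $\sub_o^\star$ applies and $h_{top}=\h_{\sub_o^\star}$ by Remark~\ref{Rem18March}. I would define, for each set $X$,
$$\eta_X\colon \atr(X)\to \sub_o^\star(K^X),\qquad \eta_X(D)=\{f\in K^X: f(x)=1\ \text{for all}\ x\in D\},$$
i.e., the kernel of the canonical projection $p_D\colon K^X\to K^D$. Then $\eta_X(D)$ is an open subgroup of index $|K|^{|D|}$, so $v(\eta_X(D))=|D|\log|K|=\log|K|\cdot v_\atr(D)$. A direct check gives $\eta_X(D_1\cup D_2)=\eta_X(D_1)\cap \eta_X(D_2)$, so $\eta_X$ is a semilattice homomorphism which, assuming $|K|\geq 2$, is injective and hence a uniform isomorphism with coefficient $\log|K|$ onto its image. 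Naturality $\eta_Y(\lambda(D))=\sigma_\lambda^{-1}(\eta_X(D))$ for $\lambda\colon X\to Y$ follows from the identification $\{g\in K^Y: g(\lambda(x))=1\ \text{for all}\ x\in D\}=\{g\in K^Y: g|_{\lambda(D)}=1\}$. Cofinality is immediate: since $K$ is finite discrete, the family $\{\eta_X(D): D\in\atr(X)\}$ is a neighbourhood base at the identity of $K^X$ in the product topology, so every open subgroup of $K^X$ contains some $\eta_X(D)$. This proves $SBT^w_{\varepsilon,\log|K|}$ and, via Theorem~\ref{MainBT}, $BT_{\varepsilon,\log|K|}$, giving $h_{top}(\sigma_\lambda)=\log|K|\cdot \mathfrak h(\lambda)$.

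For part (a), with $K$ an infinite compact group, I would split into the two implications needed to obtain $BT_{\varepsilon,\infty}$. If $\mathfrak h(\lambda)=0$, then by the product topology every finite open cover of $K^X$ has a refinement of the form $p_D^{-1}(\mathcal V')$ with $D$ finite and $\mathcal V'$ a finite cover of $K^D$; using the identity $\sigma_\lambda^{-k}(p_D^{-1}(\mathcal V'))=p_{\lambda^k(D)}^{-1}(\mathcal V')$ together with the bound $N(T_n(\sigma_\lambda,p_D^{-1}(\mathcal V')))\leq N(\mathcal V')^{|\mathfrak T_n(\lambda,D)|}$, one obtains $H_{top}(\sigma_\lambda,p_D^{-1}(\mathcal V'))\leq \log N(\mathcal V')\cdot \mathfrak h(\lambda,D)=0$, so $h_{top}(\sigma_\lambda)=0$. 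If $\mathfrak h(\lambda)>0$, then for every finite subset $K'\subseteq K$ with $|K'|\geq 2$, $K'^X$ is a closed $\sigma_\lambda$-invariant subspace of $K^X$, and $\sigma_\lambda|_{K'^X}$ is a backward generalized shift on $(K')^X$ in $\mathbf{CTop}_2$; applying Theorem~\ref{SetBT2star} (or part (b) above in the case $K'$ is a subgroup) gives $h_{top}(\sigma_\lambda|_{K'^X})=\log|K'|\cdot \mathfrak h(\lambda)$. By monotonicity of $h_{top}$ under closed invariant subspaces, $h_{top}(\sigma_\lambda)\geq \log|K'|\cdot \mathfrak h(\lambda)$, and as $K$ is infinite, $|K'|$ may be chosen arbitrarily large, yielding $h_{top}(\sigma_\lambda)=\infty$. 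Combining both cases with the convention $\log|K|=\infty$ for infinite $K$ gives the ``In particular'' statement.

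The main obstacle will be part (a): no Strong$^w$ Bridge Theorem with coefficient $\infty$ is available (by the very definition), so the coefficient $\infty$ on the bridge must be reached by an \emph{ad hoc} approximation argument rather than by a single functorial construction. The strategy above dovetails this limitation by reducing the infinite case to the finite case established in (b) through closed invariant subspaces $K'^X$ of $K^X$, together with a direct product-topology upper bound when $\mathfrak h(\lambda)=0$.
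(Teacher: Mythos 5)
Your reduction to $\varepsilon=\mathcal B_K$ and your part (b) coincide with the paper's proof (Lemma~\ref{SetBT2}): the subgroup $\eta_X(D)=\ker p_D=K^{X\setminus D}\times\{1\}^D$, the norm computation $\log[K^X:\eta_X(D)]=|D|\cdot\log|K|$, and the cofinality of these subgroups in $\mathcal V_{K^X}(1)$ are exactly the natural transformation used there. Part (a) is where you genuinely diverge. For $\mathfrak h(\lambda)>0$ the paper extracts an infinite $\lambda$-invariant string $N\subseteq X$ on which $\lambda$ acts as the successor map, so that $K^N$ is a factor of $K^X$ and $\sigma_{\lambda\restriction_N}$ is conjugate to ${}_K\beta$, whence $h_{top}(\sigma_\lambda)\geq h_{top}({}_K\beta)=\log|K|=\infty$ by \eqref{htopbeta}. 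You instead pass to closed invariant subspaces $K'^X$ for large finite subsets $K'\subseteq K$ and invoke Theorem~\ref{SetBT2star}; this works, but it imports the $\mathbf{CTop}_2$ version of the statement, which the paper derives \emph{after} (and by analogy with) the present theorem, so you must make explicit that only its finite-alphabet case is needed and that this case has an independent proof (it is also the known formula \eqref{htoph}), otherwise the argument looks circular. Your parenthetical fallback ``$K'$ a subgroup'' is not always available: an infinite compact group such as the $p$-adic integers has no nontrivial finite subgroups.

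For $\mathfrak h(\lambda)=0$ the paper writes $K^X$ as an inverse limit over finite $\lambda$-invariant sets and uses quasi-periodicity of the restricted shifts together with Continuity for inverse limits; your direct cover-counting bound is more elementary, but the inequality $N(T_n(\sigma_\lambda,p_D^{-1}(\mathcal V')))\leq N(\mathcal V')^{|\mathfrak T_n(\lambda,D)|}$ is not justified as stated when $|D|\geq 2$: the trajectory cover is a join of pullbacks of $\mathcal V'$ along the maps of $K^{\mathfrak T_n(\lambda,D)}\to K^D$ induced by $\lambda^k\restriction_D$, and the number of distinct such maps is only bounded by $|\mathfrak T_n(\lambda,D)|^{|D|}$ (moreover $\sigma_\lambda^{-k}(p_D^{-1}(\mathcal V'))=p_{\lambda^k(D)}^{-1}(\mathcal V')$ does not typecheck when $\lambda^k\restriction_D$ fails to be injective). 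The conclusion nevertheless survives: $\mathfrak h(\lambda)=0$ forces every orbit to be finite, so $\mathfrak T_n(\lambda,D)$ stabilizes, only finitely many distinct covers $\sigma_\lambda^{-k}(p_D^{-1}(\mathcal V'))$ occur, and $N(T_n)$ stays bounded, giving $H_{top}=0$ after dividing by $n$. You should argue via this boundedness (or first refine $\mathcal V'$ by a product cover, accepting a larger base of the exponential) rather than via the stated inequality.
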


In Lemma~\ref{SetBT2} we provide a proof of the theorem in the particular case of the functor \eqref{represenatble} of the backward generalized shift,
then Theorem~\ref{SetBT2*} follows immediately from Lemma~\ref{SetBT2}, Theorem~\ref{New_Lemma}, Lemma~\ref{neSBT}(a), and Remark~\ref{BTne}.

\begin{Lemma}\label{SetBT2}
Theorem~\ref{SetBT2*} holds with $\varepsilon=\mathcal B_K$, for a compact group $K$.
\end{Lemma}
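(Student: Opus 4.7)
The plan is to construct an explicit natural transformation witnessing part (b) and to bootstrap (a) from (b) by passing to finite $\sigma_\lambda$-invariant subspaces of $K^X$.

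For finite $K$, define for each set $X$ the map
$$\eta_X : \atr(X) = \mathcal S(X) \longrightarrow \mathfrak{sub}_o^\star(\mathcal B_K(X)) = \V_{K^X}(1), \quad D \mapsto V_D,$$
where $V_D := \{(k_x)_{x \in X} \in K^X : k_x = 1 \text{ for all } x \in D\}$. Each $V_D$ is an open normal subgroup of $K^X$ of index $|K|^{|D|}$, so $v(V_D) = |D|\log|K| = v(D)\log|K|$, giving the coefficient $\log|K|$. The map $\eta_X$ is an injective semigroup homomorphism from $(\mathcal S(X),\cup)$ to $(\V_{K^X}(1),\cap)$ because $V_{D_1 \cup D_2} = V_{D_1} \cap V_{D_2}$, and naturality with respect to a map $\lambda:X\to Y$ reduces to the identity $\sigma_\lambda^{-1}(V_D) = V_{\lambda(D)}$, immediate from $\pi_x \circ \sigma_\lambda = \pi_{\lambda(x)}$.

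The remaining point for (b) is cofinality of $\eta_X(\mathcal S(X))$ in $\V_{K^X}(1)$ (ordered by $\supseteq$): since $K$ is finite and discrete, $\{1\}$ is open in $K$, so the sets $\{V_D\}_{D\in\mathcal S(X)}$ form a local base at $1$ in the product topology of $K^X$, whence every open subgroup of $K^X$ contains some $V_D$. Thus $\eta_X$ is a weak isomorphism with coefficient $\log|K|$, establishing $SBT^w_{\mathcal B_K,\log|K|}$; Theorem~\ref{MainBT} then yields $BT_{\mathcal B_K,\log|K|}$.

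For (a), with $K$ infinite compact, the assertion $h_{top}(\sigma_\lambda) = \infty \cdot \mathfrak h(\lambda)$ splits into two subcases. If $\mathfrak h(\lambda) > 0$, fix any $n \geq 2$, pick $n$ distinct points forming a finite set $L_n \subseteq K$, and note that $L_n^X \subseteq K^X$ is a compact $\sigma_\lambda$-invariant subspace which, as a topological space, is homeomorphic to $F^X$ for any finite group $F$ of order $n$, via a homeomorphism conjugating $\sigma_\lambda\restriction_{L_n^X}$ to the backward generalized shift on $F^X$. Applying (b) to $\mathcal B_F$ together with the monotonicity of $h_{top}$ under restriction to compact invariant subspaces gives $h_{top}(\sigma_\lambda) \geq h_{top}(\sigma_\lambda\restriction_{L_n^X}) = \mathfrak h(\lambda)\log n$, and letting $n \to \infty$ yields $h_{top}(\sigma_\lambda) = \infty$. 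For the complementary case $\mathfrak h(\lambda) = 0$, every finite open cover $\mathcal U$ of $K^X$ refines to one $\mathcal U^*$ supported on finitely many coordinates $D_0 \subseteq X$, so the iterated join $T_n(\cov(\sigma_\lambda),\mathcal U^*)$ is supported on the finite set $\mathfrak T_n(\lambda,D_0)$; an elementary cover-counting argument, exploiting compactness of $K^{\mathfrak T_n(\lambda,D_0)}$, then bounds $\log N(T_n)/n$ by a quantity going to zero with $|\mathfrak T_n(\lambda,D_0)|/n$.

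The main obstacle is the $\mathfrak h(\lambda) = 0$ half of (a): the finite-$K$ argument of (b) fails to transfer directly, since infinite compact $K$ generally lacks finite-index open subgroups (for instance when $K$ is connected), so one cannot invoke the natural-transformation proof. Instead, the compactness of $K$ must be exploited at the level of finite open covers to control the growth of $N(T_n)$ on the projected finite-coordinate spaces $K^{\mathfrak T_n(\lambda,D_0)}$, which is where the topological content of the statement genuinely resides.
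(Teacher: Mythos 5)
Your part (b) coincides with the paper's proof: the same subgroups $\eta_X(F)=K^{X\setminus F}\times\{1\}^F$, the same norm computation $\log[K^X:\eta_X(F)]=|F|\cdot\log|K|$, and the same cofinality and naturality checks in $\mathcal V_{K^X}(1)$. Part (a) is where you genuinely diverge. For $\mathfrak h(\lambda)>0$ the paper extracts an infinite $\lambda$-string $N\subseteq X$, factors $(K^X,\sigma_\lambda)$ onto $(K^N,\sigma_{\lambda\restriction_N})\cong(K^{\N},{}_K\beta)$ and quotes $h_{top}({}_K\beta)=\log|K|=\infty$; you instead restrict to the closed $\sigma_\lambda$-invariant subspaces $L_n^X$ and feed the finite case (b) back in. This has the advantage of not importing the Bernoulli-shift formula \eqref{htopbeta} from the literature (so the lemma becomes self-contained), at the cost of using monotonicity under closed invariant subspaces instead of monotonicity for factors; both are available in \S\ref{f-sec}. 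For $\mathfrak h(\lambda)=0$ the paper writes $K^X$ as the inverse limit of the quotient flows $(K^F,\sigma_{\lambda\restriction_F})$ over finite $\lambda$-invariant $F$, observes that each such flow is quasi-periodic by \eqref{powers} and hence has zero entropy, and invokes Continuity for inverse limits; your direct cover-counting is also viable, but the one step you must make precise is the count itself. A cover $\mathcal U^*=\pi_{D_0}^{-1}(\mathcal W)$ joined $n$ times has a priori $|\mathcal W|^n$ elements, and bounding instead by the number of distinct pullbacks being joined gives only $N(T_n)\le |\mathcal W|^{|\mathfrak T_n(\lambda,D_0)|^{|D_0|}}$, which does not suffice when $|D_0|\ge 2$. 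The bound of the form $C^{|\mathfrak T_n(\lambda,D_0)|}$ appears only after you further refine $\mathcal W$ (using compactness of $K^{D_0}$, e.g.\ a Lebesgue-entourage argument for the unique uniformity) to a cover by boxes $\prod_{x\in D_0}O_x$ with all $O_x$ drawn from one fixed finite open cover $\mathcal O$ of $K$: then every element of the $n$-fold join is a box over $\mathfrak T_n(\lambda,D_0)$ whose sides are intersections of members of $\mathcal O$, whence $N(T_n)\le 2^{|\mathcal O|\cdot|\mathfrak T_n(\lambda,D_0)|}$ and $\log N(T_n)/n\to 0$. With that refinement inserted your argument closes; without it the asserted estimate does not follow.
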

\begin{proof}
Let $X$ be a non-empty set. For $F \in \mathcal{S}(X)$ let $\eta_X(F)$ be the subgroup $K^{X \setminus F} \times \{1\}^F$ of $K^X$. It is $\sigma_\lambda$-invariant, whenever $F$ is $\lambda$-invariant in $X$. 

\medskip
(a) Assume that $K$ is infinite, and let $\lambda:X\to Y$ be a map. We have to check that:
\begin{itemize}
  \item[(i)] if $\mathfrak{h}(\lambda) >0$, then $h_{top}(\sigma_\lambda)  =\infty $;
  \item[(ii)] if $\mathfrak{h}(\lambda) = 0$, then $h_{top}(\sigma_\lambda)  =0$ as well.
\end{itemize}
To check (i) assume that  $\mathfrak{h}(\lambda) >0$. Then there exists a $\lambda$-invariant infinite subset $N =\{x_0, \ldots, x_n, \ldots\}$ of $X$, where $\lambda$ acts as a right shift, i.e., $\lambda(x_n) = x_{n+1}$ for every $n\in\N$. Then the functor $ \mathcal B_K$ transforms the inclusion $j: N\to X$ into the projection $K^X \to K^N$ and the restriction $\lambda\restriction_N : N \to N$ to $\sigma_{\lambda\restriction_N}: K^N \to K^N$, which is conjugate to the left Bernoulli shift ${}_K\beta:K^\N\to K^\N$. Since $K$ is infinite, by the Monotonicity for factors, the Invariance under conjugation and \eqref{htopbeta}, we conclude that $h_{top}(\sigma_\lambda)  \geq h_{top}({}_K\beta) =\infty$. %and so (i) is proved. 

\smallskip
To verify (ii) assume that $\mathfrak{h}(\lambda) = 0$, i.e., every $F \in \mathcal{S}(X)$ is contained in a $\lambda$-invariant finite set $F' \in \mathcal{S}(X)$. This implies that every subgroup of the form $\eta_X(F)$ of $K^X$ contains a subgroup (of the same form) $\eta_X(F')$, with $F' \in \mathcal{S}(X)$, that is $\sigma_\lambda$-invariant. 

The group $K^X$ is the inverse limit of the inverse system of groups $$\mathcal K = \{(K^F,\pi^F_{F'}):F,F'\in \mathcal{S}(X)\}.$$ where $\pi^F_{F'}: K^F\to K^{F'}$, for 
$F\supseteq F'$ in $\mathcal{S}(X)$, is the standard projection. By what we observed above, $K^X$ is also the inverse limit of the inverse system 
$$\mathcal K_{inv}= \{(K^F,\pi^F_{F'}): F,F'\in \mathcal{S}(X),\ \lambda\text{-invariant}\}.$$ 
Obviously, $K^X/\eta_X(F)$ is topologically isomorphic to $K^F$ for every $F \in \mathcal{S}(X)$. Moreover, when $F \in \mathcal{S}(X)$ is $\lambda$-invariant, this isomorphism extends to an isomorphism of the flows $(K^X/\eta_X(F),\sigma_\lambda^F)$ and $(K^F, \sigma_{\lambda\restriction_{F}})$, where  
$$\sigma_\lambda^F:K^X/\eta_X(F)\to K^X/\eta_X(F)$$
 is the endomorphism induced by $\sigma_\lambda$; equivalently, $\sigma_\lambda^F$ and $\sigma_{\lambda\restriction_F}$ are conjugate and so $h_{top}(\sigma_\lambda^F)=h_{top}(\sigma_{\lambda\restriction_F})$.

Let $\zeta=\lambda\restriction_F: F \to F$ for the sake of brevity. 
Since $F$ is finite, we deduce there exist natural numbers $m>k$ such that $\zeta^m =\zeta^k$. By \eqref{powers}, this yields 
$$(\sigma_\zeta)^m = \sigma_{\zeta^m} = \sigma_{\zeta^k}  = (\sigma_\zeta)^k.$$ 
Therefore, $\sigma_\zeta$ is quasi-periodic. By Lemma~\ref{teo:provvisorio}, we conclude that $h_{top}(\sigma_\zeta)=0$. Therefore, $h_{top}(\sigma_\lambda^F)=0$.

This proves that $h_{top}(\sigma_\lambda)  =0$ in view of the Continuity for inverse limits of $h_{top}$.

\medskip
(b) Assume that $K$ is finite. Then $\eta_X(F) \in \mathcal V_{K^X}(1)$ for every $F \in \mathcal{S}(X)$.
\begin{equation*}
\xymatrix@R=10pt@C=40pt{
(X,\lambda) \ar[dd]_{\mathcal B_K} \ar[r]^{\atr}\ar@/^2.5pc/[rrd]^{\mathfrak{h}}& (\mathcal{S}(X),\atr({\lambda})) \ar[dr]^{h_\Se}\ar[dd]^{\eta_X} & \\
 & &  \mathbb R_+ \\
(K^X,\sigma_\lambda) \ar[r]^{\mathfrak{sub^\star_o}}\ar@/_2.5pc/[rru]_{h_{top}} & (\mathcal V_{K^X}(1),\mathfrak{sub^\star_o}(\sigma_\lambda))  \ar[ur]_{h_\Se}
}
\end{equation*}
Since every $U \in \mathcal V_{K^X}(1)$ contains $\eta_X(F)$ for some $F \in \mathcal{S}(X)$, we deduce that $\eta_X$ has cofinal image in $\mathcal V_{K^X}(1)$. Moreover, the norm of $F$ in $\mathcal{S}(X)$ is $|F|$, while the norm of $\eta_X(F)$ is
$$\log [K^X: \eta_X(F)] = \log |K|^{|F|} =  |F| \cdot \log |K|.$$
This proves that $\eta_X:\mathcal{S}(X)\to\mathcal V_{K^X}(1)$ is a weak isomorphism with coefficient $\log |K|$. Therefore, the pair $(\h_{\atr},\h_{\mathfrak{sub}^\star_o})$ satisfies $SBT_{\mathcal B_K,\log|K|}^w$. 
\end{proof}

The following is the counterpart of Theorem~\ref{SetBT2*} for the category $\mathbf{CTop}_2$ of all compact Hausdorff spaces. It covers the known formula recalled in \eqref{htoph}.

\begin{Theorem}\label{SetBT2star}
Let $\varepsilon: \mathbf{Set}\to \mathbf{CTop}_2$ be a contravariant functor sending coproducts to products, and let $K = \varepsilon(*)$. Then: 
\begin{itemize}
\item[(a)] the pair $(\mathfrak h, h_{top})$ satisfies $BT_{\varepsilon,\infty}$, if $K$ is infinite; 
\item[(b)] the pair $(\h_{\atr},\h_{\cov})$ satisfies $SBT_{\varepsilon,\log|K|}^w$, if $K$ is finite.
\end{itemize}
In particular, the pair $(\mathfrak h, h_{top})$ satisfies $BT_{\varepsilon,\log|K|}$ (with the convention that $\log|K|=\infty$ if $K$ is infinite).
\end{Theorem}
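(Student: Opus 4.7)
The plan is to reduce the general statement to the case $\varepsilon=\mathcal B_K$ with $K=\varepsilon(\ast)$. By Theorem~\ref{New_Lemma}, any contravariant functor $\mathbf{Set}\to\mathbf{CTop}_2$ sending coproducts to products is naturally equivalent to $\mathcal B_K$, and both $BT_{\cdot,C}$ and $SBT^w_{\cdot,C}$ are invariant under such natural equivalence by Remark~\ref{BTne} and Lemma~\ref{neSBT}(a). The ``in particular'' assertion then splits cleanly: when $K$ is infinite, $BT_{\varepsilon,\log|K|}$ is just $BT_{\varepsilon,\infty}$ from (a); when $K$ is finite, $BT_{\varepsilon,\log|K|}$ is deduced from $SBT^w_{\varepsilon,\log|K|}$ in (b) via Theorem~\ref{MainBT}(b). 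So I may assume $\varepsilon=\mathcal B_K$ throughout and prove (a) and (b) separately.

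For (a), with $K$ an infinite compact Hausdorff space, the proof mirrors Lemma~\ref{SetBT2}(a). If $\mathfrak h(\lambda)>0$, I would extract a $\lambda$-invariant countable subset $N=\{x_n\}_{n\in\N}\subseteq X$ on which $\lambda$ acts as the right shift; the inclusion $j:N\hookrightarrow X$ is sent by $\mathcal B_K$ to the continuous surjective restriction $\sigma_j:K^X\to K^N$, exhibiting $(K^N,\sigma_{\lambda|_N})$ as a factor of $(K^X,\sigma_\lambda)$. Since $\sigma_{\lambda|_N}$ is conjugate to the one-sided left Bernoulli shift ${}_K\beta$, Monotonicity for factors of $h_{top}$ combined with \eqref{htopbeta} forces $h_{top}(\sigma_\lambda)\geq h_{top}({}_K\beta)=\log|K|=\infty$. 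If $\mathfrak h(\lambda)=0$, every $F\in\mathcal S(X)$ is contained in some finite $\lambda$-invariant $F'\in\mathcal S(X)$, so $K^X$ is the inverse limit in $\mathbf{CTop}_2$ of the cofinal subsystem $\{K^{F'}:F'\in\mathcal S(X),\ \lambda\text{-invariant}\}$, and the map induced by $\sigma_\lambda$ on each factor is conjugate to $\sigma_{\lambda|_{F'}}$. Since $F'$ is finite, $\lambda|_{F'}$ is quasi-periodic, and \eqref{powers} transfers this to $\sigma_{\lambda|_{F'}}$; hence $h_{top}(\sigma_{\lambda|_{F'}})=0$ by Vanishing on quasi-periodic flows, and the Continuity for inverse limits of $h_{top}$ established after Proposition~\ref{New:corollary2} yields $h_{top}(\sigma_\lambda)=0$.

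For (b), $K$ is finite and Hausdorff, hence discrete, and I would build the natural transformation $\eta:\atr\to\cov\circ\mathcal B_K$ by setting, for every non-empty set $X$ and every $F\in\mathcal S(X)$, $\eta_X(F)=\{\pi_F^{-1}(\{f\}):f\in K^F\}$, the clopen partition of $K^X$ induced by the canonical projection $\pi_F:K^X\to K^F$. This is a finite open cover with $N(\eta_X(F))=|K|^{|F|}$, so $v(\eta_X(F))=|F|\log|K|$; the relation $\eta_X(F_1\cup F_2)\sim\eta_X(F_1)\vee\eta_X(F_2)$ (they differ only by empty cells) makes $\eta_X$ a uniform isomorphism onto its image in $\PSL^\dag$ with coefficient $\log|K|$. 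Cofinality of $\eta_X(\mathcal S(X))$ in $\cov(K^X)$ uses that $\{\pi_F^{-1}(\{f\}):F\in\mathcal S(X),\,f\in K^F\}$ is a base of the compact space $K^X$: any open cover admits a finite basic subcover, which is in turn refined by $\eta_X(F)$ for $F$ the union of the finitely many finite subsets involved. Naturality with respect to a map $\lambda:X\to Y$ follows from $\pi_F\circ\sigma_\lambda$ being precomposition with $\lambda|_F$, giving $\sigma_\lambda^{-1}(\pi_F^{-1}(\{f\}))=\pi_{\lambda(F)}^{-1}(\{f'\})$ when $f$ is constant on the fibers of $\lambda|_F$ and $\emptyset$ otherwise, so $\cov(\sigma_\lambda)(\eta_X(F))\sim\eta_Y(\lambda(F))=\eta_Y(\atr(\lambda)(F))$. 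The main obstacle is that all these equalities hold only up to the refinement equivalence $\sim$ rather than literally, which requires careful bookkeeping to certify that $\eta$ is a genuine natural transformation into the presemilattice category $\PSL^\dag$; once this is settled, (b) is a direct transcription of Lemma~\ref{SetBT2}(b) with $\cov$ in place of $\sub_o^\star$.
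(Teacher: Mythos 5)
Your proof follows essentially the same route as the paper's: the same reduction to $\mathcal B_K$ via Theorem~\ref{New_Lemma}, Remark~\ref{BTne} and Lemma~\ref{neSBT}(a), the same shift/inverse-limit dichotomy for (a), and for (b) the same natural transformation $F\mapsto\{\pi_F^{-1}(f):f\in K^F\}$ with the same cofinality argument through basic clopen sets. The refinement-equivalence issue you flag at the end is genuine but harmless: since the paper allows $\emptyset$ to belong to a cover, adjoining $\emptyset$ to each $\eta_X(F)$ turns all the relevant identities ($\eta_X(F_1)\vee\eta_X(F_2)=\eta_X(F_1\cup F_2)$ and $\sigma_\lambda^{-1}(\eta_X(F))=\eta_Y(\lambda(F))$) into exact equalities, so $\eta$ is a genuine morphism into $\PSL^\dag$ — a point the paper's own proof passes over in silence.
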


Theorem~\ref{SetBT2star} can be proved following the line of the proof of Theorem~\ref{SetBT2*}. Indeed, Theorem~\ref{SetBT2star} follows from Theorem~\ref{New_Lemma}, Lemma~\ref{neSBT}(a), Remark~\ref{BTne}, and the counterpart for $\mathbf{CTop}_2$ of Lemma~\ref{SetBT2}. This can be obtained as follows. Item (a) follows from a standard variation of the proof of item (a) of Lemma~\ref{SetBT2}. For item (b) 
consider for every finite $F$ the projection $p_F: K^X \to K^F$. As $K^F$ is finite, so discrete, the set 
$p_F^{-1}(x)$ is open in $K^X$ for every $x\in K^F$, hence such that
$$\mathcal U_{F}=\{p_F^{-1}(x) :x\in K^F\}$$
is a finite open cover of $K^X$ with $$N(\mathcal U_{F})=|K|^F.$$ 
Moreover, following the line of the proof of Claim~\ref{Claim18March}, one can prove that the family $$\{\mathcal U_{F}:F\in\mathcal S(X)\}$$ 
is cofinal in $\fc(X)$, and $\fc(X)$ is cofinal in $\cov(X)$ by Remark~\ref{fcrem}.
Hence, letting 
$$\nu_X(F)=\mathcal U_{F}$$
for every $F\in\mathcal S(X)$, one finds that $\nu$ is a natural transformation between $\atr$ and $\cov\circ\mathcal B_K$ such that, for every set $X$, $\nu_X:\mathcal S(X)\to \cov(X)$ is a weak isomorphism with coefficient $\log|K|$. Therefore, the pair $(\h_{\atr},\h_{\cov})$ satisfies $SBT_{\mathcal B_K,\log|K|}^w$.
\begin{equation*}
\xymatrix@R=10pt@C=40pt{
(X,\lambda) \ar[dd]_{\mathcal B_K} \ar[r]^{\atr}\ar@/^2.5pc/[rrd]^{\mathfrak{h}}& (\mathcal{S}(X),\atr({\lambda})) \ar[dr]^{h_\Se}\ar[dd]^{\nu_X} & \\
 & &  \mathbb R_+ \\
(K^X,\sigma_\lambda) \ar[r]^{\mathfrak{cov}}\ar@/_2.5pc/[rru]_{h_{top}} & (\cov(K^X),\cov(\sigma_\lambda))  \ar[ur]_{h_\Se}
}
\end{equation*}

\begin{Remark}\label{TSUNAMI-h*}
Let $K$ be a group and $$\mathcal B'_K: \mathbf{Set}_\mathrm{fin}\to \mathbf{Grp}$$ be the contravariant functor defined on $\mathbf{Set}_\mathrm{fin}$, sending $\emptyset$ to the terminal object $\{1\}$ of $\mathbf{Grp}$ and a non-empty set $X$ to $$\mathcal B'_K(X)=K^{(X)}.$$ For a finite-to-one map $\lambda:X\to Y$ let, as recalled in \eqref{^oplus}, $$\mathcal B'_K(\lambda)=\sigma_\lambda^\oplus:K^{(Y)}\to K^{(X)}$$ when $X$ and $Y$ are non-empty, and let $\mathcal B'_K(\lambda)=1$ when either $X$ or $Y$ is empty.

The result from \cite{DG-islam} recalled in \eqref{halgh} can be stated as follows as a Bridge Theorem, with the convention that $\log|K|=\infty$ if $K$ is infinite: 
\begin{quote}
the pair $(\mathfrak h_p^*, h_{alg})$ satisfies $BT_{\mathcal B'_K,\log|K|}$.
\end{quote}
Nevertheless, we cannot find it as a Strong Bridge Theorem, since we can find $\mathfrak h^*$ as a functorial entropy but not $\mathfrak h_p^*$.

To overcome this problem one could proceed as follows.
First, note that every functor $$F : \mathfrak X\to \mathfrak Y$$
induces a functor $$F_2 : \mathbf{Flow}_\mathfrak X\to \mathbf{Flow}_\mathfrak Y$$ assigning to each object $f: X \to X$ of 
$\mathbf{Flow}_\mathfrak X$ the object $F(f)$ of $\mathbf{Flow}_\mathfrak Y$. 
A careful analysis of our categorical approach to entropy 
shows that we need substantially, rather than the functor $$F : \mathfrak X \to \Se^*,$$ 
the functor $$F_2 : \mathbf{Flow}_\mathfrak X \to \mathbf{Flow}_{\Se^*},$$
so that a ``functorial entropy'' of $\mathfrak X$ can be obtained as a composition of $F_2$ with the entropy function $$h_{\Se^*}:\mathbf{Flow}_{\Se^*} \to \R_+\cup \{\infty\}.$$

This modified approach applies in cases when one has a convenient functor $G:  \mathbf{Flow}_\mathfrak X \to \mathbf{Flow}_{\Se^*}$
that is not necessarily of the form $G =F_2$. For example, it perfectly fits the case of the functor 
$$\mathrm{sc}:\mathbf{Flow}_{\mathbf{Set}_{\mathrm{fin}}}\to \mathbf{Flow}_{\mathbf{Set}_{\mathrm{fin}}},$$
sending the flow $(X,\lambda)$ to its subflow $(\mathrm{sc}(\lambda),\lambda\restriction_{\mathrm{sc}(\lambda)})$ (it cannot be obtained as 
$F_2$ for any functor $F: \mathbf{Set}_{\mathrm{fin}} \to \mathbf{Set}_{\mathrm{fin}}$).
Then, for the functor 
$$\str_2:\mathbf{Flow}_{\mathbf{Set}_{\mathrm{fin}}}\to\mathbf{Flow}_{\Se^*}$$ induced by the functor $\str$, let
$$G=\str_2\circ\mathrm{sc} :\mathbf{Flow}_{\mathbf{Set}_{\mathrm{fin}}}\to\mathbf{Flow}_{\Se^*}.$$
This allows us to obtain, for every $(X,\lambda)\in\mathbf{Flow}_{\mathbf{Set}_{\mathrm{fin}}}$,
$$\mathfrak h_p^*(X,\lambda)=h_{\Se^*}(G(X,\lambda)),$$
and so we can find $\mathfrak h_p^*$ as a ``functorial entropy'' as well, although in a slightly different way compared to the functorial entropy we studied so far.
This alternative approach will not be adopted in the present paper. 
\end{Remark}

\subsubsection{Algebraic entropy and forward generalized shifts}

We start proving the relation between the backward generalized shift $\sigma_\lambda:K^Y\to K^X$ induced by the map $\lambda:X\to Y$ (see Definition~\ref{bgs}  and \eqref{bgseq}), and the forward generalized shift $\tau_\lambda:\widehat K^{(X)}\to\widehat K^{(Y)}$ (see Definition~\ref{fgs} and \eqref{fgseq}). To this end we identify 
\begin{equation}\label{PD1}
\widehat{K^X}=\widehat K^{(X)}
\end{equation}
as follows. For $\chi\in \widehat{K^X}$ there exists a finite subset $F$ of $X$ such that $$\chi=\sum_{x\in F}\chi\restriction_{K_x},$$ where we denote by $K_x$ the copy of $K$ in $K^X$ corresponding to $x\in X$. Then we identify $\chi\in\widehat{K^X}$ with $(\chi\restriction_{K_x})_{x\in X}\in\widehat K^{(X)}$, noting that $\chi\restriction_{K_x}=0$ for every $x\in X\setminus F$. When $K$ is finite, $K\cong \widehat K$.

\begin{Proposition}\label{sigmatau}
Let $K$ be a compact abelian group and $\lambda:X\to Y$ be a map. Then the homomorphisms 
$$\sigma_\lambda:K^Y\to K^X\quad \text{and}\quad \tau_\lambda:\widehat K^{(X)}\to\widehat K^{(Y)}$$
satisfy 
$$\widehat{\sigma_\lambda}=\tau_\lambda.$$
\end{Proposition}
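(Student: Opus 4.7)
The plan is to exploit the universal property of the coproduct in the definition of $\tau_\lambda$ (equation \eqref{ibi}) together with the contravariance of Pontryagin duality. The idea is that under the identification \eqref{PD1}, the canonical injection $\iota_x:\widehat K \to \widehat K^{(X)}$ is exactly the dual $\widehat{\pi_x}:\widehat K \to \widehat{K^X}$ of the projection $\pi_x: K^X \to K$. Once this is established, verifying that $\widehat{\sigma_\lambda}$ satisfies the defining relation of $\tau_\lambda$ becomes a one-line computation.

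First I would unpack the identification \eqref{PD1}. Given $\chi \in \widehat{K^X}$, the support of the tuple $(\chi\restriction_{K_x})_{x\in X}$ is finite because every continuous character on a product of compact abelian groups factors through a finite subproduct. Thus $\chi = \sum_{x\in F}(\chi\restriction_{K_x})\circ \pi_x$ for some finite $F\subseteq X$. Under \eqref{PD1}, the composition $\chi\circ \pi_x$ with a single projection corresponds to the tuple with $\chi$ in the $x$-coordinate and $0$ elsewhere. In other words, for every $x\in X$ and $\chi \in \widehat K$,
$$\widehat{\pi_x}(\chi) = \chi \circ \pi_x = \iota_x(\chi),$$
so $\widehat{\pi_x} = \iota_x$ as morphisms $\widehat K \to \widehat K^{(X)}$. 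The same holds for $\pi_y:K^Y \to K$ and $\iota_y:\widehat K\to \widehat K^{(Y)}$.

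Next, by the universal property of the coproduct $\widehat K^{(X)}$, to show that $\widehat{\sigma_\lambda}$ equals $\tau_\lambda$ it suffices (by Definition~\ref{fgs} and \eqref{ibi}) to verify that $\widehat{\sigma_\lambda}\circ \iota_x = \iota_{\lambda(x)}$ for every $x \in X$. Using the identification from the previous paragraph and the contravariance of the duality functor, I compute
$$\widehat{\sigma_\lambda}\circ \iota_x = \widehat{\sigma_\lambda}\circ \widehat{\pi_x} = \widehat{\pi_x \circ \sigma_\lambda}.$$
Now the defining property of the backward generalized shift (Definition~\ref{bgs}) gives $\pi_x \circ \sigma_\lambda = \pi_{\lambda(x)}$, so the right-hand side equals $\widehat{\pi_{\lambda(x)}} = \iota_{\lambda(x)}$, as required.

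The main obstacle, if any, is purely bookkeeping: one has to make sure that the identification \eqref{PD1} genuinely turns dual projections into coproduct injections, which in turn relies on the fact that continuous characters of $K^X$ factor through finite subproducts (a standard consequence of compactness of $K$ and continuity of characters). Once this identification is correctly set up, the proof reduces to the single line above and no further analysis is needed.
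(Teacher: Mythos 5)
Your proof is correct and is essentially the paper's argument in categorical dress: the paper checks $\widehat{\sigma_\lambda}(\xi^x)=\xi^{\lambda(x)}=\tau_\lambda(\xi^x)$ on the generators $\xi^x$ of $\widehat K^{(X)}$ by a pointwise computation, and your identification $\iota_x=\widehat{\pi_x}$ is exactly the observation $\xi^x=\xi\circ\pi_x$, after which your chain $\widehat{\sigma_\lambda}\circ\widehat{\pi_x}=\widehat{\pi_x\circ\sigma_\lambda}=\widehat{\pi_{\lambda(x)}}$ is the functorial restatement of the paper's line $\widehat{\sigma_\lambda}(\xi^x)(f)=\xi^x(f\circ\lambda)=\xi(f(\lambda(x)))$. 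Both reductions rest on the same two facts — that characters of $K^X$ factor through finite subproducts, and that $\pi_x\circ\sigma_\lambda=\pi_{\lambda(x)}$ — so no substantive difference, and no gap.
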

\begin{proof} 
For a character $\xi\in\widehat K$ and $x\in X$, let $\xi^x\in\widehat K^{(X)}$ be defined by $\xi^x(y) = 0$ if $y\in X\setminus \{x\}$ and $\xi^x(x)=\xi$.  In view of  the identification \eqref{PD1}, for $x_0\in X$ and $f\in K^X$ one has 
$$\xi^{x_0}(f) = \xi(f(x_0)).$$

Since $\{\xi^x:\xi\in\widehat K, x\in X\}$ is a set of generators of $\widehat K^{(X)}$, it suffices to prove that, for every $\xi\in\widehat K$ and every $x\in X$,
\begin{equation}\label{xix}
\widehat{\sigma_\lambda}(\xi^x)=\tau_\lambda(\xi^x).
\end{equation}
So, let $\xi\in\widehat K$ and $x\in X$. Then, by the definition,
$$\tau_\lambda(\xi^x)=\xi^{\lambda(x)}.$$ 
Moreover, $\widehat{\sigma_\lambda}(\xi^x)=\xi^x\circ\sigma_\lambda$, and hence, for every $f \in K^Y$,
$$\widehat{\sigma_\lambda}(\xi^x)(f)=\xi^x(\sigma_\lambda(f))=\xi^x(f \circ \lambda) =\xi ((f \circ \lambda)(x) )=\xi(f(\lambda(x)))=\xi^{\lambda(x)}(f);$$
 therefore, $$\widehat{\sigma_\lambda}(\xi^x)=\xi^{\lambda(x)}.$$
This proves the equality in \eqref{xix}, and consequently the equality $\widehat{\sigma_\lambda}=\tau_\lambda$.
\end{proof}

Fix an abelian group $K$ and consider the functor $\mathcal F_K: \mathbf{Set}\to \mathbf{AG}$ defined in \eqref{represenatble2} by means of the forward generalized shift.
Proposition~\ref{sigmatau} implies directly the following nice connection between the functors $\mathcal B_{K}$ and $\mathcal F_{\widehat K}$.

\begin{Corollary}\label{aboveclaim}
For a  compact abelian group  $K$ and the functor $\;\;\widehat{}\;:\mathbf{CAG}\to\mathbf{AG}$ one has $$\mathcal F_{\widehat K}=\; \widehat{}\;\circ\mathcal B_K.$$
\end{Corollary}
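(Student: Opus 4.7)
The plan is to verify the asserted equality separately on objects and on morphisms, exploiting the Pontryagin-duality identification \eqref{PD1} on objects and Proposition~\ref{sigmatau} on morphisms. Both $\mathcal{F}_{\widehat K}$ and $\widehat{}\circ\mathcal{B}_K$ are covariant functors $\mathbf{Set}\to\mathbf{AG}$: indeed, $\mathcal B_K$ is contravariant and so is $\widehat{}$, so their composition is covariant, matching the variance of $\mathcal F_{\widehat K}$. Moreover, the empty set is sent to $\widehat K^{(\emptyset)}=0$ by $\mathcal F_{\widehat K}$, and to $\widehat{K^\emptyset}=\widehat{\{1\}}=0$ by $\widehat{}\circ\mathcal B_K$, since the terminal object of $\mathbf{CAG}$ is the trivial group. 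So the equality on the empty set is automatic.

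On a non-empty set $X$, the functor $\mathcal B_K$ sends $X$ to $K^X$, so $(\widehat{}\circ\mathcal B_K)(X)=\widehat{K^X}$; on the other hand, $\mathcal F_{\widehat K}(X)=\widehat K^{(X)}$. These two groups coincide under the canonical identification \eqref{PD1}, which sends $\chi\in\widehat{K^X}$ (having finite support by continuity, since $K^X$ is compact) to the family $(\chi\restriction_{K_x})_{x\in X}\in\widehat K^{(X)}$. Hence the two functors agree on objects.

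For morphisms, given a map $\lambda:X\to Y$ between non-empty sets, one has on one side $(\widehat{}\circ\mathcal B_K)(\lambda)=\widehat{\sigma_\lambda}:\widehat{K^X}\to\widehat{K^Y}$, and on the other side $\mathcal F_{\widehat K}(\lambda)=\tau_\lambda:\widehat K^{(X)}\to\widehat K^{(Y)}$. Under the identification \eqref{PD1}, these are maps between the same source and target, and Proposition~\ref{sigmatau} yields precisely the equality $\widehat{\sigma_\lambda}=\tau_\lambda$. The degenerate cases where $X$ or $Y$ is empty are handled directly from the definitions of $\mathcal B_K$ and $\mathcal F_{\widehat K}$ on $\emptyset$, since the only morphism to (respectively, from) the trivial group is uniquely determined.

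The argument is essentially bookkeeping: the substantive content has already been verified in Proposition~\ref{sigmatau}, so the only thing to be careful about is tracking variances and making the identification \eqref{PD1} explicit. The main (very minor) obstacle is being precise that the equality asserted in the corollary is to be understood modulo the canonical natural isomorphism $\widehat{K^X}\cong\widehat K^{(X)}$ of \eqref{PD1}, rather than as literal set-theoretic equality; once this convention is adopted, the verification is immediate from Proposition~\ref{sigmatau}.
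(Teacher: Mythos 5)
Your proof is correct and follows exactly the route the paper intends: the corollary is stated there as a direct consequence of Proposition~\ref{sigmatau} together with the identification \eqref{PD1}, and your write-up simply makes explicit the object-level identification, the variance bookkeeping, and the empty-set cases that the paper leaves implicit. No gaps.
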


The following is the counterpart of Theorem~\ref{SetBT2*} for the algebraic entropy and the covariant set-theoretic entropy.

\begin{Theorem}\label{SetBT3*}
Let $\gamma: \mathbf{Set}\to \mathbf{AG}$ be a covariant functor sending coproducts to coproducts, and let $K = \gamma(*)$. Then: 
\begin{itemize}
\item[(a)] the pair $(\mathfrak h, h_{alg})$ satisfies $BT_{\gamma,\infty}$, if $K$ is infinite; 
\item[(b)] the pair $(\h_{\atr},\h_{\pet})$ satisfies $SBT_{\gamma,\log|K|}^w$, if $K$ is finite.
\end{itemize}
In particular, the pair $(\mathfrak h, h_{alg})$ satisfies $BT_{\gamma,\log|K|}$ (with the convention that $\log|K|=\infty$ if $K$ is infinite).
\end{Theorem}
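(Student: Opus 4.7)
The plan is to reduce Theorem~\ref{SetBT3*} to the specific case $\gamma = \mathcal F_K$ of the functor of the forward generalized shift, where $K = \gamma(*)$. By Theorem~\ref{New_Lemma2} there is a natural equivalence $\gamma\simeq \mathcal F_K$, so Lemma~\ref{neSBT}(a) transports $SBT^w_{\mathcal F_K,\log|K|}$ to $SBT^w_{\gamma,\log|K|}$ and Remark~\ref{BTne} transports the Bridge Theorem accordingly. Moreover, by Theorem~\ref{MainBT}, part (b) for $\mathcal F_K$ automatically yields the Bridge Theorem $BT_{\gamma,\log|K|}$ in the finite-$K$ case, so the ``in particular'' assertion requires extra work only for infinite $K$, where it is handled directly by part (a).

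For part (b), assuming $K$ is finite (and nontrivial, the case $|K|=1$ being obvious), I define a natural transformation $\eta: \atr \to \pet \circ \mathcal F_K$ by setting, for $F\in \mathcal S(X)$,
$$\eta_X(F) = \{f\in K^{(X)} : \mathrm{supp}(f)\subseteq F\},$$
with $\eta_X(\emptyset)=\{0\}$. This is a finite subgroup of $K^{(X)}$ of order $|K|^{|F|}$, hence an element of $\pet(K^{(X)}) = \sH(K^{(X)})$ with norm $|F|\cdot \log|K|$. The map $\eta_X$ is a semigroup monomorphism because $\eta_X(F_1\cup F_2)=\eta_X(F_1)+\eta_X(F_2)$, and for subgroups the subset sum used in $\pet$ agrees with the subgroup sum; its image is cofinal in $\sH(K^{(X)})$, since any finite $H\subseteq K^{(X)}$ is contained in $\eta_X(F_H)$ for $F_H$ the union of the supports of the elements of $H$; and the norm satisfies $v(\eta_X(F)) = \log|K|\cdot v(F)$. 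Naturality $\tau_\lambda(\eta_X(F)) = \eta_Y(\lambda(F))$ for any map $\lambda: X\to Y$ follows from the explicit formula \eqref{fgseq} for $\tau_\lambda$, using that $\lambda\restriction_F: F \to \lambda(F)$ is surjective to obtain the nontrivial inclusion. Thus each $\eta_X$ is a weak isomorphism with coefficient $\log|K|$, yielding $SBT^w_{\mathcal F_K,\log|K|}$.

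For part (a), assume $K$ is infinite. If $\mathfrak h(\lambda)>0$, fix an infinite $\lambda$-invariant set $N=\{x_n\}_{n\in\N}\subseteq X$ on which $\lambda$ acts as the successor shift $x_n\mapsto x_{n+1}$. Then $K^{(N)}\leq K^{(X)}$ is a $\tau_\lambda$-invariant subgroup with $\tau_\lambda\restriction_{K^{(N)}} = \tau_{\lambda\restriction_N}$ conjugate, via the bijection $N\cong \N$, to the one-sided right Bernoulli shift $\beta_K^\oplus$ on $K^{(\N)}$; combining Monotonicity for subflows, Invariance under conjugation, and \eqref{halgbeta} gives $h_{alg}(\tau_\lambda)\geq h_{alg}(\beta_K^\oplus)=\log|K|=\infty$. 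If instead $\mathfrak h(\lambda)=0$, every finite subset of $X$ is contained in some $\lambda$-invariant finite $F\subseteq X$, so $K^{(X)}=\varinjlim_F K^{(F)}$ over $\tau_\lambda$-invariant subgroups indexed by such $F$, and Continuity for direct limits of $h_{alg}$ reduces matters to proving $h_{alg}(\tau_{\lambda\restriction_F})=0$ for each finite $\lambda$-invariant $F$. But $\lambda\restriction_F$ is eventually periodic, and the functoriality $\tau_{\mu\circ\nu}=\tau_\mu\circ\tau_\nu$ then shows $\tau_{\lambda\restriction_F}$ is quasi-periodic; since $K^{(F)}$ is abelian, $\pet(K^{(F)})$ has arithmetic norm by Remark~\ref{polgrowth}, and Theorem~\ref{quasi-per} delivers the required vanishing.

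The main obstacle is the naturality verification in part (b), specifically establishing both inclusions in $\tau_\lambda(\eta_X(F))=\eta_Y(\lambda(F))$; the nontrivial direction requires, for each $y\in\lambda(F)$, choosing a preimage in $\lambda^{-1}(y)\cap F$ to realize an arbitrary character at the $y$-coordinate, so the cofinality hypothesis on $F$ together with the explicit fiberwise-sum form of $\tau_\lambda$ are essential. A secondary care is to restrict the direct-limit decomposition in the second case of (a) to $\lambda$-invariant finite subsets, which is legitimate precisely because $\mathfrak h(\lambda)=0$ makes them cofinal among all finite subsets of $X$.
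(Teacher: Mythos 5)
Your proof is correct, but it takes a genuinely different route from the paper's. The paper deduces Lemma~\ref{SetBT3} (the case $\gamma=\mathcal F_K$) from the already-proved topological case Lemma~\ref{SetBT2} by Pontryagin duality: Corollary~\ref{aboveclaim} identifies $\mathcal F_{\widehat K}$ with the composite of the duality functor and $\mathcal B_K$, and the weak isomorphisms of Lemma~\ref{SetBT2} are then composed with those of Weiss' Bridge Theorem (Theorem~\ref{WeissBT}) when $K$ is finite, respectively combined with the deep general Bridge Theorem~\ref{BTgen} when $K$ is infinite. You instead work entirely on the algebraic side: for (b) you exhibit the natural transformation $\eta_X(F)=\{f\in K^{(X)}:\mathrm{supp}(f)\subseteq F\}$ directly and check the norm scaling $v(\eta_X(F))=\log|K|\cdot|F|$, cofinality in $(\sM(K^{(X)}),\subseteq)$, and the naturality identity $\tau_\lambda(\eta_X(F))=\eta_Y(\lambda(F))$ by hand (the surjectivity of $\lambda\restriction_F:F\to\lambda(F)$ being exactly what gives the nontrivial inclusion); for (a) you transplant the two-case argument of Lemma~\ref{SetBT2}(a), replacing the left Bernoulli shift and Continuity for inverse limits by the right Bernoulli shift $\beta_K^\oplus$, formula \eqref{halgbeta}, and Continuity for direct limits of $h_{alg}$ over the finite $\lambda$-invariant subsets, which are cofinal precisely because $\mathfrak h(\lambda)=0$. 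Your route is self-contained and avoids the compact-group machinery altogether: it needs neither Lemma~\ref{circwi} to compose weak isomorphisms, nor the silent passage between $\h_{\sub}$, $\h_{\cov}$, $\h_{\sub_o^\star}$ and $\h_{\pet}$ that the paper's terse composition leaves implicit, nor (for infinite $K$) the general Bridge Theorem~\ref{BTgen}. What the paper's route buys is brevity given the duality apparatus already in place, and it showcases how Strong Bridge Theorems compose along functors. Both arguments rest on the same two external inputs: the reduction from $\gamma$ to $\mathcal F_K$ via Theorem~\ref{New_Lemma2}, Lemma~\ref{neSBT}(a) and Remark~\ref{BTne}, and the dichotomy that $\mathfrak h(\lambda)>0$ yields an infinite string on which $\lambda$ acts as a shift while $\mathfrak h(\lambda)=0$ makes every finite subset absorbable into a finite invariant one.
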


Theorem~\ref{SetBT3*} follows directly from Lemma~\ref{SetBT3}, Theorem~\ref{New_Lemma2}, Lemma~\ref{neSBT}(a), and Remark~\ref{BTne}. 
We deduce Lemma~\ref{SetBT3} from Lemma~\ref{SetBT2} using the nice properties of the Pontryagin duality functor.

\begin{Lemma}\label{SetBT3}
Theorem~\ref{SetBT3*} holds with $\gamma=\mathcal F_K$, for an abelian group $K$.
\end{Lemma}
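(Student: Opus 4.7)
The plan is to deduce this from Lemma~\ref{SetBT2} via Pontryagin-van Kampen duality. The starting observation is that Corollary~\ref{aboveclaim}, applied to the compact abelian group $\widehat K$ (in place of its ``$K$''), together with the natural isomorphism $\widehat{\widehat K}\cong K$, yields a natural equivalence
$$\mathcal F_K \;\cong\; \widehat{\;\;}\circ\mathcal B_{\widehat K}$$
of functors $\mathbf{Set}\to\mathbf{AG}$. In view of Remark~\ref{BTne} and Lemma~\ref{neSBT}(a), it suffices to establish the conclusions of (a) and (b) with $\mathcal F_K$ replaced by this composition.

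For part (a), if $K$ is infinite then so is $\widehat K$, so Lemma~\ref{SetBT2}(a) gives that $(\mathfrak h, h_{top})$ satisfies $BT_{\mathcal B_{\widehat K},\infty}$. Combining this with Theorem~\ref{BTgen}, which asserts that $(h_{top},h_{alg})$ satisfies $BT_{\;\widehat{}\;}$ for $\widehat{\;\;}:\mathbf{CAG}\to\mathbf{AG}$, and using Proposition~\ref{sigmatau} to identify $\widehat{\sigma_\lambda}$ with $\tau_\lambda$ for every selfmap $\lambda:X\to X$, one immediately obtains $h_{alg}(\tau_\lambda)=\infty\cdot\mathfrak h(\lambda)$, i.e., $BT_{\mathcal F_K,\infty}$ for the pair $(\mathfrak h, h_{alg})$.

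For part (b), when $K$ is finite, $\widehat K$ is also finite, so $\mathcal B_{\widehat K}(X)=\widehat K^X$ lies in $\mathbf{TdCAG}$ for every set $X$ and the Pontryagin dual $\widehat{\widehat K^X}=\mathcal F_K(X)=K^{(X)}$ lies in $\mathbf{TAG}$. Since $|\widehat K|=|K|$, Lemma~\ref{SetBT2}(b) provides a natural transformation $\eta:\atr\to\mathfrak{sub}_o^\star\circ\mathcal B_{\widehat K}$ with each $\eta_X$ a weak isomorphism of coefficient $\log|K|$, witnessing $SBT^w_{\mathcal B_{\widehat K},\log|K|}$ for $(\h_{\atr},\h_{\mathfrak{sub}_o^\star})$. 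Theorem~\ref{WBT}(b) in turn provides a natural transformation $\nu:\mathfrak{sub}_o^\star\to\mathfrak{sub}\circ\widehat{\;\;}$ (given by annihilators) with each component a uniform isomorphism of coefficient $1$, witnessing $SBT_{\;\widehat{}\;}$ for $(\h_{\mathfrak{sub}_o^\star},\h_{\mathfrak{sub}})$. By Lemma~\ref{circwi}, the componentwise composition $\nu_{\mathcal B_{\widehat K}(X)}\circ\eta_X$ is a weak isomorphism of coefficient $\log|K|$, so $(\h_{\atr},\h_{\mathfrak{sub}})$ satisfies $SBT^w_{\;\widehat{}\;\circ\mathcal B_{\widehat K},\log|K|}$, hence $SBT^w_{\mathcal F_K,\log|K|}$ by Lemma~\ref{neSBT}(a). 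Finally, since each $K^{(X)}$ is torsion abelian, the inclusion $\mathcal F(G)\hookrightarrow\mathcal H(G)$ (any finite subset of a torsion abelian group is contained in the finite subgroup it generates) defines a natural transformation $\mathfrak{sub}\to\pet$ on $\mathbf{TAG}$ whose components are norm-preserving homomorphisms with cofinal image, i.e., weak isomorphisms of coefficient $1$; one more application of Lemma~\ref{circwi} delivers $SBT^w_{\mathcal F_K,\log|K|}$ for the pair $(\h_{\atr},\h_{\pet})$. The ``In particular'' clause follows from Theorem~\ref{MainBT}. The main obstacle is the careful bookkeeping of natural transformations and target categories ensuring that at each composition step the hypotheses of Lemma~\ref{circwi} are met and the coefficients multiply correctly; once the identification $\mathcal F_K\cong\widehat{\;\;}\circ\mathcal B_{\widehat K}$ is in hand, the rest is essentially assembling existing pieces.
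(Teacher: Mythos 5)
Your proof is correct and follows the same basic route as the paper: reduce to Lemma~\ref{SetBT2} via Pontryagin duality and Corollary~\ref{aboveclaim}. The paper's own argument is much terser, and your version actually supplies three details that the paper glosses over. First, you make the identification $\mathcal F_K\cong\;\widehat{}\;\circ\mathcal B_{\widehat K}$ explicit (applying Corollary~\ref{aboveclaim} to the compact group $\widehat K$ and using $\widehat{\widehat K}\cong K$), which is what lets the argument work uniformly for infinite $K$; the paper invokes Corollary~\ref{aboveclaim} with $\mathcal B_K$ itself, which only literally applies when $K$ is finite (hence compact). Second, in the finite case the paper cites Theorem~\ref{WeissBT} for a Strong$^w$ Bridge Theorem in the direction $\mathbf{TdCAG}\to\mathbf{TAG}$, but Theorem~\ref{WeissBT} only establishes $SBT^w$ for the functor $\mathbf{TAG}\to\mathbf{TdCAG}$, and by Remark~\ref{remark**} the weak version does not formally invert; your substitution of Theorem~\ref{WBT}(b), which gives a genuine $SBT_{\;\widehat{}\;}$ with coefficient $1$ in the needed direction for $(\h_{\mathfrak{sub}_o^\star},\h_{\sub})$, is the cleaner and strictly correct reference. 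Third, since Theorem~\ref{SetBT3*}(b) is stated for the pair $(\h_{\atr},\h_{\pet})$ rather than $(\h_{\atr},\h_{\sub})$, your final composition with the norm-preserving cofinal inclusion $\sF(G)\hookrightarrow\sH(G)$ on torsion abelian groups (a weak isomorphism with coefficient $1$, compatible with Lemma~\ref{circwi}) is a step the paper omits but genuinely needs. In short: same strategy, but your bookkeeping closes small gaps in the published proof.
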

\begin{proof}
If $K$ is finite, then the pair $(\mathfrak h,h_{top})$ satisfies $SBT^w_{\mathcal B_K,\log|K|}$ by Lemma~\ref{SetBT2}, while $(h_{top},h_{alg})$ satisfies $SBT^w_{\;\widehat{}\;}$ (where $\;\widehat{}:\mathbf{CAG}\to\mathbf{AG}$ is the Pontryagin duality functor), by Theorem~\ref{WeissBT}. Then the pair  $(\mathfrak h, h_{alg})$ satisfies $SBT^w_{\mathcal B_K,\log|K|}$ by Corollary~\ref{aboveclaim}.
Analogously, the case when $K$ is infinite follows from Lemma~\ref{SetBT2}, Theorem~\ref{BTgen} and Corollary~\ref{aboveclaim}.
\end{proof}

\section{Topological and algebraic entropy in locally compact groups}\label{NewSec2}

In this section, in order to find the topological entropy and the algebraic entropy for locally compact groups as functorial entropies, we need to use the larger category $\Se^*$ instead of $\Se$. In fact, as noted in the Introduction, these entropies are not invariant under inversion: if $G$ is a totally disconnected locally compact group and $\phi:G\to G$ is a topological automorphism, then $h_{top}(\phi^{-1})=h_{top}(\phi)-\log\Delta(\phi)$, where $\Delta(\phi)$ is the modulus of $\phi$ (see \cite{DG-tdlc}), and the same formula holds for the algebraic entropy $h_{alg}$ when $G$ is strongly compactly covered (see \cite{GBST}). 

\medskip
Let $G$ be a locally compact group, let $\mathcal N(G)$ be the family of all compact neighborhoods of $1$ and $\mu$ be a right Haar measure on $G$. For a continuous endomorphism $\phi: G \to G$, $U\in\mathcal N(G)$ and $n\in\N_+$, the $n$-th $\phi$-cotrajectory $C_n(\phi,U)=U\cap \phi^{-1}(U)\cap\ldots\cap\phi^{-n+1}(U)$ is still in $\mathcal N(G)$. 
It can be shown that the value
$$H_{top}(\phi,U)=\limsup _{n\to \infty} - \frac{\log \mu (C_n(\phi,U))}{n},$$
is independent of the choice of the Haar measure $\mu$. The \emph{topological entropy} of $\phi$ is 
$$\newsym{topological entropy}{h_{top}}(\phi)=\sup\{H_{top}(\phi,U):U\in\mathcal N(G)\}.$$
We are using here the same notation $H_{top}$ and $h_{top}$ that we have already used in \S\ref{htop-sec} for the topological entropy of continuous selfmaps of compact spaces. 
This is safe since there is no possibility of confusion, moreover the two topological entropies coincide for continuous endomorphisms of compact groups.

\smallskip
If $G$ is discrete, then $\mathcal N(G)$ is the family of all finite subsets of $G$ containing $1$, and $\mu(A) = |A|$ for subsets $A$ of $G$. So $H_{top}(\phi,U)= 0$ for every $U \in \mathcal N(G)$, hence $h_{top}(\phi)=0$. 

\smallskip
To obtain the topological entropy $h_{top}(\phi)$ of a continuous endomorphism $\phi: G \to G$ of a locally compact group via the semigroup entropy, let $\mathcal V(G)$ be the family of all closed neighborhoods of $1$ in $G$.  Then 
$$\mathcal N(G)\subseteq \mathcal V(G).$$
For $U\in\mathcal V(G)$, let $$v_T(U)=\begin{cases}-\log\mu(U) & \text{if}\  \mu(U)\leq 1\\ 0 &\text{otherwise};\end{cases}$$
note that the second case includes also the possibility $\mu(U)=\infty$. Then $(\mathcal V(G),\cap, v_T)$ is a normed semilattice.  
We order $\mathcal V(G)$ by the containment, that is, $(\mathcal V(G),\supseteq)$. Then the norm $v_T$ is monotone with respect to this order and $\mathcal N(G)$ is cofinal in $(\mathcal V(G),\supseteq)$.

Moreover, we associate to a continuous endomorphism $\phi:G\to G$ the semigroup endomorphism $\phi_T:\mathcal V(G)\to\mathcal V(G)$ defined by $\phi_T(U)=\phi^{-1}(U)$ for every $U\in\mathcal V(G)$. The assignments $G \mapsto \mathcal V(G)$ and $\phi\mapsto\phi_T$ define a contravariant functor $$\boxed{\newsym{functor $\mathfrak{lctop}:\mathbf{LCG}\to \mathfrak{S}^*$}{\mathfrak{lctop}}:\mathbf{LCG}\to \mathfrak{S}^*.}$$

\begin{Theorem}
Let $G$ be a locally compact group and $\phi:G\to G$ a continuous endomorphism. Then $H_{top}(\phi,U)=\H_{\mathfrak{lctop}}(\phi,U)$ for every $U\in\mathcal N(G)$, and so $$h_{top}(\phi)=\h_{\mathfrak{lctop}}(\phi).$$
\end{Theorem}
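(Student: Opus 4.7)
The plan is to unwind the definitions so that $H_{top}(\phi,U)$ and $\H_{\mathfrak{lctop}}(\phi,U)$ arise as the $\limsup$ of two sequences that eventually agree term by term. First, I would observe that since the semigroup operation on $\mathcal V(G)$ is intersection and $\phi_T(W) = \phi^{-1}(W)$, the $n$-th $\phi_T$-trajectory of $U$ is exactly
$$T_n(\phi_T,U) = U\cap \phi^{-1}(U)\cap\ldots\cap \phi^{-n+1}(U) = C_n(\phi,U).$$
When $U\in\mathcal N(G)$, this is a closed subset of the compact set $U$, hence compact with finite Haar measure. By the definition of the semigroup entropy, $c_n(\phi_T,U) = v_T(C_n(\phi,U))$, so the two quantities to be compared reduce to $\limsup$'s of sequences built from $\mu(C_n(\phi,U))$.

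The next step is to compare $v_T(C_n(\phi,U))$ with $-\log\mu(C_n(\phi,U))$, exploiting the crucial monotonicity $C_{n+1}(\phi,U)\subseteq C_n(\phi,U)$, which makes $\{\mu(C_n(\phi,U))\}_{n}$ non-increasing. Two cases will arise. If there exists $n_0$ with $\mu(C_{n_0}(\phi,U))\le 1$, then for every $n\ge n_0$ one has $v_T(C_n(\phi,U)) = -\log\mu(C_n(\phi,U))$, and since finitely many initial terms do not affect a $\limsup$, the two $\limsup$'s coincide. Otherwise $\mu(C_n(\phi,U))>1$ for all $n$, so $v_T(C_n(\phi,U))=0$ identically and $\H_{\mathfrak{lctop}}(\phi,U)=0$; in this regime the sequence $-\log\mu(C_n(\phi,U))$ is non-decreasing and bounded within $[-\log\mu(U),0)$, so after dividing by $n$ and taking $\limsup$ one likewise gets $H_{top}(\phi,U)=0$. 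In either case the pointwise equality $H_{top}(\phi,U) = \H_{\mathfrak{lctop}}(\phi,U)$ holds.

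Finally, to derive $h_{top}(\phi) = \h_{\mathfrak{lctop}}(\phi)$, I would note that $h_{top}(\phi)$ is a supremum over $\mathcal N(G)$ while $\h_{\mathfrak{lctop}}(\phi)$ is a supremum over the larger family $\mathcal V(G)$. The two suprema agree by a cofinality-plus-monotonicity argument: given $U\in\mathcal V(G)$, local compactness allows us to pick $U'\in\mathcal N(G)$ with $U'\subseteq U$, showing that $\mathcal N(G)$ is cofinal in $(\mathcal V(G),\supseteq)$; then $C_n(\phi,U')\subseteq C_n(\phi,U)$ forces $v_T(C_n(\phi,U'))\ge v_T(C_n(\phi,U))$, so $\H_{\mathfrak{lctop}}(\phi,U')\ge \H_{\mathfrak{lctop}}(\phi,U)$. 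Hence $\sup_{U\in\mathcal V(G)}=\sup_{U\in\mathcal N(G)}$.

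The step requiring the most care is the ``$\mu(C_n)>1$ forever'' regime: the semigroup norm $v_T$ is truncated at $0$ precisely to land in $\R_{\geq 0}$, whereas $H_{top}$ is defined as a $\limsup$ that could a priori be negative, and reconciling them requires the boundedness observation that collapses the $\limsup$ to zero. A second minor point is that the cofinality step in the final paragraph must be verified by hand rather than by invoking Lemma~\ref{mons} directly, since $\mathcal N(G)$ need not be $\phi_T$-invariant: preimages of compact sets under a continuous homomorphism are closed but need not be compact.
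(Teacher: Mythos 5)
Your proof is correct and follows essentially the same route as the paper: identify $T_n(\phi_T,U)=C_n(\phi,U)$, compare $c_n(\phi_T,U)$ with $-\log\mu(C_n(\phi,U))$, and pass from $\mathcal N(G)$ to $\mathcal V(G)$ via cofinality and monotonicity. Your case analysis in the regime $\mu(C_n(\phi,U))>1$ is in fact more careful than the paper's, which asserts the identity $-\log\mu(C_n(\phi,U))=c_n(\phi_T,U)$ outright even though $v_T$ is truncated at $0$; since both $\limsup$'s vanish in that regime, the conclusion is unaffected.
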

\begin{proof} 
Let $U\in \mathcal N(G)$. For every $n\in\N_+$ we have that $$T_n(\phi_T,U)=C_n(\phi,U);$$
applying the definitions we can conclude that $-\log\mu(C_n(\phi,U))=c_n(\phi_T, U)$. So, 
\begin{equation}\label{UU}
H_{top}(\phi,U)=h_{\Se^*}(\phi_T,U)=\H_{\mathfrak{lctop}}(\phi,U).
\end{equation}
 As $\mathcal N(G)$ is cofinal in $(\mathcal V(G),\supseteq)$, now \eqref{UU} implies that $h_{top}(\phi)=\h_{\mathfrak{lctop}}(\phi)$. 
\end{proof}

\medskip
Let $G$ be a locally compact group, $\mu$ a right Haar measure on $G$ and $\phi:G\to G$ a continuous endomorphism. To define the algebraic entropy of $\phi$ with respect to $U\in\mathcal N(G)$ one uses the {$n$-th $\phi$-trajectory} $T_n(\phi,U)=U\cdot \phi(U)\cdot \ldots\cdot \phi^{n-1}(U)$ of $U$, that still belongs to $\mathcal N(G)$; the term ``algebraic'' is motivated by the fact that the definition of $T_n(\phi,U)$ (unlike $C_n(\phi,U)$) makes use of the group operation.

It turns out that the value 
\begin{equation}\label{**}
H_{alg}(\phi,U)=\limsup_{n\to \infty} \frac{\log \mu (T_n(\phi,U))}{n}
\end{equation}
does not depend on the choice of $\mu$. It can be shown that  $H_{alg}(\phi,U)< \infty$.
The \emph{algebraic entropy} of $\phi$ is $$\newsym{algebraic entropy}{h_{alg}}(\phi)=\sup\{H_{alg}(\phi,U):U\in\mathcal N(G)\}.$$
(What Peters defined actually was, in our notation, $h_{alg}(\phi^{-1})$ for a topological automorphism $\phi$; it is denoted by $h_{\infty}(\phi)$ in \cite{Pet1}.) 

As we saw above \eqref{**} is a limit when $G$ is discrete. Moreover, if $G$ is compact, then $h_{alg}(\phi)=H_{alg}(\phi,G)=0$.

\smallskip
Let $\mathcal K(G)$ be the family of all compact subsets of $G$ containing $1$. Then $$\mathcal N(G)=\mathcal K(G)\cap\mathcal V(G).$$
For $U\in\mathcal K(G)$, let $$v_A(U)=\begin{cases}\log\mu(U) & \text{if}\  \mu(U)\geq 1\\ 0 &\text{otherwise}.\end{cases}$$
Then $(\mathcal K(G),\cdot, v_A)$ is a normed semigroup containing $\mathcal N(G)$ as a subsemigroup. 
We consider on $\mathcal K(G)$ the order given by the containment, that is, $(\mathcal K(G),\subseteq)$.
Then the norm $v_A$ is monotone with respect this order and $\mathcal N(G)$ is cofinal in $(\mathcal K(G),\subseteq)$. 

Moreover, we associate to a continuous endomorphism $\phi:G\to G$ the semigroup endomorphism $\phi_A:\mathcal K(G)\to\mathcal K(G)$ defined by $\phi_A(U)=\phi(U)$ for every $U\in\mathcal K(G)$.
The assignments $G \mapsto \mathcal K(G)$ and $\phi\mapsto\phi_A$ define a contravariant functor 
$$\boxed{\newsym{functor $\mathfrak{lcalg}:\mathbf{LCG}\to \mathfrak{S}^*$}{\mathfrak{lcalg}}:\mathbf{LCG}\to \mathfrak{S}^*.}$$

\begin{Theorem}
Let $G$ be a locally compact group and $\phi:G\to G$ a continuous endomorphism. Then $H_{alg}(\phi,U)=\H_{\mathfrak{lcalg}}(\phi,U)$ for every $U\in\mathcal N(G)$, and so $$h_{top}(\phi)=\h_{\mathfrak{lcalg}}(\phi).$$
\end{Theorem}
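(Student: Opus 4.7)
The plan is to mirror the structure of the proof of the preceding theorem for $h_{top}$: first I would establish the pointwise identity $H_{alg}(\phi,U)=\H_{\mathfrak{lcalg}}(\phi,U)$ for every $U\in\mathcal N(G)$, and then deduce the equality of the suprema by using the cofinality of $\mathcal N(G)$ in $(\mathcal K(G),\subseteq)$ together with the monotonicity of the norm $v_A$ and of $\phi_A$.

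For the pointwise step, I would begin by unwinding the definition of $\phi_A$ to note that $T_n(\phi_A,U)=U\cdot\phi(U)\cdot\ldots\cdot\phi^{n-1}(U)=T_n(\phi,U)$, so that the comparison between $H_{alg}(\phi,U)$ and $\H_{\mathfrak{lcalg}}(\phi,U)$ reduces to comparing $v_A(T_n(\phi,U))$ with $\log\mu(T_n(\phi,U))$. Unlike the topological counterpart (where $v_T$ is essentially $-\log\mu$ on $\mathcal N(G)$ and no truncation plays a role), here the two quantities need not coincide: the norm $v_A$ is truncated to $0$ on sets of measure $<1$ while $\log\mu$ is then negative.

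The main obstacle will be handling this truncation. My plan is to exploit the fact that $T_n(\phi,U)\supseteq U$, which yields $\log\mu(T_n(\phi,U))\geq\log\mu(U)>-\infty$ and therefore $\liminf_n\log\mu(T_n(\phi,U))/n\geq 0$; in particular $H_{alg}(\phi,U)\geq 0$. Writing $v_A(T_n(\phi,U))=\max\{0,\log\mu(T_n(\phi,U))\}$ and invoking the elementary identity $\limsup_n\max\{0,a_n\}=\max\{0,\limsup_n a_n\}$, I would conclude
\[
\H_{\mathfrak{lcalg}}(\phi,U)=\max\{0,H_{alg}(\phi,U)\}=H_{alg}(\phi,U),
\]
which is the desired pointwise equality.

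For the global identity, the inequality $\h_{\mathfrak{lcalg}}(\phi)\geq h_{alg}(\phi)$ is immediate from the pointwise equality together with $\mathcal N(G)\subseteq\mathcal K(G)$. For the reverse, given any $V\in\mathcal K(G)$, cofinality of $\mathcal N(G)$ in $(\mathcal K(G),\subseteq)$ supplies some $U\in\mathcal N(G)$ with $V\subseteq U$; since $\phi_A$ and $v_A$ are monotone with respect to $\subseteq$, this forces $T_n(\phi_A,V)\subseteq T_n(\phi_A,U)$ and hence $\H_{\mathfrak{lcalg}}(\phi,V)\leq\H_{\mathfrak{lcalg}}(\phi,U)=H_{alg}(\phi,U)\leq h_{alg}(\phi)$. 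Taking the supremum over $V\in\mathcal K(G)$ will complete the proof, noting (as seems intended) that the ``$h_{top}$'' in the stated formula is a typographical slip for $h_{alg}$.
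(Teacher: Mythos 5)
Your proof is correct and follows essentially the same route as the paper's: identify $T_n(\phi_A,U)$ with $T_n(\phi,U)$, get the pointwise equality $H_{alg}(\phi,U)=\H_{\mathfrak{lcalg}}(\phi,U)$ on $\mathcal N(G)$, and pass to the suprema via cofinality of $\mathcal N(G)$ in $(\mathcal K(G),\subseteq)$ and monotonicity of $v_A$ and $\phi_A$. In fact you are slightly more careful than the paper, which simply asserts $\log\mu(T_n(\phi,U))=c_n(\phi_A,U)$ and ignores the truncation of $v_A$ at $0$; your observation that $T_n(\phi,U)\supseteq U$ forces $H_{alg}(\phi,U)\geq 0$, combined with $\limsup_n\max\{0,a_n\}=\max\{0,\limsup_n a_n\}$, closes that small gap, and you are right that the stated ``$h_{top}$'' is a typo for $h_{alg}$.
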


\begin{proof} Let  $U\in \mathcal N(G)$. For every $n\in\N_+$ we have that
$$T_n(\phi_A,U)=T_n(\phi,U),$$
so applying the definitions we can conclude that $\log\mu(T_n(\phi,U))=c_n(\phi_A, U)$. Therefore, 
\begin{equation}\label{lalala}
H_{alg}(\phi,U)=h_{\Se^*}(\phi_A,U)=\H_{\mathfrak{lcalg}}(\phi,U).
\end{equation} 
As $\mathcal N(G)$ is cofinal in $(\mathcal K(G),\subseteq)$, now \eqref{lalala} implies that $h_{alg}(\phi)=\h_{\mathfrak{lcalg}}(\phi)$. 
\end{proof}

\medskip
Hereinafter we speak about the (Strong) Bridge Theorem, even if the functors considered in this section have $\Se^*$, and no more $\Se$, as a target. The above definitions can be generalized in the obvious way, so we are not going to do that, although we shall use $SBT_{\;\widehat{}\;}$ in the obvious sense. 

\smallskip
Considering the restriction
\begin{equation}\label{CCLCA}
\widehat{}: \mathbf{TdLCA} \to \mathbf{CcLCA}
\end{equation}
of the Pontryagin duality functor, where the objects of \newsym{category of totally disconnected locally compact abelian groups and continuous homomorphisms}{$\mathbf{TdLCA}$} are all totally disconnected locally compact abelian groups and the objects of \newsym{category of compactly covered locally compact abelian groups and continuous homomorphisms}{$\mathbf{CcLCA}$} are all compactly covered locally compact abelian groups, Theorem~\ref{BT2} reads as follows:

\begin{Theorem}
For the functor \eqref{CCLCA} the pair $(h_{top},h_{alg})$ satisfies $BT_{\;\widehat{}\;}$. 
Similarly, the pair $(h_{alg},h_{top})$ satisfies $BT_{\;\widehat{}\;}$ for the inverse functor $\ \widehat{}: \mathbf{CcLCA} \to \mathbf{TdLCA}$.
\end{Theorem}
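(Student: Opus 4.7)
The second assertion follows from the first one by invoking Remark~\ref{remark**}(a) in combination with Pontryagin duality, since the two functors are mutually quasi-inverse equivalences of categories (so $C=1$ is its own reciprocal). Hence the whole task reduces to the first assertion, which is the categorical reformulation of Theorem~\ref{BT2} from \cite{DGB-BT}: for every continuous endomorphism $\phi:G\to G$ of a totally disconnected LCA group $G$, one must verify
\[
h_{top}(\phi)=h_{alg}(\widehat\phi).
\]

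The plan is to realize both sides by a common numerical quantity indexed by compact open subgroups. First, van Dantzig's theorem guarantees that compact open subgroups form a local base at $1$ in $G\in\mathbf{TdLCA}$, hence a cofinal subfamily of $\mathcal N(G)$ (a cofinality argument in the spirit of Claim~\ref{Claim18March} reduces the supremum defining $h_{top}$ to this subfamily). For such $U$, the cotrajectory $C_n(\phi,U)$ is again a compact open subgroup of $U$ of finite index; choosing the Haar measure on $G$ so that $\mu_G(U)=1$ gives $\mu_G(C_n(\phi,U))=[U:C_n(\phi,U)]^{-1}$, yielding the limit-free style formula
\[
H_{top}(\phi,U)=\lim_{n\to\infty}\frac{\log[U:C_n(\phi,U)]}{n}.
\]

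Dually, I would exploit the annihilator correspondence: if $U\leq G$ is a compact open subgroup, then $U^\perp\leq\widehat G$ is a compact open subgroup, and this bijection restricts between the two families (open-in-$G$ correspond to compact-in-$\widehat G$ and vice versa). The compactly covered hypothesis on $\widehat G$ ensures that these annihilators are cofinal in $\mathcal N(\widehat G)$ for the computation of the algebraic entropy of $\widehat\phi$. The key algebraic identity is
\[
\bigl(C_n(\phi,U)\bigr)^\perp = T_n(\widehat\phi,U^\perp),
\]
which, by the antitone and sum-preserving nature of the annihilator on the lattice of closed subgroups, is reduced to the single identity $(\phi^{-1}(U))^\perp=\widehat\phi(U^\perp)$. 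This latter fact is the dualisation of the canonical injection $G/\phi^{-1}(U)\hookrightarrow G/U$ induced by $\phi$, using the Pontryagin correspondence between injections of discrete groups and surjections of compact groups.

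Combining these ingredients, the standard duality $[A:B]=[B^\perp:A^\perp]$ for open subgroups $B\leq A$ gives $[U:C_n(\phi,U)]=[T_n(\widehat\phi,U^\perp):U^\perp]$; normalising the Haar measure on $\widehat G$ so that $\mu_{\widehat G}(U^\perp)=1$ produces the pointwise equality $H_{top}(\phi,U)=H_{alg}(\widehat\phi,U^\perp)$, and passing to the supremum over compact open $U\leq G$ (cofinal on both sides) yields the Bridge Theorem. The hardest step is neither the identity $(\phi^{-1}(U))^\perp=\widehat\phi(U^\perp)$ nor the index duality (both of which are essentially formal consequences of Pontryagin duality) but rather the cofinality of $\{U^\perp:U\text{ compact open in }G\}$ in $\mathcal N(\widehat G)$ for computing $h_{alg}(\widehat\phi)$; this is precisely where the compactly covered hypothesis on $\widehat G$ is used and is the content of the limit-free techniques developed in \cite{DG-lf,GBST,GBV}.
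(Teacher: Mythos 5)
Your argument is correct in substance, but it is worth pointing out that the paper does not actually prove this theorem internally: it presents the statement as a mere reformulation, in the language of $BT_{\;\widehat{}\;}$, of Theorem~\ref{BT2}, whose proof is delegated to \cite{DGB-BT}. What you have written is therefore a reconstruction of that external proof rather than a parallel to anything in the text. Your reconstruction follows the right lines and all the key steps check out: the reduction of $h_{top}(\phi)$ to compact open subgroups $U$ via van Dantzig (these are cofinal in $(\mathcal N(G),\supseteq)$ and the norm is monotone); the identity $(\phi^{-1}(U))^\perp=\widehat\phi(U^\perp)$ (no closure needed, since the image of the compact group $U^\perp$ is compact, hence closed); the identity $(C_n(\phi,U))^\perp=T_n(\widehat\phi,U^\perp)$, where one should note that $(A\cap B)^\perp=A^\perp+B^\perp$ holds here precisely because the sum of the compact annihilators is compact, hence closed; the index duality $[U:C_n(\phi,U)]=[T_n(\widehat\phi,U^\perp):U^\perp]$; and the deduction of the second assertion from the first via Remark~\ref{remark**}(a). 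Since the same integer sequence computes both $H_{top}(\phi,U)$ and $H_{alg}(\widehat\phi,U^\perp)$, the question of $\limsup$ versus $\lim$ is immaterial to the equality.

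Two small corrections. First, you attribute the cofinality of $\{U^\perp: U\ \text{compact open in}\ G\}$ in $(\mathcal N(\widehat G),\subseteq)$ to ``the limit-free techniques''; in fact it is a purely structural consequence of $\widehat G$ being compactly covered: such a group has no $\R^n$-factor, hence possesses a compact open subgroup $K$, the discrete quotient $\widehat G/K$ is torsion, and the (finite) image of any compact neighborhood generates a finite subgroup whose preimage is a compact open subgroup containing it. The limit-free formulas concern the elimination of the limit over $n$, which your argument does not actually need. Second, you call this cofinality the hardest step; arguably the cleanest way to see that it is where the hypothesis enters is to observe that for a general LCA group (e.g.\ one with an $\R$-factor in the dual) no compact open subgroups of $\widehat G$ exist at all, so the supremum defining $h_{alg}(\widehat\phi)$ cannot be restricted to annihilators and the whole scheme collapses --- which is consistent with the fact that the general case (Theorem~\ref{BT1} for $\mathbf{LCA}$) remains open in the paper.
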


We leave open the following problem.

\begin{question} 
Does the pair $(\h_\mathfrak{lctop},\h_\mathfrak{lcalg})$ satisfy $SBT_{\;\widehat{}\;}$ for the functor \eqref{CCLCA}? Does the pair $(\h_\mathfrak{lctop},\h_\mathfrak{lcalg})$ satisfy $SBT_{\;\widehat{}\;}$?
\end{question}

The validity of the Bridge Theorem for locally compact abelian group is open in general:

\begin{question}
Consider $\ \ \widehat{}: \mathbf{LCA} \to \mathbf{LCA}$. Does the pair $(h_{top},h_{alg})$ satisfy $BT_{\;\widehat{}\;}$?
\end{question}

We leave open also the following more general question.

\begin{question}
Consider $\ \ \widehat{}: \mathbf{LCA} \to \mathbf{LCA}$. Does the pair $(\h_\mathfrak{lctop},\h_\mathfrak{lcalg})$ satisfy $SBT_{\;\widehat{}\;}$? Does the pair $(\h_\mathfrak{lctop},\h_\mathfrak{lcalg})$ satisfy $SBT_{\;\widehat{}\;}$?
\end{question}

Indeed, we hope that considering $h_{alg}$ and $h_{top}$ as functorial entropies may help in proving the most general version of the Bridge Theorem between these two entropies.

\newpage

\printindex 

\newpage

\listofsymbols

\end{document}